\documentclass[11pt,a4paper]{amsart}
\usepackage{a4wide}
\usepackage{amsthm, amsmath, amsfonts, amssymb,graphicx}
\usepackage{mathrsfs,url,color}

\usepackage[dvipsnames]{xcolor}
\usepackage[normalem]{ulem}
\usepackage[utf8]{inputenc}
\usepackage{bbm}

\numberwithin{equation}{section}

\newcommand{\Tr}{\operatorname{Tr}}
\newcommand{\bTr}{\mathbf{Tr}}

\newcommand{\fB}{\boldsymbol{\mathfrak{B}}}
\newcommand{\balpha}{{\boldsymbol{\alpha}}}

\newcommand{\brho}{{\boldsymbol{\rho}}}
\newcommand{\bchi}{{\boldsymbol{\chi}}}

\newcommand{\bsim}{\boldsymbol{\sim}}
\newcommand{\fY}{\boldsymbol{\mathfrak{Y}}}
\newcommand{\fS}{\boldsymbol{\mathfrak{S}}}

\newcommand{\bh}{\mathbf{h}}
\newcommand{\bj}{\mathbf{j}}
\newcommand{\bF}{\mathbf{F}}

\newcommand{\GL}{\mathrm{GL}}

\newcommand{\pow}{\mathcal{P}} 

\newcommand{\rainbow}{\mathrel{\smash{\raise-.1ex\hbox to 0pt{$\kern1pt\smallfrown$\hss}\raise.3ex\hbox{$\frown$}}}}

\newcommand{\bpi}{\bar{\pi}}

\newcommand{\bU}{\bar{U}}
\newcommand{\bV}{\bar{V}}
\newcommand{\bW}{\bar{W}}
\newcommand{\bbU}{\bar{\mathbf{U}}}
\newcommand{\bfU}{\mathbf{U}}
\newcommand{\bfV}{\mathbf{V}}
\newcommand{\bfW}{\mathbf{W}}
\newcommand{\bbW}{\bar{\mathbf{W}}}

\newcommand{\radcl}{\rad_{\mathrm{cl}}}
\newcommand{\radop}{\rad_{\mathrm{op}}}

\newcommand{\LL}{\mathscr{L}}
\newcommand{\Lring}{\LL_{\mathrm{ring}}}
\newcommand{\Lvf}{\LL_{\mathrm{VF}}}
\newcommand{\Leq}{\LL^\eq}
\newcommand{\Lnoval}{\LL_{0}} 
\newcommand{\Lnovaleq}{\Lnoval^\eq}
\newcommand{\Lnovalfine}{\LL_{0,\fine}} 
\newcommand{\Lnovalfineeq}{\LL_{0,\fine}^\eq}

\newcommand{\TT}{\mathscr{T}}
\newcommand{\Teq}{\TT^\eq}
\newcommand{\Tnoval}{\TT_{0}}

\newcommand{\Sloc}{S^{\mathrm{loc}}}

\newcommand{\valring}{\boldsymbol{\mathcal O}}
\newcommand{\maxid}{\boldsymbol{\mathcal M}}

\newcommand{\fine}{{\mathrm{fine}}}

\newcommand{\Sh}{\mathrm{Sh}}
\newcommand{\bSh}{\mathbf{Sh}}

\newcommand{\NN}{\mathbb{N}}
\newcommand{\bbL}{\mathbb{L}}
\newcommand{\ZZ}{\mathbb{Z}}
\newcommand{\FF}{\mathbb{F}}
\newcommand{\QQ}{\mathbb{Q}}
\newcommand{\RR}{\mathbb{R}}
\newcommand{\CC}{\mathbb{C}}

\newcommand{\bGr}{\mathbf{Gr}}

\newcommand{\CCt}[1][t]{\CC(\!(#1)\!)}

\newcommand{\kt}[1][t]{k(\!(#1)\!)}

\newcommand{\cC}{\mathscr{C}}

\newcommand{\Crig}{\operatorname{Crig}}
\newcommand{\Prig}{\operatorname{Prig}}
\newcommand{\bCrig}{\mathbf{Crig}}
\newcommand{\bPrig}{\mathbf{Prig}}

\DeclareMathOperator{\im}{im}
\DeclareMathOperator{\pr}{pr}

\DeclareMathOperator{\Gal}{Gal}

\DeclareMathOperator{\rv}{rv}
\DeclareMathOperator{\ac}{\overline{ac}}
\newcommand{\RV}{\mathbf{RV}}
\newcommand{\VF}{\mathbf{VF}}
\newcommand{\VG}{\mathbf{VG}}
\newcommand{\RF}{\mathbf{RF}}
\DeclareMathOperator{\dcl}{dcl}
\newcommand{\dclVF}{\dcl_{\LL}^{\scriptscriptstyle{\VF}}}
\newcommand{\dclRV}{\dcl_{\Leq}^{\scriptscriptstyle{\RV}}}
\newcommand{\dclRF}{\dcl_{\Leq}^{\scriptscriptstyle{\RF}}}
\newcommand{\dclVG}{\dcl_{\Leq}^{\scriptscriptstyle{\VG}}}

\DeclareMathOperator{\res}{res}

\newcommand{\eq}{{\mathrm{eq}}}

\DeclareMathOperator{\rtsp}{rtsp}
\DeclareMathOperator{\drtsp}{drtsp}

\def\Int{\operatorname{int}}
\def\Cl{\operatorname{cl}}

\definecolor{refkey}{RGB}{150,150,150}
\definecolor{labelkey}{RGB}{150,150,150}

\title[Riso-stratifications and a tree invariant]{Riso-stratifications and a tree invariant}

\author{David Bradley-Williams}
\author{Immanuel Halupczok}
\address{David Bradley-Williams, Institute of Mathematics, Czech Academy of Sciences, \v Zitn\'a 25, 115 67 Praha 1, Czech Republic.}
\address{Immanuel Halupczok, Heinrich Heine University D\"usseldorf,
Faculty of Mathematics and Natural Sciences, 
Mathematical Institute, 
Universit\"atsstr.\ 1, 40225 D\"usseldorf, Germany.}

\thanks{
Both authors would like to thank the organisers of the \emph{Model Theory, Combinatorics and Valued fields} trimester at the IHP, Paris in 2018, where some of this work was carried out; the participation of D.B-W. in the trimester was partially supported by a HeRA travel grant. D.B-W. would also like to thank the Fields Institute, Toronto for hospitality and support during the 2021 thematic program ``Trends in Pure and Applied Model Theory''. Both authors were partially supported at Heinrich Heine University D\"usseldorf by the research training group \emph{GRK 2240: Algebro-Geometric Methods in Algebra, Arithmetic and Topology}, funded by the DFG. I.H. was additionally partially supported by the individual research grant No. 426488848, \emph{Archimedische und nicht-archimedische Stratifizierungen höherer Ordnung}, also funded by the DFG. D.B-W. was additionally supported by project EXPRO 20-31529X of the Czech Science Foundation (GA\v CR) and by the Czech Academy of Sciences CAS (RVO 67985840).}

\subjclass[2010]{03C60,03C65,03C98,12J25,32S60,03H05,14B05,14B20,14G20}

\usepackage{cite}
\usepackage{amsmath}
\usepackage{amsthm}
\usepackage{amssymb}
\usepackage{mathrsfs}
\usepackage{enumerate}
\usepackage{graphicx}

\theoremstyle{plain}

\newtheorem{thm}{Theorem}[subsection]

\newtheorem{prop}[thm]{Proposition}

\newtheorem{lem}[thm]{Lemma}

\newtheorem{cor}[thm]{Corollary}

\newtheorem{qu}[thm]{Question}
\newtheorem{claim}{Claim}

\newtheorem*{claim*}{Claim}

\theoremstyle{definition}
\newtheorem{defn}[thm]{Definition}
\newtheorem{notn}[thm]{Notation}
\newtheorem{conv}[thm]{Convention}
\newtheorem{exa}[thm]{Example}

\newtheorem{rmk}[thm]{Remark}
\newtheorem{modrmk}[thm]{Model Theoretic Remark}

\newtheorem{hyp}[thm]{Hypothesis}

\makeatletter
\newcommand{\thorn}{{\fontencoding{T1}\selectfont\th}}
\newcommand{\@indepsymbol}[2]{#1\setbox0=\hbox{$#1x$}\kern\wd0\hbox to 0pt{\hss$#1\mid$\hss}\lower.9\ht0\hbox to 0pt{\hss$#1\smile$\hss}\kern\wd0}
\newcommand{\@nindepsymbol}[2]{#1\setbox0=\hbox{$#1x$}\kern\wd0\hbox to 0pt{\mathchardef
	\nn=12854\hss$#1\nn$\kern1.4\wd0\hss}\hbox to
	0pt{\hss$#1\mid$\hss}\lower.9\ht0 \hbox to
	0pt{\hss$#1\smile$\hss}\kern\wd0}
\newcommand{\ind}[1][]{\mathop{\mathpalette\@indepsymbol{}^{\!\!\!\!\rlap{$\scriptstyle\textnormal{#1}$}\,\,\,\,}}}
\newcommand{\nind}[1][]{\mathop{\mathpalette\@nindepsymbol{}^{\!\!\!\rlap{$\scriptstyle\textnormal{#1}$}\,\,\,}}}
\newcommand{\@Ind}[1][]{\mathpalette\@indepsymbol{}^{\!\!\!\!\mbox{$\scriptstyle\textnormal{#1}$}}}
\newcommand{\Ind}[1][]{\@Ind[\ \,]}
\newcommand{\newind}[4]{
	\newcommand{#1}{{\!\@Ind[#4]}}
	\newcommand{#2}{\ind[#4]}
	\newcommand{#3}{\nind[#4]}
}
\newind{\thInd}{\thind}{\nthind}{\thorn}
\renewcommand{\thInd}{\text{$\@Ind[\thorn]$\;}}
\makeatother

\newcommand{\bA}{\mathbf{A}}
\newcommand{\bB}{\mathbf{B}}
\newcommand{\bC}{\mathbf{C}}
\newcommand{\bQ}{\mathbf{Q}}

\newcommand{\bS}{\mathbf{S}}
\newcommand{\bX}{\mathbf{X}}
\newcommand{\bY}{\mathbf{Y}}
\newcommand{\bZ}{\mathbf{Z}}

\newcommand{\pt}{\mathbf{pt}}
\newcommand{\bff}{\mathbf{f}}
\newcommand{\bfg}{\mathbf{g}}

\newcommand{\ra}{\rightarrow}

\DeclareMathOperator{\rad}{rad}

\DeclareMathOperator{\Th}{Th}

\DeclareMathOperator{\id}{id}

\newcommand{\sseteq}{\subseteq}

\renewcommand{\phi}{\varphi}

\begin{document}
\maketitle

\begin{abstract}
We introduce a new notion of stratification (``riso-stratification''), which is canonical and which exists in a variety of settings, including different topological fields like $\CC$, $\RR$ and $\QQ_p$, and also including different o-minimal structures on $\RR$. Riso-stratifications are defined directly in terms of a suitable notion of triviality along strata; the key difficulty and main result is that 
the strata defined in this way are ``algebraic in nature'', i.e., definable in the corresponding first-order language.
As an example application, we show that local motivic Poincaré series are, in some sense, trivial along the strata of the riso-stratification.
Behind the notion of riso-stratification lies a new invariant of singularities, which we call the ``riso-tree'', and which captures, in a canonical way, information that was contained in the non-canonical strata of a Lipschitz stratification. On our way to the Poincaré series application, we show, among others, that our notions interact well with motivic integration.
\end{abstract}

\section{introduction}
\label{sec:intro}

Since Whitney's seminal paper \cite{Whi.strat},
a classical approach to get some understanding of the singularities of a ``geometric'' set $X$ (e.g., an algebraic set $X \subseteq \CC^n$) consists in considering a stratification of $X$, i.e., a partition into smooth subsets which satisfies certain regularity conditions. Often, one is not interested in the regularity conditions themselves, but in the fact that they imply a certain triviality along strata, e.g.\ topological triviality in the case of Whitney stratifications or bi-Lipschitz triviality in the case of Mostowski's Lipschitz stratifications.
In the present paper, we avoid the detour through regularity conditions: We associate, to every geometric set $X$, a 
canonical stratification, its \emph{riso-stratification}, which is directly defined in terms of a suitable triviality condition.

Intuitively, that  triviality condition is very simple and natural. It says: on a tiny neighbourhood of a point of a stratum $S$, $X$ is almost translation invariant along $S$. To make this formal, we use ideas from non-standard analysis.
A variant of the notion of triviality already appeared in \cite{i.whit}; what is really new in the present paper is the (very non-trivial) result that if we use this triviality notion to define a stratification, then the strata live in the right category: they are constructible/globally subanalytic/etc.\ if $X$ is. We are not aware of any previous result that stratifications can be obtained directly from a natural triviality notion.

Canonical stratifications have been constructed before: For many kinds of regularity conditions, one obtains a minimal stratification satisfying those conditions by simply moving points into lower-dimensional strata whenever one is forced to (see e.g. \cite{Ran.compStrat}).
Riso-stratifications are strictly stronger than (minimal) Whitney stratifications, as one sees in the application to Poincaré series mentioned below.

An interesting benefit of a notion of stratification being canonical is that it yields an invariant of singularities: to a singularity $a \in X$, one associates the dimension of the stratum containing $a$.
In the case of Whitney stratifications over $\CC$, Teissier \cite{Tei.varPol2}
gave an explicit description of that invariant in terms of polar varieties. Taking such a point of view, the riso-stratification is merely the shadow of a much stronger invariant which we call the \emph{riso-tree} of $X$. 
It is this riso-tree which is really the central object of study of this paper.

As an example,
consider the trumpet $X = \bX(\CC) \subseteq \CC^3$ from Figure~\ref{fig:trumpet} (on p.~\pageref{fig:trumpet}). Its riso-stratification consists of one (zero-dimensional) stratum at the origin and one (two-dimensional) stratum consisting of the rest of $X$. The riso-tree associated to $X$ additionally sees a kind of one-dimensional germ at the origin, reflecting the fact that the tangent cone of $\bX(\CC)$ at $0$ is singular along the $x$-axis. This singularity of the tangent cone is also detected by Mostowski's Lipschitz stratifications: Lipschitz-stratifying $X$ yields an actual one-dimensional stratum leaving the origin to the right. However, while this one-dimensional stratum is inherently non-canonical, the riso-tree captures that information in a canonical way. Another advantage of the riso-tree is the simplicity of its definition, compared to the ingenious but highly non-trivial definition of Lipschitz stratifications.

One of our motivations for introducing riso-trees was to get a better understanding of the Poincaré series associated to an algebraic set $X = \bX(\CC) \subseteq \CC^n$, and more specifically to understand the poles of that series.
A powerful tool to determine those poles is Denef's formula \cite[Theorem~3.1]{Den.degIgusa}. However, this formula depends on a resolution of singularities of $\bX$, and it can yield false candidate poles depending on the chosen resolution.
In \cite{i.whit}, the second author introduced \emph{t-stratifications} (which are defined on valued fields like $\CCt$) and showed that a t-stratification of $\bX(\CCt)$ yields an alternative method to determine the Poincaré series of $X$. But again, t-stratifications are inherently non-canonical and tend to yield false candidate poles.
In some sense, the riso-trees introduced in the present paper capture exactly the canonical part of the information contained in t-stratifications (and thereby can be considered as a positive answer to the question in \cite[Section~9.4]{i.whit}). In particular, by applying the method described in \cite[Section~8]{i.whit}, one could obtain a concrete formula for the poles of the Poincaré series of $X$ in terms of the (combinatorial) data describing the riso-tree of $X$. However, the explicit formula would typically be long and cumbersome.
Therefore, instead of writing out such an explicit formula, we present the following concrete application to Poincaré series, whose proof illustrates how the riso-tree of $X$ controls its Poincaré series.

Let $X \subseteq \CC^n$ be an algebraic subset and fix a point $x \in X$.
Recall that the \emph{local motivic Poincaré series} $P_{X,x}(T)$ of $X$ at $x$ is a certain formal power series in $T$.
If $X$ can be written as a cartesian product $X = Y \times Z$, $x = (y,z)$, and $Z$ is smooth at $z$, then there is an easy way to compute $P_{X,x}(T)$ from $P_{Y,y}(T)$ and $d := \dim Z$, namely, $P_{X,x}(T) = \bbL^{-d}\cdot P_{Y,y}(\bbL^{d}\cdot T)$.
This raises the following question:
For an arbitrary algebraic subset $X \subseteq \CC^n$, if $x$ lies in a $d$-dimensional stratum $S$ of a stratification of $X$, 
do we also have $P_{X,x}(T) = \bbL^{-d}\cdot P_{X \cap V,x}(\bbL^{d}\cdot T)$
where $V$ is an affine space through $x$ transversal to the tangent space $T_xS$? This does not hold for an arbitrary Whitney or Verdier stratification (we provide a counter-example), but we show that this is indeed true if one uses the riso-stratification of $X$.

A key step in this proof consists in showing that riso-trees control motivic integrals -- a result which potentially opens up applications of riso-trees to a whole other range of questions.
In particular, building on this, a completely different
application of riso-trees is given in \cite{iC.vit}: To a subset $X \subseteq \RR^n$, one classically associates certain invariants $V_0(X), \dots, V_n(X)$ called Vitushkin variations. In \cite{iC.vit}, it is shown that an analogue of those $V_i(X)$ can also be defined over certain non-Archimedean fields, by examining the riso-tree of $X$.

Most constructions and proofs in this paper work uniformly in a wide range of settings, including e.g.\ algebraic sets $X \subseteq K^n$ for any algebraically closed field $K$ of characteristic $0$, semi-algebraic and globally subanalytic sets $X \subseteq \RR^n$, and also algebraic and semi-algebraic sets $X$ in other topological fields like the $p$-adic numbers $\QQ_p$ and fields $\kt$ of Laurent series of characteristic $0$.

In the following, we give a more detailed overview over the paper. A much more gentle introduction to riso-stratifications is given in the lecture notes \cite{i.rino}.

\subsection{Riso-stratifications}
\label{sec:i:risostrat}
We now define the riso-stratification of a set $X$.
For the sake of this introduction, we suppose that $X = \bX(\CC)$ is a constructible subset of $\CC^n$. The associated riso-stratification will be a stratification of the ambient space $\CC^n$.

The intuition behind the definition is simple: We just want to impose that if $x \in \CC^n$ lies in a stratum $S$ of the riso-stratification, then there should exist a ``very small neighbourhood'' $U_x$ of $x$ on which $X$ is ``almost translation invariant'' in the direction of the tangent space $T_xS$ of $S$.\footnote{More precisely, one should say: the smaller the neighbourhood, the closer to translation invariant; see \cite[Section~1]{i.rino} for details about why this is really what the following definition yields.}
To make ``very small neighbourhood'' and ``almost translation invariant'' precise, we choose a field $K \supseteq \CC$ which contains infinitesimal elements. Concretely, one can choose $K$ to be 
the field $\CCt[t^{\QQ}]$ of Hahn series, where $t$ is considered as being infinitesimal. (Equivalently, $K$ is the maximal immediate extension of the field of Puiseux series over $\CC$.)
Every point $x \in \CC^n$ now has a (unique) \emph{infinitesimal neighbourhood} in $K^n$, namely the set $U_x := x + \maxid(K)^n$, where $\maxid(K)^n \subseteq K^n$ denotes the $n$-th cartesian power of the maximal ideal corresponding to the $t$-adic valuation on $K$. This $U_x$ is our ``very small neighbourhood'' of $x$.

\begin{figure}
 \includegraphics{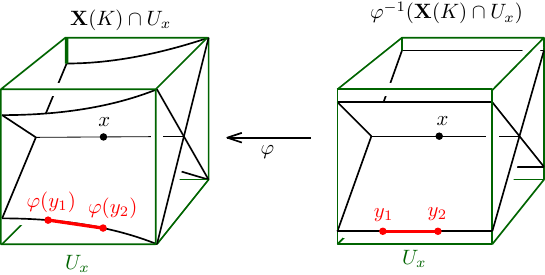}
 \caption{On the left hand side, the intersection of some set $\bX(K)$ with some infinitesimal neighbourhood $U_x$, and on the right hand side, its preimage under a risometry $\varphi$ making it horizontally translation invariant. That $\varphi$ is a risometry means that the differences $y_1 - y_2$ and $\varphi(y_1) - \varphi(y_2)$ are almost equal.}
 \label{fig:riso}
\end{figure}

Next, we need the notion of a \emph{risometry}, which one should think of as an infinitesimal perturbation (see Figure~\ref{fig:riso}). A risometry is a bijection  $\phi\colon U_x \to U_x$ satisfying
\[
|\phi(y_1) - \phi(y_2)-(y_1 - y_2)| < |y_1-y_2|
\]
for every $y_1\ne y_2$ in $U_x$; here, $|\cdot|$ denotes the $t$-adic valuation, written multiplicatively. We now make our desired notion of
``almost translation invariant'' precise by defining it as ``translation invariant up to risometry''. More formally, we define the \emph{riso-triviality space} $\rtsp_{U_x}(\bX(K))$ of $\bX(K)$ on $U_x$ as
the vector subspace of $\CC^n$ consisting of all those $v \in \CC^n$ such that there exists a risometry
$\phi\colon U_x \to U_x$ such that 
$\phi^{-1}(\bX(K) \cap U_x)$ is translation invariant in the direction of $v$ (within $U_x$).\footnote{The definition of the riso-triviality space given here differs from Definition~\ref{defn:rtsp}, but both definitions can easily be seen to be equivalent using Remark~\ref{rmk:lift:no:matter} and Proposition~\ref{prop:rtsp}.}

We now give a tentative definition of the riso-stratification, which we call the \emph{shadow} of $\bX$: 
It is a partition of $\CC^n$ into sets $\Sh_0, \dots, \Sh_n$, where the \emph{$d$-dimensional skeleton} $\Sh_d$ consist of all those points $x \in \CC^n$ such that the riso-triviality space $\rtsp_{U_x}(\bX(K))$ is $d$-dimensional.
We prove that $\Sh_d$ is constructible and that $\dim \Sh_d = d$ (unless it is empty). However, it might be that the topological closure of $\Sh_d$ is not contained in $\Sh_0 \cup \dots \cup \Sh_d$. We
correct this by iterating the above procedure, taking the shadow of the tuple $(X, \Sh_0, \Sh_1, \dots, \Sh_n)$, and so on. This process stabilizes after $n$ steps at most (Proposition~\ref{prop:shadow:stabilizes}); the final result is the riso-stratification of $X$. To justify the term ``stratification'', we show that it satisfies Whitney's regularity conditions (Theorem~\ref{thm:whit}).

\subsection{Riso-trees}
\label{sec:i:risotree}
The riso-triviality space of $\bX(K)$ does not only make sense on the above infinitesimal neighbourhoods $U_x$, but more generally on arbitrary valuative balls $B \subseteq K^n$. Recall that those balls form a tree. The \emph{riso-tree} of $\bX(K)$ is 
simply the partition of the tree of all closed valuative balls $B \subseteq K^n$ into the sets $\Tr_d := \{B \mid \dim \rtsp_B(\bX(K)) = d\}$.
The central result of this paper (Corollary~\ref{cor:riso-tree}) states that each set $\Tr_d$ is definable in the language of valued fields. From this, we deduce the above-mentioned result about the shadow $\Sh_d$ being constructible.
The proof of definability of the sets $\Tr_r$ heavily relies on model theory of valued fields, and even if one is only interested in constructibility of the shadow (as needed for the riso-stratifications), we have no clue how model theory could be avoided.

\begin{figure}
 \includegraphics{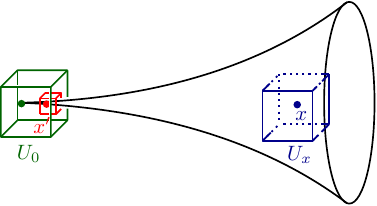}
 \caption{The set $\bX(K) \subseteq K^3$ given by $z^2 + y^2 = x^3$ shows that the riso-tree captures more information than the riso-stratification.}
 \label{fig:trumpet}
\end{figure}

We can now describe the trumpet example from Figure~\ref{fig:trumpet} in more detail: A riso-stratification of that trumpet only sees that the riso-triviality space on the infinitesimal neighbourhood $U_0$ of the origin is trivial and that the riso-triviality space on $U_x$ for any other point $x \in \bX(\CC)$ is two-dimensional (namely, equal to the tangent space $T_x(\bX(\CC))$). However, if we choose a valuative ball $B$ around some $x' \in \bX(K)$ which is infinitesimally close to $0$ but such that $0 \notin B$, then $\bX(K) \cap B$ is almost translation invariant horizontally (in the real picture, it is almost a cylinder), and indeed,
$\rtsp_B(\bX(K))$ is equal to the horizontal line.

\subsection{Relation to t-stratifications}
\label{sec:i:tstrat}

As already mentioned, the riso-tree can be considered as isolating the canonical information contained in the t-stratifications introduced in \cite{i.whit}. Let us now make this precise.

A t-stratification is a partition of the ambient space $K^n$ of $\bX(K)$ into definable skeleta $S_0, \dots, S_n$ (with $\dim S_d = d$) that in particular has the following property: If $B \subseteq K^n$ is any valuative ball with $\dim \rtsp_B(\bX(K)) = d$, then $B$ has non-empty intersection with $S_{\le d} :=  S_0 \cup \dots \cup S_d$ (see Definition~\ref{defn:tstrat} for details). Stratifications with this property are typically very non-unique in a similar way as Lipschitz stratifications: In Figure~\ref{fig:trumpet} for example, $S_1$ has to meet the small valuative ball around $x'$; to this end, it could be any curve along the trumpet close to the $x$-axis, or it could be the $x$-axis itself.

The riso-tree forgets the t-stratification and instead only recalls the key ingredient, namely the dimensions of the riso-triviality spaces $\rtsp_B(\bX(K))$. While this is clearly canonical, there is \emph{a priori} no reason why the riso-tree constructed in this way should be definable in the valued field language. Existence of t-stratifications is a useful ingredient for this proof, but interestingly, we also make use of techniques which are more analytic than purely model theoretic methods. Concretely, we therefore work in a valued field that is actually spherically complete, and not merely definably spherically complete.

As a side remark, note that our definition of the riso-triviality space differs slightly from the one in \cite{i.whit}. One reason for this difference is that, in at least some pathological cases, a riso-tree defined with the notion from \cite{i.whit} would not have been definable (see \cite[Example~3.15]{i.whit}). While it seems so far that in cases of interest this pathology should not arise (in which cases it is to be expected the two notions of riso-triviality space coincide), we were not able to show this even in the nicest possible setting, namely when $K = \CCt[t^{\QQ}]$ (see Question~\ref{qu:autodef}).

Despite the key difference between t-stratifications and the riso-tree, the latter inherits some properties from t-stratifications, making it into some kind of ``tree-shaped stratification''. Concretely, from the fact that a t-stratification has a finite set $S_0$ containing the ``worst singularities'', one deduces that the ``worst'' part $\Tr_0$ of the riso-tree is a tree with only finitely many branching points (see Lemma~\ref{lem:Crig}). A similar but more technical result holds for all $\Tr_d$, but instead of formulating this explicitly, we show that for $d \ge 1$, all the relevant information about a ball $B \in \Tr_d$ is already contained
in any fixed fiber $F \subseteq B$ ``sufficiently transversal'' to $\rtsp_B(\bX(K))$ 
(see Lemmas~\ref{lem:fiber_compat} to~\ref{lem:fiber_triv}). This yields a notion of induction over the riso-tree, which plays a central role in many proofs in this paper.

\subsection{Poincaré series}
\label{sec:i:poincare}

With the definition of riso-stratification at hand, our result about local motivic Poincaré series mentioned at the beginning of the introduction (and stated precisely in Theorem~\ref{thm:poincare}) becomes quite natural. Suppose that $X = \bX(\CC) \subseteq \CC^n$ is an algebraic subset and that $a \in X$ lies in the $d$-th skeleton $S_d$ of the riso-stratification. As before, denote by $U_a = a + \maxid(K)^n$ the infinitesimal neighbourhood of $a$, where $K = \CCt[t^{\QQ}]$.
One can verify that
the riso-triviality space $\rtsp_{U_a}(X)$ contains the tangent space $T_aS_d$, i.e., $X$ is ``almost $T_aS_d$-translation invariant'' on $U_a$.
Recall that the $T^r$-coefficient of the local motivic Poincaré series $P_{X,a}(T)$ of $X$ at $a$ is essentially the motivic measure $\mu(X_r)$ of a certain subset $X_r$ of $U'_{a} := a + (t\CC[[t]])^n \subseteq \CCt^n$. Note that $U'_{a}$ is, like $U_a$, an infinitesimal neighbourhood of $a$, but over the field $\CCt$; more precisely, $U'_a = U_a \cap \CCt^n$. The key steps to obtain that $P_{X,a}(T)$ is determined by  $P_{X \cap V,a}(T)$ (for some affine space $V$ through $a$ transversal to $T_aS_d$) are to show that (a) restricting from $K$ to $\CCt$ preserves the riso-triviality space (Proposition~\ref{prop:fields}), that (b) almost translation invariance of $X$ implies almost translation invariance of the above sets $X_r$, and that (c) risometries preserve motivic measure (Proposition~\ref{prop:presMeas}).

We carry out the above strategy for two variants of the local motivic Poincaré series (corresponding to the two kinds of $p$-adic Poincaré series considered e.g.\ by Denef in \cite{Den.rat}): $\tilde P_{X,a}(T)$, whose $T^r$-coefficient is the motivic cardinality of the sets of jets in $\bX(\CC[t]/t^r)$ that are centered at $a$, and $P_{X,a}(T)$, which additionally requires the jets to lift to $\bX(\CC[[t]])$. Treating $\tilde P_{X,a}(T)$ needs an additional quirk:
The notion of riso-stratification presented in Section~\ref{sec:i:risostrat} is defined set-theoretically in the sense that it captures riso-triviality of the set $\bX(K)$. To obtain the result for $\tilde P_{X,a}(T)$,
we introduce an algebraic variant of the riso-stratification, which, in a suitable sense, captures riso-triviality of the ideal defining $\bX$ (see Definition~\ref{defn:alg:riso}).

Concerning the above Step~(c):
While at first glance, it might seem very natural that risometries preserve motivic measure, recall that
the risometries we consider are set-theoretic maps, with no condition of algebraicity or definability imposed.
To prove Proposition~\ref{prop:presMeas} nevertheless, we use the full strength of riso-trees, doing an induction over the riso-tree as explained in Section~\ref{sec:i:tstrat}.

\subsection{Summary of the results}
\label{sec:i:details}

Many of our results hold under quite a large generality, and on our way, we also obtain some more results of interest. Here are some details:

\begin{enumerate}
\item\label{it:rtt}
The heart of this paper is the \emph{Riso-Triviality Theorem}~\ref{thm:RTT}, which states that the riso-triviality space is definable. From this, one easily deduces the definability of the riso-tree mentioned in Section~\ref{sec:i:risotree} (Corollary~\ref{cor:riso-tree}). This result holds not only in $K = \CCt[t^{\QQ}]$, but more generally in a varied class of spherically closed valued fields $K$ of equi-characteristic $0$. Moreover, instead of working in the pure valued field language, one can also work in any $1$-h-minimal expansion of $K$ in the sense of \cite{iCR.hmin}. This includes for example expansions by analytic functions and the power-bounded $T$-convex structures introduced by van den Dries and Lewenberg \cite{DL.Tcon1}.

Moreover, instead of considering riso-triviality of a subset of $K^n$, one can make sense of riso-triviality of a partition of $K^n$. Such a partition can
be given as the fibers of a map from $K^n$ another set, and in that sense, the Riso-Triviality Theorem holds more generally for definable maps from $K^n$ to
(a cartesian power of) the leading term structure $\RV(K)$ (or a quotient thereof). This more general version is needed to obtain the algebraic riso-stratifications mentioned at the end of Section~\ref{sec:i:poincare}.
\item\label{it:ret}
The Riso-Triviality Theorem is proved in a common induction with the \emph{Riso-Equivalence Theorem}~\ref{thm:RET}, which states that being in risometry is a definable condition. More precisely, we obtain that risometry types can be parametrized by elements of $\RV^\eq$; see Corollary~\ref{cor:RET} for the precise statement. This can be considered as a key step towards a classification of definable sets up to risometry. (We expect it should be possible, yet somewhat laborious, to finish the classification along the lines of \cite[Section~8]{i.whit}.)
\item\label{it:risostrat}
We introduce \emph{riso-stratifications} (Definition~\ref{defn:risoStrat}) in a variety of different settings, namely in algebraically closed fields of characteristic $0$, in real closed fields, in $p$-adically closed fields and in equi-characteristic $0$ valued fields of the form $\kt$ and $\kt[t^\QQ]$ (see Hypothesis~\ref{hyp:can_whit}). In all those settings, the set we are stratifying can be any definable set, e.g.\ a constructible set in an algebraically closed field or a semi-algebraic set in a real closed field. In the case of real closed fields, we also allow to work in certain other o-minimal structures, including e.g.\ the one consisting of all globally subanalytic sets in $\RR^n$. The precise condition we need is that the o-minimal structure is power-bounded (see Definition~\ref{defn:powbd}), which amounts to requiring that no definable function grows exponentially.
\item \label{it:whit}
To put riso-stratifications in relation to other notions of stratifications, we prove that they satisfy Whitney's regularity conditions (Theorem~\ref{thm:whit}). While in almost all of the settings from Hypothesis~\ref{hyp:can_whit}, the existence of Whitney (and even Verdier) stratifications was known before (see \cite{Loi.ominStrat} in the o-minimal case and \cite{CCL.cones,For.motCones} for the valued field cases with discrete value group), our approach yields a uniform existence proof for all those fields at once. (In $\kt[t^\QQ]$, even existence of Whitney stratifications seems to be new, though the methods of \cite{CCL.cones,For.motCones} should be expected to extend to this context also.)

Note that we do not know whether a riso-stratification is always a Verdier stratification, and we rather believe that this is not the case.
\item\label{it:algriso}
To any closed subscheme $\bX$ of the affine space $\mathbb A^n$, we associate an \emph{algebraic riso-stratification} (Definition~\ref{defn:alg:riso}), which captures information about the non-reduced structure of $\bX$ (see Example~\ref{ex:nonred}). When $\bX$ is reduced, we do not know whether it captures more information than the riso-stratification of the set $\bX(K)$ for $K$ algebraically closed of characteristic $0$.
\item\label{it:fields}

Riso-triviality in $K$ restricts well to suitable sub-fields $K_0 \subseteq K$:
If $X \subseteq K^n$ is definable, then riso-triviality of $X$ on a ball $B \subseteq K^n$ implies the corresponding riso-triviality of $X \cap K_0^n$ on $B \cap K_0^n$
(Proposition~\ref{prop:fields}). A similar result also holds for existence of a risometry between two different definable sets. Note however that our proof of this does not work in the same generality as the results from Items~(\ref{it:rtt}) and (\ref{it:ret}), but require
an additional technical assumption stated in Hypothesis~\ref{hyp:RFlin}, which ensures that induction over the riso-tree works particularly well.
\item\label{it:mot}
As mentioned before, risometries preserve motivic measure (Proposition~\ref{prop:presMeas}), and modulo some technicality, they also preserve motivic integrals (Proposition~\ref{prop:presInt}). Those results are obtained under the same assumptions as Proposition~\ref{prop:fields}, together with the assumptions needed for motivic integration to exist (Hypothesis~\ref{hyp:mot:int}). We use the Cluckers--Loeser version of motivic integration.
\item\label{it:poincare}
Finally, we obtain the application to Poincaré series (Theorem~\ref{thm:poincare}), which we already mentioned above.
We also give an example showing that this result would not hold if one would use an arbitrary Verdier stratification in place of the riso-stratification (Example~\ref{exa:notVerdier}).
\end{enumerate}

\subsection{Outline of the paper}
\label{sec:i:outline}

In Section~\ref{sec:setting}, we fix our notation and conventions. We also fix general assumptions about the first order language we work with (Hypothesis~\ref{hyp:KandX_can}) and we introduce (in Section~\ref{sec:riso}) the central notions of risometry and riso-triviality.

Section~\ref{sec:main-results} is devoted to the Riso-Triviality Theorem and the Riso-Equivalence Theorem
(items (\ref{it:rtt}) and (\ref{it:ret}) in the above list).
The results are stated precisely in Section~\ref{sec:thms}, and the remaining subsections of Section~\ref{sec:main-results} are devoted to their proof: In Section~\ref{sec:tstrat}, we relate some of the basic notions of the present paper to those introduced for developing t-stratifications in \cite{i.whit}. In
Section~\ref{sec:prel}, we adapt many basic results from \cite{i.whit} to the amended notions employed here; for convenience of the reader, we repeat the proofs even though much of the adaptation in this subsection is straight forward. Finally, the main work of proving definability happens in Section~\ref{sec:proofs}.

The main objects of study of Section~\ref{sec:can_whit} are the riso-stratifications.
After fixing the assumptions and showing how this relates to the valued fields considered previously in the paper (Section~\ref{sec:whit-ass}), Sections~\ref{sec:sh}, \ref{sec:iter:sh} and \ref{sec:riso-strat} are devoted to defining and constructing riso-stratifications (item (\ref{it:risostrat})) and proving their most basic properties. We then have an intermediate section relating riso-triviality to manifolds, before we come, in Section~\ref{sec:whit}, to the proof 
that riso-stratifications satisfy Whitney's regularity conditions (item~(\ref{it:whit}), Theorem~\ref{thm:whit}).
Finally, in Section~\ref{sec:alg:riso}, we introduce algebraic riso-stratifications (item~(\ref{it:algriso})).

The last section of the paper, Section~\ref{sec:appl}, is guided by our desired application to Poincaré series (item (\ref{it:poincare})), which is given in Subsection~\ref{sec:poincare}. It starts with a subsection explaining the additional assumption mentioned in item~(\ref{it:fields}) and how it helps to improve induction over the riso-tree.
In Sections~\ref{sec:fields} and \ref{sec:mot}, we prove the two missing ingredients, namely restriction to subfields (item~(\ref{it:fields})) and compatibility with motivic measure (item~(\ref{it:mot})).

\section{Setting}
\label{sec:setting}

\subsection{Notation and conventions}
\label{sec:notn}

We fix some terminology for the entire paper, first related to model theory in general and then more specifically about valued fields.

\begin{notn}[Model-independent definable objects]\label{notn:uniform}
Suppose that $\LL$ is a language and that $\TT$ is an (often implicitly fixed) $\LL$-theory.
\begin{enumerate}
 \item 
 By an \emph{$\LL$-definable set $\bX$ in the theory $\TT$}, we mean an $\LL$-formula up to equivalence modulo $\TT$. If $\TT$ is clear from the context, we will just write \emph{$\LL$-definable set}.
 If $K \models \TT$ is a model, we write $\bX(K)$ for the set of realizations of $\bX$ in $K$. Notationally, we treat such $\bX$ like sets, e.g.\ writing
$\bX^n$ for the cartesian power, $\bX \cup \bX'$ for the disjunction, $\bX \subseteq \bX'$ if $\bX$ implies $\bX'$, $\{x \in \bX \mid \phi(x)\}$ for the conjunction of $\bX$ and $\phi$, and so on.
\item
In a similar way, when $\bX$ and $\bY$ are $\LL$-definable sets, then by an ``$\LL$-definable map $\balpha\colon \bX \to \bY$'', we mean a formula defining a map $\balpha_K$ from $\bX(K)$ to $\bY(K)$ for every model $K \models \TT$. We will sometimes also use non-bold letters for $\LL$-definable maps and omit the index $K$ (writing $\alpha\colon \bX(K) \to \bY(K)$), mainly when we think of $\alpha$ as a 
function symbol from $\LL$.
\item
Other definable objects (like $\LL$-definable families, $\LL$-definable equivalence relations, $\LL$-definable elements) are treated similarly.
\end{enumerate}
\end{notn}

\begin{notn}[Definable objects in a model]
If $K \models \TT$, then we also speak about $\LL$-definable sets in $K$, with the usual meaning. More precisely, \emph{$\LL$-definable} means definable without parameters, whereas \emph{definable} means definable with parameters, and for $A \subseteq K$, $A$-definable means $\LL(A)$-definable.
\end{notn}

Typically, we will use boldface letters for model-independent $\LL$-definable objects and non-boldface letters for $\LL$-definable objects in a specific model.

\begin{notn}[Definable closure]
If $K$ is an $\LL$-structure, $\bS$ is a sort of $\LL$ and $A \subseteq K$ is a set of elements in arbitrary sorts,
we write $\dcl_{\LL}^{\scriptscriptstyle{\bS}}(A)$ for the elements in $\bS(K)$ which are $\LL(A)$-definable.
\end{notn}

\begin{conv}[Imaginaries]\label{conv:imaginaries}
Given a language $\LL$ and an $\LL$-theory $\TT$, we will often work in $\Leq$ and $\TT^\eq$. Recall that $\Leq$ and $\TT^\eq$ are obtained form $\LL$ and $\TT$ by adding, for every $\LL$-definable set $\bX$ and every $\LL$-definable equivalence relation $\bsim$ on $\bX$, a sort for $\bX/\mathord{\bsim}$
and the quotient map $\bX \to \bX/\mathord{\bsim}$.
Given a model $K \models \TT$, we denote the corresponding model of $\Teq$ by $K^\eq$, but to keep notation lighter,
if $\bX$ is an $\Leq$-definable set, then we just write $\bX(K)$ (instead of $\bX(K^\eq)$) for the set of realizations of $\bX$ in $K^\eq$. (And similarly for definable maps, etc.)
\end{conv}

Recall that this convention is harmless in the sense that it does not change definability in the original sorts of $\LL$. Recall also that this yields a natural notion of definability of sets of sets (by identifying a set with its code). More precisely, we will use the following convention:

\begin{conv}[Definable sets of sets]\label{conv:sets:of:sets}
Let $\bX$ be an $\Leq$-definable set.
By an $\Leq$-definable subset $\fY$ of the power set $\pow(\bX)$, we mean an $\Leq$-definable family $\tilde\fY$ of subset of $\bX$, parametrized by some $\Leq$-definable set $\bQ$, where each family member is unique, i.e., for every $K \models \TT$ and every $q,q' \in \bQ(K)$, $\tilde\fY(K)_q = \tilde\fY(K)_{q'}$ implies $q = q'$. Notationally, we treat $\fY$ like a subset of $\pow(\bX)$; for example, given a model $K \models \TT$ and a subset $Z \subseteq \bX(K)$, ``$Z \in \fY(K)$'' means that there exists a $q \in \bQ(K)$ such that
$\tilde\fY(K)_q = Z$. Other terminology is used accordingly; for example, an $\Leq$-definable subset of $\fY$ is given by an $\Leq$-definable subset of $\bQ$, an $\Leq$-definable
map from/to $\fY$ is an $\Leq$-definable map from/to $\bQ$, etc.
\end{conv}

Next, we fix some languages and theories that will be central in this paper.

\begin{notn}[The languages $\Lring$ and $\Lvf$]\label{notn:Lvf}
We write $\Lring = \{+,\cdot\}$ for the ring language and $\Lvf$ for the language of valued fields, i.e., the extension of $\Lring$ by a predicate $\valring$ for the valuation ring.
\end{notn}

In reality, in the entire paper, we care about languages only up to interdefinability, and we do not care whether additional sorts are added.

The following languages $\LL$ and $\Lnoval$ are the ones we will work with most. They come with corresponding theories $\TT$ and $\Tnoval$, which are the ones we implicitly think of when using model-independent definable sets as in Notation~\ref{notn:uniform}.

\begin{notn}[$\Lnoval$ and $\Tnoval$]\label{conv:Lnoval}
Any language called $\Lnoval$ in this paper will be an extension of $\Lring$, and there will be a corresponding $\Lnoval$-theory $\Tnoval$ which will always contain the theory of fields of characteristic $0$. We denote the home sort of $\Lnovaleq$ (i.e., the field sort) by $\bA$. Even if $\Lnoval$ is multi-sorted, we identify models $K_0 \models \Tnoval$ with their field sort, i.e., $K_0 = \bA(K_0)$.
\end{notn}

Note the analogy between the $\Lnoval$-definable set $\bA^n$ and the affine space $\mathbb A^n$.

\begin{notn}[$\LL$ and $\TT$]\label{conv:LL}
From now on (for the entire paper), $\LL$ will always be an extension of the valued field language $\Lvf$ and $\TT$ will be an $\LL$-theory containing the theory of henselian valued fields of equi-characteristic $0$. 
We write $\VF$ for the valued field sort of $\Leq$, $\RF$ for the residue field sort and $\VG^\times$ for the value group sort. 
We use multiplicative notation for the value group, and the valuation is denoted by $|\cdot|\colon \VF \to \VG := \VG^\times \cup \{0\}$.
We write $\valring \subseteq \VF$ for the valuation ring and $\maxid \subseteq \valring$ for its maximal ideal, considered as $\Leq$-definable sets. The residue map is denoted by $\res\colon \valring \to \RF$. Even if $\LL$ is multi-sorted, we identify models $K \models \TT$ with their valued field sort, i.e., $K = \VF(K)$.
\end{notn}

\begin{notn}
We write $|\cdot|\colon \VF^n \to \VG$ for the supremum norm, i.e., $|(x_1, \dots, x_n)| = \max_i |x_i|$.
\end{notn}

We also need the leading term structure and a higher dimensional variant of it:

\begin{defn}[$\RV^{(n)}$]\label{defn:rvn}
For every $n \ge 0$,
the \emph{$n$-dimensional leading term structure} is the $\Leq$-sort $\RV^{(n)} := \VF^n/\mathord{\bsim}$, where 
 \[x\bsim y \Leftrightarrow (|y-x| < |x| \vee x=y).\]
The canonical map $\VF^n \ra \RV^{(n)}$ is denoted by $\rv^{(n)}$. The element $\rv^{(n)}(0)$ of $\RV^{(n)}$ is also denoted by $0$.
In the case $n = 1$, we often omit the superscript: $\RV := \RV^{(1)}$ and $\rv := \rv^{(1)}$.
\end{defn}

\begin{notn}[$\RV^\eq$]
We write $\RV^\eq$ for the collection of those sorts of $\Leq$ which are $\Leq$-definable quotients of the form $\RV^m/\mathord{\bsim}$, for some $\Leq$-definable equivalence relation $\bsim$. (In particular, $\RF$, $\VG$, and $\RV^{(n)}$ are sorts of $\RV^\eq$.) By an $\Leq$-definable map into $\RV^\eq$, we mean an $\Leq$-definable map whose range is any sort of $\RV^\eq$.
\end{notn}

\begin{defn}[Balls]
Let $K \models \TT$ be a valued field and let $n \ge 1$.
We call a subset $B \subseteq K^n$ a \emph{ball} if $B$ is infinite and for all $x,x'\in B$ and all $y\in K$ with $|x-y|\leq |x-x'|$ one has $y\in B$.
The \emph{cut radius} of a ball $B$ is the cut
$\{|x-y| \in \VG^\times(K) \mid x, y \in B, x \ne y\}$.
A \emph{closed ball} is a ball of the form
\[B_{\le\lambda}(x) := \{x' \in K^n \mid |x' - x| \le \lambda\},\]
for $x \in K^n$ and $\lambda \in \VG^{\times}(K)$; we also call $\lambda$ the \emph{closed radius} of $B_{\le\lambda}(x)$, and we write $\radcl(B)$ for the closed radius of a closed ball $B$.
Similarly,
\[B_{<\lambda}(x):= \{x' \in K^n \mid |x' - x|< \lambda\}\]
is an \emph{open ball} of \emph{open radius} $\lambda$, and we write $\radop(B)$ for the open radius of an open ball $B$. We also consider $B_{<\infty}(x) := K^n$ as an open ball (of open radius $\infty$).
\end{defn}

Note that by this definition, $K^n$ is a ball, but individual points are not.

\begin{conv}[$\LL$-definable balls]
By an $\LL$-definable ball in $\VF^n$, we mean an $\LL$-definable subset $\bB \subseteq \VF^n$ such that $\bB(K)$ is a ball in $K^n$ for every model $K \models \TT$. In particular, whether $\bB(K)$ is open or closed or neither/nor may depend on $K$.
\end{conv}

\begin{defn}[Dimension]\label{defn:dim}
Given a definable set $X \subseteq K^n$, for some valued field $K$ and some $n \ge 0$, by $\dim X$, we mean its topological dimension, i.e., the maximal $d$ such that there exists a coordinate projection $\pi\colon K^n \to K^d$ such that $\pi(X)$ has non-empty interior. We set $\dim \emptyset := -\infty$. For an $\LL$-definable set $\bX \subseteq \VF^n$, we set $\dim \bX := \max_K \dim (\bX(K))$, where $K$ runs over all models of $\TT$.
\end{defn}

In the next section, we will impose conditions on $\LL$ on $\TT$ (for the entire paper) which in particular will ensure that this notion of dimension behaves well.

\subsection{The main hypothesis: $1$-h-minimality}
\label{sec:hmin}

We now fix the setting in which we can prove our main definability results (the Riso-Triviality Theorem~\ref{thm:RTT} and the Riso-Equivalence Theorem~\ref{thm:RET}). The most basic setting is the one where $\LL$ is the pure valued field language and $\TT$ is the $\LL$-theory of henselian valued field of equi-characteristic $0$.
However, our results also hold in various extensions of that language, provided that $\LL$-definable sets still behave in a ``tame'' way. The notion of tameness we impose is $1$-h-minimality from \cite{iCR.hmin}:

\begin{hyp}
\label{hyp:KandX_can}
For the entire paper, we fix a language $\LL$ and an $\LL$-theory $\TT$ as in Convention~\ref{conv:LL} (in particular, every model of $\TT$ is a valued field of equi-characteristic $0$) and we assume that $\TT$ is $1$-h-minimal in the sense of \cite[Definition~1.2.3]{iCR.hmin}.%
\footnote{Formally, in \cite{iCR.hmin}, the language is assumed to be one-sorted (consisting only of the valued field sort), but if one wants, one can allow multi-sorted languages $\LL$ by applying
\cite[Definition~1.2.3]{iCR.hmin} to the induced structure on $\VF$.}
\end{hyp}

Note that while there is a notion of $1$-h-minimality for valued fields of mixed characteristic (see \cite{iCRV.hmin2}) and it seems plausible that a version of our results holds in such a setting, our proof works only in equi-characteristic $0$.

Instead of recalling the definition of $1$-h-minimality, we just recall some of the main examples, and we cite the consequences that we will need. (In addition to the consequences cited below, we cite one more in Lemma~\ref{lem:tstrat:hyp}.)

\begin{exa}
The following theories are $1$-h-minimal, by \cite[Section~6]{iCR.hmin}:
\begin{enumerate}
 \item the theory of henselian valued fields of equi-characteristic $0$.
 \item 
 the theory of henselian valued fields of equi-characteristic $0$ with analytic $\mathcal A$-structure, for any separated Weierstraß system $\mathcal A$ in the sense of \cite{CL.analyt}. (This is an abstract version of extending $\Lvf$ by analytic functions; see \cite[Section~4.4]{CL.analyt} for many concrete examples.)
 \item The theory of any $\Tnoval$-convex valued field in the sense of \cite{DL.Tcon1}, where $\Tnoval$ is a power-bounded o-minimal theory. (The notion of power-bounded is recalled in Definition~\ref{defn:powbd}.
 A $\Tnoval$-convex valued field is obtained by 
 taking a model $K \models \Tnoval$ and turning it into a valued field using the convex closure of an elementary substructure $K_0 \prec_{\Lnoval} K$ as  a valuation ring, provided that this yields a non-trivial valuation; see also the proof of Lemma~\ref{lem:consKK0}.)
\end{enumerate}
In addition, $1$-h-minimality is preserved 
under arbitrary expansions of $\RV^\eq$ (see \cite[Theorem 4.1.19]{iCR.hmin}). In particular, all of the above examples stay $1$-h-minimal if one e.g.\ adds an angular component map $\ac\colon \VF \to \RF$ to the language (since $\ac$ factors over $\RV$). 
\end{exa}

\begin{rmk}\label{rmk:hmin:const}
Adding $\VF$-constants to the language also preserves $1$-h-minimality. We will use this implicitly throughout this paper.
(Be careful however that adding constants from arbitrary sorts of $\Leq$ might destroy $1$-h-minimality.)
\end{rmk}

\begin{prop}[{\cite[Proposition~5.3.4]{iCR.hmin}}]\label{prop:hmin:dim}
In $1$-h-minimal theories, 
the topological dimension of definable sets (see Definition~\ref{defn:dim}) behaves as expected with respect to unions and definable maps, it is definable, $0$-dimensional sets are finite, and if $\bX \subseteq \VF^n$ is a non-empty $\LL$-definable set and $\Cl(\bX)$ is its topological closure, then $\dim (\Cl(\bX) \setminus \bX) < \dim \bX$.
\end{prop}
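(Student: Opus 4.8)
The plan is to prove all assertions by induction on the ambient dimension $n$, bootstrapping from the one-variable case together with the structure theory that $1$-h-minimality supplies (``$1$-preparation'' and cell decomposition, from \cite{iCR.hmin}). Since adding $\VF$-constants preserves $1$-h-minimality (Remark~\ref{rmk:hmin:const}), it suffices to prove the corresponding statements for definable-with-parameters subsets of $K^n$ in an arbitrary model $K \models \TT$, the model-independent statements about $\LL$-definable $\bX$ then following by compactness and definability of $\dim$ in families. \textbf{Base case $n=1$.} By $1$-preparation, any definable $X \subseteq K$ coincides, outside a finite set $C$, with a finite union of fibres of a map $x \mapsto (\rv((x-c)/d))_{c \in C}$ for a suitable $d$; each such fibre is a Boolean combination of balls, and in a valued field balls are clopen (an open ball is closed, and a closed ball of nonzero radius is open). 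Hence $X$ is finite or has nonempty interior, ``$X$ has interior'' is a definable condition on parameters (so $\TT$ eliminates $\exists^\infty$ over $\VF$), $\dim X \in \{-\infty,0,1\}$, and $\Cl(X) \setminus X \subseteq C$ is finite, which is the frontier inequality in this case; and ``$0$-dimensional $\Rightarrow$ finite'' is immediate.

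\textbf{The fibre-dimension formula.} Let $n \ge 2$, $X \subseteq K^n$ definable, and $\pi\colon K^n \to K^{n-1}$ the projection forgetting the last coordinate. Put $A := \{y : \dim X_y = 1\}$ and $B := \{y : X_y \ne \emptyset,\ \dim X_y = 0\}$; by the base case applied fibrewise and definability of the base-case dichotomy in families, $A,B \subseteq K^{n-1}$ are definable and $A \cup B = \pi(X)$. Using that a definable family of finite subsets of $\VF$ has uniformly bounded cardinality (a consequence of cell decomposition), $X \cap \pi^{-1}(B)$ is a finite union of graphs of definable functions over $B$, so $\dim(X \cap \pi^{-1}(B)) = \dim B$; an argument with coordinate projections gives $\dim(X \cap \pi^{-1}(A)) = \dim A + 1$. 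Therefore
\[
  \dim X \;=\; \max(\dim A + 1,\ \dim B),
\]
with $A,B$ of ambient dimension $n-1$. From this formula the remaining ``soft'' assertions follow by induction: since $\pi(X) = A \cup B$ and $\dim(A \cup B) = \max(\dim A,\dim B) \le \dim X$, coordinate projections do not raise dimension, and realizing an arbitrary definable $f$ as the last projection of its graph (whose fibres over the domain are singletons) yields $\dim f(X) \le \dim X$ in general; good behaviour under finite unions, definability of $\dim$ in definable families, and ``$0$-dimensional $\Rightarrow$ finite'' (if $\dim X = 0$ then $A = \emptyset$ and $\dim B = 0$, so $B$ is finite by induction, hence $X$, with finite base and finite fibres, is finite) are all immediate consequences.

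\textbf{The frontier inequality.} This is the crux. After a cell decomposition it suffices to bound $\dim(\Cl(X) \setminus X)$ when $X$ is a single cell over $U := \pi(X) \subseteq K^{n-1}$. If $X = \Gamma(f)$ is the graph of a continuous definable $f\colon U \to K$, then any $(q,a) \in \Cl(X) \setminus X$ must have $q \notin U$ (otherwise $a = f(q)$ by continuity and $(q,a) \in X$), i.e.\ $q \in \Cl(U) \setminus U$; moreover the fibre of $\Cl(X)$ over such a $q$ is finite, since it is a definable subset of $\VF$ and a definable function admits a limit along each of the finitely many valuative branches approaching $q$. Hence $\dim(\Cl(X) \setminus X) \le \dim(\Cl(U) \setminus U) < \dim U = \dim X$ by the inductive hypothesis. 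If instead $X$ is a ``ball cell'', its fibres $X_q$ being open/closed balls or annuli whose centre and radii vary definably with $q \in U$, then over a fixed $q \in U$ new boundary points arise only at the finitely many limiting radii, a $0$-dimensional condition in the last coordinate, while above $\Cl(U)\setminus U$ one is back in the graph situation; combining the two contributions via the fibre-dimension formula and the inductive hypothesis again gives $\dim(\Cl(X)\setminus X) < \dim X$. I expect precisely this last step — controlling the closure of a cell above the frontier of its base, and in particular the fact that definable functions extend to finitely many limits along valuative branches — to be the main obstacle; everything else is bookkeeping on top of the one-variable structure theory.
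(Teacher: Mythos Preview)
The paper does not prove this proposition at all: it is imported verbatim from \cite[Proposition~5.3.4]{iCR.hmin} and simply cited as a known input. So there is no ``paper's own proof'' to compare your attempt against; the authors treat the entire dimension theory as a black box supplied by the h-minimality reference.

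As for your sketch on its own merits: the overall architecture (reduce to one variable via cell decomposition and a fibre-dimension formula, then induct) is the standard one and is essentially how \cite{iCR.hmin} proceeds. The soft parts (unions, images, definability, $0$-dimensional implies finite) are fine. The genuine gap is exactly the one you flag yourself: in the frontier inequality for a graph cell $X = \Gamma(f)$, you assert that the fibre of $\Cl(X)$ over a point $q \in \Cl(U)\setminus U$ is finite because ``a definable function admits a limit along each of the finitely many valuative branches approaching $q$''. Neither clause is free: finiteness of valuative branches at $q$ and existence of limits of $f$ along them both require nontrivial input (in \cite{iCR.hmin} this comes from the Jacobian/valuation property and the resulting local structure of definable functions, not from $1$-preparation alone). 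Without that, one cannot rule out that the set of accumulation values of $f$ at $q$ is infinite, which would make the fibre of $\Cl(X)\setminus X$ over $q$ one-dimensional and break the induction. Your ball-cell case has the same issue, compounded by the need to control limits of the centre and radius functions simultaneously. So the outline is right, but the step you identify as ``the main obstacle'' is indeed where all the content lives, and it is not addressed.
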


Since $1$-h-minimal theories eliminate $\exists^\infty$ in the $\VF$-sort (by \cite[Lemma~2.5.2]{iCR.hmin}), we even have a uniform bound on the cardinality of $0$-dimensional sets:

\begin{lem}\label{lem:Einf}
If $\bX \subseteq \VF^n$ is an $\LL$-definable set of dimension $0$, then there exists an $m \in \NN$ such that $\#\bX(K) \le m$ for every model $K \models \TT$.
\end{lem}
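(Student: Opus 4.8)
The plan is to derive Lemma~\ref{lem:Einf} from Proposition~\ref{prop:hmin:dim} together with elimination of $\exists^\infty$ in the $\VF$-sort, via a standard compactness argument. The statement is that a zero-dimensional $\LL$-definable set $\bX \subseteq \VF^n$ has uniformly bounded size across all models of $\TT$, so the natural move is to assume the contrary and produce, by compactness, a single model with an infinite fibre, contradicting that $0$-dimensional sets are finite (Proposition~\ref{prop:hmin:dim}).

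Concretely, here is how I would carry it out. First, since $\bX$ is $0$-dimensional, Proposition~\ref{prop:hmin:dim} tells us that $\bX(K)$ is finite for every model $K \models \TT$; we only need the uniform bound. Introduce, for each $m \in \NN$, the $\LL$-sentence $\sigma_m$ expressing ``$\#\bX \ge m$'', i.e.\ $\exists x_1, \dots, x_m \in \bX\ \bigwedge_{i < j} x_i \ne x_j$. If no uniform bound $m$ worked, then for every $m$ there would be a model $K_m \models \TT$ with $K_m \models \sigma_m$; hence $\TT \cup \{\sigma_m : m \in \NN\}$ would be finitely satisfiable, so by compactness there is a model $K^* \models \TT$ with $\#\bX(K^*) \ge m$ for all $m$, i.e.\ $\bX(K^*)$ infinite. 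This contradicts finiteness of $0$-dimensional definable sets in $K^*$. Therefore some $m$ bounds $\#\bX(K)$ for all $K \models \TT$, which is the claim.

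Alternatively, and even more directly, one can invoke elimination of $\exists^\infty$ in the $\VF$-sort (cited as \cite[Lemma~2.5.2]{iCR.hmin} in the excerpt) applied to the definable family with a single member $\bX$: this is precisely the statement that there is a natural number $m$ such that, in every model, either $\bX$ has at most $m$ elements or $\bX$ is infinite; since Proposition~\ref{prop:hmin:dim} rules out the infinite case for a $0$-dimensional set, the bound $m$ works uniformly. I would probably present the compactness version as the main argument since it is self-contained given the two cited inputs, and mention the $\exists^\infty$ route as the conceptual reason.

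There is no real obstacle here: the lemma is a routine packaging of two already-established facts (definable finiteness of $0$-dimensional sets, and uniform elimination of $\exists^\infty$), and the only point requiring a line of care is noting that $\TT$ is a fixed complete-enough theory so that ``finitely satisfiable implies satisfiable'' via compactness applies to $\TT \cup \{\sigma_m\}$ — but $\TT$ being an $\LL$-theory, this is immediate. The mild subtlety worth a sentence is the multi-sorted setting: one should read $\#\bX(K)$ and the formulas $\sigma_m$ in the $\VF$-sort as per Hypothesis~\ref{hyp:KandX_can}, but since $\bX \subseteq \VF^n$ this is automatic.
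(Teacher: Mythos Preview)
Your proposal is correct and matches the paper's approach: the paper simply states the lemma as an immediate consequence of elimination of $\exists^\infty$ in the $\VF$-sort together with Proposition~\ref{prop:hmin:dim}, without giving any further proof. Your compactness argument just unpacks what elimination of $\exists^\infty$ means, so the two are essentially the same.
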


In large parts of the paper, we would like to only consider models $K \models \TT$ which are spherically complete (meaning that each nested family of balls in $K$ has non-empty intersection); indeed, this is necessary for the riso-triviality space (Definition~\ref{defn:rtsp}) to exist.
By \cite{iBW.sph}, assuming $K$ to be spherically complete is not a severe restriction:

\begin{thm}[{\cite[Theorem~1]{iBW.sph}}]\label{thm:sph}
Every model $K \models \TT$ has a spherically complete immediate elementary extension.
\end{thm}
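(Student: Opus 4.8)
\textbf{Proof proposal for Theorem~\ref{thm:sph} (every $K \models \TT$ has a spherically complete immediate elementary extension).}

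The natural plan is to construct the extension by transfinite iteration of two basic operations: taking elementary extensions (to stay a model of $\TT$) and filling in pseudo-limits of pseudo-Cauchy sequences (to approach spherical completeness). First I would recall the purely valuation-theoretic fact (Kaplansky) that every valued field admits a maximal immediate extension, and that maximal immediate extensions are spherically complete; the issue is purely that such a maximal immediate extension need not be a model of $\TT$, since $\TT$ is an expansion of the henselian valued field theory and the extra structure (analytic functions, $T$-convexity data, angular component maps, \dots) need not extend canonically. So the real content is to interleave the two operations so that the limit is \emph{both} elementarily equivalent to $K$ over $K$ \emph{and} immediate \emph{and} spherically complete. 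Concretely I would build a chain $(K_i)_{i < \lambda}$ with $K_0 = K$, where at successor stages I alternate: at even stages pass to an elementary extension that realizes (pseudo-limits for) all pseudo-Cauchy sequences without a limit in $K_i$ — these exist by saturation/compactness — and at odd stages I would need to arrange that the newly added elements stay \emph{immediate} over the previous field, i.e., no new residue field elements and no new value group elements appear. At limit stages take unions.

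The key step, and the one I expect to be the main obstacle, is controlling immediacy: a generic elementary extension will certainly add new residue field and value group elements, so I cannot simply take arbitrary elementary extensions. The fix should exploit $1$-h-minimality, specifically the strong control it gives over definable sets in one variable and over the interaction of $\VF$ with $\RV^\eq$. The point is that for a pseudo-Cauchy sequence $(a_i)$ in $K$ without a pseudo-limit, any pseudo-limit $a$ in an extension has the property that $\rv(a - b)$ for $b \in K$ is already "determined by the sequence", so that adjoining $a$ adds nothing to $\RV(K)$ and hence nothing to $\RF$ or $\VG$; this is the classical argument that pseudo-Cauchy limits generate immediate extensions, but one has to check it goes through in the expanded language, which is where $1$-h-minimality (in particular the behaviour of the induced structure on $\RV$ and the control of definable functions $\VF \to \VF$) is exactly what is needed. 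Concretely, $1$-h-minimality implies that definable subsets of $\VF$ are, up to finitely many "$\RV$-pullback" pieces, controlled by finitely many balls, so the type of a pseudo-limit over $K$ is "immediate" — it does not force new elements into the quotient sorts. One then iterates: pass to an $|K|^+$-saturated elementary extension, realize pseudo-limits of all pseudo-Cauchy sequences over $K$ inside a submodel that is immediate over $K$ (take the definable — or $\dcl_{\LL}$ — closure of $K$ together with one pseudo-limit per sequence, which by the previous point stays immediate and, being $\dcl$-closed over an elementary substructure, is itself a model of $\TT$ elementarily embedding $K$), and repeat.

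For the iteration to terminate I would bound the length: a spherically complete immediate extension cannot contain a pseudo-Cauchy sequence over $K$ of "breadth" not already achieved, and by $1$-h-minimality (uniform finiteness, Proposition~\ref{prop:hmin:dim} and Lemma~\ref{lem:Einf}) the relevant invariants — cardinality of $\RF(K)$ and $\VG(K)$, which stay fixed under immediate extensions — bound the amount of "new" pseudo-limits needed; so after at most $(2^{|\RF(K)| + |\VG(K)|})^+$ steps (or some such cardinal depending only on $K$) the process stabilizes, and the union is the desired $K'$: it is an elementary extension of $K$ (union of an elementary chain), immediate over $K$ (union of immediate extensions, since immediacy is preserved under directed unions), and spherically complete (every pseudo-Cauchy sequence over $K'$ already lies over some $K_i$ with $i$ in the stabilized range, hence has a limit). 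One caveat to double-check is Remark~\ref{rmk:hmin:const}: at intermediate stages I am effectively adding $\VF$-constants, which preserves $1$-h-minimality, so the inductive use of tameness is legitimate; adding constants from other sorts is forbidden, but immediacy guarantees I never need to. The only genuinely delicate point remains verifying that the $\dcl_{\LL}$-closure of $K \cup \{a\}$ over a pseudo-limit $a$ really is immediate in the expanded language — I would isolate this as a lemma and prove it directly from the $1$-h-minimality axioms on definable functions $\VF \to \VF$ and the induced structure on $\RV$.
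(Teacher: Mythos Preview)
The paper does not prove Theorem~\ref{thm:sph}; it is simply quoted from \cite[Theorem~1]{iBW.sph} (with the remark that the cited result has weaker hypotheses and a stronger conclusion). There is therefore no proof in the present paper to compare your proposal against.

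That said, your outline is broadly in the right spirit for how such results are obtained: one iteratively adjoins pseudo-limits of pseudo-Cauchy sequences and argues that, under the tameness hypothesis, the resulting extension stays elementary and immediate. The genuinely delicate step you correctly flag is showing that $\dcl_{\LL}(K \cup \{a\})$ is immediate over $K$ when $a$ is a pseudo-limit; this requires real work from $1$-h-minimality (control of definable functions $\VF \to \RV$ and of one-variable definable sets), and your sketch does not yet supply that argument. Your termination bound is also somewhat hand-wavy: the cardinal arithmetic you gesture at does not obviously bound the number of new pseudo-Cauchy sequences that can appear at later stages, since immediate extensions can be much larger than $|\RF(K)| + |\VG(K)|$. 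But since the paper defers entirely to \cite{iBW.sph} for the proof, these are issues to resolve by consulting that reference rather than by comparison with anything here.
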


(More precisely, \cite[Theorem~1]{iBW.sph} has weaker assumptions and a stronger conclusion.)
In particular, we have the following:

\begin{rmk}\label{rmk:sph:unique}
By Theorem~\ref{thm:sph},
an $\Leq$-definable set $\bX$ is uniquely determined by the sets $\bX(K)$,
where $K$ only runs over the spherically complete models of $\TT$ (and similarly for any other kind of $\Leq$-definable object.)
\end{rmk}

\subsection{Risometries and riso-triviality}
\label{sec:riso}

In this subsection, we introduce the central notion of this paper, namely riso-triviality (Definition~\ref{defn:riso-triv}). This is notion is completely independent of model theory, but for it to have good properties, we need the valued field we work in to be spherically complete (as we will see in Section~\ref{sec:prel}).
So in this subsection, we (only) assume that $K$ is  
a spherically complete valued field $K$ of equi-characteristic $0$.

In several of the following definitions, we denote certain subsets of $K^n$ by $B$ (or $B_i$). All those definitions will almost only be applied when each such $B$ is a ball, or maybe the intersection of a ball with an affine subspace of $K^n$. (More general sets $B$ only appear in a few specific places in some proofs.)

\begin{defn}\label{defn:risom}
Let $B_1, B_2$ be subsets of $K^n$ for some $n \ge 0$.
\begin{itemize}
    \item A \emph{risometry} $\phi: B_1 \ra B_2$ 
is a bijection such that, for all $x_1,x_2 \in B_1$ with $x_1 \ne x_2$, we have 
\begin{equation}\label{eq:riso}
|\phi(x_1) - \phi(x_2)-(x_1 - x_2)| < |x_1-x_2|.
\end{equation}
\item If we additionally have maps $\chi_i\colon B_i \to S$ for $i=1,2$, where $S$ is an arbitrary set, then by a \emph{risometry from $\chi_1$ to $\chi_2$}, we mean a risometry $\phi\colon B_1 \to B_2$ satisfying $\chi_1 = \chi_2 \circ \phi$. If such a risometry $\phi$ exists, we say that $\chi_1$ is \emph{risometric} to $\chi_2$.
\item If we are given just one set $B \subseteq K^n$, and if $\chi_1, \chi_2$ are two maps whose domains contain $B$, by a \emph{risometry from $\chi_1$ to $\chi_2$ on $B$}, we mean a risometry between the restrictions $\chi_1|_B$ and $\chi_2|_B$.
\end{itemize}
\end{defn}

Condition (\ref{eq:riso}) is equivalent to the condition that, for all pairs $x_1, x_2 \in B_1$, we have
\[
\rv^{(n)}(\phi(x_1) - \phi(x_2)) = \rv^{(n)}(x_1-x_2).
\]
This is weaker than $\phi(x_1) - \phi(x_2) = x_1-x_2$ and stronger than
$|\phi(x_1) - \phi(x_2)| = |x_1-x_2|$, so any translation is a risometry, and any risometry is an isometry. The latter implies that if there exists a risometry between balls $B_1$ and $B_2$, then those balls are necessarily of the same radius.
This also motivates the term ``risometry'': In the same way as an isometry preserves the valuation of differences -- often denoted by $v(x_1 - x_2)$ --, a \underline{r}isometry preserves $\operatorname{\underline{r}v}(x_1 - x_2)$.

Typically, the risometries we will need will piecewise resemble the following examples.

\begin{exa}\label{exa:riso}
A linear map $\alpha \in K^{n \times n}$ is a risometry if and only if it is invertible in $\valring(K)^{n\times n}$ and it induces the identity map on the residue field, i.e., if and only if
$\alpha \in \ker(\GL_n(\valring(K)) \to \GL_n(\RF(K)))$; see \cite[Remark~2.13]{i.whit}.
\end{exa}

\begin{exa}\label{exa:vert:transl}
Suppose that $B_1, B_2 \subseteq K$ are balls and that $f\colon B_1 \to B_2$ is a map satisfying $|f(x) - f(x')| < |x - x'|$ for every pair of distinct elements $x,x' \in B_1$. Additionally fix some $y_0 \in B_2$. Then for $B := B_1 \times B_2$, one easily verifies that the map
$\phi\colon B \to B, (x,y) \mapsto (x, y - y_0 + f(x))$ is a risometry (see Figure~\ref{fig:vert:transl}). Note also that it sends the horizontal line segment $B_1 \times \{y_0\}$ to the graph of $f$.
\end{exa}

\begin{figure}
 \includegraphics{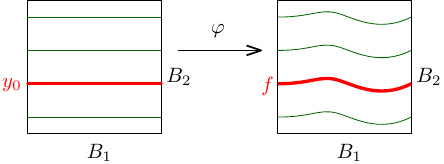}
 \caption{The risometry in Example~\ref{exa:vert:transl} turns a straight line segment into the graph of a function $f$, by translating things vertically.}\label{fig:vert:transl}
\end{figure}

\begin{rmk}
Later, Definition~\ref{defn:risom} will be applied to definable sets $B_i$ and definable maps $\chi_i$ (and the set $S$ will always be a sort of $\RV^{\eq}$).
We emphasize that even then, the involved risometries will not be imposed to be definable, unless stated otherwise. (Example~\ref{exa:def:nnd} shows how this makes a difference.)
\end{rmk}

\begin{conv}\label{conv:sets}
Often, we will have sets $X_i \subseteq B_i$ and want to consider a risometry $\phi\colon B_1 \to B_2$ sending $X_1$ to $X_2$. We will do so by applying Definition~\ref{defn:risom} to the indicator functions
$\mathbbm1_{X_i} : B_i \ra \{0,1\}$. In this way, also all of the following
definitions can be applied to sets instead of maps. Similarly,
we apply the definitions to tuples $(X_1, \dots, X_\ell, \chi_1, \dots, \chi_{\ell'})$ of sets $X_i$ and maps $\chi_j$, meaning that we consider the map
$x \mapsto (\mathbbm1_{X_1}(x), \dots, \mathbbm1_{X_\ell}(x), \chi_1(x), \dots, \chi_{\ell'}(x))$.
\end{conv}

\begin{defn}
Suppose that $B \sseteq K^n$ is a subset and $\chi$ is a map whose domain contains $B$ (and with arbitrary range). Let $V$ be a $K$-vector subspace of $K^n$. We say that $\chi$ is \emph{$V$-translation invariant on $B$} if, for every $x,x' \in B$ with $x-x' \in V$, we have \[\chi(x) = \chi(x').\]
 If the domain of $\chi$ is equal to $B$, we may omit writing ``on $B$''.
 \end{defn}
 
 The above definition will mainly be applied when $B$ is a ball in $K^n$, or possibly a ball in an affine subspace $a + W \subseteq K^n$; in the latter case we will have $V \subseteq W$.
 
\begin{defn}
Given any vector subspace $V\sseteq K^n$ (for $n \ge 0$), we write $\res(V)$ for the image
of $V \cap \valring(K)^n$ under $\res: \valring(K)^n \ra \RF(K)^n$.
(Note that $\res(V)$ is a vector subspace of $\RF(K)^n$ satisfying $\dim_K V = \dim_{\RF(K)} \res(V)$.) Conversely, given a vector subspace $\bV \subseteq \RF(K)^n$, we say that a vector subspace $V\subseteq K^n$ is a \emph{lift} of $\bar{V}$ if $\res(V) = \bV$.
\end{defn}

We now introduce the central notion of this paper, namely riso-triviality.

\begin{defn}\label{defn:riso-triv}
Suppose that $B \subseteq K^n$ (for some $n \ge 0$) is a subset not contained in any affine subspace of $K^n$ and that $\chi$ is a map whose domain contains $B$.
 \begin{enumerate}
     \item 
Let $\bV \sseteq \RF(K)^n$ be a vector sub-space. We say that $\chi$ is \emph{$\bV$-riso-trivial on $B$} if there exists a lift $V \sseteq K^n$ of $\bV$ and a risometry $\phi: B \ra B$ such that $\chi|_B \circ \phi$ is $V$-translation invariant. In this situation, the risometry $\phi$ is called a $V$-\emph{straightener} of $\chi$ on $B$. (We often omit $V$, if $\bV$ is clear from the context and we do not care about the specific lift $V$.) We also say that $\phi$ \emph{witnesses} $\bar V$-riso-triviality of $\chi$ (on $B$).
 \item
 Given $0 \le r \le n$,
  the map $\chi$ is called \emph{$r$-riso-trivial} on $B$, if it is $\bV$-riso-trivial for some vector subspace $\bV \sseteq \RF(K)^n$ of dimension $r$.
  We say that $\chi$ is \emph{not riso-trivial (at all)} on $B$ if it is not $1$-riso-trivial on $B$.
 \item If the domain of $\chi$ is equal to $B$, we may just say ``riso-trivial'' instead of ``riso-trivial on $B$''.
\end{enumerate}
\end{defn}

\begin{rmk}\label{rmk:lift:no:matter}
In (1), the choice of the lift $V$ does not matter:
If $\chi$ is $\bV$-riso-trivial on $B$, then for every lift $V$ of $\bV$, there exists a $V$-straightener of $\chi$ on $B$. This follows from the
fact that if $V$ and $V'$ are two different lifts, then there exists a linear map $\alpha \in
\GL_n(\valring(K))$ which is a risometry (see Example~\ref{exa:riso}) and which
sends $V$ to $V'$; see also \cite[Remark~2.13]{i.whit}.
\end{rmk}

One can naturally make sense of the notion of riso-triviality within an affine subspace of $K^n$, but in that case, one has to be careful to choose the lifts in Definition~\ref{defn:riso-triv} (1) appropriately. While the affine space we are thinking of should always be clear from the context, we prefer to make this precise, as follows.

\begin{defn}\label{defn:riso-triv-sub}
If $B \subseteq K^n$ is contained in 
an affine subspace $F = a + W \subseteq K^n$ (for a point $a \in K^n$ and a vector subspace $W \subseteq K^n$), but not in any proper affine subspace of $F$, we use the terminology from Definition~\ref{defn:riso-triv} with the modification that we only consider subspaces $\bV$ of $\RF(K)^n$ which are subspaces of $\res(W)$, and in (1), we require the lift $V$ of $\bV$ to be a subspace of $W$.
\end{defn}

As stated in Convention~\ref{conv:sets}, Definition~\ref{defn:riso-triv} can also be applied to tuples of maps and sets. For example, a set $X \subseteq K^n$ is said to be $\bV$-riso-trivial on a ball $B \subseteq K^n$ if the indicator function $\mathbbm1_X$ of $X$ is $\bV$-riso-trivial on $B$.

\begin{rmk}
Note that asking a tuple $(\chi_1, \dots, \chi_{\ell'})$ of maps to be $\bar V$-riso-trivial is strictly stronger than asking each $\chi_i$ individually to by $\bar V$-riso-trivial. As an example, pick any
$\bar V \subseteq \RF(K)^2$ of dimension $1$ and pick any two different lifts $V_1, V_2 \subseteq K^2$. Then $V_1$ and $V_2$ are $\bar V$-riso-trivial on $K^2$ individually, but $(V_1, V_2)$ is not.
\end{rmk}

We already state a key property of the notion of riso-triviality. Its proof will essentially be postponed to Section~\ref{sec:prel}.

\begin{prop}\label{prop:rtsp}
Recall that we assume that $K$ is a spherically complete valued field of equi-characteristic $0$. Suppose that $B \subseteq K^n$ is a ball (for some $n \ge 1$) and that $\chi\colon B \to S$ is a map (into any set $S$). Then there exists a maximal vector sub-space $\bar V \subseteq \RF(K)^n$ for which $\chi$ is $\bar V$-riso-trivial on $B$.
\end{prop}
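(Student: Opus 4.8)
The plan is to show that the set of subspaces $\bar V \subseteq \RF(K)^n$ for which $\chi$ is $\bar V$-riso-trivial on $B$ is closed under taking the ``join'' of any two of its members; since $\RF(K)^n$ is finite-dimensional, a maximal such subspace then exists automatically (any subspace of maximal dimension is the desired one, and uniqueness follows from the join property). So the heart of the matter is the following claim: if $\chi$ is $\bar V_1$-riso-trivial and $\bar V_2$-riso-trivial on $B$, then $\chi$ is $(\bar V_1 + \bar V_2)$-riso-trivial on $B$.

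To prove this claim, first use Remark~\ref{rmk:lift:no:matter} to fix a single lift $V \subseteq K^n$ of $\bar V_1 + \bar V_2$ together with subspaces $V_1, V_2 \subseteq V$ lifting $\bar V_1, \bar V_2$ respectively (so $V = V_1 + V_2$), and let $\phi_1, \phi_2\colon B \to B$ be the corresponding $V_1$- and $V_2$-straighteners, so that $\chi \circ \phi_i$ is $V_i$-translation invariant. The strategy is then to produce, by a transfinite/inductive ``straightening in layers'' argument, a single risometry $\phi\colon B \to B$ with $\chi \circ \phi$ being $V$-translation invariant. Concretely: after applying $\phi_1$ we may assume $\chi$ itself is $V_1$-translation invariant; it then suffices to find a risometry $\psi\colon B\to B$ that is $V_1$-translation-invariant (so that $\chi\circ\psi$ stays $V_1$-translation invariant) and such that $\chi\circ\psi$ becomes $V_2$-translation invariant as well — for then $\chi\circ\psi$ is invariant under $V_1 + V_2 = V$. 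One builds $\psi$ by choosing a complement so that $B$ fibers over $B/(V_1\cap\text{(direction of interest)})$ and straightening along each fiber coherently, using the hypothesis that $\chi$ is $\bar V_2$-riso-trivial on $B$ (hence on suitable sub-balls/affine slices) and that $K$ is spherically complete so that the family of straighteners on the fibers can be glued into a well-defined bijection satisfying the risometry inequality~(\ref{eq:riso}).

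The main obstacle I expect is exactly this gluing step: a straightener $\phi_2$ witnessing $\bar V_2$-riso-triviality need not respect the $V_1$-direction at all, so naively composing $\phi_1$ and $\phi_2$ destroys the $V_1$-invariance already achieved. One must instead ``average'' or ``propagate'' $\phi_2$ along $V_1$-cosets in a way that is simultaneously (i) compatible with $V_1$-translation, (ii) still a risometry — which requires controlling $\rv^{(n)}$ of differences of points lying in different $V_1$-cosets, not just within one coset — and (iii) actually straightens the $V_2$-direction on all of $B$, not just on a single slice. Spherical completeness is what lets one take the requisite limits/intersections of nested balls when defining $\psi$ point by point, and the ultrametric (non-archimedean) nature of~(\ref{eq:riso}) is what makes the risometry condition survive the gluing, since it only needs to be checked ``coset by coset'' and these interact ultrametrically. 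I would isolate the gluing as a separate lemma of the form: \emph{given a $V_1$-translation-invariant $\chi$ on a ball $B$ which is additionally $\bar V_2$-riso-trivial on $B$, there is a $V_1$-translation-invariant straightener making $\chi$ $(V_1+V_2)$-translation invariant}, and expect its proof to be where essentially all the work lies; the reduction from ``maximal subspace exists'' to this lemma is then purely formal via finite-dimensionality.

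Finally, I would note that once this join-closure is in hand, the statement of Proposition~\ref{prop:rtsp} follows immediately: among the (nonempty — the zero subspace always works) family of $\bar V$ for which $\chi$ is $\bar V$-riso-trivial, pick one of maximal dimension $d$; if some other $\bar V'$ in the family were not contained in it, the join would be strictly larger, contradicting maximality, so the chosen subspace contains every member of the family and is therefore the unique maximal one.
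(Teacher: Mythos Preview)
Your reduction to the join-closure property is exactly right and matches the paper: the entire content of Proposition~\ref{prop:rtsp} is Lemma~\ref{lem:V-additive}, and your final paragraph deducing the proposition from join-closure via finite-dimensionality is correct. You also correctly normalize to the situation where $\chi$ is already $V_1$-translation invariant and seek a $V_1$-equivariant risometry $\psi$ making $\chi\circ\psi$ additionally $V_2$-invariant.

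Where your proposal diverges from the paper is in the construction of $\psi$ and in the diagnosis of the difficulty. You propose a fiber-by-fiber gluing with a transfinite/limiting step, and you anticipate the hard part to be controlling the risometry condition across $V_1$-cosets during the gluing. The paper does neither: following \cite[Lemma~3.3]{i.whit}, one chooses a complement $V_3$ of $V_1$ with $V_2 \subseteq V_3$, writes $\pi_1,\pi_3$ for the associated projections, and simply sets $\psi(x) := \phi_2(\pi_3(x)) + \pi_1(x)$. This is visibly $V_1$-equivariant, a short ultrametric computation shows it is a risometry onto its image, and $(V_1+V_2)$-invariance of $\chi\circ\psi$ is a two-line check. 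There is no gluing and no limit.

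The genuine obstacle, which your proposal does not mention, is that this explicit $\psi$ is a priori only a risometry from $B$ \emph{into} $B$; one needs surjectivity. This is precisely where spherical completeness enters, via the Banach Fixed Point Theorem (Lemma~\ref{lem:BFPT}, packaged as Lemma~\ref{lem:B_finite}): any risometry from a ball into itself is automatically onto. So spherical completeness is used once, for surjectivity of an explicit candidate, not for assembling a family of partial straighteners. Your gluing sketch, as written, does not specify what is being glued (the given $\phi_2$ need not respect $V_1$-cosets, so ``restrict to a fiber and straighten'' has no obvious meaning), and the limiting role you assign to spherical completeness is not the one it actually plays.
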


\begin{proof}
It suffices to show that if $\chi$ is $\bV_1$-riso-trivial and $\bV_2$-riso-trivial for some $\bV_1, \bV_2 \subseteq \RF(K)^n$, then it is even $(\bV_1+ \bV_2)$-riso-trivial. This is the statement of Lemma~\ref{lem:V-additive}.
\end{proof}

Proposition~\ref{prop:rtsp} gives rise to the following natural definition:

\begin{defn}\label{defn:rtsp}
Let $B \subseteq K^n$ be a ball (for some $n \ge 1$) and let $\chi$ be a map whose domain contains $B$.
The (unique) maximal vector subspace $\bV$ of $\RF(K)^n$ such that $\chi$
is $\bV$-riso-trivial on $B$ is called the \emph{riso-triviality space} of $\chi$ on $B$. We denote it by $\rtsp_B(\chi)$.
We also apply this notion when $B$ is a ball in an affine subspace $F = a + W \subseteq K^n$ of dimension $\dim F \ge 1$ (using Definition~\ref{defn:riso-triv-sub}).
\end{defn}

In other words, for any subspace $\bW \subseteq \RF(K)^n$, $\chi$ is $\bW$-riso-trivial on $B$ if and only if $\bW \subseteq \rtsp_B(\chi)$.

\begin{conv}\label{conv:sets:rtsp}
As in Convention~\ref{conv:sets}, if $(X_1, \dots, X_\ell, \chi_1, \dots, \chi_{\ell'})$ is a tuple of sets $X_i$ and maps $\chi_j$, we write $\rtsp_B(X_1, \dots, X_\ell, \chi_1, \dots, \chi_{\ell'})$ for the riso-triviality space on $B$ of the map
$x \mapsto (\mathbbm1_{X_1}(x), \dots, \mathbbm1_{X_\ell}(x), \chi_1(x), \dots, \chi_{\ell'}(x))$.
\end{conv}

To provide some intuition about riso-triviality, we end this section by explaining how, in the case of algebraic sets, it relates to smoothness.

\begin{exa}\label{exa:smooth}
Suppose (for concreteness) that $K = \CCt[t^\QQ]$ and that $X = \bX(K) \subseteq K^n$ is a subvariety of pure dimension $d$ defined over $\CC$.
If $X$ is smooth at some point $a \in X$, then there exists a ball $B \subseteq K^n$ around $a$ on which $X$ is $d$-trivial, and $\rtsp_B(X)$ is equal to the residue $\res(T_aX)$ of the tangent space of $X$ at $a$. In the case where $a \in \bX(\CC)$ (which we consider as a subset of $X$), we can take this ball $B$ to be $B_{<1}(a) = a + \maxid(K)^n$ (and we obtain $\rtsp_B(X) = \res(T_aX) = T_a(\bX(\CC))$).
Conversely, if $X$ is $d$-trivial on $B_{<1}(a)$ for every $a \in \bX(\CC)$, then $X$ is smooth.
\end{exa}

A precise proof of these claims is given in Proposition~\ref{prop:riso-mani}. (For the first part, apply the proposition to the smooth locus. For the ``conversely'' part use also Remark~\ref{rmk:C1:smooth}.)
Here, we only give the main idea of how smoothness at $a \in \bX(\CC)$ implies $\rtsp_{B}(X) = T_a(\bX(\CC))$ for $B = B_{<1}(a)$:
Suppose for simplicity that $n = 2$ and $d = 1$, and suppose without loss that $a = 0$ and that $T_a(\bX(\CC)) = \CC \times \{0\}$.
Then one can manually verify (using Hensel's Lemma) that $X \cap \maxid(K)^2$ is the graph of a function $f\colon \maxid(K) \to \maxid(K)$ satisfying the assumption from Example~\ref{exa:vert:transl}.
The risometry provided by that example witnesses $(\CC \times \{0\})$-triviality of $\bX(K)$ on $\maxid(K)^2$.

\begin{qu}
Does the converse part of Example~\ref{exa:smooth} also hold point by point, i.e., does $d$-triviality on a neighbourhood of $a$ imply smoothness at $a$? Note that in a semi-algebraic setting, such a converse would be false, as the origin in Example~\ref{exa:PS} shows.
\end{qu}

\section{Definability of riso-triviality}
\label{sec:main-results}

\subsection{Statement of the definability results}
\label{sec:thms}

We are now ready to state our main definability results precisely.
We use notation from Section~\ref{sec:notn} and assume Hypothesis~\ref{hyp:KandX_can} about the language $\LL \supseteq \Lvf$
and the $\LL$-theory $\TT$ (namely, $\TT$ is $1$-h-minimal).
Recall that by a map into $\RV^\eq$, we mean a map into any sort of $\RV^\eq$. Recall also that $\VF^n$ is considered as a ball, but singletons are not.

The first result is about definability of the riso-triviality space. While we are mostly interested in $\rtsp_B(X)$ for some definable set $X \subseteq K^n$, we more generally consider $\rtsp_B(\chi)$ for a definable map $\chi\colon K^n \to \RV(K)^\eq$. This implies the result for $\rtsp_B(X)$ by letting $\chi := \mathbbm1_{X}$ be the indicator function of $X$ (see Convention~\ref{conv:sets:rtsp}).

\begin{thm}[Riso-Triviality Theorem]
\label{thm:RTT}
Suppose that $\LL$ and $\TT$ satisfy Hypothesis~\ref{hyp:KandX_can}.
Let $\bB \subseteq \VF^n$ be an $\LL$-definable ball (for $n \ge 1$) and $\bchi \colon \bB \to \RV^{\eq}$ an $\Leq$-definable map. Then there exists an $\Leq$-definable vector subspace $\bbU \subseteq \RF^n$ such that for every spherically complete model $K \models \TT$, we have $\rtsp_{\bB(K)}(\bchi_K) = \bbU(K)$.
\end{thm}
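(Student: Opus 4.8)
The plan is to reduce the definability of the riso-triviality \emph{space} to the definability of each fiber $\Tr_r = \{B \mid \dim\rtsp_B(\bchi) = r\}$ of the riso-\emph{tree}, and then to upgrade "dimension is definable" to "the subspace itself is definable". More precisely, by Proposition~\ref{prop:rtsp} the riso-triviality space $\rtsp_{\bB(K)}(\bchi_K)$ is a well-defined subspace of $\RF(K)^n$ for every spherically complete $K$, so the content of the theorem is that this subspace, as $K$ varies, is cut out by a single $\Leq$-formula. The natural strategy is a simultaneous induction on $n$ together with the Riso-Equivalence Theorem~\ref{thm:RET} (as the introduction already advertises), with the key auxiliary statement being: for each fixed $r$, the relation "$\bchi$ is $\bar V$-riso-trivial on $\bB$ for the subspace coded by $\bar V$" is $\Leq$-definable in the pair $(\text{parameters of }\bB,\bchi;\ \bar V)$, where $\bar V$ ranges over the (definable) Grassmannian of $r$-subspaces of $\RF^n$.

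First I would set up the Grassmannian: for each $r\le n$, the set of $r$-dimensional subspaces of $\RF^n$ is an $\Leq$-definable set $\bGr_r$ (it is interpretable already in the residue field), and a subspace $\bar V\in\bGr_r(K)$ together with its lift-data is controlled by $\RV$-points. The heart of the matter is then to show that the set
\[
\fS_r := \{(\text{code of }(\bB,\bchi),\ \bar V) \mid \bchi\text{ is }\bar V\text{-riso-trivial on }\bB\}
\]
is $\Leq$-definable. Granting this, definability of $\rtsp$ follows by a standard "maximal definable subspace" argument: by Proposition~\ref{prop:rtsp} the riso-triviality space is the unique maximal $\bar V$ with $(\cdot,\bar V)\in\fS_r$; one shows the dimension $r_0(K):=\dim\rtsp_{\bB(K)}(\bchi_K)$ is definable (it is the largest $r$ for which $\fS_r$ has a fiber over the given parameters), hence by compactness/$\aleph_0$-saturation is constant, $=r_0$; and then $\bbU$ is defined as "the $\bar V\in\bGr_{r_0}$ with $(\cdot,\bar V)\in\fS_{r_0}$", which is a singleton by maximality (additivity from Lemma~\ref{lem:V-additive}), so it is an $\Leq$-definable point of $\bGr_{r_0}$ and therefore an $\Leq$-definable subspace of $\RF^n$. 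Remark~\ref{rmk:sph:unique} then lets us ignore non-spherically-complete models. Finally, the statement is phrased for a \emph{definable} ball $\bB$, but whether $\bB(K)$ is open, closed or neither depends on $K$; this causes no trouble since riso-triviality is only ever invoked for balls and the definition of $\rtsp$ is uniform across ball types.

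The main obstacle — and the place where model theory of valued fields is genuinely needed — is the definability of $\fS_r$, i.e.\ detecting \emph{set-theoretic} riso-triviality of $\bchi$ by a first-order condition, even though the witnessing straightener $\phi$ is an arbitrary non-definable bijection. The plan here is to run the induction on $n$: after a coordinate change one may assume the candidate lift $V$ is a coordinate subspace, realize $\bB$ as a ball fibered over the $V$-complement, and use $1$-h-minimality (cell decomposition / preparation for the definable map $\bchi$ over $\RV^\eq$, as in \cite{iCR.hmin}) to reduce the existence of a fiberwise straightener to a condition on finitely many "centers" valued in $\RV^\eq$; here the Riso-Equivalence Theorem~\ref{thm:RET} supplies that the relevant risometry types between fibers are parametrized in $\RV^\eq$, turning "$\exists$ a straightener" into "$\exists$ a tuple in a definable $\RV^\eq$-set", which is first-order. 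Spherical completeness enters to guarantee that the fiberwise straighteners can actually be assembled into a single straightener on all of $\bB$ (no "escaping to infinity" in a nested family of balls), exactly as in the proof of Proposition~\ref{prop:rtsp}. Compactness (Lemma~\ref{lem:Einf}, elimination of $\exists^\infty$ in $\VF$) keeps all the bounds uniform in $K$, which is what makes the single formula $\bbU$ work across all models.
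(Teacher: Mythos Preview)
Your high-level architecture is right and matches the paper: one proves (r-triv)$_n$ and (r-equiv)$_n$ by a common induction on $n$, and the core step is to express ``$\bchi$ is $\bar V$-riso-trivial on $\bB$'' first-order, even though the straightener is not definable. Your reduction of the theorem to definability of the relation $\fS_r$ is also correct (and the minor slip about the dimension being ``constant by compactness'' is harmless: $\TT$ need not be complete, but one simply defines $\bbU$ as the union of all $\bar V$ for which triviality holds).

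The genuine gap is in how you propose to prove definability of $\fS_r$. Saying ``use cell decomposition / preparation to reduce to finitely many centres'' is not a working mechanism here: preparation controls $\bchi$ on balls avoiding finitely many centres, but it does not by itself produce a first-order criterion for the existence of a \emph{global} straightener on $\bB$. The paper's key technical idea, which your proposal does not contain, is to import a \emph{t-stratification} $(\bS_i)_i$ reflecting $\bchi$ from \cite{i.whit} (Theorem~\ref{thm:tstrat}). The point of this is twofold. First, the \emph{definable} riso-triviality space $\drtsp_B((\bS_i)_i)$ is already known to depend definably on $B$ (Lemma~\ref{lem:3.14}), so one has a definable scaffold of directions in which $\bchi$ is already definably trivial on each closed subball. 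Second, the paper then formulates an explicit criterion (Claim~\ref{claim.ind.UVW.new}): $\bchi$ is $\bar U$-riso-trivial on $\bB$ iff for every closed $B'\subseteq\bB$ with $\bar U\not\subseteq\bar W:=\drtsp_{B'}((\bS_i))$, certain pairs of transversal $(n-\dim\bar W-1)$-dimensional fibers of $\bchi$ are risometric; this is first-order by (r-equiv)$_{<n}$. The hard direction is showing this criterion \emph{suffices}: one does a downward induction on $\dim\bar W$, using that $\bS_\ell\cap F$ is finite on each transversal fiber $F$ to perform an intricate glueing of straighteners (one strip at a time around the finitely many bad points). This glueing argument, anchored on the finiteness coming from the t-stratification, is the substance you are missing; a direct appeal to h-minimal cell decomposition does not produce it.
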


By adding a constant symbol to the language, one obtains the following version of the theorem for definable families.

\begin{notn}
Given $0 \le d \le n$,
we write $\bGr_d^n(\RF)$ for the Grassmannian of $d$-dimensional vector subspaces of $\RF^n$, considered as an $\Leq$-definable subset of the power set $\pow(\RF^n)$ (in the sense of Convention~\ref{conv:sets:of:sets}).
\end{notn}

\begin{cor}
\label{cor:RTT}
Let $\bQ$ be any $\Leq$-definable set and let 
$\fB \subseteq \VF^n \times \bQ$ be an $\Leq$-definable family of balls in $\VF^n$ parametrized by $\bQ$, i.e., for every $K \models \TT$ and every $q \in \bQ(K)$, the fiber $\fB(K)_q \subseteq K^n$ is a ball.
Let moreover $\bchi\colon \fB \to \RV^{\eq}$ be an $\Leq$-definable map.
Then there exists an $\Leq$-definable map $\bQ \to \bigcup_{d=0}^n\bGr_d^n(\RF)$ sending, for every
spherically complete model $K \models \TT$,
$q \in \bQ(K)$ to $\rtsp_{\fB(K)_q}(\bchi_{K}(\cdot, q))$.
\end{cor}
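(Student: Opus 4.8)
The plan is to deduce Corollary~\ref{cor:RTT} from Theorem~\ref{thm:RTT} by the standard trick of absorbing the parameter $q$ into the language. First I would introduce a fresh constant symbol $c$ (in the appropriate sort, so that $c$ ranges over $\bQ$) and pass to the language $\LL' := \LL(c)$ with theory $\TT' := \TT \cup \{\text{``}c \in \bQ\text{''}\}$ (or more precisely, work model-by-model over the parameter; see Remark~\ref{rmk:hmin:const}). Since $\bQ$ is an $\Leq$-definable set, $\TT'$ is still $1$-h-minimal: adding $\VF$-constants preserves $1$-h-minimality by Remark~\ref{rmk:hmin:const}, and although $q$ may live in a sort of $\RV^\eq$ rather than $\VF$, adding constants from $\RV^\eq$ also preserves $1$-h-minimality because $1$-h-minimality is stable under arbitrary expansions of $\RV^\eq$ (\cite[Theorem~4.1.19]{iCR.hmin}). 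Here one should be a little careful that the relevant ``parameter sort'' really is a quotient of $\RV^m$ or a $\VF$-power; for a general $\Leq$-definable $\bQ$ this requires splitting $\bQ$ into the $\VF$-part and the $\RV^\eq$-part of its code, but in each case the cited preservation results apply.

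Next I would apply Theorem~\ref{thm:RTT} to the $\LL'$-definable ball $\fB_c := \{x \in \VF^n \mid (x,c) \in \fB\}$ and the $\LL'$-definable map $\bchi_c := \bchi(\cdot, c)\colon \fB_c \to \RV^\eq$. The theorem produces an $\LL'^{\eq}$-definable vector subspace $\bbU_c \subseteq \RF^n$ such that $\rtsp_{\fB_c(K)}((\bchi_c)_K) = \bbU_c(K)$ for every spherically complete $K \models \TT'$. Unwinding the constant $c$, $\bbU_c$ is given by an $\Leq$-formula $\psi(x, q)$ with $q$ a parameter variable ranging over $\bQ$, i.e., it defines an $\Leq$-definable family $\bbU \subseteq \RF^n \times \bQ$ of vector subspaces. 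Evaluating at a specific model $K \models \TT$ and a specific $q \in \bQ(K)$ (and noting that $(K,q) \models \TT'$), we get $\rtsp_{\fB(K)_q}(\bchi_K(\cdot,q)) = \bbU(K)_q$ for all spherically complete $K$.

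It then remains to repackage this family as an $\Leq$-definable map $\bQ \to \bigcup_{d=0}^n \bGr_d^n(\RF)$. Since each $\bbU(K)_q$ is a vector subspace of $\RF(K)^n$, its dimension $d = d(q)$ lies in $\{0, \dots, n\}$, and by Convention~\ref{conv:sets:of:sets} a subspace of a fixed dimension $d$ is an element of $\bGr_d^n(\RF)$; the dimension function $q \mapsto d(q)$ is $\Leq$-definable (definability of dimension, as in Proposition~\ref{prop:hmin:dim}, or more elementarily because ``$\dim \bbU_q = d$'' is expressible via the existence/nonexistence of linearly independent tuples in $\RF^n$). Thus $q \mapsto \bbU(K)_q$ is exactly an $\Leq$-definable map into $\bigcup_{d=0}^n \bGr_d^n(\RF)$ in the sense of Convention~\ref{conv:sets:of:sets} (its graph, restricted to the $\bQ$-fibers of the $\bGr$-side codes, is $\Leq$-definable). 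Finally, by Remark~\ref{rmk:sph:unique}, an $\Leq$-definable map is determined by its values on spherically complete models, so the map we have constructed is the desired one, and its defining property on all spherically complete $K$ is precisely the statement of the corollary. The only genuine obstacle here is the bookkeeping around which sorts the parameter $q$ may occupy and checking that adjoining such a constant preserves $1$-h-minimality; once that is settled, everything is a routine translation of Theorem~\ref{thm:RTT}.
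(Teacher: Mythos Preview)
Your approach is essentially the same as the paper's: add a constant for the parameter, apply Theorem~\ref{thm:RTT}, and unwind the constant into a free variable. The one point where you diverge is the handling of the sort of $q$. The paper avoids this entirely by first replacing $\bQ$ by a preimage in $\VF^m$ (any $\Leq$-definable set is a quotient of such a set), so that only $\VF$-constants are added and Remark~\ref{rmk:hmin:const} applies directly. Your ``split $\bQ$ into a $\VF$-part and an $\RV^\eq$-part'' is vaguer than necessary---an arbitrary imaginary sort need not decompose as such a product---so the paper's pullback trick is the cleaner fix. Once that is in place, the rest of your argument (including the repackaging into $\bigcup_d \bGr_d^n(\RF)$) is fine and matches the paper.
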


\begin{proof}
Without loss, $\bQ \subseteq \VF^m$ for some $m$. (Otherwise, replace it by a preimage in $\VF^m$.)
Add a constant for $q$ to the language and add ``$q \in \bQ$'' to the theory. This new theory is still $1$-h-minimal (see Remark~\ref{rmk:hmin:const}), and its models correspond exactly to pairs $(K, q)$ with $K \models \TT$, $q \in \bQ(K)$. Apply Theorem~\ref{thm:RTT} to obtain an $\Leq(q)$-definable $\bbU \in \bigcup_{d=0}^n\bGr_d^n(\RF)$. Consider the formula defining $\bbU$ as an $\Leq$-formula, with $q$ a variable; this defines the desired map.
\end{proof}

A special case of the above corollary is that $\rtsp_B(\chi)$ depends definably on $B$; in particular, we obtain a partition of the set of all closed balls according to the dimension of the riso-triviality space. While it is often more handy to use Corollary~\ref{cor:RTT} instead of that partition, we nevertheless introduce a name for it, since it is the central idea behind the entire paper.

\begin{cor}
\label{cor:riso-tree}
Let $\bchi \colon \VF^n \to \RV^{\eq}$ be an $\Leq$-definable map, for $n \ge 1$.
Then there exists a partition of the set of all closed subballs of $\VF^n$ (considered as a sort of $\Leq$) into 
$\Leq$-definable sets $\bTr_0, \dots, \bTr_n$ such that for every spherically complete $K \models \TT$, every closed ball $B \subseteq K^n$, and every $r$, we have $B \in \bTr_r(K)$ if and only if $\dim \rtsp_B(\bchi_K) = r$.
\end{cor}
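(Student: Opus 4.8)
The plan is to deduce Corollary~\ref{cor:riso-tree} directly from Corollary~\ref{cor:RTT} by taking the parameter space $\bQ$ to be the sort of all closed subballs of $\VF^n$ together with the tautological family over it. More precisely, recall (from the discussion of $\Leq$-definable balls and Convention~\ref{conv:sets:of:sets}) that the set of closed subballs of $\VF^n$ is itself a sort of $\Leq$: a closed ball $B_{\le\lambda}(x)$ is coded by the pair $(x,\lambda) \in \VF^n \times \VG^\times$ modulo the equivalence relation ``same ball'', which is $\Leq$-definable. Call this sort $\bQ$. Over $\bQ$ there is a tautological $\Leq$-definable family of balls $\fB \subseteq \VF^n \times \bQ$ whose fiber $\fB(K)_B$ over a code for a ball $B$ is exactly $B$ itself; this is $\Leq$-definable because membership ``$y \in B_{\le\lambda}(x)$'' is just $|y-x|\le\lambda$, which descends to the quotient $\bQ$.

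First I would apply Corollary~\ref{cor:RTT} with this $\bQ$ and this $\fB$, and with $\bchi$ replaced by its pullback $\fB \to \VF^n \xrightarrow{\bchi} \RV^\eq$ along the first projection (which is $\Leq$-definable). The corollary yields an $\Leq$-definable map $f\colon \bQ \to \bigcup_{d=0}^n \bGr_d^n(\RF)$ such that for every spherically complete $K \models \TT$ and every closed ball $B \subseteq K^n$, $f_K(B) = \rtsp_{\fB(K)_B}(\bchi_K) = \rtsp_B(\bchi_K)$. Then I would define $\bTr_r$ to be the preimage under $f$ of the component $\bGr_r^n(\RF)$ of the Grassmannian, i.e.\ $\bTr_r := \{B \in \bQ \mid f(B) \in \bGr_r^n(\RF)\}$; since the $\bGr_d^n(\RF)$ are disjoint $\Leq$-definable subsets of $\bigcup_d \bGr_d^n(\RF)$ covering it, and since $\dim \rtsp_B(\bchi_K) = r$ is equivalent to $\rtsp_B(\bchi_K) \in \bGr_r^n(\RF)(K)$, these $\bTr_0,\dots,\bTr_n$ are $\Leq$-definable, pairwise disjoint, and cover $\bQ$, with $B \in \bTr_r(K)$ iff $\dim\rtsp_B(\bchi_K) = r$, as required.

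There is really no deep obstacle here; the statement is a packaging of Corollary~\ref{cor:RTT}. The only points requiring a little care are (i) checking that the set of closed subballs of $\VF^n$ genuinely is a sort of $\Leq$ (equivalently, an $\Leq$-definable quotient) and that the tautological family is $\Leq$-definable — this follows from the elimination of $\exists^\infty$ / the conventions on $\Leq$-definable balls already fixed in the excerpt, together with the observation that two pairs $(x,\lambda)$, $(x',\lambda')$ code the same closed ball iff $\lambda = \lambda'$ and $|x-x'|\le\lambda$; and (ii) being slightly careful that distinct family members in the sense of Corollary~\ref{cor:RTT} are not needed here, since we are using the tautological family and only pulling the conclusion back. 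I expect the bookkeeping about codes for balls to be the most delicate step, but it is routine given the setup, and once $\bQ$ and $\fB$ are in place the rest is a one-line application of Corollary~\ref{cor:RTT}.
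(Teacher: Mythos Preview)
Your proposal is correct and matches the paper's own approach: the paper simply presents Corollary~\ref{cor:riso-tree} as a special case of Corollary~\ref{cor:RTT}, and what you have written is precisely the unpacking of that specialization (taking $\bQ$ to be the sort of closed subballs with the tautological family, then partitioning by the dimension of the image in the Grassmannian). Your caveat~(ii) is slightly over-cautious---Corollary~\ref{cor:RTT} does not require distinct family members---but this does no harm.
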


\begin{defn}\label{defn:riso-tree}
The partition $(\bTr_r)_{0\le r\le n}$ from Corollary~\ref{cor:riso-tree} is called the \emph{riso-tree} of $\bchi$. We also apply this terminology within spherically complete models $K \models \TT$: $(\bTr_r(K))_{0\le r\le n}$ is the \emph{riso-tree} of $\bchi_K$.
\end{defn}

Note that even though Corollary~\ref{cor:riso-tree} specifies $\bTr_r(K)$ only for spherically complete models $K \models \TT$, this entirely determines $\bTr_r$ by Remark~\ref{rmk:sph:unique}.

As explained in the introduction,
one can think of $(\bTr_r)_{0 \le r \le n}$ as a better replacement for a (canonical) stratification of $\bX$; better in the sense that it contains more information.
Recall also that in some sense, these $\bTr_r$ form a stratification of the set of all closed subballs of $\VF^n$, and that, instead of formulating precisely for all $r$ what this means, we just formulate it for $r=0$ (in Lemma~\ref{lem:Crig}) and reduce any questions about bigger $r$ to the $r=0$ case by restricting to suitable fibers; this reduction works due to Lemmas~\ref{lem:fiber_compat}, \ref{lem:fiber_check}, and \ref{lem:fiber_triv}.

\medskip

At the same time as we prove Theorem~\ref{thm:RTT}, we also prove that the existence of a risometry is a first-order condition; this is our second main definability result. Again, we formulate it for definable maps into $\RV^\eq$, which can be taken to be indicator functions of definable sets, if this is what one is interested in.

\begin{thm}[Riso-Equivalence Theorem]
\label{thm:RET}
Suppose that $\LL$ and $\TT$ satisfy Hypothesis~\ref{hyp:KandX_can}.
Let $\bB \subseteq \VF^n$ be an $\LL$-definable ball (for $n \ge 1$).
Let $\bchi_1, \bchi_2\colon \bB \to \RV^{\eq}$ be $\Leq$-definable maps.
Then there exists an $\LL$-sentence $\psi$ such that for every spherically complete model $K \models \TT$, we have
$K \models \psi$ if and only if $\bchi_{1,K}$ and $\bchi_{2,K}$ are risometric.
\end{thm}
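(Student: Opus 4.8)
The plan is to prove Theorem~\ref{thm:RET} (the Riso-Equivalence Theorem) simultaneously with Theorem~\ref{thm:RTT}, since the paper explicitly announces (item~(\ref{it:ret})) that the two are proved in a common induction; I would expect the induction to be on the ambient dimension $n$, possibly with an auxiliary parameter coming from induction over the riso-tree as described in Section~\ref{sec:i:risotree}. The conceptual heart of the statement is a \emph{classification} of definable sets (and definable maps into $\RV^\eq$) up to risometry: by Corollary~\ref{cor:RET} (cited in the excerpt), risometry types are parametrized by elements of $\RV^\eq$. Granting such a parametrization, the sentence $\psi$ is obtained as follows. For $i = 1, 2$, let $e_i$ be the $\Leq$-definable element of $\RV^\eq$ that encodes the risometry type of $\bchi_{i,K}$ on $\bB(K)$; these are $\Leq$-definable (without parameters) because $\bB$ and $\bchi_i$ are. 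Then $\bchi_{1,K}$ is risometric to $\bchi_{2,K}$ precisely when $e_1 = e_2$, and ``$e_1 = e_2$'' translates into a single $\LL$-sentence $\psi$ (it is a statement about $\Leq$-definable elements of $\RV^\eq$, hence expressible as an $\LL$-sentence).

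So the real work is establishing the classification, i.e.\ that risometry types of definable maps $\bB \to \RV^\eq$ are coded by $\RV^\eq$-elements, uniformly in the model. First I would reduce to the case where $\bB$ is a closed ball (or even $\maxid^n$ after rescaling), using that risometries are isometries so the radius is an invariant and can be read off definably. Next I would set up the induction: using $1$-h-minimality and the resulting cell-decomposition / preparation machinery (the consequences of \cite{iCR.hmin} cited in the excerpt, together with the relation to t-stratifications from \cite{i.whit} promised in Section~\ref{sec:tstrat}), one decomposes $\bB$ into finitely many pieces along a definable coordinate-like direction so that on each piece the map $\bchi_i$ is ``linear'' in the last coordinate up to a controlled error, in a way reminiscent of Example~\ref{exa:vert:transl}. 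The fibers of this decomposition live in $\RV^{(n)}$ or a quotient thereof, and the fiberwise data is a definable map of the same shape in one fewer variable (or on a riso-subtree), to which the inductive hypothesis of \emph{both} theorems applies. The Riso-Triviality Theorem is what guarantees that the decomposition can be chosen compatibly with the straighteners, so that gluing the fiberwise risometries produces a genuine risometry of $\bB$; conversely, a risometry between $\bchi_1$ and $\bchi_2$ must respect these $1$-h-minimal invariants, giving the reverse implication. Spherical completeness of $K$ is used exactly here: to glue a compatible family of fiberwise risometries into one risometry one needs nested families of balls to have nonempty intersection (this is the same point as in Proposition~\ref{prop:rtsp}).

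Concretely, the steps I would carry out are: (1) normalize $\bB$ to $\maxid^n$ and record the radius as a definable invariant; (2) invoke the common-induction hypothesis and the $1$-h-minimal preparation to split off the last coordinate, writing $\bchi_i$ fiberwise over an $\Leq$-definable family in $\RV^\eq$; (3) by the inductive Riso-Equivalence Theorem in dimension $n-1$ (applied uniformly over the fibers, e.g.\ via the constant-adjunction trick used in the proof of Corollary~\ref{cor:RTT}), obtain a definable $\RV^\eq$-code for the risometry type of each fiber; (4) assemble these into a single $\Leq$-definable $\RV^\eq$-code $e_i$ for $\bchi_{i,K}$, using the Riso-Triviality Theorem to ensure the fiberwise straighteners patch and using spherical completeness to realize the patched risometry; (5) conclude that $\bchi_{1,K}$ and $\bchi_{2,K}$ are risometric iff $e_1 = e_2$, and express the latter equality as the desired $\LL$-sentence $\psi$; (6) finally, check that the construction of $e_i$ is uniform in $K$ so that $\psi$ is a single fixed sentence, invoking Remark~\ref{rmk:sph:unique} to see that it suffices to verify the equivalence on spherically complete models. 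The main obstacle, I expect, is step~(4): showing that locally-defined straighteners can be glued into one risometry of the whole ball without introducing any definability of the risometry itself, and proving that the resulting invariant $e_i$ is genuinely canonical (independent of all the choices made in the $1$-h-minimal decomposition). This is precisely where the ``induction over the riso-tree'' of Section~\ref{sec:i:risotree} does the heavy lifting, and it is also the point at which the non-definability of risometries — essential by Example~3.15 of \cite{i.whit} — must be handled with care rather than sidestepped.
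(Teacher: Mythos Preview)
Your high-level plan is right: the paper does prove Theorems~\ref{thm:RTT} and~\ref{thm:RET} by a joint induction on the ambient dimension~$n$, formulated as statements (r-triv)$_n$ and (r-equiv)$_n$, with Lemmas~\ref{lem:t=>r} and~\ref{lem:r=>t} giving the two inductive implications. But your inductive step for (r-equiv)$_n$ differs substantially from the paper's, and the gap you flag in your step~(4) is genuine.

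The paper does \emph{not} split off one coordinate via $1$-h-minimal cell decomposition. Instead, having (r-triv)$_n$ in hand, it immediately uses $\rtsp_{\bB}(\bchi_i)$ as a definable invariant and case-splits on its dimension. If the spaces differ, $\bchi_1$ and $\bchi_2$ are not risometric and $\psi$ says so. If $\bbW := \rtsp_{\bB}(\bchi_1) = \rtsp_{\bB}(\bchi_2)$ is all of $\RF^n$, both maps are constant and $\psi$ compares their values. If $0 < \dim\bbW < n$, Lemma~\ref{lem:fiber_check} reduces the question to a single transversal fiber of dimension $n-\dim\bbW$, and (r-equiv)$_{n'}$ applies. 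The hard case is $\bbW=\{0\}$: here the paper uses the \emph{rigid core} $\Crig_\chi$ (Definition~\ref{defn:Crig}, Lemma~\ref{lem:Crig}), a uniformly finite, definable set of balls and points canonically attached to $\chi$. Any risometry $\chi\to\chi'$ must send $\Crig_\chi$ to $\Crig_{\chi'}$; by Lemma~\ref{lem:fin-riso} this is a definable condition and the induced bijection is definable and unique. This further induces a canonical bijection between the partitions $Q_\chi$, $Q_{\chi'}$ into maximal balls off the rigid cores, reducing to checking risometry piecewise. Each piece is either $1$-riso-trivial (handled by the earlier cases) or a closed ball in $\Crig_\chi$ on which one further reduces, via a translation parameter $d$, to its maximal open subballs, which are again $1$-riso-trivial by Lemma~\ref{lem:Crig}~(3).

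The point is that the paper never chooses a non-canonical decomposition and then tries to glue: every reduction step uses a structure (the riso-triviality space, the rigid core) that \emph{any} risometry automatically respects. Your cell-decomposition approach lacks such a canonical anchor, which is exactly why your step~(4) resists justification; two risometric maps need not admit compatible $1$-h-minimal preparations along a chosen coordinate. Finally, note that Corollary~\ref{cor:RET} is deduced \emph{from} Theorem~\ref{thm:RET} (together with \cite[Proposition~3.19]{i.whit}), not the other way around, so organizing the argument as ``first build the $\RV^\eq$-parametrization, then read off $\psi$'' reverses the paper's logical order.
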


Again, we also obtain a family version of that result, namely stating that the risometry type within a definable family of maps is definable. By combining with a result from \cite{i.whit} about t-stratifications, we moreover obtain that the risometry type ``lives in $\RV^\eq$'', in the following sense:

\begin{cor}\label{cor:RET}
Let $\bQ$, $\fB \subseteq \VF^n \times \bQ$ and $\bchi\colon \fB \to \RV^\eq$ be as in Corollary~\ref{cor:RTT}.
Then there exists an $\Leq$-definable map $\brho\colon \bQ \to \RV^\eq$ such that for every spherically complete model $K \models \TT$ and every $q, q' \in \bQ(K)$, we have $\brho_K(q) = \brho_K(q')$ if and only if $\bchi_K(\cdot, q)$ and $\bchi_K(\cdot, q')$ are risometric.
\end{cor}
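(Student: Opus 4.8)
The plan is to deduce Corollary~\ref{cor:RET} from Theorem~\ref{thm:RET} (the family version of it) by quotienting $\bQ$ by the definable equivalence relation ``being risometric'' and then coding the quotient inside $\RV^\eq$. First I would pass to the family version of the Riso-Equivalence Theorem in the same way Corollary~\ref{cor:RTT} was obtained from Theorem~\ref{thm:RTT}: set $\bQ^2 = \bQ \times \bQ$, consider the family $\fB' \subseteq \VF^n \times \bQ^2$ with $\fB'_{(q,q')} := \fB_q$ (say, restricting to the diagonal-compatible part, or more simply just taking two disjoint copies of the parameter), and apply Theorem~\ref{thm:RET} with $\bchi_1 := \bchi(\cdot,q)$ and $\bchi_2 := \bchi(\cdot,q')$ after adding constants $q,q'$ to the language. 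This yields an $\Leq$-formula $\bpsi(q,q')$ which, for every spherically complete $K \models \TT$ and $q,q' \in \bQ(K)$, holds iff $\bchi_K(\cdot,q)$ is risometric to $\bchi_K(\cdot,q')$. Since risometry is an equivalence relation on maps, $\bpsi$ defines an $\Leq$-definable equivalence relation $\bsim$ on $\bQ$ (this is true for all models of $\TT$, not just the spherically complete ones, because by Theorem~\ref{thm:sph} / Remark~\ref{rmk:sph:unique} a definable set or relation is determined by its restriction to spherically complete models, and being an equivalence relation is a first-order property).

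Next I would form the quotient sort $\bQ/\mathord{\bsim}$ in $\Leq$ (legitimate, as $\Leq$ contains all quotients by $\Leq$-definable equivalence relations by Convention~\ref{conv:imaginaries}) with its canonical map $\brho_0 \colon \bQ \to \bQ/\mathord{\bsim}$. By construction $\brho_{0,K}(q) = \brho_{0,K}(q')$ iff $\bchi_K(\cdot,q)$ and $\bchi_K(\cdot,q')$ are risometric, for every spherically complete $K$. The only remaining task is to upgrade the target from the abstract imaginary sort $\bQ/\mathord{\bsim}$ to (a sort of) $\RV^\eq$, i.e.\ to show that $\bQ/\mathord{\bsim}$ admits an $\Leq$-definable injection into some sort of $\RV^\eq$. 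This is exactly where the cited result from \cite{i.whit} on t-stratifications comes in, as the statement itself flags. The idea is that a t-stratification of the definable family $\fB$ together with $\bchi$ produces, on each ball $\fB_q$, combinatorial data valued in $\RV^\eq$ (leading-term/residue-field data of the stratification) which is a complete invariant of $\bchi(\cdot,q)$ up to risometry — this is the precise sense in which ``risometry types are parametrized by $\RV^\eq$''. I would invoke that t-stratification machinery (as developed in the later Section~\ref{sec:tstrat}, or directly the relevant statement of \cite{i.whit}) to obtain an $\Leq$-definable map $\brho\colon \bQ \to \RV^\eq$ whose fibers are exactly the $\bsim$-classes, i.e.\ $\brho$ factors through $\brho_0$ via an $\Leq$-definable injection $\bQ/\mathord{\bsim} \hookrightarrow \RV^\eq$. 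Composing, $\brho$ has the required property; and as before Remark~\ref{rmk:sph:unique} lets us specify it only on spherically complete models.

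The main obstacle is precisely this last step: producing the $\RV^\eq$-valued invariant rather than an abstract imaginary. Theorem~\ref{thm:RET} alone only gives definability of the risometry relation, hence an abstract quotient; getting it to land in $\RV^\eq$ genuinely requires the structural input that a t-stratification exists for the family and that its numerical/leading-term data on each ball is a complete risometry invariant. I would therefore expect the bulk of the argument (or rather, the appeal to prior work) to be in justifying that the t-stratification of $(\fB,\bchi)$ yields such a complete invariant with values in $\RV^\eq$, together with checking the definability of the resulting map in the parameter $q$. Everything else — the constant-adding trick, verifying $\bsim$ is a definable equivalence relation, forming the quotient, and transferring between spherically complete and arbitrary models — is routine given the results already established in the excerpt.
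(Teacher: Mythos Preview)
Your approach is essentially the same as the paper's, and the overall plan is correct. There is, however, one point where your phrasing mis-states what the t-stratification machinery actually delivers, and this is worth making precise.

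You write that the result from \cite{i.whit} produces an $\Leq$-definable map $\brho\colon \bQ \to \RV^\eq$ ``whose fibers are exactly the $\bsim$-classes'', and you call the t-stratification data ``a complete risometry invariant''. This is not quite what \cite[Proposition~3.19]{i.whit} gives. That proposition only yields a map $\tilde\brho\colon \bQ \to \RV^\eq$ with the one-way implication: $\tilde\brho_K(q) = \tilde\brho_K(q')$ implies that $((\fS_i(K)_q)_i,\bchi_K(\cdot,q))$ and $((\fS_i(K)_{q'})_i,\bchi_K(\cdot,q'))$ are (definably) risometric, hence $q \bsim_K q'$. The converse is not provided --- t-stratifications are not canonical, so two risometric maps may receive different $\tilde\brho$-values. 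Thus the fibers of $\tilde\brho$ only \emph{refine} the $\bsim$-classes.

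The paper closes this gap exactly by using the first part of your argument: since $\bsim$ is already known to be $\Leq$-definable (from the family version of Theorem~\ref{thm:RET}), it pushes forward to a definable equivalence relation on the image of $\tilde\brho$ inside $\RV^\eq$, and one sets $\brho$ to be $\tilde\brho$ followed by the quotient map. The quotient of a sort of $\RV^\eq$ by an $\Leq$-definable equivalence relation is again a sort of $\RV^\eq$, so $\brho$ lands where it should. In short, \emph{both} ingredients are genuinely needed: Theorem~\ref{thm:RET} alone gives only an abstract quotient, and \cite[Proposition~3.19]{i.whit} alone gives only a refinement; it is their combination that yields the desired $\brho$.
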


We postpone the proof of this corollary to the end of Subsection~\ref{sec:tstrat}, after we recalled the notion of t-stratifications.

\subsection{A related notion: t-stratifications}
\label{sec:tstrat}

Our notion of riso-triviality is very similar to the notion of translatability from \cite{i.whit}: the only difference is that \cite{i.whit} requires the risometries to be definable. Despite this difference, some of the results from \cite{i.whit} are useful ingredients for the present paper; in this subsection, we recall those results.
As usual, we assume Hypothesis~\ref{hyp:KandX_can}, and we let $K$ be a model of $\TT$. (In this subsection, we do not need to impose that $K$ is spherically complete.)

Firstly, note that in \cite{iCR.hmin}, it has been shown that the central assumption from \cite{i.whit} holds in this setting:

\begin{lem}[{\cite[Lemma~5.5.4]{iCR.hmin}}]\label{lem:tstrat:hyp}
Hypothesis~2.21 from \cite{i.whit} holds in $\TT$.
\end{lem}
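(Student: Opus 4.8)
The plan is simply to invoke \cite[Lemma~5.5.4]{iCR.hmin}, since that result is exactly the assertion that $1$-h-minimal theories satisfy the tameness package bundled into Hypothesis~2.21 of \cite{i.whit}. Concretely, I would first recall what Hypothesis~2.21 of \cite{i.whit} demands: a well-behaved dimension theory on the $\VF$-sort together with a preparation/cell-decomposition statement describing definable subsets of $\VF^n$ and definable maps $\VF^n \to \RV^\eq$ in terms of finitely many balls and the leading-term structure (plus the auxiliary finiteness statements that accompany it, such as elimination of $\exists^\infty$ in the $\VF$-sort). I would then observe that $\TT$ is $1$-h-minimal by Hypothesis~\ref{hyp:KandX_can}, so that \cite[Lemma~5.5.4]{iCR.hmin} applies and delivers each of these properties; the facts already cited in the excerpt (Proposition~\ref{prop:hmin:dim}, Lemma~\ref{lem:Einf}) are instances of what is being quoted here.

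The only point needing a word of justification is the one already flagged in the footnote to Hypothesis~\ref{hyp:KandX_can}: \cite{iCR.hmin} is formally stated for one-sorted languages, whereas here $\LL$ may carry extra sorts. As stipulated there, ``$\TT$ is $1$-h-minimal'' is to be read as ``the structure induced on $\VF$ is $1$-h-minimal''. I would therefore note that every clause of Hypothesis~2.21 of \cite{i.whit} is a statement purely about definable subsets of powers of $\VF$ and about definable maps between such sets and sorts of $\RV^\eq$ — all of which lie in the induced structure on $\VF$ — so that applying \cite[Lemma~5.5.4]{iCR.hmin} to that induced structure yields exactly Hypothesis~2.21 for $\TT$. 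The main (and essentially only) obstacle is bookkeeping: carefully matching the precise wording of the two axiom lists against one another; there is no genuine mathematical difficulty.
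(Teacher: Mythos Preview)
Your proposal is correct and matches the paper's approach: the paper gives no proof at all beyond the bracketed citation, treating the lemma as a direct quotation of \cite[Lemma~5.5.4]{iCR.hmin}. Your additional remarks on the one-sorted versus multi-sorted issue are reasonable but go beyond what the paper itself supplies.
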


We start by recalling the notion of translatability from \cite{i.whit}; however, we prefer to call it ``definable riso-triviality''. 
Note that a version of Proposition~\ref{prop:rtsp} also holds for this notion: there exists a maximal space of definable riso-triviality.

\begin{defn}\label{defn:driso-triv}
Suppose that $B \subseteq K^n$ is either an open or a closed ball (where $n \ge 1$) and that $\chi$ is a definable map whose domain contains $B$.
\begin{enumerate}
\item
Given an $\RF(K)$-vector subspace $\bV \subseteq \RF(K)^n$, we say that $\chi$ is \emph{definably $\bV$-riso-trivial} on $B$ if there exists
a definable (with arbitrary parameters) straightener $\phi\colon B \to B$ witnessing $\bar V$-riso-triviality of $\chi$ on $B$. We say that $\chi$ is definably $r$-riso-trivial on $B$ if it is definably $\bV$-riso-trivial on $B$ for some $r$-dimensional $\bV \subseteq \RF(K)^n$.
\item
The maximal $\RF(K)$-vector subspace $\bV \subseteq \RF(K)^n$ for which $\chi$ is definably riso-trivial on $B$ is denoted by $\drtsp_B(\chi)$. (This exists by \cite[Lemma~3.3]{i.whit}; in \cite{i.whit}, $\drtsp_B(\chi)$ is denoted by $\operatorname{tsp}_B(\chi)$.)
\item We also apply those notions to tuples of maps and sets, as explained in Convention~\ref{conv:sets}.
\end{enumerate}
\end{defn}

Here is an example showing that it does make a difference whether one requires the straightener to be definable or not:

\begin{exa}\label{exa:def:nnd}
Set $K := \kt$, let $\bar f\colon k \to k$ be any function definable in the ring language on $k$ and set $X := \{(x,ty)\in k[[t]]^2 \mid \res(y) = \bar f(\res(x))\}$. Then $X$ is always $\bU$-riso-trivial on $B:=k[[t]]^2$, where $\bU := k\times\{0\}$, but we claim that it is not necessarily definably $\bU$-riso-trivial. Indeed, if it is, then one can find a definable straightener $\psi\colon B \to B$ preserving the $x$-coordinate and with $\psi^{-1}(X) = k[[t]] \times t k[[t]]$ (by \cite[Lemma~3.6]{i.whit}; see Lemma~\ref{lem:fiber_compat} for a version of that result in the non-definable setting). Using $\psi$, we obtain a definable lift $f$ of $\bar f$ to a map from $k[[t]]$ to $k[[t]]$, namely defined by $(x, t\cdot f(x)) = \psi((x, 0))$. (Here, ``lift'' means that $\res(f(x)) = \bar f(\res(x))$ for every $x \in k[[t]]$.) However, if $k$ is sufficiently evil, then not every definable function from $k$ to $k$ can be lifted. (See \cite[Example~3.15]{i.whit} for a concrete example.)
\end{exa}

The following is essentially \cite[Definition~3.12]{i.whit} (and also \cite[Definition~5.5.2]{iCR.hmin}); the differences are that Definition~\ref{defn:tstrat} is formulated uniformly for all models of $\TT$ and that we allow the domain $\bB_0$ to be a ``cut ball''. (Balls in \cite{i.whit} are imposed to be either open or closed.)

\begin{defn}\label{defn:tstrat}
Let $\bB_0 \subseteq \VF^n$ be an $\LL$-definable ball for some $n \ge 1$.
\begin{enumerate}
 \item 
An \emph{$\LL$-definable t-stratification} of $\bB_0$ is a partition $(\bS_i)_{0 \le i \le n}$ of $\bB_0$ into $\LL$-definable sets $\bS_0, \dots, \bS_n$ with the following properties:
\begin{enumerate}
 \item $\dim \bS_r \le r$ for every $r$.
 \item For every model $K \models \TT$, every $r$, and every ball $B \subseteq \bS_{\ge r}(K)$ which is either open or closed, $(\bS_i(K))_i$ is definably $r$-riso-trivial on $B$. Here, $\bS_{\ge r} := \bS_{r}\cup\dots\cup \bS_n$.
\end{enumerate}
\item
Let now additionally $\bchi\colon \bB_0 \to \RV^\eq$ be an $\Leq$-definable map. We say that the t-stratification $(\bS_i)_{0 \le i \le n}$ \emph{reflects} $\bchi$ if for every $K, r, B$ as in (2), $((\bS_i(K))_i, \bchi_K|_B)$ is definably $r$-riso-trivial on $B$.
\end{enumerate}
\end{defn}

\begin{rmk}\label{rmk:tstrat}
As explained below \cite[Definition~3.12]{i.whit},
if $(\bS_i)_{i}$, $\bchi$, $K$ and $B$ are as in Definition~\ref{defn:tstrat} (with $(\bS_i)_{i}$ reflecting $\bchi$), then we have $\drtsp_B((\bS_i(K))_i) = \drtsp_B((\bS_i(K))_i, \bchi_K)$, and the dimension of this space is equal to the minimal $r$ with $\bS_r(K) \cap B \ne \emptyset$. In particular,
$\drtsp_B((\bS_i(K))_i) \subseteq \rtsp_B(\bchi_K)$.
\end{rmk}

The following is essentially the main result of \cite{i.whit}, but in the more general context of $1$-h-minimal theories:

\begin{thm}\label{thm:tstrat}
Given any $\Leq$-definable map $\bchi\colon \bB_0 \to \RV^\eq$ on an $\LL$-definable ball $\bB_0 \subseteq \VF^n$ (for some $n \ge 1$), there exists an $\LL$-definable t-stratification of $\bB_0$ reflecting $\bchi$.
\end{thm}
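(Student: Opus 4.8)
The plan is to deduce this from the original $t$-stratification existence theorem of \cite{i.whit} (or equivalently \cite[Definition~5.5.2, Theorem~5.5.3]{iCR.hmin}), whose hypotheses are available here by Lemma~\ref{lem:tstrat:hyp}. The only two gaps between the cited result and the present statement are: (a) the cited result is proved in a fixed model, while we want a single $\LL$-definable $t$-stratification working uniformly for all $K \models \TT$; and (b) the cited result assumes $\bB_0$ is (in each model) either an open or a closed ball, whereas here we allow $\bB_0$ to be an $\LL$-definable ball whose type (open/closed/cut) may vary with $K$.

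For (a), the standard move is compactness together with definability of the relevant conditions. First apply the model-wise existence theorem to get, for each $K$, a $t$-stratification of $\bB_0(K)$ reflecting $\bchi_K$. By \cite[Theorem~4.1 and Remark~3.13]{i.whit} (or the corresponding uniformity/definability statement in \cite[Section~5.5]{iCR.hmin}), the property of ``being a $t$-stratification reflecting $\bchi$'' is itself expressible by an $\Leq$-formula in the parameters defining the candidate partition $(\bS_i)_i$ — this uses that definable riso-triviality on open/closed balls is a definable condition, which is exactly \cite[Lemma~3.3 and its surrounding results]{i.whit}, now available in the $1$-h-minimal setting via Lemma~\ref{lem:tstrat:hyp}. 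Combining the model-wise existence with this definability, a routine compactness argument produces finitely many $\Leq$-formulas $\theta_1, \dots, \theta_m$ (in a parameter variable) such that in every model at least one $\theta_j$ defines a valid $t$-stratification reflecting $\bchi$; gluing these by cases on which $\theta_j$ first succeeds (ordered, say, lexicographically on the parameter) yields a single $\Leq$-definable family, hence a single $\LL$-definable $t$-stratification of $\bB_0$ reflecting $\bchi$. Here one uses Remark~\ref{rmk:sph:unique} to note it suffices to check on spherically complete models, although for this step ordinary compactness over all models already works.

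For (b), the point is that Definition~\ref{defn:tstrat}(2) quantifies over balls $B$ that are open or closed, so the $t$-stratification conditions we must verify never ``see'' the cut-ball nature of $\bB_0$ directly; what changes is only the ambient set being partitioned. So one reduces to the open/closed case by a standard trick: in each model $K$, write $\bB_0(K)$ as a suitable $\Leq$-definable union of open balls (or of a closed ball with open annuli), stratify each piece by the cited theorem, and check that the resulting partition still satisfies condition (2) on every open or closed $B \subseteq \bS_{\ge r}(K)$ — such a $B$ is contained in one of the pieces of the decomposition up to a negligible boundary, so riso-triviality on it follows from the piecewise $t$-stratifications. Again definability of all conditions lets one carry this out uniformly and then invoke compactness as in step (a).

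The main obstacle I expect is purely bookkeeping rather than conceptual: making the uniform/definable version of ``is a $t$-stratification reflecting $\bchi$'' precise enough to feed into compactness, i.e.\ tracking how the model-wise statement of \cite{i.whit} and the definability of $\drtsp$ package together into a first-order schema in the parameters, and handling the cut-ball case so that condition (2) is genuinely preserved after the decomposition. Neither step requires new ideas beyond those already in \cite{i.whit} and \cite{iCR.hmin}; the work is in the careful formulation.
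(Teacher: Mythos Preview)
Your plan would work in principle, but it is considerably more elaborate than what is needed, and the handling of~(b) is where it becomes shaky. The paper's proof is essentially three lines: extend $\bchi$ to a map $\bchi'\colon \VF^n \to \RV^\eq$ by sending $\VF^n \setminus \bB_0$ to $0$; apply \cite[Corollary~4.13]{i.whit} (applicable by Lemma~\ref{lem:tstrat:hyp}) to obtain an $\LL$-definable t-stratification $(\bS_i)_i$ of $\VF^n$ reflecting $\bchi'$; and observe that $(\bS_i \cap \bB_0)_i$ is the desired t-stratification of $\bB_0$ reflecting $\bchi$.

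This extend-and-restrict trick dissolves both of your concerns at once. Your point~(a) is moot because \cite[Corollary~4.13]{i.whit} already produces an $\LL$-definable, hence model-uniform, t-stratification; no compactness gluing is required. Your point~(b) disappears because $\VF^n$ is an $\LL$-definable ball in every model, and the conditions in Definition~\ref{defn:tstrat} for $(\bS_i \cap \bB_0)_i$ only concern open or closed balls $B \subseteq \bB_0(K)$; for any such $B$ one has $B \cap \bS_i(K) = B \cap (\bS_i(K) \cap \bB_0(K))$ and $\bchi'_K|_B = \bchi_K|_B$, so the required definable riso-triviality is inherited verbatim from the stratification of $\VF^n$.

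By contrast, your decomposition strategy for~(b) is where a real gap lurks. A cut ball is not a finite union of open or closed balls, so you are forced into an infinite family of pieces; then a single open or closed test ball $B \subseteq \bS_{\ge r}(K)$ can meet infinitely many of them, and there is no evident mechanism for fusing the separate definable straighteners on those pieces into one definable straightener on $B$. Your remark that $B$ lies in one piece ``up to a negligible boundary'' does not hold for the natural decompositions of a cut ball. The paper's approach avoids this difficulty entirely.
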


\begin{proof}
Extend $\bchi$ to an $\Leq$-definable map $\bchi'\colon \VF^n \to \RV^\eq$ (e.g.\ sending $\VF^n \setminus \bB_0$ to $0$).
By Lemma~\ref{lem:tstrat:hyp},
\cite[Corollary~4.13]{i.whit} can be applied to $\bchi'$, yielding an $\LL$-definable t-stratification $(\bS_i)_{i}$ of $\VF^n$ reflecting $\bchi'$. Then $(\bS_i \cap \bB_0)_{i}$ is the desired t-stratification of $\bB_0$ reflecting $\bchi$.
\end{proof}

One reason t-stratifications will be useful in the proof of our main results is the following lemma:

\begin{lem}[{\cite[Lemma~3.14]{i.whit}}]\label{lem:3.14}
If $(\bS_i)_{i}$ is an $\LL$-definable t-stratification of some $\LL$-definable ball $\bB_0 \subseteq \VF^n$,
then the map sending, for each $K \models \TT$, an open or closed ball $B \subseteq \bB_0(K)$ to $\drtsp_B((\bS_i(K))_i)$ is $\Leq$-definable.
\end{lem}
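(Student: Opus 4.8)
I would prove this by giving an explicit $\Leq$-definable description of $\drtsp_B((\bS_i(K))_i)$ as the residue of a tangent space of the lowest-dimensional stratum meeting $B$.

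For a ball $B$, put $d = d(B) := \min\{i : \bS_i(K)\cap B\ne\emptyset\}$; this is $\Leq$-definable in $B$. Since $\bS_{<d}(K)\cap B=\emptyset$ we have $B\subseteq\bS_{\ge d}(K)$, so by the definition of a t-stratification (Definition~\ref{defn:tstrat}) the tuple $(\bS_i(K))_i$ is \emph{definably} $d$-riso-trivial on $B$: fix a definable straightener $\phi\colon B\to B$ and a lift $V\subseteq K^n$ of $\bV:=\drtsp_B((\bS_i(K))_i)$ with every $\phi^{-1}(\bS_i(K)\cap B)$ being $V$-translation invariant; by Remark~\ref{rmk:tstrat}, $\dim\bV=d$. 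As risometries preserve dimension and a nonempty $V$-translation-invariant subset of $B$ contains a full coset $(y+V)\cap B$ (of dimension $d$), the \emph{definable} set $\phi^{-1}(\bS_d(K)\cap B)$ has dimension $d$ and, being $V$-translation invariant, is a finite union of cosets $(y_j+V)\cap B$. Consequently $\dim(\bS_d(K)\cap B)=d$ and, the pieces $\phi((y_j+V)\cap B)$ being pairwise at positive distance, near every point of $\bS_d(K)\cap B$ the set $\bS_d(K)\cap B$ coincides with a single such piece --- which, by the risometry inequality for $\phi$, is globally the graph over $V$ of a map $g$ with $|g(u)-g(u')|<|u-u'|$ for $u\ne u'$.

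The key step is: at every point $a\in\bS_d(K)\cap B$ at which $\bS_d(K)$ is strictly $C^1$ of dimension $d$ --- an $\Leq$-definable subset of $\bS_d(K)\cap B$ of full dimension $d$ --- one has $\res(T_a\bS_d(K))=\bV$, independently of $a$. Indeed, near such an $a$ the set is the graph of $g$ over $V$ with $g$ strictly differentiable at the point $u_0$ above $a$; writing $g(u)=g(u_0)+g'(u_0)(u-u_0)+o(|u-u_0|)$ and using $|g(u)-g(u')|<|u-u'|$, the ultrametric inequality forces $|g'(u_0)|<1$, so $T_a\bS_d(K)=\{(h,g'(u_0)h):h\in V\}$ differs from $V$ only infinitesimally and $\res(T_a\bS_d(K))=\res(V)=\bV$. (One can also see this by noting that $(\bS_i(K))_i$ stays definably $\bV$-riso-trivial on every subball of $B$, so that $\bV\subseteq\drtsp_{B'}((\bS_i(K))_i)\subseteq\rtsp_{B'}(\bS_d(K))$ for a small ball $B'$ around $a$, and then invoking Proposition~\ref{prop:riso-mani}, which gives $\rtsp_{B'}(\bS_d(K))=\res(T_a\bS_d(K))$ of dimension $d$; a dimension count finishes it.)

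Granting this, definability follows: $B\mapsto d(B)$ is $\Leq$-definable, the strict-$C^1$ locus of a definable set together with the tangent-space map $a\mapsto T_a\bS_d(K)$ on it are $\Leq$-definable (standard from $C^1$-cell decomposition in $1$-h-minimal theories, cf.\ Proposition~\ref{prop:hmin:dim}), and $\res$ of a vector subspace is $\Leq$-definable; hence the map sending $B$ to the unique $\bW\in\bigcup_{r=0}^n\bGr_r^n(\RF)$ admitting some $a$ in the strict-$C^1$ locus of $\bS_{d(B)}$ with $a\in B$ and $\bW=\res(T_a\bS_{d(B)})$ is $\Leq$-definable, and by the key step it is exactly $B\mapsto\drtsp_B((\bS_i(K))_i)$. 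I expect the key step --- ruling out a ``non-infinitesimal tilt'' of the straightened strata and thereby pinning down the tangent directions of the strata along $B$ --- to be the main obstacle; this is precisely where the fine analysis of risometries (strict differentiability, equivalently the smoothness--riso-triviality dictionary) is needed, whereas the remaining bookkeeping is routine.
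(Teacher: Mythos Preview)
The paper does not give its own proof of this lemma; it simply cites \cite[Lemma~3.14]{i.whit}. So there is no in-paper argument to compare against, and your task was really to supply a proof from scratch.

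Your overall strategy --- characterizing $\drtsp_B((\bS_i)_i)$ as $\res(T_a\bS_{d(B)})$ for any strict-$C^1$ point $a$ of $\bS_{d(B)}$ inside $B$, and then using definability of $d(B)$, of the $C^1$ locus, and of tangent spaces --- is sound. The dimension bookkeeping (that $\bS_d\cap B$ has dimension exactly $d$, that the straightened set is a finite union of $V$-cosets, that the $C^1$ locus meets $B$) is correct.

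There is, however, a gap in the key step as written. You assert that, since $\bS_d$ is strictly $C^1$ at $a$ and is locally the Lipschitz graph of some $g$ over $V$, the function $g$ itself is strictly differentiable at $u_0$, whence $|g'(u_0)|<1$. But strict differentiability of $g$ (equivalently, that the graph-over-$V$ representation is an admissible $C^1$ chart) requires $T_a\bS_d\cap\ker\pi_V=\{0\}$, i.e., transversality of $T_a\bS_d$ to the chosen complement of $V$ --- and that transversality is essentially what you are trying to prove. Your parenthetical alternative via Proposition~\ref{prop:riso-mani} does not help: that proposition is proved in Section~\ref{sec:can_whit} using the Riso-Triviality Theorem (via Corollary~\ref{cor:RTT}), which in turn relies on the present Lemma~\ref{lem:3.14}, so invoking it here would be circular (and it is stated under the more restrictive hypotheses of Section~\ref{sec:can_whit}).

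The fix is short: avoid differentiating $g$ and argue directly on secants. For $0\ne v\in T_a\bS_d$, strict differentiability in \emph{any} chart witnessing that $\bS_d$ is a manifold at $a$ yields $x_1,x_2\in\bS_d$ near $a$ with $\rv(x_1-x_2)=\rv(v)$. By your Lipschitz-graph observation, every such secant satisfies $|\pi_W(x_1-x_2)|<|\pi_V(x_1-x_2)|$, hence $\rv(x_1-x_2)\in\rv(V)$. Thus $\rv(v)\in\rv(V)$ for all nonzero $v\in T_a\bS_d$, which by Remark~\ref{rmk:rvW} gives $\res(T_a\bS_d)\subseteq\bV$; equality follows from the dimension count. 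With this correction your proof goes through.
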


Note that this lemma would be false if instead of $\drtsp_B((\bS_i(K))_i)$, one would consider $\drtsp_B(\bchi(K))$ for an arbitrary $\Leq$-definable map $\bchi\colon \bB_0 \to \RV^\eq$
(even if $\bchi$ is the indicator function of an $\LL$-definable set), as \cite[Example~3.15]{i.whit} shows. In particular, for the Riso-Triviality Theorem to be true, it is crucial to allow non-definable risometries.

This example crucially relies on the existence of definable functions in the residue field which cannot be lifted to the valuation ring. This might seem pretty artificial, since
in the cases we are mostly interested in, namely when the residue field is algebraically closed or real closed, all functions can be lifted. One might therefore wonder whether in those cases, one could stick to the setting of \cite{i.whit} of considering only definable risometries. However, our proof of the Riso-Triviality Theorem would certainly not work anymore, and we were not even able to prove the following statement \emph{a posteriori}:

\begin{qu}\label{qu:autodef}
Suppose that every definable function $f\colon \RF^n(K) \to \RF(K)$
can be lifted to a definable function $\tilde f\colon \valring^n(K) \to \valring(K)$ (in the sense that $\res \circ \tilde f = f \circ \res$), is it true that the existence of an arbitrary risometry
from $\chi_1$ to $\chi_2$ (for definable $\chi_i\colon B_i \subseteq K^n \to \RV^\eq(K)$) implies the existence of a definable risometry from $\chi_1$ to $\chi_2$?
\end{qu}

\medskip

We can now give the proof of Corollary~\ref{cor:RET}, about isometry types ``living in $\RV^\eq$'':

\begin{proof}[Proof of Corollary~\ref{cor:RET}]
Recall that we are given an $\Leq$-definable set $\bQ$ and an $\Leq$-definable map $\bchi\colon \fB \subseteq \VF^n \times \bQ \to \RV^\eq$. Without loss $\bQ \subseteq \VF^m$ for some $m$.
By applying Theorem~\ref{thm:RET} in a language $\LL'$ with constants for $q$ and $q'$ added, and in the theory $\TT' = \TT \cup \{q \in \bQ, q' \in \bQ\}$, we obtain an $\LL$-definable equivalence relation $\bsim$ on $\bQ$ such that for every spherically complete $K \models \TT$, we have
\[
q \bsim_K q' \iff 
\text{$\bchi_K(\cdot, q)$ and $\bchi_K(\cdot, q')$ are risometric}.
\]
Existence of t-stratifications (Theorem~\ref{thm:tstrat}) applied in $\LL'$ yields an $\LL$-definable family of t-stratifications
$(\fS_i)_i$ of $\bchi$ (both considered as families parametrized by $\bQ$), i.e., we have $\fS_i \subseteq \VF^n \times \bQ$, and
for every $K$ and every $q \in \bQ(K)$,
$(\fS_i(K)_q)_i$ is a t-stratification reflecting $\bchi_K(\cdot, q)$.

Applying \cite[Proposition 3.19]{i.whit} (which we can, by Lemma~\ref{lem:tstrat:hyp}) to $(\fS_i)_i$ and $\bchi$ yields an 
$\Leq$-definable map $\tilde\brho\colon \bQ \to \RV^\eq$ such that, for every $K$ and every $q, q' \in \bQ(K)$,
$\tilde\brho_K(q) = \tilde\brho_K(q')$ implies that
$((\fS_{i}(K)_q)_i, \bchi_K(\cdot, q))$ and $((\fS_{i}(K)_{q'})_i, \bchi_K(\cdot, q'))$ are definably risometric. Since this in particular implies that $\bchi_K(\cdot, q)$ and $\bchi_K(\cdot, q')$ are risometric, we obtain that each $\bsim_K$-equivalence class is a union of fibers of $\tilde\brho_K$, so that we can define the desired map $\brho\colon \bQ \to \RV^\eq$ by quotienting the image of $\tilde\brho$ by (the image of) $\bsim$.
\end{proof}

\subsection{Basic properties of riso-triviality}
\label{sec:prel}

In this section, we collect a whole bunch of basic properties of riso-triviality. They are very similar to corresponding results in \cite{i.whit}, but the word ``definable'' has been removed everywhere, and whereas in \cite{i.whit}, definable spherical completeness was a key ingredient, we now use actual spherical completeness instead. No model theory is used in this entire subsection, except in some ``model theoretic remarks''. Thus: For most of this section, the only assumption is that $K$ is a spherically complete valued field of equi-characteristic $0$; for the model theoretic remarks, 
we assume that we are in the setting from Section~\ref{sec:notn}.

\begin{notn}
If $B, B' \subseteq K^n$ are disjoint sets, each of which is either a ball or a singleton, then the set $\{\rv^{(n)}(x - x') \mid x \in B, x' \in B'\}$ is a singleton; in the following lemma and its proof, we denote that element of $\RV^{(n)}(K)$ by $\rv^{(n)}(B - B')$. We define $|B-B'| \in \VG^\times(K)$ similarly. 
\end{notn}

\begin{lem}\label{lem:fin-riso}
Suppose that $B_0 \subseteq K^n$ is a ball,
that $C = B_1 \cup \dots \cup B_m \subseteq B_0$ is a union of finitely many disjoint points and balls $B_i$, and similarly 
$C' = B'_1 \cup \dots \cup B'_m \subseteq B_0$.
Then:
\begin{enumerate}
 \item There exists a risometry $\phi\colon B_0 \to B_0$ satisfying $\phi(B_i)= B'_i$ for each $i$ if and only if the following two conditions hold:
 \begin{enumerate}
 \item for each $i$, $B_i$ and $B'_i$ have the same cut-radius, i.e., $\{|x - y| \mid x,y \in B_i\} = \{|x - y| \mid x,y \in B'_i\}$;
 \item for each $i \ne j$, we have $\rv(B_i - B_j) = \rv(B'_i - B'_j)$.
\end{enumerate}
 \item
   If a risometry $\phi$ as in (1) exists and $\phi'\colon B_0 \to B_0$ is another risometry satisfying $\phi'(C)=C'$, then we also have $\phi'(B_i) = B'_i$ for every $i$.
\end{enumerate}
\end{lem}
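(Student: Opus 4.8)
The proof naturally splits into the two assertions, and the whole thing is essentially an exercise in bookkeeping with valuations and radii, together with an application of spherical completeness. The key point is that a risometry $\phi\colon B_0 \to B_0$ is built piece by piece: near each $B_i$ one wants $\phi$ to behave like a translation sending $B_i$ to $B'_i$, but these local translations must be glued into a single global bijection of $B_0$ without violating \eqref{eq:riso}.

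\medskip

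\textbf{Necessity in (1).} If $\phi$ exists, then $\phi|_{B_i}\colon B_i \to B'_i$ is itself a risometry, hence an isometry (as noted after Definition~\ref{defn:risom}), so $B_i$ and $B'_i$ have the same cut-radius; that gives (a). For (b): pick $x \in B_i$, $x'\in B_j$ with $i \ne j$; since $\phi$ is a risometry, $\rv^{(n)}(\phi(x)-\phi(x')) = \rv^{(n)}(x-x')$, and because $\phi(x)\in B'_i$, $\phi(x')\in B'_j$ with $B'_i,B'_j$ disjoint balls/points, the left side is precisely $\rv^{(n)}(B'_i - B'_j)$ and the right side is $\rv^{(n)}(B_i - B_j)$.

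\medskip

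\textbf{Sufficiency in (1).} Assume (a) and (b). The plan is to choose, for each $i$, a translation vector $v_i := y'_i - y_i$ where $y_i \in B_i$, $y'_i \in B'_i$ are arbitrary base points, and then define $\phi$ so that on a suitable ball $\widehat B_i \supseteq B_i$ it acts by $x \mapsto x + v_i$, with these maps agreeing off the $B_i$'s. Concretely: choose an arbitrary translation $\phi_0$ of $B_0$ as a first approximation (sending some $y_0 \in B_0$ to a chosen point), then modify it near each $B_i$ by composing with translations supported on small balls $\widehat B_i$ around $B_i$ chosen small enough to be pairwise disjoint and to avoid the other $B_j$'s and the base point of $\phi_0$. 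On $\widehat B_i$ the correction is a translation by $v_i - (\text{the displacement }\phi_0\text{ already applies})$; outside all the $\widehat B_i$'s, $\phi := \phi_0$. One then checks \eqref{eq:riso} for each pair $x_1,x_2$: if both lie in the same $\widehat B_i$ the map is a genuine translation there, so the difference is exactly preserved; if $x_1 \in \widehat B_i$, $x_2 \in \widehat B_j$ with $i\ne j$ (or one of them outside all $\widehat B$'s), then $|x_1 - x_2|$ is large compared to the radii of the $\widehat B_i$'s, and condition (b) together with the fact that each correction moves points by no more than the diameter of the $\widehat B_i$ ensures the perturbation is strictly smaller than $|x_1-x_2|$; one has to verify that $v_i - v_j$ is consistent with $\rv^{(n)}(B_i - B_j) = \rv^{(n)}(B'_i - B'_j)$, which is exactly what (b) encodes. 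If some $B_i$ is a singleton the argument simplifies (no cut-radius condition to worry about). Finally, $\phi$ so constructed is a bijection of $B_0$ (it is a bijection near each $\widehat B_i$ and the identity-up-to-$\phi_0$ elsewhere), and $\phi(B_i) = B_i + v_i$; this is a ball of the same cut-radius as $B_i$, contained in $\widehat B_i$, containing $y'_i \in B'_i$, and by (a) it has the same cut-radius as $B'_i$ — so $\phi(B_i) = B'_i$.

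\medskip

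\textbf{Part (2): uniqueness of the image decomposition.} Suppose $\phi$ as in (1) exists and $\phi'\colon B_0\to B_0$ is any risometry with $\phi'(C) = C'$. Then $\psi := \phi' \circ \phi^{-1}$ is a risometry of $B_0$ with $\psi(C') = C'$; it suffices to show $\psi(B'_i) = B'_i$ for each $i$, since then $\phi'(B_i) = \psi(\phi(B_i)) = \psi(B'_i) = B'_i$. Now a risometry of $B_0$ maps any sub-ball $B \subseteq B_0$ to a ball of the same cut-radius, and $\psi(B)$ must again be a maximal ball among those contained in $C'$ of that radius that meets $C'$; more precisely, two points $x, x'$ of $C'$ lie in the same piece $B'_i$ iff $|x - x'|$ is smaller than the gaps $|B'_i - B'_j|$ to all other pieces, and this "same-piece" relation is preserved by the isometry $\psi$ (since it only depends on valuations of differences). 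Hence $\psi$ permutes the pieces $B'_1,\dots,B'_m$; and since $\psi$ preserves each $|B'_i - B'_j|$ and each cut-radius, and the pieces have (a priori possibly coinciding) radii, one shows the permutation is trivial — here one uses that $\psi$ is the identity modulo infinitesimally-smaller corrections, so $\psi(B'_i)$ has the same "address" $\rv^{(n)}(B'_i - B'_j)$ relative to every other piece as $B'_i$ does, forcing $\psi(B'_i) = B'_i$.

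\medskip

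\textbf{Main obstacle.} The only genuinely delicate point is the gluing in the sufficiency direction of (1): producing a \emph{single} bijection $\phi$ of all of $B_0$ out of the local translations while keeping \eqref{eq:riso} globally. One must choose the "transition balls" $\widehat B_i$ with radii in the correct window — large enough to contain $B_i$ and absorb the needed displacement $v_i$, small enough to stay disjoint and far from the other pieces — and then carefully estimate the perturbation $|\phi(x_1)-\phi(x_2)-(x_1-x_2)|$ across pieces using condition (b). Spherical completeness is what guarantees the relevant balls and base points exist and that the cut-radius conditions in (a) can actually be realized by genuine balls (equality of cuts giving genuine equality of ball-radii in a spherically complete field). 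Everything else is routine valuation arithmetic.
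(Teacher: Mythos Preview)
Your necessity argument in (1) is fine. The gap is in the sufficiency direction.

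\textbf{The gluing in (1) fails as described.} Your plan is: pick one global translation $\phi_0$ (with displacement $v_0$), then correct by $v_i - v_0$ on small pairwise disjoint balls $\widehat B_i \supseteq B_i$. For the risometry condition across the boundary of $\widehat B_i$ you need $|v_i - v_0|$ to be strictly smaller than the radius of $\widehat B_i$, and for disjointness you need that radius to be at most $\min_{j\ne i}|B_i - B_j|$. So you need $|v_i - v_0| < \min_{j\ne i}|B_i - B_j|$ for every $i$ simultaneously. No single $v_0$ achieves this in general. Concretely, take $n=1$, $m=4$, all $B_i, B'_i$ singletons: $B_1=\{0\}$, $B_2=\{\epsilon\}$, $B_3=\{1\}$, $B_4=\{1+\epsilon\}$ and $B'_1=\{c\}$, $B'_2=\{c+\epsilon\}$, $B'_3=\{1\}$, $B'_4=\{1+\epsilon\}$, with $|\epsilon| < |c| < 1$. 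Conditions (a),(b) hold. Here $v_1=v_2=c$ and $v_3=v_4=0$. The constraints become $|c - v_0| < |\epsilon|$ and $|v_0| < |\epsilon|$, which together force $|c| < |\epsilon|$, a contradiction. So your one-layer construction cannot succeed; the configurations are genuinely hierarchical, and your scheme does not respect the tree structure of the $B_i$.

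The paper avoids this entirely: it defines $\phi$ on each $B_i$ as the translation $x\mapsto x - x_i + x'_i$, and on each \emph{maximal} ball $B \subseteq B_0 \setminus C$ as the translation by $x'_{i(B)} - x_{i(B)}$, where $i(B)$ is an index with $|B - B_{i(B)}|$ minimal. This ``nearest-neighbour'' rule automatically handles the hierarchy, and the verification that the resulting map is a risometry is a short computation using (b).

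\textbf{Part (2).} Your reduction to $\psi := \phi'\circ\phi^{-1}$ permuting the $B'_i$ is correct, but the final step (``same address forces $\psi(B'_i)=B'_i$'') is too vague as stated. The clean argument: if $\psi(B'_i) = B'_j$ with $i\ne j$, rescale so that $|B'_i - B'_j| = 1$ and both lie in $\valring(K)^n$; then $\psi$ induces a \emph{translation} on $\RF(K)^n$ which permutes the finite set $\res(C' \cap \valring(K)^n)$, hence is the identity on it, contradicting $\res(B'_i) \ne \res(B'_j)$. The point is not merely that $\psi$ preserves all pairwise $\rv$'s, but that it acts as a translation at each scale.

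\textbf{On spherical completeness.} It plays no role in this lemma. The balls $B_i$, $B'_i$ are given, not constructed, and the proof is pure valuation combinatorics. (Spherical completeness enters later, in Lemma~\ref{lem:B_finite}.)
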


\begin{proof}
(1) The implication from left to right is clear. For the implication from right to left, for each $i$, pick an arbitrary element $x_i \in B_i$ and an arbitrary element $x'_i \in B'_i$. For $x \in B_i$, we define $\phi(x) := x - x_i + x'_i$; this already defines a 
risometry $\phi\colon C \to C'$ sending $B_i$ to $B'_i$. This can be extended to a risometry $B_0 \to B_0$ as follows: For every maximal ball $B \subseteq B_0 \setminus C$, choose any $i = i(B)$ for which $|B - B_i| = \min_{j} |B - B_j|$. For $x \in B$, set $\phi(x) := x - x_{i(B)} + x_{i(B)}'$.
One deduces that $\rv(B - B_j) = \rv(\phi(B) - B'_j)$ for every $j$.
Using this, a short computation shows that the total map $\phi\colon B_0 \to B_0$ defined in this way is a risometry.

(2) By replacing $\phi'$ by $\phi^{-1} \circ \phi'$, we may assume that $C' = C$ and that $\phi$ is the identity, i.e., we have to show that a risometry $\phi'\colon B_0 \to B_0$ sending $C$ to itself also sends each $B_i$ to itself.
Suppose that $\phi'(B_i) = B_j$ for some $j \ne i$.
By translating and scaling, we may assume that $|B_i - B_j| = 1$ and that $B_i, B_j \subseteq \valring(K)^n$. In particular, $\res(B_i) \ne \res(B_j)$. Then $\phi'$ sends
$\valring(K)^n$ to itself and it
induces a translation $\bar\phi'\colon \RF(K)^n \to \RF(K)^n$
which sends
$\bar C := \res(C \cap \valring(K)^n)$ to itself. Since $\bar C$ is finite, $\bar\phi'$ is the identity on $\bar C$, so it does not send $\res(B_i)$ to $\res(B_j)$, contradicting $\phi'(B_i) = B_j$.
\end{proof}

\begin{modrmk}\label{rmk:fin-riso}
If, in the above lemma, $C$ and $C'$ are $\LL$-definable, then we can $\LL$-definably express whether there exists a risometry from $C$ to $C'$, namely by introducing one variable for each $B_i$ and each $B'_i$ and using that the conditions
(a) and (b) from the lemma are first-order. Moreover, we obtain that the (unique) induced map from $\{B_1, \dots, B_m\}$ to $\{B'_1, \dots, B'_m\}$ is also $\Leq$-definable.
All this also works uniformly in all models of $\TT$, provided that the number $m$ is bounded independently of the model.
\end{modrmk}

We will use the following version of the Banach Fixed Point Theorem for spherically complete fields:

\begin{lem}\label{lem:BFPT}
Suppose that $B \subseteq K^n$ is any ball and that $g:B\ra B$ is a strictly contracting map, i.e., $|g(x) - g(y)| < |x-y|$ for all $x,y \in B, x \ne y$.
Then $g$ has a unique fixed point in $B$.
\end{lem}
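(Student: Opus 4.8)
The statement is the Banach Fixed Point Theorem in a spherically complete ultrametric setting, so I would reproduce the standard iteration argument, but taking care that the ultrametric inequality (rather than completeness of a metric) is what makes the nested balls work. First I would fix a starting point $x_0 \in B$ and consider the sequence $x_{k+1} := g(x_k)$. Since $g$ maps $B$ into $B$, the whole sequence lies in $B$. The key computation is that the ``step sizes'' are strictly decreasing: writing $\lambda_k := |x_{k+1} - x_k|$, strict contraction gives $\lambda_{k+1} = |g(x_{k+1}) - g(x_k)| < |x_{k+1} - x_k| = \lambda_k$ whenever $x_{k+1} \ne x_k$ (and if ever $x_{k+1} = x_k$ we are already done, as then $x_k$ is a fixed point).

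\textbf{Extracting the fixed point.} Now I would package the tail of the sequence into a nested family of balls. For each $k$, let $B_k := B_{\le \lambda_k}(x_k)$, the closed ball of radius $\lambda_k$ around $x_k$. I claim these are nested: $B_{k+1} \subseteq B_k$. Indeed, any $y \in B_{k+1}$ satisfies $|y - x_k| \le \max(|y - x_{k+1}|, |x_{k+1} - x_k|) = \max(\lambda_{k+1}, \lambda_k) = \lambda_k$ by the ultrametric inequality together with $\lambda_{k+1} < \lambda_k$; note also $x_{k+1} \in B_k$ for the same reason, so the $B_k$ genuinely form a decreasing chain of balls. (Each $B_k$ is infinite, hence a ball in the sense of the paper, since $n \ge 1$ — or one argues inside an affine line through the relevant points; this is a routine point I would not belabour.) By spherical completeness, $\bigcap_k B_k \ne \emptyset$; pick $x_\infty$ in the intersection. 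Then $|x_\infty - x_k| \le \lambda_k$ for all $k$, and since the $\lambda_k$ are strictly decreasing, applying $g$ gives $|g(x_\infty) - x_{k+1}| = |g(x_\infty) - g(x_k)| \le |x_\infty - x_k| \le \lambda_k$ wait — more carefully, $|g(x_\infty) - g(x_k)| < |x_\infty - x_k| \le \lambda_k$ if $x_\infty \ne x_k$ (and if $x_\infty = x_k$ for some $k$ then $x_\infty$ is already a fixed point). Hence $g(x_\infty) \in B_{\le \lambda_k}(x_{k+1}) \subseteq B_{k}$ for all $k$ (using $\lambda_k < \lambda_{k-1}$ type estimates to absorb the shift from $x_{k+1}$ back into $B_k$), so $g(x_\infty) \in \bigcap_k B_k$. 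To conclude $g(x_\infty) = x_\infty$ I compare directly: $|g(x_\infty) - x_\infty| \le \max_k$ of terms that go below every $\lambda_k$; more cleanly, if $g(x_\infty) \ne x_\infty$ then $|g(x_\infty) - x_\infty| =: \mu > 0$, but $|g(x_\infty) - g(x_k)| < |x_\infty - x_k| \le \lambda_k \to$ eventually $< \mu$ (the $\lambda_k$ form a strictly descending chain in the value group, but need not tend to $0$ — so this needs the extra observation below).

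\textbf{The main obstacle.} The one genuine subtlety — and the step I expect to need the most care — is that in a general (non-discretely-valued, non-real-valued) value group, the strictly decreasing sequence $\lambda_k$ need not have infimum $0$; it could stabilize above some positive value in a cut. So ``$\lambda_k \to 0$'' is not available, and the naive Cauchy-sequence argument fails. The fix is to argue at the level of the point $x_\infty$ itself rather than the sequence: once $x_\infty \in \bigcap_k B_k$ is fixed, suppose $g(x_\infty) \ne x_\infty$ and set $\mu := |g(x_\infty) - x_\infty|$. For each $k$, since $x_\infty \in B_k = B_{\le\lambda_k}(x_k)$ and $g(x_\infty) \in B_k$ as well (shown above), we get $\mu = |g(x_\infty) - x_\infty| \le \lambda_k$. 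But also, directly from strict contraction applied to the pair $x_\infty$, $x_k$ (assuming $x_\infty \ne x_k$): $\mu \le \max(|g(x_\infty) - g(x_k)|, |g(x_k) - x_\infty|) \le \max(|g(x_\infty)-g(x_k)|, \lambda_k)$, and $|g(x_\infty) - g(x_k)| < |x_\infty - x_k| \le \lambda_k$, giving $\mu \le \lambda_k$ — no contradiction yet. The actual contradiction comes from iterating once more: $|g(x_\infty) - x_{k+1}| = |g(x_\infty) - g(x_k)| < \lambda_k$, so $\mu = |g(x_\infty) - x_\infty| \le \max(|g(x_\infty) - x_{k+1}|, |x_{k+1} - x_\infty|)$ and both terms are $< \lambda_k$ (the second since $x_{k+1}, x_\infty \in B_{k+1}$, of radius $\lambda_{k+1} < \lambda_k$), whence $\mu < \lambda_k$ for every $k$ — and finally, $\mu < \lambda_k$ for all $k$ together with $\mu = |g(x_\infty) - x_\infty|$ being a specific element forces, via one last ultrametric step $|x_{k+1} - x_\infty| < \lambda_k$ and re-deriving $\mu \le |g(x_\infty)-g(x_\infty)|=0$... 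I would reorganize this into: the intersection $\bigcap_k B_k$ is itself either a point or a ball, $g$ maps it strictly into itself, so iterating shows it cannot be a ball (a strict contraction has no room on a ball to preserve it), hence it is a single point $x_\infty$, and strict contraction forces $g(x_\infty) = x_\infty$. For uniqueness: if $g(x) = x$ and $g(x') = x'$ with $x \ne x'$, then $|x - x'| = |g(x) - g(x')| < |x - x'|$, a contradiction.
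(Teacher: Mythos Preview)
The paper does not give a proof; it simply cites the reference \cite{PCR.fixPt} (Prie\ss-Crampe). So there is no paper-proof to compare your argument to. What I can do is assess whether your argument stands on its own.

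It does not, and you have correctly put your finger on why. The countable iteration $x_{k+1}=g(x_k)$ produces a strictly decreasing sequence $(\lambda_k)_k$ in the value group, but nothing forces this sequence to be cofinal downwards, so the intersection $B':=\bigcap_k B_k$ may perfectly well be a ball rather than a point. Your final proposal --- that $g$ maps $B'$ into itself and ``a strict contraction has no room on a ball to preserve it'' --- is circular: the restriction $g|_{B'}\colon B'\to B'$ is again a strict self-contraction of a ball, i.e.\ exactly the hypothesis of the lemma, and running the iteration again on $B'$ only reproduces the same difficulty one level down. Countably many steps genuinely do not suffice in a general value group.

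The standard repair (and this is what the cited reference does) is to make the iteration transfinite, or equivalently to invoke Zorn's lemma. One clean formulation: for each $x\in B$ with $g(x)\ne x$, set $B_x:=B_{\le|g(x)-x|}(x)$. A short ultrametric computation gives $g(B_x)\subseteq B_x$ and $B_{g(x)}\subsetneq B_x$. Now take a maximal chain in $\{B_x: g(x)\ne x\}$; by spherical completeness its intersection contains some $z$, and if $g(z)\ne z$ then $B_z$ (or $B_{g(z)}$) properly extends the chain, contradicting maximality. Hence $g(z)=z$. Your uniqueness argument is fine.
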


\begin{proof}
See for example \cite{PCR.fixPt}.
\end{proof}

From this, we deduce the following, seemingly harmless statement, but which plays a key role for everything that follows.

\begin{lem}
\label{lem:B_finite}
Suppose $B \sseteq K^n$ is a ball and $f: B\ra X \sseteq B$ is a risometry into $B$. Then $X=B$.
\end{lem}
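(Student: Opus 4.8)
The plan is to show that a risometry $f\colon B \to X$ with $X \subseteq B$ must be surjective. Suppose for contradiction that $X \subsetneq B$, so there is a point $b \in B \setminus X$. The idea is to iterate $f$ and use spherical completeness together with the Banach Fixed Point Theorem (Lemma~\ref{lem:BFPT}) to produce a contradiction. Since $f(B) = X \subseteq B$, we can form the composition $f^{(k)} = f \circ \cdots \circ f$ ($k$ times), and each $f^{(k)}$ is again a risometry of $B$ into itself (compositions of risometries are risometries, which follows directly from the ultrametric inequality in \eqref{eq:riso}). The key quantitative point is to track how far $b$ stays from the image: let $\lambda := |b - f(b)| \in \VG^\times(K)$ if $f(b) \neq b$ (note $f(b) \in X$ so $f(b) \neq b$), and more generally consider the distances $|f^{(k)}(b) - f^{(k+1)}(b)|$.

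The main step is the following observation: because $f$ is a risometry, it is in particular an isometry, so $|f(x) - f(y)| = |x-y|$ for all $x,y$. Hence $|f^{(k)}(b) - f^{(k+1)}(b)| = |b - f(b)| = \lambda$ for every $k \ge 0$. Now consider the closed ball $B' := B_{\le \lambda}(b) \subseteq B$ (this lies in $B$ since $B$ is a ball containing both $b$ and $f(b)$, which are at distance $\lambda$). I claim $f$ maps $B'$ into a ball disjoint from a suitable sub-ball, forcing an infinite strictly descending structure. More carefully: the cleanest route is to define $g\colon B \to B$ by noting that since $f$ is a risometry, $\rv^{(n)}(f(x) - f(y)) = \rv^{(n)}(x - y)$; in particular, restricting attention to the ball $B_{\le\lambda}(b)$, the map $f$ sends it isometrically onto the ball $B_{\le\lambda}(f(b))$, and since $|b - f(b)| = \lambda$ these two closed balls of radius $\lambda$ are \emph{equal} (two closed balls of the same radius are either equal or disjoint, and $f(b)$ lies in both... here one must be slightly careful: $f(b) \in B_{\le\lambda}(b)$ since $|b - f(b)| = \lambda$, so indeed $B_{\le\lambda}(f(b)) = B_{\le\lambda}(b) =: B'$). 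Thus $f$ restricts to a risometry $B' \to B'$.

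Now apply the argument on $B'$: on $B'$, consider whether $b \in f(B' )$. If not, we still have $f(B') \subseteq B'$ with $f(b) \in B'$, and we iterate. The real engine is: define the map $g := f$ restricted appropriately, but to get an honest contradiction, pass to the sequence $x_0 := b$, $x_{k+1} := f(x_k)$; all $x_k$ lie in $B'$ and $|x_k - x_{k+1}| = \lambda$ for all $k$. Consider now the balls $B_k := B_{<\lambda}(x_k)$ (open radius $\lambda$). Since $|x_k - x_{k+1}| = \lambda$, consecutive ones are disjoint. But I want a nested family; instead, the standard trick (essentially the proof that a contracting self-map of a spherically complete ball is surjective) is to build a strictly contracting map. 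Concretely: since $X = f(B)$ and we assume $b \notin X$, define $h\colon B \to B$ by $h(x) = x - f(x) + b$ — wait, this need not land in $B$. The correct move, which I expect to be the genuine obstacle, is to show directly that $f$ being a risometry into $X$ with $b \notin X$ contradicts the existence of a fixed point: apply Lemma~\ref{lem:BFPT} to the map $x \mapsto f(x)$ is not contracting (it is an isometry). So instead one argues by the "descending ball" criterion peculiar to spherically complete fields: the sets $B \setminus f(B)$, $f(B) \setminus f^{(2)}(B)$, $\dots$ — if $b \in B \setminus f(B)$, then since $f$ is an isometry, $f(b) \in f(B) \setminus f^{(2)}(B)$, and inductively $f^{(k)}(b) \in f^{(k)}(B) \setminus f^{(k+1)}(B)$, so all $f^{(k)}(b)$ are distinct and $f^{(k)}(B) \subsetneq f^{(k-1)}(B) \subsetneq \cdots \subsetneq B$ is a strictly descending chain of balls. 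Since $f$ is a risometry, $f^{(k)}(B)$ is a ball of the same radius as $B$ — but a ball cannot strictly contain another ball of the same radius, contradiction. This last point (a ball of radius $\lambda$ cannot properly contain another ball of radius $\lambda$) is elementary from the ultrametric ball structure, and it is the clean finish. The main obstacle I anticipate is being careful that $f^{(k)}(B)$ is genuinely a ball (not just a subset) — this needs that the image of a ball under a risometry is a ball of the same radius, which one should either cite or verify via the characterization of risometries in terms of $\rv^{(n)}$.
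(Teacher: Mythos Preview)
Your final argument is circular. You conclude by asserting that $f^{(k)}(B)$ is a ball of the same radius as $B$ and that a ball cannot properly contain a ball of the same radius. But the statement ``the image of a ball under a risometry into $K^n$ is again a ball'' is essentially the content of the lemma you are proving; it is \emph{not} a formal consequence of the $\rv^{(n)}$-characterisation of risometries, and it fails without spherical completeness (the paper remarks immediately after the lemma that the conclusion is false without that hypothesis). So the last step begs the question, and the attempt does not actually use spherical completeness anywhere.

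The irony is that you had the correct idea and discarded it. The map $h(x) = x - f(x) + b$ \emph{does} land in $B$: for any ball $B$ and any $a,b,c \in B$, one has $a+b-c \in B$, since writing $B$ as $\{y : |y-p| \le \lambda\}$ (or $<\lambda$, or a cut) gives $|(a+b-c)-p| = |(a-p)+(b-p)-(c-p)| \le \max(|a-p|,|b-p|,|c-p|)$. This is exactly the paper's approach: given $x_0 \in B$, set $g(x) := x + x_0 - f(x)$; then $g\colon B \to B$, and $g(x)-g(y) = (x-y) - (f(x)-f(y))$, which by the risometry condition has norm strictly less than $|x-y|$. So $g$ is strictly contracting, Lemma~\ref{lem:BFPT} (the Banach Fixed Point Theorem, which is where spherical completeness enters) produces a fixed point, and that fixed point is a preimage of $x_0$ under $f$.
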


\begin{proof}
Given $x_0 \in B$, we find a pre-image in $B$ (under $f$) by defining a strictly contracting mapping on $B$ and consider the fixed point, namely as follows. For $x \in B$ set $g(x) := x + x_0 - f(x)$. Note that a fixed point of $g$ is a pre-image of $x_0$.
For $x,y \in B$, we have 
$g(x) - g(y) = x - y - (f(x) - f(y))$, so
using that $f$ is a risometry, one deduces that $g$ is contracting.
Now Lemma \ref{lem:BFPT} yields the desired fixed point of $g$.
\end{proof}

The above lemma is the only place where we use spherically completeness. Note that it would be false without that assumption.

We can now fulfill our promise to prove that the riso-triviality space introduced in Definition~\ref{defn:rtsp} exists. More precisely, recall that the only missing ingredient (for the proof of Proposition~\ref{prop:rtsp}) is the following lemma:

\begin{lem}
 \label{lem:V-additive}
  Let $\bV_1$ and $\bV_2$ be subspaces of $\RF(K)^n$, and let $B \subseteq K^n$ be a ball. Suppose that $\chi: B \ra S$ is a map which is $\bV_1$-riso-trivial and $\bV_2$-riso-trivial. Then $\chi$ is $(\bV_1 + \bV_2)$-riso-trivial.
\end{lem}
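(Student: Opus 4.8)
The goal is to combine a $\bV_1$-straightener and a $\bV_2$-straightener into a single $(\bV_1+\bV_2)$-straightener. The natural strategy is to straighten in two stages: first apply a risometry $\phi_1$ witnessing $\bV_1$-riso-triviality, making $\chi\circ\phi_1$ invariant along a lift $V_1$ of $\bV_1$; then, working relative to this already-straightened situation, apply a second risometry $\phi_2$ that additionally straightens the $\bV_2$-direction. The composite $\phi_1\circ\phi_2$ should then be the desired straightener, provided that $\phi_2$ can be chosen so as not to destroy the $V_1$-invariance already achieved. The key structural input is Remark~\ref{rmk:lift:no:matter}: since the choice of lift does not matter, we are free to pick lifts $V_1$, $V_2$ of $\bV_1$, $\bV_2$ compatibly, e.g.\ so that $V_1\cap V_2$ is a lift of $\bV_1\cap\bV_2$ and $V_1+V_2$ is a lift of $\bV_1+\bV_2$; after a risometric linear change of coordinates (again using Example~\ref{exa:riso}) we may even assume $V_1$ and $V_2$ are spanned by standard basis vectors.

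\textbf{Key steps.} First, normalize: choose lifts $V_1\supseteq V_1\cap V_2\subseteq V_2$ of $\bV_1,\bV_2$ with $V_1+V_2$ a lift of $\bV_1+\bV_2$, and by a risometric linear automorphism of $K^n$ arrange the coordinates so that $V_1$ and $V_2$ are coordinate subspaces. Second, fix a $V_1$-straightener $\phi_1$ of $\chi$ on $B$, so $\chi':=\chi\circ\phi_1$ is $V_1$-translation invariant on $B$. Third — the crucial point — observe that $\phi_1^{-1}$ transports the hypothesis ``$\chi$ is $\bV_2$-riso-trivial on $B$'' to ``$\chi'$ is $\bV_2$-riso-trivial on $B$'': indeed, if $\psi$ witnesses $\bV_2$-riso-triviality of $\chi$, then $\phi_1^{-1}\circ\psi$ witnesses it for $\chi'$ (a composite of a risometry with the inverse risometry $\phi_1^{-1}$ is again a risometry — here one needs the easy fact, or Lemma~\ref{lem:B_finite}-type reasoning, that $\phi_1^{-1}$ is a risometry $B\to B$). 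Fourth, I want to produce from a $V_2$-straightener $\phi_2$ of $\chi'$ a straightener that is simultaneously the identity modulo the $V_1$-direction, so that the composite remains $V_1$-invariant. The cleanest way is to invoke a fiberwise straightening result: since $\chi'$ is $V_1$-translation invariant, it factors through the quotient projection $B\to B/V_1$, and $\bV_2$-riso-triviality descends to this quotient ball; straighten there, then pull back. This is exactly the kind of argument handled by the fiber-compatibility lemmas (Lemma~\ref{lem:fiber_compat} and its relatives) — choosing $\phi_2$ so that it is ``$V_1$-equivariant'', i.e.\ commutes with $V_1$-translations. The composite $\phi:=\phi_1\circ\phi_2$ is then a risometry $B\to B$ with $\chi\circ\phi=\chi'\circ\phi_2$ translation invariant in both the $V_1$- and $V_2$-directions, hence in all of $V_1+V_2$, which is a lift of $\bV_1+\bV_2$.

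\textbf{Main obstacle.} The delicate step is the fourth one: ensuring that straightening the second direction does not spoil the first. Naively composing two arbitrary straighteners fails because $\phi_2$ need not preserve $V_1$-invariance of $\chi'$. The resolution is to exploit that $\chi'$, being $V_1$-invariant, lives on the quotient $B/V_1$ (a ball in $K^n/V_1$, still spherically complete), perform the $\bV_2$-straightening downstairs, and lift it back along the coordinate projection; this produces a $\phi_2$ that by construction commutes with $V_1$-translations, so $\chi'\circ\phi_2$ stays $V_1$-invariant. One must also check that $\bV_2$-riso-triviality of $\chi'$ on $B$ genuinely passes to $\bar{V_2}$-riso-triviality of the induced map on $B/V_1$ — this uses that $V_1\cap V_2$ is handled uniformly (the lift choices from step one), reducing to the case $V_1\cap V_2=0$ where $\bar V_2$ injects into $\RF(K/V_1)^{\dim}$. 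Granting this, the rest is bookkeeping: composites and inverses of risometries are risometries, and invariance in $V_1$ and $V_2$ separately plus linearity gives invariance in $V_1+V_2$.
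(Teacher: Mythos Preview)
Your overall two-stage strategy is correct and matches the paper's: reduce to the case where $\chi$ is already $V_1$-translation invariant, then produce a second straightener in the $V_2$-direction that is $V_1$-equivariant. You also correctly identify the main obstacle: an arbitrary $V_2$-straightener of $\chi'$ need not respect $V_1$-cosets.

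However, your proposed resolution of this obstacle has a real gap. You claim that ``$\bV_2$-riso-triviality descends to the quotient ball $B/V_1$'' and then invoke Lemma~\ref{lem:fiber_compat}. Neither ingredient does the work you want. First, Lemma~\ref{lem:fiber_compat} applied with $\bW=\bV_2$ produces a $V_2$-straightener that respects $\pi_{V_2}$-fibers, which is not the same as being $V_1$-equivariant; applied with $\bW=\bV_1$ (where the identity is already a straightener) it gives nothing. The ``relative'' Lemma~\ref{lem:fiber_triv} would be circular, since its proof uses Lemma~\ref{lem:V-additive}. Second, descent of riso-triviality to $B/V_1$ is precisely the nontrivial point: a risometry $\phi\colon B\to B$ does not descend to $B/V_1$ (it need not send $V_1$-cosets to $V_1$-cosets), so you cannot simply push the $V_2$-straightener down.

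What is actually needed is to write down the candidate straightener explicitly: with $\bV_1\cap\bV_2=\{0\}$, choose a complement $\bV_3\supseteq\bV_2$ of $\bV_1$, lifts $V_i$, and the projections $\pi_1,\pi_3$ along $V_3,V_1$; then set $\psi(x):=\phi(\pi_3(x))+\pi_1(x)$, where $\phi$ is the $V_2$-straightener. This $\psi$ is $V_1$-equivariant by construction and one checks directly that it is a risometry onto its image and that $\chi\circ\psi$ is $(V_1+V_2)$-invariant. The only genuinely nontrivial step is surjectivity of $\psi$ onto $B$, and this is where spherical completeness enters via Lemma~\ref{lem:B_finite}. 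You do mention Lemma~\ref{lem:B_finite}, but in the wrong place (invertibility of $\phi_1$, which is automatic since risometries are bijections by definition); its real role is here.
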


\begin{proof}
The proof works exactly as for \cite[Lemma~3.3]{i.whit}, using Lemma~\ref{lem:B_finite} instead of 
\cite[Lemma~2.33]{i.whit}. (Using straighteners witnessing $\bV_1$-riso-triviality and $\bV_2$-riso-triviality, one can easily write down explicitly a candidate $\psi\colon B \to B$ for a straightener witnessing $(\bV_1 + \bV_2)$-riso-triviality. It turns out that the only difficulty is to prove surjectivity of $\psi$, but this is automatic by Lemma~\ref{lem:B_finite}.)
\end{proof}

We fix some conventions for several of the next lemmas:

\begin{conv}\label{conv:trans}
We continue to assume that $K$ is a spherically complete valued field of equi-characteristic $0$.
In the following, we additionally assume that $B \subseteq K^n$ is a ball (for some $n \ge 1$), $\bW \subseteq \RF(K)^n$ is a vector subspace, $\chi\colon B \to S$ is a $\bW$-riso-trivial map, $\bar\pi_{\bW}\colon \RF(K)^n \to \bW$ is a projection and $W \subseteq K^n$ and $\pi_W\colon B\subseteq K^n \to W$ are lifts of $\bW$ and of $\bar\pi_{\bW}$, respectively.
\end{conv}
By $\pi_W$ being a lift of $\bar\pi_{\bW}$, we mean that it is the restriction to $B$ of a linear map $\pi\colon K^n \to W$ satisfying $\res(\pi(x)) = \bar\pi_{\bW}(\res(x))$ for $x \in \valring(K)^n$.

The condition that $\pi_W$ is a lift of a projection $\bar\pi_{\bW}$ (and not just any projection $B \subseteq K^n \to W$) means that the fibers of $\pi_W$ are, in some sense, ``sufficiently transversal'' to $W$. This implies various relations between $\chi$ and its restriction to such fibers, as the following lemmas show.
(Note that, while in the case $\bW = \RF(K)^n$, the lemmas become almost void, they are still true.)

\begin{lem}\label{lem:fiber_compat}
(Applying Convention~\ref{conv:trans}.) Suppose that $\bW$-riso-triviality of $\chi$ is witnessed by some $W$-straightener $\psi\colon B \to B$.
Then there exists a risometry $\phi\colon B \to B$ such that $\chi \circ \phi = \chi \circ \psi$
and such that moreover, 
$\pi_W\circ \phi = \pi_W$. In particular,
\begin{enumerate}
 \item $\phi$ is a straightener of $\chi$, and
 \item for every fiber $F \subseteq B$ of $\pi_W$,
 $\phi|_F$ is a risometry from $(\chi\circ\psi)|_F$ to $\chi|_F$.
\end{enumerate}
\end{lem}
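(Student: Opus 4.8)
The plan is to construct $\phi$ by precomposing $\psi$ with a suitable risometry of $B$ that fixes the $\pi_W$-fibers setwise and corrects for the way $\psi$ moves those fibers. Concretely, since $\pi_W$ is the restriction of a linear map that is a lift of the projection $\bar\pi_{\bW}$, the composite $\pi_W \circ \psi \colon B \to W$ is, fiberwise over $\pi_W$, a strictly contracting perturbation of the identity: for $x, x'$ in the same $\pi_W$-fiber (so $\pi_W(x) = \pi_W(x')$, i.e.\ $x - x' \in \ker \pi$, which is a lift of $\ker \bar\pi_{\bW}$), the risometry property of $\psi$ gives $|\psi(x) - \psi(x') - (x - x')| < |x - x'|$, and applying the $\valring$-linear map $\pi$ (which does not increase valuations) yields $|\pi_W(\psi(x)) - \pi_W(\psi(x'))| \le |x - x'|$ — but more importantly, composing with $\pi$ and using that $\pi$ is a lift, one sees that on each fiber $\psi$ followed by $\pi_W$ is a risometry onto a subset of $W$. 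I would first record this as a lemma-internal computation: $\pi_W \circ \psi$ restricted to a fiber $F$ of $\pi_W$ is a risometry from $F$ (viewed inside the affine space it spans) to its image in $W$.

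Next I would define, for each fiber $F = \pi_W^{-1}(w)$ with $w \in \pi_W(B)$, a correction map: let $\theta_w \colon W\text{-slice} \to W\text{-slice}$ be a risometry of the relevant ball in $W$ that undoes the displacement $\pi_W \circ \psi|_F$, and set $\phi := \psi \circ \Theta$ where $\Theta \colon B \to B$ acts on the $W$-coordinate direction only, fiber by fiber, so that $\pi_W \circ \psi \circ \Theta = \pi_W$. The key point making $\Theta$ exist and be a risometry of all of $B$ is exactly Lemma~\ref{lem:B_finite} (via Lemma~\ref{lem:BFPT}): $\Theta$ is built as an "inverse" to a strictly contracting perturbation of the identity, so it is a risometry into $B$, hence onto $B$. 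One then checks directly that $\phi = \psi \circ \Theta$ is a risometry (composite of risometries), that $\chi \circ \phi = \chi \circ \psi \circ \Theta = \chi \circ \psi$ because $\chi \circ \psi$ is $W$-translation invariant and $\Theta$ moves points only within cosets of $W$ — wait, that is not quite it; rather $\Theta$ moves points within the fibers of $\pi_W$, and $W$ is a complement-type lift so that fibers of $\pi_W$ are cosets of $\ker\pi$; the translation invariance we need is that $\chi \circ \psi$ is invariant in the $W$-direction, and $\Theta$ shifts in the $W$-direction within each $\pi_W$-fiber. Let me restate: choose the lift $W$ of $\bW$ and arrange $\pi = \pi_W$ to have kernel a fixed complement $W'$; then fibers of $\pi_W$ are cosets $x_0 + W$, and $\Theta$ translates within these, i.e.\ in the $W$-direction, so $W$-translation invariance of $\chi \circ \psi$ gives $\chi \circ \psi \circ \Theta = \chi \circ \psi$. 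Simultaneously $\pi_W \circ \phi = \pi_W \circ \psi \circ \Theta = \pi_W$ by the defining property of $\Theta$.

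Finally, the two itemized conclusions are immediate: (1) $\phi$ is a straightener of $\chi$ since $\chi \circ \phi = \chi \circ \psi$ and the right-hand side is $W$-translation invariant; (2) for a fiber $F$ of $\pi_W$, the condition $\pi_W \circ \phi = \pi_W$ means $\phi(F) = F$, so $\phi|_F \colon F \to F$ is a risometry, and $\chi \circ \phi = \chi \circ \psi$ restricted to $F$ says exactly that $\phi|_F$ is a risometry from $(\chi \circ \psi)|_F$ to $\chi|_F$.

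I expect the main obstacle to be the bookkeeping around lifts: making precise that $\pi_W$ being a lift of a \emph{projection} $\bar\pi_{\bW}$ (rather than an arbitrary linear retraction onto $W$) is exactly what forces the fiberwise map $\pi_W \circ \psi$ to be a strict perturbation of the identity in the right sense, so that the fixed-point/surjectivity argument via Lemma~\ref{lem:B_finite} applies cleanly. Everything else — that composites of risometries are risometries, that $W$-translation invariance is preserved under shifting in the $W$-direction — is routine; the subtlety is entirely in extracting the contracting structure of $\Theta$ from the hypothesis on $\pi_W$, and this is presumably where one cites or mimics \cite[Lemma~3.6]{i.whit}.
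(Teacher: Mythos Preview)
Your overall strategy is sound and is essentially the one the paper uses: produce $\phi$ by composing $\psi$ with a correction that shifts only in the $W$-direction, and invoke Lemma~\ref{lem:B_finite} for the surjectivity. However, your write-up has a persistent confusion between $\pi_W$-fibers (which are cosets of $V := \ker\pi$, a lift of $\ker\bar\pi_{\bW}$) and cosets of $W$ (which are the fibers of the complementary projection $\pi_V$). This makes several of your intermediate claims false. In particular, for $x,x'$ in the same $\pi_W$-fiber $F$ you correctly compute $|\pi_W(\psi(x)) - \pi_W(\psi(x'))| < |x-x'|$, but then assert that $\pi_W\circ\psi|_F$ is a risometry onto its image in $W$; it is not --- it is strictly contracting (note also that $F$ and $W$ need not even have the same dimension). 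Likewise, your description of $\Theta$ as ``shifting in the $W$-direction within each $\pi_W$-fiber'' is self-contradictory: shifting by an element of $W$ changes the $\pi_W$-value, so such a $\Theta$ cannot preserve $\pi_W$-fibers. What you actually want is $\Theta(x) = x + w(x)$ with $w(x)\in W$ determined by the condition $\pi_W(\psi(x+w(x))) = \pi_W(x)$; this $\Theta$ preserves $\pi_V$-fibers (cosets of $W$), not $\pi_W$-fibers, and it is precisely $W$-translation invariance of $\chi\circ\psi$ that then gives $\chi\circ\psi\circ\Theta = \chi\circ\psi$.

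The paper avoids solving that implicit equation pointwise by working with $\psi^{-1}$ instead: one sets $\phi'(x) := \pi_W(x) + \pi_V(\psi^{-1}(x))$, checks directly (using $|\pi_V(z)|\le |z|$, which is where the lift hypothesis on $\pi_W$ enters) that $\phi'$ is a risometry from $B$ into $B$, deduces surjectivity from Lemma~\ref{lem:B_finite}, and takes $\phi := (\phi')^{-1}$. Then $\pi_W\circ\phi' = \pi_W$ is immediate, and $\chi\circ\phi = \chi\circ\psi$ follows from the observation that $\psi^{-1}(\phi(x)) - x \in W$. This is the same $\phi$ your construction would produce, reached with less bookkeeping.
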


\begin{defn}
If a risometry $\phi\colon B \to B$ satisfies $\pi \circ \phi = \pi$ for some projection $\pi\colon K^n \to W$ (as in the lemma), we say that $\phi$ \emph{respects $\pi$-fibers}.
\end{defn}

\begin{proof}[Proof of Lemma~\ref{lem:fiber_compat}]
The proof works exactly as for \cite[Lemma~3.6]{i.whit} (1), again using Lemma~\ref{lem:B_finite} instead of \cite[Lemma~2.33]{i.whit}. While the statement of \cite[Lemma~3.6]{i.whit} does not impose $\chi\circ\phi = \chi\circ \psi$, the map $\phi$ constructed in its proof has this property.
(The idea is to work with the inverses of the straighteners: One can easily modify the inverse of $\psi$ to make it respect $\pi_W$-fibers. An easy computation shows that the modified inverse is a risometry from $B$ to its image; then use Lemma~\ref{lem:B_finite} to see that its image is all of $B$.)
\end{proof}

\begin{rmk}\label{rmk:fix:fiber}
In the setting of Convention~\ref{conv:trans}, we can always find a $W$-straightener $\psi$ of $\chi$ for our given lift $W$ of $\bW$ (by Remark~\ref{rmk:lift:no:matter}), and then Lemma~\ref{lem:fiber_compat} allows us to additionally assume that $\psi$ respects $\pi_W$-fibers. We can then additionally choose a $\pi_W$-fiber $F \subseteq B$ and
a risometry $\psi'\colon F \to F$ and prescribe that $\psi|_F$ should be equal to $\psi'$. Indeed, any risometry $\phi'\colon F \to F$ extends by translation along $W$ to a risometry $\phi\colon B \to B$ (namely, sending $x+w \in B$ to $\phi'(x) + w$ for $x \in F$ and $w \in W$). The pre-composition of our $\psi$ with any such $\phi$ still straightens $\chi$, and by taking $\phi' := (\psi|_F)^{-1} \circ \psi'$, we obtain $(\psi \circ \phi)|_F = \psi|_F \circ \phi' = \psi'$.
\end{rmk}

\begin{modrmk}
If, in Remark~\ref{rmk:fix:fiber}, $\chi$ is definable, then by imposing $\psi|_F$ to be the identity on $F$, we obtain that $\chi \circ \phi$ is also definable. However, it is not always possible to ensure that $\chi \circ \phi$ is definable over the same parameters as $\chi$, and moreover, it might not be possible to take the risometry $\phi$ itself to be definable, as Example~\ref{exa:def:nnd} shows.
\end{modrmk}

\begin{rmk}\label{rmk:fiber_riso}
By applying Lemma~\ref{lem:fiber_compat} (2)
to two different fibers $F_i := \pi_W^{-1}(w_i)$
(for $w_1, w_2 \in \pi_W(B) \subseteq W$), we obtain a risometry
$\alpha\colon F_1 \to F_2$
from $\chi|_{F_1}$ to 
$\chi|_{F_2}$, namely defined by $x \mapsto \phi(\phi^{-1}(x) + w_2 - w_1)$.
This $\alpha$ has the additional property that it ``does not differ too much from the translation by $w_2-w_1$'' in the following sense: For any $x \in F_1$, we have $|\alpha(x) - (x + w_2-w_1)| < |w_2 - w_1|$. (One sees this easily by setting $x = \phi(y)$.)
\end{rmk}

The risometry type of $\chi$ is determined by the risometry type of its restriction to a $\pi_W$-fiber. More precisely, we have:

\begin{lem}\label{lem:fiber_check}
(Applying Convention~\ref{conv:trans}.)
Suppose that we have balls $B_1, B_2 \subseteq K^n$, maps $\chi_i\colon B_i \to S$ both of which are $\bW$-riso-trivial, and $\pi_W$-fibers $F_i \subseteq B_i$ for $i = 1,2$. Then the following are equivalent.
\begin{enumerate}
 \item $\chi_1$ and $\chi_2$ are in risometry.
 \item $\chi_1|_{F_1}$ and $\chi_2|_{F_2}$ are in risometry.
\end{enumerate}
\end{lem}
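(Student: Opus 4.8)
The plan is to prove the two implications separately, with the forward direction being routine and the reverse direction being the real content. For $(1)\Rightarrow(2)$: suppose $\eta\colon B_1 \to B_2$ is a risometry with $\chi_1 = \chi_2 \circ \eta$. Since $\chi_1$ and $\chi_2$ are $\bW$-riso-trivial, I can apply Lemma~\ref{lem:fiber_compat} to each (together with Remark~\ref{rmk:lift:no:matter} to adjust lifts) to obtain $W$-straighteners $\psi_i\colon B_i \to B_i$ respecting $\pi_W$-fibers. Then $\psi_i^{-1}\circ\chi_i$-type data is $W$-translation invariant, and on the fiber $F_i$ the restriction $\psi_i|_{F_i}$ is a risometry between $\chi_i|_{F_i}$ and $(\chi_i\circ\psi_i)|_{F_i}$. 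The composite risometry $\eta$, conjugated by the straighteners, becomes a fiber-preserving-up-to-translation map, and restricting to a single fiber (using Remark~\ref{rmk:fiber_riso} to move between fibers within each $B_i$) yields a risometry between $\chi_1|_{F_1}$ and $\chi_2|_{F_2}$. Actually the cleaner route: the straightened maps $\chi_i\circ\psi_i$ are $W$-translation invariant, so their restrictions to any two $\pi_W$-fibers are risometric (Remark~\ref{rmk:fiber_riso}); hence $\chi_i|_{F_i}$ is risometric to $\chi_i|_{F_i'}$ for any fiber $F_i'$. So the forward direction reduces to checking that $\eta$ can be arranged to send some fiber of $\pi_W$ in $B_1$ to some fiber of $\pi_W$ in $B_2$, which follows by composing $\eta$ with a translation along $\bW$ on the target.

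For $(2)\Rightarrow(1)$, which is the substantive direction: suppose $\beta\colon F_1 \to F_2$ is a risometry with $\chi_1|_{F_1} = \chi_2|_{F_2}\circ\beta$. Using Convention~\ref{conv:trans}, pick lifts $W$ of $\bW$ and $\pi_W$ of $\bar\pi_{\bW}$; let $V := \ker\pi_W$, so $B_i$ splits (via the affine structure through $F_i$) as $F_i$ ``plus'' a $W$-direction. Choose $W$-straighteners $\psi_i\colon B_i \to B_i$ of $\chi_i$ respecting $\pi_W$-fibers (Lemma~\ref{lem:fiber_compat}); then by Remark~\ref{rmk:fix:fiber} I may further prescribe $\psi_i|_{F_i}$ — take $\psi_1|_{F_1} = \mathrm{id}_{F_1}$ and $\psi_2|_{F_2} = \mathrm{id}_{F_2}$, so that on $F_i$ we have $\chi_i\circ\psi_i = \chi_i$. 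Now $\chi_i\circ\psi_i$ is $W$-translation invariant on $B_i$, hence is entirely determined by its restriction to $F_i$, which is $\chi_i|_{F_i}$. Since $\beta$ is a risometry $F_1\to F_2$ identifying $\chi_1|_{F_1}$ with $\chi_2|_{F_2}$, it extends by $W$-translation to a risometry $\tilde\beta\colon B_1 \to B_2$ (sending $x + w$ with $x\in F_1$, $w\in W$ to $\beta(x) + w'$, where $w'$ is the corresponding vector in the target's $W$-direction); here I use that $\beta$ barely differs from a translation in the sense of Remark~\ref{rmk:fiber_riso}, or more directly just that translation extensions of risometries are risometries (the computation behind Remark~\ref{rmk:fix:fiber}). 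By $W$-translation invariance, $\tilde\beta$ intertwines $\chi_1\circ\psi_1$ with $\chi_2\circ\psi_2$. Finally, the desired risometry from $\chi_1$ to $\chi_2$ is $\psi_2 \circ \tilde\beta \circ \psi_1^{-1}$.

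The main obstacle I anticipate is bookkeeping around affine subspaces and lifts: the balls $B_i$ live inside $K^n$ but the ``$W$-direction'' and the fibers $F_i$ must be handled compatibly via Definition~\ref{defn:riso-triv-sub} and Convention~\ref{conv:trans}, and one must be careful that the extension-by-translation step $\beta \mapsto \tilde\beta$ genuinely produces a risometry (this is exactly the content hidden in Remark~\ref{rmk:fix:fiber}'s construction, where extending a fiber risometry along $W$ preserves the risometry condition because the $W$-component contributes an exact translation while the fiber component contributes a risometry, and $\rv^{(n)}$ of the sum picks out whichever component is larger). A secondary subtlety is ensuring that $\psi_i$ can simultaneously be a $W$-straightener, respect $\pi_W$-fibers, and restrict to the identity on the chosen fiber $F_i$ — but this is precisely what Lemma~\ref{lem:fiber_compat} combined with Remark~\ref{rmk:fix:fiber} delivers, so no new work is needed there. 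Everything else is a routine chase through the definitions, with Lemma~\ref{lem:B_finite} implicitly doing the work of guaranteeing surjectivity wherever a candidate map into $B_i$ is constructed.
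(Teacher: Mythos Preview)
Your argument for $(2)\Rightarrow(1)$ is correct and matches the paper's proof: straighten each $\chi_i$ with a $\pi_W$-fiber-preserving $\psi_i$ fixing $F_i$ pointwise (Remark~\ref{rmk:fix:fiber}), extend the fiber risometry $\beta$ along $W$, and conjugate back.

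Your argument for $(1)\Rightarrow(2)$ has a genuine gap. Both your ``first attempt'' and your ``cleaner route'' rely on the claim that the given risometry $\eta$ (or its conjugate $\psi_2^{-1}\circ\eta\circ\psi_1$) can be made to send a $\pi_W$-fiber to a $\pi_W$-fiber by composing with a translation along $W$. This is false: take $n=2$, $W=K\times\{0\}$, and $\eta(x,y)=(x+\epsilon y,\,y)$ with $|\epsilon|<1$; then $\eta$ sends the fiber $\{0\}\times B$ to $\{(\epsilon y,y)\}$, and no translation in the $W$-direction fixes this, since the required correction $-\epsilon y$ depends on $y$. Choosing the straighteners $\psi_1,\psi_2$ independently loses the link between the two sides. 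The paper's trick is to \emph{not} choose $\psi_2$ independently: after reducing to $B_1=B_2$ and $F_1=F_2$, pick any straightener $\psi_1$ of $\chi_1$ and set $\psi_2:=\phi\circ\psi_1$ (where $\phi$ is the given risometry from $\chi_1$ to $\chi_2$). This forces $\chi_1\circ\psi_1=\chi_2\circ\psi_2$ \emph{exactly}. Now apply Lemma~\ref{lem:fiber_compat} separately for $i=1,2$ to obtain fiber-preserving risometries $\phi_i$ with $\chi_i\circ\phi_i=\chi_i\circ\psi_i$; restricting to $F_1$ gives risometries $(\chi_i\circ\psi_i)|_{F_1}\to\chi_i|_{F_1}$, and since the left-hand sides coincide for $i=1,2$, you are done.
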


\begin{proof}
``(1) $\Rightarrow$ (2)'':
We may assume that $B_2 = B_1$ and that $F_2 = F_1$, namely by choosing $x_i \in F_i$ for $i=1,2$ and replacing $\chi_2$ by the map $B_1 \to S, x \mapsto \chi_2(x + x_2-x_1)$.

Pick a straightener $\psi_1$ of $\chi_1$ and a risometry $\phi\colon B_1 \to B_1$ satisfying $\chi_1 = \chi_2 \circ \phi$. Note that $\psi_2 := \phi \circ \psi_1$ is a straightener of $\chi_2$.
Using Lemma~\ref{lem:fiber_compat}, we find a risometry from $(\chi_i\circ \psi_i)|_{F_1}$ to $\chi_i|_{F_1}$, for $i=1,2$. Now (2) follows using
that $\chi_1\circ \psi_1 = \chi_2\circ \psi_2$.

``(2) $\Rightarrow$ (1)'':
Without loss, $\chi_1$ and $\chi_2$ are $W$-translation invariant. (Otherwise, compose them with a straightener, and notice that this does not change the risometry type of $\chi_i|_{F_i}$, by Lemma~\ref{lem:fiber_compat}.) Now any risometry $\phi'\colon F_1 \to F_2$ from $\chi_1|_{F_1}$ to $\chi_2|_{F_2}$ extends to a risometry from $\chi_1$ to $\chi_2$ by ``translating $\phi'$ along $W$'': send $x+w \in B_1$ to $\phi'(x)+w$ for $x \in F_1$ and $w \in W$.
\end{proof}

\begin{lem}\label{lem:fiber_triv}
(Applying Convention~\ref{conv:trans}.)
For any $\pi_W$-fiber $F \subseteq B$ and any vector subspace $\bU \subseteq \ker \bar\pi_{\bar W}$, 
$\chi|_F$ is $\bU$-riso-trivial on $F$
if and only if $\chi$ is $\bU$-riso-trivial on $B$, if and only if $\chi$ is $(\bW+ \bU)$-riso-trivial on $B$.
\end{lem}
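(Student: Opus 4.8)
The plan is to prove the chain of equivalences by establishing two implications in each direction, treating the three statements as (A) "$\chi|_F$ is $\bU$-riso-trivial on $F$", (B) "$\chi$ is $\bU$-riso-trivial on $B$", (C) "$\chi$ is $(\bW+\bU)$-riso-trivial on $B$". Note first that $\bW$-riso-triviality of $\chi$ on $B$ is part of Convention~\ref{conv:trans}, and since $\bU \subseteq \ker\bar\pi_{\bW}$ is a complement-direction to $\bW$, the sum $\bW + \bU$ is direct; in particular a lift of $\bW+\bU$ can be taken to be $W \oplus U$ for a lift $U$ of $\bU$ inside $\ker\pi_W$ (using Remark~\ref{rmk:lift:no:matter} to fix lifts conveniently).

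For the implication (C) $\Rightarrow$ (B), this is immediate: if $\chi$ is $(\bW+\bU)$-riso-trivial on $B$ then it is in particular $\bU$-riso-trivial on $B$, since $\bU \subseteq \bW+\bU$ and riso-triviality with respect to a subspace of the riso-triviality space always holds (the observation right after Definition~\ref{defn:rtsp}, together with Proposition~\ref{prop:rtsp}). For (B) $\Rightarrow$ (C): if $\chi$ is $\bU$-riso-trivial on $B$, then since it is also $\bW$-riso-trivial on $B$ by hypothesis, Lemma~\ref{lem:V-additive} (the additivity of riso-triviality) gives that $\chi$ is $(\bW+\bU)$-riso-trivial on $B$. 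So (B) $\Leftrightarrow$ (C) is essentially free.

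The substance is the equivalence (A) $\Leftrightarrow$ (B). For the direction (B) $\Rightarrow$ (A), start from a straightener of $\chi$ on $B$ witnessing $(\bW+\bU)$-riso-triviality (using (B) $\Leftrightarrow$ (C)), with lift $W \oplus U$. Apply Lemma~\ref{lem:fiber_compat} to replace it by a straightener $\phi$ that respects $\pi_W$-fibers, with $\chi\circ\phi$ still $(W\oplus U)$-translation invariant. Then $\phi$ restricts to a risometry $\phi|_F\colon F \to F$ (by the $\pi_W$-fiber-respecting property), and $(\chi\circ\phi)|_F = (\chi|_F)\circ(\phi|_F)$ is $U$-translation invariant because $U \subseteq W \oplus U$ and, being inside $\ker\pi_W = $ the direction of $F$, the translation invariance genuinely lives on $F$. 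Hence $\phi|_F$ witnesses $\bU$-riso-triviality of $\chi|_F$ on $F$. For the converse (A) $\Rightarrow$ (B): given a straightener $\psi'\colon F \to F$ witnessing $\bU$-riso-triviality of $\chi|_F$, first apply Lemma~\ref{lem:fiber_compat} and Remark~\ref{rmk:fix:fiber} to fix a $W$-straightener $\psi$ of $\chi$ on $B$ that respects $\pi_W$-fibers and whose restriction to $F$ equals $\psi'$; then $\chi \circ \psi$ is $W$-translation invariant and its restriction to $F$ equals $(\chi|_F)\circ\psi'$, which is $U$-translation invariant. Extending by translation along $W$ (as in the proof of Lemma~\ref{lem:fiber_check}, "(2)$\Rightarrow$(1)"), one checks that $\chi\circ\psi$ is actually $(W\oplus U)$-translation invariant: for $x, x' \in B$ with $x - x' \in W \oplus U$, write $x - x' = w + u$; translating $x'$ by $w$ first (harmless, $\chi\circ\psi$ is $W$-invariant) reduces to the case $x - x' = u \in U$, which is handled fiberwise. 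Thus $\psi$ witnesses $(\bW+\bU)$-riso-triviality of $\chi$ on $B$, giving (C), hence (B).

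The main obstacle I expect is bookkeeping around the fiber structure: one must be careful that "$U$-translation invariant on $F$" and "$U$-translation invariant on $B$ (given already $W$-translation invariant)" match up, which relies on $F$ being a $\pi_W$-fiber (so its direction is exactly $\ker\pi_W \supseteq U$) rather than an arbitrary affine slice — this transversality is precisely what Convention~\ref{conv:trans} guarantees, and Remark~\ref{rmk:fix:fiber} is what lets us prescribe the restriction of the straightener to $F$. No new ideas beyond Lemmas~\ref{lem:V-additive}, \ref{lem:fiber_compat} and Remark~\ref{rmk:fix:fiber} should be needed.
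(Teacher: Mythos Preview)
Your proposal is correct and follows exactly the approach the paper intends: the paper's own proof is the single sentence ``This follows from Lemmas~\ref{lem:V-additive} and \ref{lem:fiber_compat}'', and you have simply unpacked what that sentence means, using Remark~\ref{rmk:fix:fiber} (itself a consequence of Lemma~\ref{lem:fiber_compat}) for the direction (A) $\Rightarrow$ (C). The only point worth making explicit in your write-up of (A) $\Rightarrow$ (C) is that $U$-translation invariance of $\chi\circ\psi$ on the single fiber $F$ propagates to every $\pi_W$-fiber via the already-established $W$-translation invariance; you gesture at this with ``handled fiberwise'', and it is indeed straightforward, but it is the one step a reader might pause on.
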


\begin{proof}
This follows from Lemmas~\ref{lem:V-additive} and \ref{lem:fiber_compat}.
\end{proof}

The next lemma provides a simple way to compute the riso-triviality space of a set $X \subseteq K^n$ in certain simple cases.

\begin{lem}\label{lem:surface}
Let $B \subseteq K^n, \bW \subseteq \RF(K)^n, W \subseteq K^n, \pi_W\colon B \to W$ be as in
Convention~\ref{conv:trans}, and suppose that
$X \subseteq K^n$ is a subset such that $\pi_W(X \cap B) = \pi_W(B)$ and such that for every $x_1, x_2 \in X \cap B$, we have $\rv^{(n)}(x_1 - x_2) \in \rv^{(n)}(W)$.
Then $\rtsp_B(X) = \bW$.
\end{lem}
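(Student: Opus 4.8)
The plan is to prove the two containments $\bW \subseteq \rtsp_B(X)$ and $\rtsp_B(X) \subseteq \bW$ separately. The first is essentially a direct application of Lemma~\ref{lem:surface}'s hypotheses together with the vertical-translation construction from Example~\ref{exa:vert:transl}, suitably packaged through the fiber lemmas. The second is where the genuine content lies.

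For the containment $\bW \subseteq \rtsp_B(X)$: I would show that $X$ (i.e.\ $\mathbbm1_X$) is $\bW$-riso-trivial on $B$ by producing a $W$-straightener. The idea is to build a risometry $\phi\colon B \to B$ that, on each $\pi_W$-fiber $F \subseteq B$, moves the single point of $X \cap F$ onto a fixed affine translate of $W$. More precisely, fix one $\pi_W$-fiber $F_0$ and a point $x_0 \in X \cap F_0$ (which exists since $\pi_W(X\cap B) = \pi_W(B)$). The condition $\rv^{(n)}(x_1 - x_2)\in\rv^{(n)}(W)$ for $x_1,x_2\in X\cap B$ says precisely that $X \cap B$ is, fiber by fiber, a ``graph over $W$'' whose fiber-wise position varies only inside the directions of $W$ up to something of smaller valuation. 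I would first note that the assumption forces, for each $w\in\pi_W(B)$, the fiber $X\cap\pi_W^{-1}(w)$ to consist of a point (or at least to be contained in a small ball); one deduces this because two points $x_1,x_2$ in the same $\pi_W$-fiber satisfy $\pi_W(x_1-x_2)=0$, so $x_1-x_2\in\ker\pi_W=:V$, yet also $\rv^{(n)}(x_1-x_2)\in\rv^{(n)}(W)$, and since $V$ lifts a complement of $\bW$ this forces $|x_1-x_2|<|x_1-x_2|$, a contradiction unless $x_1=x_2$; so $X$ meets each $\pi_W$-fiber in at most one point, hence (by surjectivity of $\pi_W|_{X\cap B}$) in exactly one point. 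Writing that point as $s(w)$ for $w\in\pi_W(B)\subseteq W$, the map $w\mapsto s(w)$ then satisfies $|s(w)-s(w')-(w-w')|<|w-w'|$ for $w\neq w'$, precisely the contractivity hypothesis of Example~\ref{exa:vert:transl} (in coordinates adapted to $W\oplus V$, with the $V$-component of $s$ playing the role of the function $f$). The risometry of that example then straightens $X$ to a $W$-translation-invariant set, witnessing $\bW\subseteq\rtsp_B(X)$. Alternatively, and perhaps cleaner, one restricts to a fiber $F$ and applies Lemma~\ref{lem:fiber_triv}: the hypotheses say $X\cap F$ is a single point, which is trivially $\res(\ker\pi_W^{F})$-riso-trivial on $F$, but that is not quite what we want; so the direct Example~\ref{exa:vert:transl} argument is the safer route.

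For the containment $\rtsp_B(X)\subseteq\bW$: let $\bV := \rtsp_B(X)$; by Lemma~\ref{lem:V-additive} it suffices to show $\bW+\bV=\bW$, equivalently $\bV\subseteq\bW$. Suppose not; then there is $\bar v\in\bV\setminus\bW$. Using Lemma~\ref{lem:fiber_triv} with the projection $\pi_W$ and the subspace $\bar\pi_{\bW}(\bar v$-direction$)$ — more carefully: since $\bW\subseteq\bV$ and $\bV$ is a subspace, $\bV\supseteq\bW+\RF(K)\bar v$ has dimension $>\dim\bW$. Restrict to a $\pi_W$-fiber $F$: by Lemma~\ref{lem:fiber_triv}, $X$ being $\bV$-riso-trivial on $B$ implies $X\cap F$ is $\bU$-riso-trivial on $F$ for $\bU := \bV\cap\ker\bar\pi_{\bW}$, which is nonzero (it has dimension $\dim\bV - \dim\bW > 0$). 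But $X\cap F$ is a single point in $F$ (shown above), and a single point in a ball $F$ of dimension $\ge 1$ is \emph{not} $1$-riso-trivial: indeed, $\bU$-riso-triviality would give a risometry $\phi\colon F\to F$ with $\phi^{-1}(X\cap F)$ invariant under a nonzero direction $u$, but a translate of a lift of $u$ inside $F$ moves the single point to a \emph{different} point (two points at distance $|u|\ne 0$), so $\phi^{-1}(X\cap F)$ would have to contain at least two points; since risometries are bijections, $X\cap F$ then has at least two points, contradiction. Hence $\bV\subseteq\bW$, and combined with the first part, $\rtsp_B(X)=\bW$.

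\textbf{Main obstacle.} The only real subtlety is the bookkeeping around affine subspaces and lifts: the lemma allows $B$ to sit inside an affine subspace $F_0 = a+W'$ of $K^n$, in which case (by Definition~\ref{defn:riso-triv-sub}) all the riso-triviality subspaces live inside $\res(W')$ and all lifts must lie in $W'$. One must check that $\ker\pi_W$ (a lift of $\ker\bar\pi_{\bW}$) is correctly a subspace of $W'$ and that $\bW\subseteq\res(W')$, so that Convention~\ref{conv:trans} is actually applicable in this relative setting; this is forced by the hypothesis that $\pi_W$ is a lift of the projection $\bar\pi_{\bW}$ onto $\bW$, which implicitly carries $\bW\subseteq\res(W')$. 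The other place to be careful is the claim that $X$ meets each $\pi_W$-fiber in exactly one point: surjectivity of $\pi_W$ restricted to $X\cap B$ is given, and the ``at most one point'' direction is the short valuation-theoretic argument above using that $\ker\pi_W$ lifts a complement of $\bW$. Once these two points are in place, everything else is routine application of Lemmas~\ref{lem:V-additive}, \ref{lem:fiber_compat}, \ref{lem:fiber_triv} and Example~\ref{exa:vert:transl}.
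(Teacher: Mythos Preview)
Your proof is correct and matches the paper's approach closely: the paper also first shows that each $\pi_W$-fiber meets $X$ in exactly one point (by the same valuation argument), then builds the explicit straightener $\psi(x) = x - \pi_W(x) + f(\pi_W(x))$ (your invocation of Example~\ref{exa:vert:transl} amounts to the same construction), and finally uses Lemma~\ref{lem:fiber_triv} together with the singleton-in-fiber observation for the upper bound. Your ``Main obstacle'' about affine subspaces is a non-issue here, since Convention~\ref{conv:trans} takes $B$ to be a full-dimensional ball in $K^n$.
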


\begin{rmk}\label{rmk:rvW}
Note that the set $\rv^{(n)}(W) = \{\rv^{(n)}(w) \mid w \in W\}$ only depends on $\bW$ (and not on the chosen lift $W$). More precisely, for $x \in K^n$, we have $\rv^{(n)}(x)\in \rv^{(n)}(W)$ if and only if $\res(K\cdot x) \subseteq \bW$. (One sees this by multiplying $x$ with any $r \in K$ satisfying $|r| = |x|^{-1}$.)
\end{rmk}

\begin{proof}[Proof of Lemma~\ref{lem:surface}]
Firstly, note that each fiber $\pi_W^{-1}(w) \subseteq B$ contains exactly one element of $X$.
To see this, note that if $x_1,x_2$ lie in the same $\pi_W$-fiber, then using $\res(K\cdot(x_1-x_2)) \subseteq \res(\ker\pi_W)$, one deduces $\res(K\cdot(x_1-x_2)) \not\subseteq \bW$, contradicting the assumption that $\rv^{(n)}(x_1 - x_2) \in \rv^{(n)}(W)$.

Let $f\colon \pi_W(B) \to B$ be the map sending $w$ to the unique element of $\pi_W^{-1}(w) \cap X$.
Let us assume (without loss) that $0 \in B$. This implies that $\pi_W(B) = W \cap B$ and that
$x \mapsto x - \pi_W(x) + f(\pi_W(x))$ defines a map $\psi\colon B \to B$. The preimage $\psi^{-1}(X \cap B)$ is equal to $W \cap B$, and using that
$\rv^{(n)}(f(w_1) - f(w_2)) \in \rv^{(n)}(W)$ for any $w_1,w_2 \in \pi_W(B)$, one deduces that $\psi$ is a risometry, hence witnessing $\bW$-riso-triviality of $X$. On the other hand,
since for $w \in W \cap B$, the fiber $\pi_W^{-1}(w) \cap X$ is a singleton, it is not riso-trivial at all on $\pi_W^{-1}(w)$, so $\rtsp_B X$ cannot be strictly bigger than $\bW$ (by Lemma~\ref{lem:fiber_triv}).
\end{proof}

Next, we show that if $X \subseteq K^n$ is riso-trivial, then so is its topological closure and also any tubular neighbourhood. More precisely, we have:

\begin{lem}\label{lem:triv_cl}
Suppose that $B \subseteq K^n$ is a ball, that $X \subseteq K^n$ is a set, and that $\psi\colon B \to B$ is a $W$-straightener of $X$ on $B$, for some vector subspace $W \subseteq K^n$. Then $\psi$ is also a $W$-straightener of the following sets:
\begin{enumerate}
 \item $X' := \Cl(X)$, the topological closure of $X$ with respect to the valuation topology;
 \item $X'' := X + B_{\le\lambda}(0) = \{y \in K^n \mid \exists x \in X\colon |x-y| \le \lambda\}$, for any fixed $\lambda \in \VG(K)$.
\end{enumerate}
\end{lem}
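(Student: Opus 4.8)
The plan is to check directly that the straightener $\psi$ for $X$ already straightens $X'$ and $X''$, using the fact that a risometry is an isometry (so it preserves distances between points and sets) together with the definitions of topological closure and tubular neighbourhood. The key observation is that $\psi$ being a $W$-straightener of $X$ means $\psi^{-1}(X \cap B)$ is $W$-translation invariant within $B$, and we must show the same for $\psi^{-1}(X' \cap B)$ and $\psi^{-1}(X'' \cap B)$.

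For part (1): first I would note that since $\psi\colon B \to B$ is a homeomorphism of $B$ (it is an isometry, hence continuous, with continuous inverse), we have $\psi^{-1}(X' \cap B) = \Cl_B(\psi^{-1}(X \cap B))$, where $\Cl_B$ denotes closure relative to $B$. Then it suffices to observe that the closure (relative to $B$) of a $W$-translation invariant subset of $B$ is again $W$-translation invariant: if $Z \subseteq B$ satisfies $z \in Z, z - z' \in W, z' \in B \Rightarrow z' \in Z$, and $y \in \Cl_B(Z)$ with $y - y' \in W$, $y' \in B$, then translating a sequence (or net) in $Z$ converging to $y$ by the fixed vector $y'-y \in W$ gives a net in $Z$ (using translation invariance of $Z$, provided the translates stay in $B$, which holds since $B$ is closed under the relevant translations once a point and its $W$-translate both lie in $B$... more carefully: one works inside the $W$-coset structure) converging to $y'$, so $y' \in \Cl_B(Z)$. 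A small amount of care is needed here because translation by a vector of $W$ need not map $B$ into $B$ globally, but it does map the relevant fibers appropriately; alternatively, and more cleanly, one factors $\psi$ through a projection using Convention~\ref{conv:trans} / Lemma~\ref{lem:fiber_compat} so that $W$-translation invariance becomes "constancy along $\pi_W$-fibers" and closure is taken fiberwise.

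For part (2): I would use that $\psi$ is an isometry, so for $y \in B$, $x \in X$ with $|x - y| \le \lambda$, applying $\psi^{-1}$ preserves the distance; concretely, $y \in X'' \cap B$ iff there exists $x \in X$ with $|y - x| \le \lambda$, and one checks that we may take $x \in B$ as well (if $\lambda < \operatorname{rad}(B)$ then $x$ is automatically in $B$; if not, then $X'' \cap B = B$ and there is nothing to prove, or one handles the boundary case separately). Then $\psi^{-1}(y) \in \psi^{-1}(X'' \cap B)$ iff $|\psi^{-1}(y) - \psi^{-1}(x)| \le \lambda$ for some $x \in X \cap B$, i.e.\ iff $\psi^{-1}(y)$ is within $\lambda$ of $\psi^{-1}(X \cap B)$. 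So $\psi^{-1}(X'' \cap B) = (\psi^{-1}(X \cap B)) + B_{\le \lambda}(0)$, intersected with $B$, and it remains to note that adding the $W$-invariant ball $B_{\le\lambda}(0)$-neighbourhood to a $W$-translation invariant set keeps it $W$-translation invariant — which is immediate since the operation $Z \mapsto Z + B_{\le\lambda}(0)$ commutes with translations.

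The main obstacle I anticipate is purely bookkeeping: making precise the interplay between "$W$-translation invariant" (a global notion on $K^n$) and the restriction to the ball $B$, i.e.\ that translations by vectors in $W$ behave well with respect to $B$. The cleanest route is probably to invoke Lemma~\ref{lem:fiber_compat} to replace $\psi$ by a straightener respecting $\pi_W$-fibers and argue fiber-by-fiber, where both "topological closure" and "$+B_{\le\lambda}(0)$" act compatibly; once that is set up, both parts reduce to one-line observations about isometries and about the fact that closure and Minkowski sum with a ball commute with the relevant translations.
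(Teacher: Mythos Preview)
Your proposal is correct and follows essentially the same approach as the paper: for (1), use that $\psi$ is a homeomorphism so $\psi^{-1}(\Cl(X)\cap B)=\Cl_B(\psi^{-1}(X\cap B))$, and closure preserves $W$-translation invariance; for (2), handle the radius case distinction first (the paper phrases this as ``we may assume $B$ has strictly smaller cut radius than $B_{\le\lambda}(0)$, otherwise the statement is trivial''), then use that $\psi$ is an isometry to get $\psi^{-1}(X+B_{\le\lambda}(0))=\psi^{-1}(X)+B_{\le\lambda}(0)$. Your bookkeeping worry about $W$-translation invariance relativized to $B$ is not a real obstacle (for $y,y'\in B$ with $y-y'\in W$, any sufficiently small ball around $y$ translates into $B$ by the ball axiom), and the paper simply glosses over it; invoking Lemma~\ref{lem:fiber_compat} is unnecessary.
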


\begin{proof}
That $\psi$ is a risometry implies that it is continuous. We therefore have
$\psi^{-1}(\Cl(X) \cap B) = \Cl(\psi^{-1}(X \cap B))$, which is $W$-translation invariant, since
$\psi^{-1}(X \cap B)$ is.
For (2), first note that we may assume that $B$ has a strictly smaller cut radius than $B_{\le\lambda}(0)$, since otherwise, $X''$ is either disjoint from $B$ or contains $B$, and the statement is trivial. With this assumption, if we replace $X$ by $X \cap B$, this also replaces $X''$ by $X'' \cap B$; we may therefore additionally assume that $X \subseteq B$. Then we use that any risometry is an isometry to deduce that
$\psi^{-1}(X + B_{\le\lambda}(0)) = \psi^{-1}(X)+ B_{\le\lambda}(0)$.
This is $W$-translation invariant since $\psi^{-1}(X)$ is.
\end{proof}

We end this section with a series of lemmas stating that riso-triviality and the risometry type of a map on arbitrary ball $B$ are determined by riso-triviality and the risometry type on the closed subballs of $B$.

\begin{lem}\label{lem:closed2all}
Suppose that a ball $B \subseteq K^n$ and a map $\chi\colon B \to S$ are given, and also a vector subspace $\bV \subseteq \RF(K)^n$. 
Then $\chi$ is $\bV$-riso-trivial on $B$ if and only if for every closed ball $B' \subseteq B$, $\chi$ is $\bV$-riso-trivial on $B'$.
\end{lem}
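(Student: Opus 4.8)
The plan is to prove both directions of the equivalence in Lemma~\ref{lem:closed2all}, with the interesting content in the direction that builds a $\bV$-straightener on all of $B$ out of straighteners on closed subballs. The forward direction is immediate: if $\psi\colon B \to B$ is a $V$-straightener witnessing $\bV$-riso-triviality of $\chi$ on $B$ (for a fixed lift $V$ of $\bV$), then for any closed ball $B' \subseteq B$ one checks that $\psi(B') = B'$, because any risometry is an isometry and hence preserves distances to a fixed centre, so $\psi$ maps $B'$ bijectively to a closed ball of the same radius containing $\psi$ of its centre; a short argument (or a direct application of the spherical-completeness machinery) shows this image is again $B'$. Then $\psi|_{B'}$ is a $V$-straightener of $\chi$ on $B'$. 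Actually one does not even need $\psi(B')=B'$: restricting $\psi$ to $\psi^{-1}(B')$, which is again a closed ball of the same radius, and using that translation-invariance is inherited, gives the claim; but it is cleanest to first observe $\psi(B') = B'$.

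For the converse, the idea is to exhaust $B$ by an increasing chain of closed balls and glue the straighteners, using spherical completeness to take a limit. Fix a lift $V \subseteq K^n$ of $\bV$ and fix a centre $a \in B$. Write $B$ as the union of the closed balls $B_\lambda := B_{\le\lambda}(a)$ for $\lambda$ ranging over the cut radius of $B$ (an upward-directed set, possibly with no maximum). On each $B_\lambda$ we are given a $V$-straightener $\psi_\lambda$ of $\chi$. The key point is to arrange these straighteners coherently: I would build them by transfinite/directed recursion, at each stage extending the previously chosen straightener on $B_\mu$ (for $\mu < \lambda$) to one on $B_\lambda$. This extension step is exactly what Remark~\ref{rmk:fix:fiber} provides in the fibered setting, but here it is even simpler: given a $V$-straightener $\psi_\mu$ on $B_\mu$ and \emph{any} $V$-straightener $\psi_\lambda$ on $B_\lambda$, the composition $\psi_\lambda \circ (\psi_\lambda|_{B_\mu})^{-1} \circ \psi_\mu$ is still a $V$-straightener on $B_\lambda$ (since $\psi_\lambda|_{B_\mu}$ maps $B_\mu$ to $B_\mu$ by the isometry argument above, and pre-composition with a risometry preserving $V$-translation-invariance of nothing—wait, one must be careful) ; more carefully, I would instead extend in the other direction: take $\psi_\mu$ on $B_\mu$, and extend it to $B_\lambda$ by defining it on each maximal closed subball $B' \subseteq B_\lambda \setminus B_\mu$ via a translated copy of a straightener on $B'$ along the appropriate coset, checking this is still a risometry by the same computation as in the proof of Lemma~\ref{lem:fin-riso}(1). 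This yields a coherent family $(\psi_\lambda)_\lambda$ with $\psi_\lambda|_{B_\mu} = \psi_\mu$ for $\mu \le \lambda$, so they glue to a single bijection $\psi\colon B \to B$.

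It then remains to check that the glued map $\psi$ is a $V$-straightener of $\chi$ on $B$. That $\psi$ is a risometry is local in the pair $(x_1,x_2)$: given distinct $x_1, x_2 \in B$, pick $\lambda$ large enough that both lie in $B_\lambda$, and apply the risometry inequality for $\psi_\lambda$. That $\chi \circ \psi$ is $V$-translation-invariant is similarly checked on each $B_\lambda$: given $x, x' \in B$ with $x - x' \in V$, pick $\lambda$ with $x, x' \in B_\lambda$ (possible since $V$-translates stay within bounded distance, and $B_\lambda$ can be enlarged), and use that $\chi \circ \psi_\lambda = \chi \circ \psi$ on $B_\lambda$ is $V$-invariant. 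Surjectivity of $\psi$ onto $B$ is automatic from the coherent construction (each point of $B$ lies in some $B_\lambda = \psi_\lambda(B_\lambda)$), or, if one prefers to construct $\psi$ more loosely and only check it is a risometry into $B$, one invokes Lemma~\ref{lem:B_finite} to upgrade to surjectivity — this is the one place spherical completeness enters, exactly as flagged after that lemma. \textbf{The main obstacle} is the coherent gluing: naively chosen straighteners $\psi_\lambda$ on the various $B_\lambda$ need not agree on overlaps, so one must do the recursion carefully (extending rather than re-choosing), and verify at the extension step that translating a straightener of $\chi$ on a subball along a coset of $V$ still straightens $\chi$ there — which holds precisely because $V$-translation-invariance is a coset-local condition. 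Once the family is coherent, everything else is a routine "pick $\lambda$ large enough" argument.
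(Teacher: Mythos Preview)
Your forward direction is essentially the paper's argument, though note that $\psi(B') = B'$ is \emph{false} in general (any risometry translating a point of $B'$ by more than the radius of $B'$ gives a counterexample); you correctly observe this is not needed and that restricting to $\psi^{-1}(B')$ and then pre-composing with a translation $B' \to \psi^{-1}(B')$ does the job.

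The backward direction has a genuine gap. You try to build a \emph{coherent} family $(\psi_\lambda)_\lambda$ with $\psi_\lambda|_{B_\mu} = \psi_\mu$ by transfinite recursion, but the extension step is never made to work: your first attempt (composing with an inverse) you yourself abandon, and your second (``define on each maximal closed subball $B' \subseteq B_\lambda \setminus B_\mu$ via a translated copy of a straightener on $B'$'') does not explain how to ensure that $\chi \circ \psi_\lambda$ is $V$-invariant \emph{across} the pieces. If $x \in B'$ and $y \in B_\mu$ lie in the same $V$-coset, there is no reason the independently chosen straighteners on $B'$ and $B_\mu$ give $\chi(\psi_{B'}(x)) = \chi(\psi_\mu(y))$, and a translation alone cannot fix this. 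Your final checks (``pick $\lambda$ large enough'') are correct \emph{given} coherence, but coherence is exactly what you have not established.

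The paper avoids this entirely by \emph{not} asking for coherence. It fixes a lift $\pi_V\colon K^n \to V$ of a projection onto $\bV$, lets $F$ be the $\pi_V$-fiber through the chosen centre $x_0$, and uses Remark~\ref{rmk:fix:fiber} to take each $\phi_\lambda$ (chosen independently on $B_{\le\lambda}(x_0)$ from a well-ordered cofinal $\Lambda$) to be the \emph{identity on $F \cap B_{\le\lambda}(x_0)$}. Then one simply sets $\phi(x) := \phi_{\lambda(x)}(x)$ for the minimal $\lambda(x) \in \Lambda$ with $x \in B_{\le\lambda(x)}(x_0)$. For $V$-invariance: given $x \in B$, let $f \in F$ be the unique point with $x - f \in V$; the norm-splitting $|v + w| = \max(|v|,|w|)$ for $v \in V$, $w \in \ker\pi_V$ (which holds because $\res(V)$ and $\res(\ker\pi_V)$ are complementary) gives $|f - x_0| \le |x - x_0|$, so $f \in B_{\le\lambda(x)}(x_0)$, and then
\[
\chi(\phi(x)) = \chi(\phi_{\lambda(x)}(x)) = \chi(\phi_{\lambda(x)}(f)) = \chi(f),
\]
which depends only on the $V$-coset of $x$. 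The risometry check uses $\phi_\lambda(x_0) = x_0$ (since $x_0 \in F$) to compare points in different annuli via $x_0$. This normalization on a single transversal fiber is the missing idea in your argument.
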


\begin{proof}
``$\Rightarrow$'': Choose a straightener $\phi$ of $\chi$ on $B$.
Then $B'' := \phi^{-1}(B')$ is a closed ball of the same radius as $B'$, so by pre-composing $\phi|_{B''}$ with a translation sending
$B'$ to $B''$, we obtain a straightener of $\chi$ on $B'$.

``$\Leftarrow$'': Fix any $x_0 \in B$ and let $\Omega := \{|x - x_0| \mid x \in B, x \ne x_0\} \sseteq \VG^\times(K)$ be the cut radius of $B$. Choose a well-ordered subset $\Lambda \sseteq \Omega$ which is co-final in $\Omega$, i.e., such that the suprema $\sup(\Lambda)$ and $\sup(\Omega)$ are equal.
Also fix a lift $V \subseteq K^n$ of $\bar V$ and a lift $\pi_V\colon B \to V$ of a projection $\bar\pi_{\bar V}\colon \RF(K)^n \to \bar V$. Denote the fiber of $\pi_V$ containing $x_0$ by $F$.

For each $\lambda \in \Lambda$, choose a $V$-straightener $\phi_\lambda$ of $\chi$ on $B_{\le\lambda}(x_0)$ which is the identity on $F \cap B_{\le\lambda}(x_0)$ (using Remark~\ref{rmk:fix:fiber}).
Then define $\phi: B \ra B$ by $\phi(x) := \phi_{\lambda(x)}(x)$, where $\lambda(x) \in \Lambda$ is minimal such that $x \in B_{\leq \lambda(x)}(x_0)$.
Then an easy computation shows that $\phi$ is a risometry, and to see that $\chi\circ \phi$ is $V$-translation invariant, note that the the maps $\phi_\lambda$ all agree on $F$.
\end{proof}

\begin{cor}\label{cor:closed2all}
Suppose that a ball $B \subseteq K^n$ and a map $\chi\colon B \to S$ are given.
Then there exists a closed ball $B' \subseteq B$ (possibly equal to $B$) with $\rtsp_B (\chi) = \rtsp_{B'} (\chi)$. In particular, $\chi$ is $d$-riso-trivial on $B$ (for some given $d \le n$) if and only if for every closed ball $B' \subseteq B$, $\chi$ is $d$-riso-trivial on $B'$.
\end{cor}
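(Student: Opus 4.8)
The plan is to derive both assertions from Lemma~\ref{lem:closed2all} by a nestedness argument. For the main assertion, fix a point $x_0 \in B$ and let $\Omega \subseteq \VG^\times(K)$ be the cut radius of $B$, so that $B = \bigcup_{\lambda \in \Omega} B_{\le\lambda}(x_0)$ is an increasing union of closed balls. By the ``$\Rightarrow$'' direction of Lemma~\ref{lem:closed2all} applied to pairs of nested closed balls, the subspaces $\rtsp_{B_{\le\lambda}(x_0)}(\chi)$ form a chain that decreases in $\lambda$; hence their dimensions give a non-increasing function $\Omega \to \{0,\dots,n\}$, which therefore takes a minimal value, attained at some $\lambda_0 \in \Omega$ and equal to that minimal value for all $\lambda \ge \lambda_0$. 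Since a decreasing chain of subspaces of a fixed dimension in a finite-dimensional vector space is constant, we get $\rtsp_{B_{\le\lambda}(x_0)}(\chi) = \rtsp_{B_{\le\lambda_0}(x_0)}(\chi) =: \bV_0$ for all $\lambda \ge \lambda_0$, while for $\lambda \le \lambda_0$ we have $\rtsp_{B_{\le\lambda}(x_0)}(\chi) \supseteq \bV_0$ by monotonicity. Thus $\chi$ is $\bV_0$-riso-trivial on $B_{\le\lambda}(x_0)$ for every $\lambda \in \Omega$.

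Next I would observe that every closed ball $B' \subseteq B$ is contained in some $B_{\le\lambda}(x_0)$ with $\lambda \in \Omega$ (take $\lambda$ to be the maximum of the radius of $B'$ and the distance from its center to $x_0$), so that, again by the ``$\Rightarrow$'' direction of Lemma~\ref{lem:closed2all}, $\chi$ is $\bV_0$-riso-trivial on $B'$ as well. Hence $\chi$ is $\bV_0$-riso-trivial on every closed subball of $B$, so by the ``$\Leftarrow$'' direction of Lemma~\ref{lem:closed2all} it is $\bV_0$-riso-trivial on $B$ itself, i.e.\ $\bV_0 \subseteq \rtsp_B(\chi)$. Combined with the reverse inclusion $\rtsp_B(\chi) \subseteq \rtsp_{B_{\le\lambda_0}(x_0)}(\chi) = \bV_0$ (once more the easy direction of Lemma~\ref{lem:closed2all}), this yields $\rtsp_B(\chi) = \bV_0 = \rtsp_{B_{\le\lambda_0}(x_0)}(\chi)$, so $B' := B_{\le\lambda_0}(x_0)$ is as required. (If $B$ is already closed one may of course take $B' = B$.)

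For the ``in particular'' clause, recall that $\chi$ being $d$-riso-trivial on a ball is equivalent to the dimension of its riso-triviality space on that ball being at least $d$. Then ``$\Rightarrow$'' is immediate from the inclusion $\rtsp_B(\chi) \subseteq \rtsp_{B'}(\chi)$, and ``$\Leftarrow$'' follows by applying the first part to obtain a closed $B' \subseteq B$ with $\dim \rtsp_B(\chi) = \dim \rtsp_{B'}(\chi) \ge d$. The only genuinely delicate point in all of this is that the cut radius $\Omega$ need not possess a maximal element or even a cofinal sequence, so one cannot simply ``pass to a sufficiently large $\lambda$''; this is what forces the detour through the monotone integer-valued dimension function (which stabilizes) together with the fact that a decreasing chain of subspaces of constant dimension is constant. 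The rest is routine bookkeeping with nested balls.
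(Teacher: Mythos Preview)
Your proof is correct. The paper's proof, however, takes a different and somewhat shorter route: it argues by contradiction. Setting $\bar V := \rtsp_B(\chi)$, suppose that every closed subball $B' \subseteq B$ has $\rtsp_{B'}(\chi) \supsetneq \bar V$; pick one such $B'$ whose $\rtsp$ has minimal dimension, call it $\bar V'$, and choose a complement $\bar W$ of $\bar V$ in $\bar V'$. Since $\bar W \not\subseteq \bar V = \rtsp_B(\chi)$, Lemma~\ref{lem:closed2all} gives a closed $B_{\bar W} \subseteq B$ on which $\chi$ is not $\bar W$-riso-trivial. The smallest closed ball $B''$ containing both $B'$ and $B_{\bar W}$ then has $\rtsp_{B''}(\chi) \subsetneq \bar V'$, contradicting minimality.

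The difference is that the paper works with \emph{two} closed subballs and takes their common enclosing ball, while you fix a centre $x_0$ and exploit the totally ordered family $(B_{\le\lambda}(x_0))_{\lambda\in\Omega}$. Your approach is more constructive (it exhibits $B' = B_{\le\lambda_0}(x_0)$ explicitly) and makes the monotonicity of $\rtsp$ along nested balls very visible; the paper's approach avoids fixing a centre and sidesteps any discussion of the cut radius, at the price of being slightly less explicit. Both routes ultimately rest on the same two ingredients: monotonicity of $\rtsp$ under inclusion (the ``$\Rightarrow$'' direction of Lemma~\ref{lem:closed2all}) and the fact that dimensions live in a finite set.
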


\begin{proof}
Set $\bar V := \rtsp_B (\chi)$ and suppose that for every closed subball $B' \subseteq B$, $\bar V' := \rtsp_{B'}(\chi)$ is strictly bigger than $\bar V$. Choose $B'$ such that $\bar V'$ has minimal dimension, and choose a complement $\bar W$ of $\bar V$ in $\bar V'$. By Lemma~\ref{lem:closed2all}, there exists a closed ball $B_{\bar W} \subseteq B$ such that $\chi$ is not $\bar W$-riso-trivial on $B_{\bar W}$ (since $\chi$ is not $\bar W$-riso-trivial). Let $B''$ be the smallest (closed) ball containing $B'$ and $B_{\bar W}$. Then  $\rtsp_{B''}(\chi)$ is a proper vector subspace of $\bar V'$, contradicting that $\bar V'$ has minimal dimension.
\end{proof}

\begin{lem}\label{lem:closed2all-riso}
Suppose that a ball $B \subseteq K^n$ and two maps $\chi_1, \chi_2\colon B \to S$ are given.
Then $\chi_1$ and $\chi_2$ are risometric if and only if there exist $x_1, x_2 \in B$ such that for every $\lambda \in \VG(K)$ satisfying $B_{\le \lambda}(x_1) \subseteq B$, we have that $\chi_1|_{B_{\le \lambda}(x_1)}$ is risometric to 
$\chi_2|_{B_{\le \lambda}(x_2)}$.
\end{lem}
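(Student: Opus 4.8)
\emph{The ``only if'' direction} is immediate: if $\phi\colon B\to B$ is a risometry from $\chi_1$ to $\chi_2$, pick any $x_1\in B$ and set $x_2:=\phi(x_1)$. Since $\phi$ is an isometry, $\phi(B_{\le\lambda}(x_1))=B_{\le\lambda}(x_2)$ for every $\lambda$ with $B_{\le\lambda}(x_1)\subseteq B$, and the restriction $\phi|_{B_{\le\lambda}(x_1)}$ is a risometry from $\chi_1|_{B_{\le\lambda}(x_1)}$ to $\chi_2|_{B_{\le\lambda}(x_2)}$.

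\emph{For the ``if'' direction}, since $x_1,x_2\in B$ we have $B-x_1=B-x_2=:B_0$, so after replacing $\chi_1$ by $x\mapsto\chi_1(x+x_1)$ and $\chi_2$ by $x\mapsto\chi_2(x+x_2)$ (both now defined on $B_0$; this changes neither the risometry type of $(\chi_1,\chi_2)$ nor that of any corresponding pair of restrictions, since translations are risometries) we may assume $x_1=x_2=0$. The hypothesis then reads: $\chi_1|_{B_{\le\lambda}(0)}$ is risometric to $\chi_2|_{B_{\le\lambda}(0)}$ for every $\lambda$ with $B_{\le\lambda}(0)\subseteq B$. If $B$ is a closed ball, taking $\lambda=\radcl(B)$ gives the conclusion directly, so assume $B$ is open or a ``cut ball''; then $B=\bigcup_i B_{<\lambda_i}(0)$ for a well-ordered family $(\lambda_i)_i$ cofinal in the cut radius of $B$. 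The plan is to build, by transfinite recursion along the $\lambda_i$, bijective risometries $\phi_i\colon B_{\le\lambda_i}(0)\to B_{\le\lambda_i}(0)$ from $\chi_1$ to $\chi_2$ that are \emph{coherent} in the sense that $\phi_{i'}$ agrees with $\phi_i$ on the open sub-ball $B_{<\lambda_i}(0)$ whenever $i<i'$. Granting this, $\phi:=\bigcup_i\phi_i|_{B_{<\lambda_i}(0)}$ is a well-defined risometry from $\chi_1$ to $\chi_2$ mapping $B$ into $B$, hence onto $B$ by Lemma~\ref{lem:B_finite}, i.e.\ the desired global risometry. (At a limit stage, the coherent part already built is, by the same use of Lemma~\ref{lem:B_finite}, a bijective risometry of $B_{<\lambda_i}(0)$ onto itself, and one still has to extend it across the last ``shell'' to obtain $\phi_i$ on $B_{\le\lambda_i}(0)$; so limit and successor steps are of the same nature.)

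\emph{The main obstacle} is this recursion step: given a bijective risometry $\theta$ of $\chi_1$ to $\chi_2$ on a ball $B'$ (an open, resp.\ closed, sub-ball of $B$ centred at $0$), together with \emph{some} risometry $\eta\colon B''\to B''$ from $\chi_1$ to $\chi_2$ on the next ball $B''\supseteq B'$ of the chain (supplied by the hypothesis), one must correct $\eta$ to a risometry agreeing with $\theta$ on $B'$ (resp.\ on the open core of $B'$). Since the risometries $B''\to B''$ from $\chi_1$ to $\chi_2$ form a torsor under the group of self-risometries of $\chi_2$ on $B''$, this reduces to extending the partial self-risometry $\theta\circ(\eta|_{B'})^{-1}$ of $\chi_2$ — which identifies the sub-ball $\eta(B')\subseteq B''$ $\chi_2$-risometrically with $B'$ — to a self-risometry of $\chi_2$ on all of $B''$. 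This is handled level by level in the tree of sub-balls of $B''$: when $\eta$ already stabilises $B'$ one patches the given map on $B'$ by the identity on the other sub-balls of that level, checking the pairwise $\rv^{(n)}$-conditions exactly as in the proof of Lemma~\ref{lem:fin-riso}; otherwise one first moves $\eta(B')$ onto $B'$ using risometries between $\chi_2$-risometric sub-balls together with a suitable translation, where the freedom (cf.\ Remark~\ref{rmk:fix:fiber}) to prescribe the corrected map only on the open cores, and not on the boundary shells, is what keeps the patch compatible with $\chi_2$. I expect this upgrading of the local risometries to a coherent system to be the only real difficulty; the rest is bookkeeping around Lemma~\ref{lem:B_finite}.
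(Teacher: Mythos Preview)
The paper's proof of the ``if'' direction avoids your recursion entirely. After fixing $x_1,x_2$ and a well-ordered cofinal subset $\Lambda$ of the cut radius, it simply picks, for each $\lambda\in\Lambda$, \emph{any} risometry $\phi_\lambda$ from $\chi_1|_{B_{\le\lambda}(x_1)}$ to $\chi_2|_{B_{\le\lambda}(x_2)}$ --- with no coherence condition imposed --- and sets $\phi(x):=\phi_{\lambda(x)}(x)$, where $\lambda(x)\in\Lambda$ is minimal with $x\in B_{\le\lambda(x)}(x_1)$. It then asserts that ``an easy computation'' shows this $\phi$ is a risometry, in the same spirit as the gluing in Lemma~\ref{lem:closed2all}. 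So there is no transfinite extension step and no appeal to Lemma~\ref{lem:B_finite}. (In Lemma~\ref{lem:closed2all} the $\phi_\lambda$'s were normalised to be the identity on a fixed fiber through $x_0$, and that normalisation is what made the gluing compute; the present proof records no analogous normalisation, so your instinct that some compatibility is needed is not unfounded.)

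Your proposal, by contrast, attacks a harder problem than the paper does: building genuinely coherent $\phi_i$'s by transfinite recursion. As you yourself flag, the extension step is where all the content lies, and your sketch of it is not a proof. The claim that one can ``move $\eta(B')$ onto $B'$ using risometries between $\chi_2$-risometric sub-balls together with a suitable translation'' is precisely the delicate point: knowing that $\chi_2|_{B'}$ and $\chi_2|_{\eta(B')}$ are abstractly risometric does \emph{not} produce a self-risometry of $\chi_2$ on the ambient ball $B''$ carrying one to the other. A risometry of a closed ball induces a \emph{translation} on its set of maximal open sub-balls, so a naive swap of two such sub-balls is never a risometry, and the translation taking $\eta(B')$ to $B'$ will in general fail to preserve $\chi_2$ on the remaining sub-balls. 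Remark~\ref{rmk:fix:fiber} does not help either: it is about prescribing a straightener on a chosen fiber once a direction of riso-triviality is already given, which is not the situation here.
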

\begin{proof}
``$\Rightarrow$'' is trivial. (Choose $x_1$ arbitrary and set $x_2 := \phi(x_1)$). 

For ``$\Leftarrow$'', fix such $x_1$ and $x_2$. 
Let $\Omega := \{|x - x_1| \mid x \in B, x \ne x_1\} \sseteq \VG^\times(K)$ be the cut radius of $B$ and choose a well-ordered set $\Lambda \subseteq \Omega$ with which is co-final in $\Omega$.
For each $\lambda \in \Lambda$, choose a risometry $\phi_\lambda$ from $\chi_1|_{B_{\le \lambda}(x_1)}$ to $\chi_2|_{B_{\le \lambda}(x_2)}$, and then define $\phi\colon B \to B$ by $\phi(x) := \phi_{\lambda(x)}(x)$, where $\lambda(x) \in \Lambda$ is minimal such that $x \in B_{\leq \lambda(x)}(x_1)$. An easy computation shows that $\phi$ is a risometry, and it clearly sends $\chi_1$ to $\chi_2$.
\end{proof}

\subsection{Proofs of the main definability results}
\label{sec:proofs}

This subsection contains the proofs of the main definability results, namely the Riso-Triviality Theorem~\ref{thm:RTT} and the Riso-Equivalence Theorem~\ref{thm:RET}. To this end, we now assume Hypothesis~\ref{hyp:KandX_can}, i.e., we assume that $\TT$ is a $1$-h-minimal $\LL$-theory. In addition, $K \models \TT$ will always denote a spherically complete model.

\begin{lem}
\label{lem:locallytrivdim}
If $Y \sseteq K^n$ is a definable set of dimension $r$, then for any ball $B \subseteq K^n$ which has non-empty intersection with $Y$,
we have $\dim \rtsp_B(Y)\le r$.
\end{lem}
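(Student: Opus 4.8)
The plan is to argue by contradiction: suppose $B \cap Y \neq \emptyset$ but $\dim \rtsp_B(Y) \ge r+1$. Pick a subspace $\bar V \subseteq \rtsp_B(Y)$ of dimension exactly $r+1$, and let $\phi\colon B \to B$ be a straightener witnessing $\bar V$-riso-triviality of $Y$ (i.e., of $\mathbbm 1_Y$), for some lift $V \subseteq K^n$ of $\bar V$. Then $Y' := \phi^{-1}(Y \cap B)$ is $V$-translation invariant in $B$, and since $\phi$ is a bijection of $B$ (indeed a homeomorphism), $Y'$ is non-empty. The key point is that $V$-translation invariance forces $Y'$ to contain a full affine slice $(y_0 + V) \cap B$ for some $y_0 \in Y'$, and such a slice has dimension $r+1$ because it is (a ball-intersection of) an $(r+1)$-dimensional affine subspace; hence $\dim Y' \ge r+1$.

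The remaining step is to transfer this dimension bound back to $Y$. Since $\phi$ is an isometry (every risometry is an isometry, as noted after Definition~\ref{defn:risom}), it is in particular a homeomorphism $B \to B$ for the valuation topology, so $\dim \phi(Y') = \dim Y'$. But $\phi(Y') = Y \cap B \subseteq Y$, so $\dim Y \ge \dim(Y \cap B) = \dim Y' \ge r+1$, contradicting $\dim Y = r$. (Here I would invoke Proposition~\ref{prop:hmin:dim} to know that dimension behaves well — in particular that it is monotone under inclusion of definable sets and that $\dim(Y \cap B)$ makes sense; note $Y \cap B$ and $\phi^{-1}(Y \cap B)$ are definable once one fixes a model, but since we only need a statement about dimensions of subsets of $K^n$ and not definability of $\phi$, this is not an issue — Definition~\ref{defn:dim} of topological dimension applies to arbitrary subsets.)

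The one genuinely careful point — and the main obstacle — is the claim that an isometric bijection of a ball preserves topological dimension, and that a $V$-translation-invariant non-empty subset of a ball has dimension at least $\dim V$. For the latter: choosing coordinates so that $V$ is spanned by the first $r+1$ standard basis vectors (using a risometric change of coordinates via $\GL_n(\valring(K))$ as in Example~\ref{exa:riso} and Remark~\ref{rmk:lift:no:matter}, which does not affect dimension), the set $Y'$ contains $(y_0 + V)\cap B$, whose projection to the first $r+1$ coordinates is a ball in $K^{r+1}$, hence has non-empty interior; so $\dim Y' \ge r+1$ directly from Definition~\ref{defn:dim}. For the former, an isometry maps balls to balls of the same radius and hence is open and continuous, so it preserves the property "has non-empty interior" of coordinate projections — though one must be slightly careful because $\phi$ need not commute with coordinate projections; it is cleanest to instead directly note that $\phi(Y')$ is, up to the isometry $\phi$, "the same size" as $Y'$, or alternatively to run the whole argument with $Y \cap B$ in place of $Y'$ from the start by observing $Y \cap B = \phi(Y')$ contains the isometric image of an $(r+1)$-dimensional affine slice and that such an image still has a coordinate projection with non-empty interior because isometries are bi-Lipschitz homeomorphisms. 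I would phrase the final write-up so as to minimize this bookkeeping, likely by first reducing (via a risometric linear change of coordinates) to the case $V = K^{r+1} \times \{0\}$ and then tracking interiors explicitly.
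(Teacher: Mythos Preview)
Your approach is correct, but the paper's is more direct. Rather than straightening and then arguing that the non-definable risometry $\phi$ preserves dimension, the paper projects: choose a lift $W$ of $\bar W := \rtsp_B(Y)$ and a lifted projection $\pi_W\colon B \to W$; since one $\pi_W$-fiber meets $Y$, they all do (Remark~\ref{rmk:fiber_riso}), so $\pi_W(Y \cap B) = \pi_W(B)$, and since $\pi_W$ is a \emph{definable} linear map, $\dim Y \ge \dim(B\cap Y) \ge \dim \pi_W(B) = \dim \bar W$ follows immediately. This sidesteps your ``genuinely careful point'' entirely by never leaving the realm of definable maps.

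On that point: your justification ``isometries are bi-Lipschitz homeomorphisms'' is not enough as written. Definition~\ref{defn:dim} is stated for definable sets, and even if one extends it verbatim to arbitrary subsets, it is not clear that an arbitrary isometry (as opposed to a risometry) preserves it --- $\phi$ need not commute with coordinate projections, as you note, and the bi-Lipschitz property alone does not obviously repair this. Your final plan (reduce to $V = K^{r+1}\times\{0\}$ and track interiors explicitly) does work, but the substance of that computation is that for a slice $S = (y_0 + V)\cap B$ and the coordinate projection $\pi\colon K^n \to K^{r+1}$, the composite $\pi\circ\phi|_S$ is again a risometry into a ball of $K^{r+1}$, hence surjective onto that ball by Lemma~\ref{lem:B_finite}. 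So spherical completeness enters here just as it does (via Remark~\ref{rmk:fiber_riso}) in the paper's argument; once spelled out, the two proofs use the same ingredients, only packaged from opposite ends.
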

\begin{proof}
Choose a projection $\bar \pi_{\bW} \colon \RF(K)^n \to \bW := \rtsp_B(Y)$ and a lift $\pi_W\colon B \to W$.
Since $B \cap Y$ is non-empty, at least one of the $\pi_W$-fibers
$\pi^{-1}(w) \cap Y$ (for $w \in \pi_W(B)$) is non-empty. This implies that they are all non-empty (by Remark~\ref{rmk:fiber_riso}), so $\dim Y \ge \dim (B \cap Y) \ge \dim \pi_W(B) = \dim W = \dim \rtsp_B(Y)$.
\end{proof}

For any map $\chi\colon B_0 \subseteq K^n \to \RV^\eq(K)$, the set of closed balls $B \subseteq B_0$ on which $\chi$ is not riso-trivial at all forms a tree. A crucial ingredient for the entire theory is that if $\chi$ is definable, then this tree has only finitely many branching points. We introduce a notation for the endpoints of its branches and make the statement more precise.

\begin{defn}\label{defn:Crig}
Let $\chi\colon B_0 \to S$ be a map, for some ball $B_0 \subseteq K^n$ and some $n \ge 1$, and let $T_0$ be the set of all those closed balls $B \subseteq B_0$ such that $\rtsp_B\chi = \{0\}$.
We define the \emph{rigid core} $\Crig_\chi \subseteq \pow(B_0)$ of $\chi$ as the set of those balls and points
which are obtained as intersections $\bigcap_{B \in C} B$, where $C$ is a maximal chain in $T_0$. In the case $T_0 = \emptyset$, we set $\Crig_\chi := \emptyset$.
\end{defn}

The word ``rigid'' alludes to the fact that any risometry from a definable $\chi$ to itself sends each $B \in \Crig_\chi$ to itself. (This follows from Lemmas~\ref{lem:Crig} (1) and \ref{lem:fin-riso}.)

\begin{lem}\label{lem:Crig}
Suppose that
$B_0 \subseteq K^n$ is a ball and that
$\chi\colon B_0 \to \RV^\eq(K)$ is a definable map. Then we have the following:
\begin{enumerate}
    \item $\Crig_{\chi}$ is a finite set of pairwise disjoint singletons and balls. Moreover, given an $\LL$-definable ball 
    $\bB_0 \subseteq \VF^n$ and an $\LL^\eq$-definable map $\bchi\colon \bB_0 \to \RV^\eq$, the cardinality of $\Crig_{\bchi_K}$ is bounded independently of $K$, when $K$ runs over all spherically complete models of $\TT$.
   \item For every ball $B \subseteq B_0$ with $\rtsp_B \chi = \{0\}$, there exists a $B' \in \Crig_{\chi}$ such that $B' \subseteq B$.
   \item If $\chi$ is not riso-trivial at all on a ball $B \in \Crig_\chi$, then $B$ is a closed ball and $\chi$ is $1$-riso-trivial on every proper subball $B' \subsetneq B$.
\end{enumerate}
\end{lem}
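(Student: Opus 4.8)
The plan is to reduce everything to a statement about the tree $T_0$ of closed balls on which $\chi$ is not riso-trivial, and then to extract finiteness of its branching locus from the model-theoretic tameness of $\TT$. First I would fix the connection between $\Crig_\chi$ and $T_0$: an element of $\Crig_\chi$ is the intersection of a maximal chain in $T_0$, so proving part~(1) amounts to showing that $T_0$ has only finitely many ``leaves'' (minimal balls or limit points of chains), uniformly in $K$. The key input is that the map sending a closed ball $B \subseteq B_0$ to $\rtsp_B(\chi)$ is $\Leq$-definable; here I would invoke Corollary~\ref{cor:RTT} (equivalently Corollary~\ref{cor:riso-tree}, i.e.\ the Riso-Triviality Theorem~\ref{thm:RTT} proved jointly with this lemma — so one must be careful that the common induction makes this available at the right stage, which I address below). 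Given definability of $B \mapsto \dim\rtsp_B(\chi)$, the set $T_0$ is a definable subset of the (imaginary) sort of closed subballs of $B_0$. The standard structure of definable sets of balls in $1$-h-minimal valued fields — essentially Holly-type canonicity / the fact that a definable set of balls, viewed through the tree structure on $K^n$, is controlled by finitely many ``special'' points and radii, which follows from elimination of $\exists^\infty$ (Lemma~\ref{lem:Einf}) applied fiberwise over $\VG$ — then gives that $T_0$ has a bounded number of branching points and leaves. Concretely I would argue: for each $x \in B_0$, the set $\{\radcl(B) \mid B \ni x,\ B \in T_0\}$ is an up-closed definable subset of $\VG$, hence a cut, with at most finitely many ``jump'' points as $x$ varies in a definable-in-$K$ way; a compactness/$\exists^\infty$-elimination argument bounds the number of distinct such cuts, and each intersection $\bigcap_{B\in C}B$ over a maximal chain $C$ is then pinned down by one of finitely many (point, radius)-data. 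Thus $\Crig_\chi$ is finite with bound independent of $K$, and its elements are pairwise disjoint because distinct maximal chains in a tree have incomparable (hence, for balls, disjoint) intersections.

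For part~(2): if $B \subseteq B_0$ is any ball (open, closed, or a cut ball) with $\rtsp_B\chi = \{0\}$, then by Corollary~\ref{cor:closed2all} there is a closed ball $B_1 \subseteq B$ with $\rtsp_{B_1}\chi = \{0\}$, i.e.\ $B_1 \in T_0$. Extend $\{B_1\}$ to a maximal chain $C$ downward in $T_0$; its intersection $B' := \bigcap_{B''\in C}B''$ lies in $\Crig_\chi$ by definition, and $B' \subseteq B_1 \subseteq B$. (One should check that a maximal chain through $B_1$ in $T_0$ really does exist and that its intersection is a single point or a ball — the latter holds since an intersection of a chain of balls in a spherically complete $K$ is a ball or a point, using Theorem~\ref{thm:sph}/spherical completeness.)

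For part~(3): let $B \in \Crig_\chi$ and suppose $\chi$ is not riso-trivial at all on $B$. Since $B$ is the intersection of a maximal chain $C$ in $T_0$, and $\rtsp$ of every ball in $C$ is $\{0\}$, I first claim $B$ is closed: if $B$ were open or a proper cut, one could find a closed ball $B'' $ with $B \subsetneq B'' $ still in $T_0$ extending $C$, or — more to the point — $B$ itself being a limit of the chain would then be strictly contained in all $B''\in C$, and the only way $B \in \Crig_\chi$ with $\rtsp_B\chi=\{0\}$ (by Corollary~\ref{cor:closed2all} applied to $B$, its own closed subballs all lie in $T_0$) is forced to be closed by the combinatorics of maximality — here I would use that among balls with a given intersection, the closed one is the ``largest'' and maximality of $C$ picks it out. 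Then, for a proper subball $B' \subsetneq B$: by maximality of the chain $C$ in $T_0$, $B'$ cannot lie in $T_0$ (it would extend $C$ strictly below $B$), so $\rtsp_{B'}\chi \ne \{0\}$, i.e.\ $\chi$ is $1$-riso-trivial on $B'$.

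**Main obstacle.** The hard part is part~(1), and specifically two intertwined issues: (i) this lemma is proved in a \emph{common} induction with Theorem~\ref{thm:RTT}, so one cannot simply cite definability of $B\mapsto\rtsp_B\chi$ as a black box — one must verify that at the relevant induction step, the needed instance of definability (perhaps in lower dimension, or for simpler $\chi$) is already known, or else prove part~(1) by a more hands-on analysis of the tree $T_0$ directly from $1$-h-minimality without going through the full Riso-Triviality Theorem; and (ii) extracting the \emph{uniform} finiteness bound on $\#\Crig_{\bchi_K}$ requires a genuine compactness argument over all spherically complete $K \models \TT$, leaning on elimination of $\exists^\infty$ (Lemma~\ref{lem:Einf}) to turn ``finite for each $K$'' into ``boundedly finite''. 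Everything else (parts (2) and (3), disjointness, the ball-vs-point dichotomy) is routine tree combinatorics once (1) is in hand.
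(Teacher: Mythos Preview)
Your arguments for parts~(2) and~(3) are essentially the paper's, modulo some muddled wording in~(3). (For~(3), the clean line is: maximality of the chain $C$ shows no \emph{closed} proper subball of $B$ lies in $T_0$, so $\chi$ is $1$-riso-trivial on every closed proper subball; if $B$ were not closed, every closed subball would be proper, and Corollary~\ref{cor:closed2all} would then force $1$-riso-triviality on $B$ itself, a contradiction.)

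For part~(1), however, there is a real gap. You correctly flag the circularity: Lemma~\ref{lem:Crig} is used in the proof of (r-equiv)$_n$ (Case~3 of Lemma~\ref{lem:t=>r}), so you cannot invoke the Riso-Triviality Theorem to make $T_0$ definable. Your fallback --- ``a more hands-on analysis of the tree $T_0$ directly from $1$-h-minimality'' --- is not fleshed out, and the sketch you give (Holly-type canonicity, cuts in $\VG$, compactness) does not constitute an argument: even granting that $1$-h-minimality constrains definable families of balls, $T_0$ is \emph{not} known to be definable at this point, so there is nothing to apply those tools to.

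The paper sidesteps this entirely by using t-stratifications (Theorem~\ref{thm:tstrat}), which is an \emph{external} result imported from \cite{i.whit} and \cite{iCR.hmin} and does not depend on Theorems~\ref{thm:RTT} or~\ref{thm:RET}. Pick an $\LL$-definable t-stratification $(\bS_i)_i$ reflecting $\bchi$. For any closed ball $B$ with $\rtsp_B(\bchi_K) = \{0\}$, Remark~\ref{rmk:tstrat} gives $\drtsp_B((\bS_i(K))_i) \subseteq \rtsp_B(\bchi_K) = \{0\}$, so $\bS_0(K) \cap B \ne \emptyset$. Since $\bS_0(K)$ is finite and the balls in a chain are nested, the intersection of any maximal chain in $T_0$ still meets $\bS_0(K)$. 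As the elements of $\Crig_{\bchi_K}$ are pairwise disjoint, $\#\Crig_{\bchi_K} \le \#\bS_0(K)$, and the latter is bounded uniformly by Lemma~\ref{lem:Einf}. This is the missing idea: the finiteness comes not from tree combinatorics on a definable $T_0$, but from the $0$-dimensional stratum of a t-stratification acting as a finite ``skeleton'' that every non-riso-trivial ball must hit.
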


\begin{proof}

(1) Pairwise disjointness follows from fact that in the definition of $\Crig_\chi$, only maximal chains are considered. For the uniform finiteness statement, pick an $\LL$-definable 
t-stratification $(\bS_i)_i$ reflecting $\bchi$, as provided by Theorem~\ref{thm:tstrat}. We prove the following claim:
\begin{claim}\label{claim:Crig:S0}
For every spherically complete $K \models \TT$ and every $B \in \Crig_{\bchi_K}$, $B \cap \bS_0(K) \ne \emptyset$.
\end{claim}
Since the elements of $\Crig_{\bchi_K}$ are pairwise disjoint, this implies that $\#\Crig_{\bchi_K}$ is bounded by $\#\bS_0(K)$, and this
in turn is bounded uniformly for all $K$, since $\dim \bS_0 = 0$, and since $0$-dimensional sets have bounded cardinality (by Lemma~\ref{lem:Einf}).

\begin{proof}[Proof of Claim \ref{claim:Crig:S0}.]
Fix a model $K$ and let $C$ be a maximal chain in the corresponding $T_0$ (whose intersection is some given element of $\Crig_{\bchi_K}$). For each $B \in C$, $B \cap \bS_{0}(K)$ is non-empty (by definition of a t-stratification). Since $B \cap \bS_{0}(K)$ is moreover finite, we obtain that $\bigcap_{B \in C} (B \cap \bS_{0}(K))$ is non-empty.
\qedhere(Claim 1)
\end{proof}

(2) By Corollary~\ref{cor:closed2all}, there exists a closed $B'' \subseteq B$ on which $\chi$ is not riso-trivial at all. Pick a maximal chain $C \subseteq T_0$ containing $B''$ and set $B' := \bigcap_{B''' \in C} B'''$.

(3) The definition of $\Crig_\chi$ implies that $\chi$ is $1$-riso-trivial on every proper subball $B' \subsetneq B$. If $B$ would not be closed, then
in particular $\chi$ would be $1$-riso-trivial on every closed subball of $B$, so $\chi$ would also be $1$-riso-trivial on $B$ itself, by Corollary~\ref{cor:closed2all}.
\end{proof}

Once we will have proven the Riso-Triviality Theorem, we will obtain various definability results about $\Crig_\chi$. We state those conclusions already here, to keep things thematically grouped.

\begin{lem}\label{lem:Crig-def}
(Assuming that Theorem~\ref{thm:RTT} holds.)
Suppose that $\bB_0 \subseteq \VF^n$ is an $\LL$-definable ball and that
$\bchi\colon \bB_0 \to \RV^\eq$ is an $\Leq$-definable map.
Then we have the following:
\begin{enumerate}
   \item $\Crig_{\bchi_K}$ is $\Leq$-definable, uniformly for all spherically complete $K \models \TT$.
    \item There exists an $\LL$-definable $\bC \subseteq \VF^n$
    such that for every spherically complete $K \models \TT$,
    every element of $\Crig_{\bchi_K}$ contains exactly one element of $\bC(K)$.
\end{enumerate}
\end{lem}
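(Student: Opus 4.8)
The plan is to treat the two statements separately, using the Riso-Triviality Theorem only through its Corollary~\ref{cor:riso-tree}.

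For (1), the first observation is that by Corollary~\ref{cor:riso-tree} (applied to $\bchi$, extended to all of $\VF^n$), the set $T_0 = T_0(K)$ of closed subballs $B \subseteq \bB_0(K)$ with $\rtsp_B(\bchi_K) = \{0\}$ is $\Leq$-definable, uniformly in the spherically complete models $K$. I would then rephrase Definition~\ref{defn:Crig} in a quantifier-friendly way: since the closed balls containing a fixed point form a chain under inclusion, a maximal chain $C$ in $T_0$ equals $\{B \in T_0 : z \in B\}$ for any (equivalently, some) point $z \in \bigcap C$, \emph{provided} no $B \in T_0$ is strictly contained in $\bigcap C$ (otherwise $C$ could be enlarged). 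Hence, writing $Z(z) := \{x \in \bB_0(K) : \forall B\,(B \in T_0 \wedge z \in B \to x \in B)\}$, one checks that $\Crig_{\bchi_K}$ is exactly the set of those $Z(z)$ for which $\{B \in T_0 : z \in B\} \neq \emptyset$ and no $B \in T_0$ satisfies $B \subsetneq Z(z)$. As $Z(z)$ and both side conditions are $\Leq$-definable in $z$, this presents $\Crig_{\bchi_K}$ as an $\Leq$-definable family of subsets of $\VF^n$ indexed by a definable set of points $z$; quotienting that index set by the ($\Leq$-definable) equivalence relation ``$Z(z) = Z(z')$'' puts it in the form demanded by Convention~\ref{conv:sets:of:sets}. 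Uniformity over all spherically complete $K$ is then automatic, the cardinality bound being furnished by Lemma~\ref{lem:Crig}(1). (Spherical completeness enters only through the fact that the $Z(z)$ are genuinely balls or points.)

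For (2), the starting point is a t-stratification. Fix an $\LL$-definable t-stratification $(\bS_i)_{i}$ of $\bB_0$ reflecting $\bchi$ (Theorem~\ref{thm:tstrat}). Claim~\ref{claim:Crig:S0} in the proof of Lemma~\ref{lem:Crig} already shows that every element of $\Crig_{\bchi_K}$ meets $\bS_0(K)$, and $\bS_0$ is $\LL$-definable, $0$-dimensional, hence finite of bounded cardinality. So the remaining task is to arrange that each element of $\Crig_{\bchi_K}$ meets the relevant set in a \emph{single} point. The point-elements of $\Crig_{\bchi_K}$ are unproblematic: each is itself a single point of $\bS_0(K)$. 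For a ball-element $Z$, recall (Lemma~\ref{lem:Crig}(3), supplemented by Corollary~\ref{cor:closed2all} to cover the case where $Z$ is not closed) that $\bchi$ is $1$-riso-trivial on every proper subball of $Z$; morally, all of the ``rigidity'' of $\bchi$ on $Z$ is concentrated at the top scale of $Z$, so one marked point inside $Z$ ought to suffice. I would make this precise by either choosing, or locally modifying, the t-stratification inside each ball-element $Z$ of the ($\Leq$-definable) set $\Crig_{\bchi}$ so that $\bS_0 \cap Z$ becomes a singleton, and then setting $\bC := \bS_0$ — equivalently, $\Leq$-definably selecting one point of $\bS_0(K) \cap Z$ per $Z$. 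In either formulation the output $\bC$ is $\LL$-definable.

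The hard part is exactly this last step of (2): upgrading ``every $\Crig$-element meets $\bS_0(K)$'' to ``exactly one point''. A $\Crig$-ball of positive radius has no $\emptyset$-definable center, and the classes of the relation ``lies in the same $\Crig$-element'' are whole balls, so there is no naive definable choice function; one must genuinely exploit that $\bchi$ is $1$-riso-trivial on every proper subball of such a ball to see that a sufficiently coarse t-stratification needs only one $\bS_0$-point there, and to carry this out uniformly in $K$ and without parameters. Everything else — part (1), and the reduction of (2) to this single-point statement — is a routine combination of Corollary~\ref{cor:riso-tree}, Theorem~\ref{thm:tstrat}, and Lemma~\ref{lem:Crig}.
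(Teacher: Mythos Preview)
Your treatment of part (1) is correct and matches the paper's approach; the paper is terser (it just says ``using Corollary~\ref{cor:riso-tree}, we can first define $T_0$ and then the rigid core''), but your explicit $Z(z)$ construction is a legitimate way to unpack that sentence.

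For part (2), however, you have a genuine gap. You correctly set up the t-stratification and invoke Claim~\ref{claim:Crig:S0} to get $\bS_0(K) \cap B \neq \emptyset$ for each $B \in \Crig_{\bchi_K}$, but you then declare the single-point selection to be ``the hard part'' and leave it unfinished, proposing to modify the t-stratification or to find a definable choice function. The paper's solution is much simpler than either of these: for each $B \in \Crig_{\bchi_K}$, take the \emph{barycenter} of the finite non-empty set $S_B := B \cap \bS_0(K)$, and let $\bC(K)$ be the set of these barycenters. Since we are in equi-characteristic~$0$, the integer $m := \#S_B$ has $|m| = 1$, so the average $\tfrac{1}{m}\sum_{s \in S_B} s$ lies in $B$ by the ultrametric inequality; and since the elements of $\Crig_{\bchi_K}$ are pairwise disjoint, distinct $B$'s yield distinct barycenters. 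This is uniformly $\Leq$-definable (hence $\LL$-definable, as $\bC \subseteq \VF^n$).

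Your intuition that ``there is no naive definable choice function'' is correct if one insists on selecting a point \emph{of} $S_B$, but the lemma only asks for a point \emph{of} $B$; the barycenter trick sidesteps the choice problem entirely. The $1$-riso-triviality on proper subballs, which you invoke to justify that ``one point ought to suffice'', is not needed here.
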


Before we prove the lemma, we use it to define a model-independent version $\bCrig_{\bchi}$ of the rigid core:

\begin{defn}\label{defn:bCrig}
Given an $\Leq$-definable $\bchi\colon \bB_0 \to \RV^\eq$ (for some $\LL$-definable ball $\bB_0 \subseteq \VF^n$), we write $\bCrig_{\bchi}$ for the 
$\Leq$-definable set of subsets of $\bB_0$ satisfying
$\bCrig_{\bchi}(K) = \Crig_{\bchi_K}$ for every spherically complete $K \models \TT$.
\end{defn}

\begin{proof}
(1) Using the Riso-Triviality Theorem (in the form of Corollary~\ref{cor:riso-tree}), we can first define the set $T_0$ appearing in Definition~\ref{defn:Crig} and then the rigid core.

(2) Choose an $\LL$-definable t-stratification $(\bS_i)_{i \le n}$ reflecting $\bchi$. By Claim~\ref{claim:Crig:S0} in the proof of Lemma~\ref{lem:Crig}, for each $K$ and each $B \in \Crig_{\bchi_K}$, the intersection $S_B := B \cap \bS_{0}(K)$ is non-empty. Define $\bC(K)$
to be the set $\{b(S_B) \mid B \in \Crig_{\bchi_K}\}$,
where $b(S_B)$ denotes the barycenter of $S_B$.
\end{proof}

\medskip

We will now simultaneously prove Theorems \ref{thm:RTT} and \ref{thm:RET} by a common induction over the ambient dimension $n$.
More precisely, consider the following statements for $n \ge 1$, where
$K$ runs over all spherically complete models of $\TT$.

\begin{enumerate}
    \item[(r-triv)$_n$]
For every $\LL$-definable ball
$\bB \subseteq \VF^{n}$ and every $\Leq$-definable $\bchi\colon \bB \to \RV^\eq$, there exists an $\Leq$-definable vector subspace $\bbU \subseteq \RF^{n}$ such that for every $K$, we have $\rtsp_{\bB(K)}(\bchi_K) = \bbU(K)$.
\item[(r-equiv)$_n$]
For every $\LL$-definable ball
$\bB \subseteq \VF^{n}$ and every $\Leq$-definable $\bchi, \bchi'\colon \bB \to \RV^\eq$, there exists an $\LL$-sentence $\psi$ such that for every $K$, we have $K \models \psi$ if and only if $\bchi_{K}$ and $\bchi'_{K}$ are risometric.
\end{enumerate}

\begin{lem}\label{lem:t=>r}
Let $n \ge 1$ be given.
If (r-triv)$_{n}$ holds and (r-equiv)$_{n'}$ holds for $1 \le n' < n$, then (r-equiv)$_n$ holds.
\end{lem}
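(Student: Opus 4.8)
The plan is to distinguish cases by the dimension of the riso-triviality space, using (r-triv)$_n$ to keep everything first-order. The starting point is the observation --- immediate from Definition~\ref{defn:riso-triv} and Lemma~\ref{lem:V-additive} --- that riso-triviality is a risometry invariant: if $\rho\colon B_1\to B_2$ is a risometry between balls of equal radius with $\chi_1=\chi_2\circ\rho$, then $\rtsp_{B_1}(\chi_1)=\rtsp_{B_2}(\chi_2)$ (factor $\rho$ as a translation composed with a risometry $B_1\to B_1$, and note that neither factor changes the space of riso-triviality). In particular, a risometry $\phi\colon\bB(K)\to\bB(K)$ with $\bchi_K=\bchi'_K\circ\phi$ satisfies $\rtsp_{\bB(K)}(\bchi_K)=\rtsp_{\bB(K)}(\bchi'_K)$ and sends the tree $T_0$ of closed subballs on which $\bchi_K$ is not riso-trivial onto the corresponding tree for $\bchi'_K$, hence induces a bijection $\Crig_{\bchi_K}\to\Crig_{\bchi'_K}$, $P\mapsto\phi(P)$.

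By (r-triv)$_n$ there are $\Leq$-definable $\bbU,\bbU'\subseteq\RF^n$ with $\rtsp_{\bB(K)}(\bchi_K)=\bbU(K)$ and $\rtsp_{\bB(K)}(\bchi'_K)=\bbU'(K)$. By the observation, $\bbU(K)=\bbU'(K)$ is necessary for $\bchi_K$ and $\bchi'_K$ to be risometric (written $\bchi_K\sim\bchi'_K$), and ``$\bbU=\bbU'$'' is expressed by an $\LL$-sentence; since $\dim\bbU$ is definable (Proposition~\ref{prop:hmin:dim}), it suffices to produce, for each $0\le d\le n$, an $\LL$-sentence $\psi_d$ equivalent to $\bchi_K\sim\bchi'_K$ over every spherically complete $K$ with $\bbU(K)=\bbU'(K)$ of dimension $d$, and then set $\psi:=\bigvee_{d=0}^n\big([\bbU=\bbU']\wedge[\dim\bbU=d]\wedge\psi_d\big)$. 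The case $d=n$ is immediate: then $\bchi_K$ is $\RF(K)^n$-riso-trivial, hence risometric to a constant and in particular has singleton image, so $\bchi_K\sim\bchi'_K$ holds iff $\bchi$ and $\bchi'$ take a common constant value on $\bB$, which is a sentence.

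For $1\le d\le n-1$ I pass to a fiber. Add $\VF^n$-constants $w_1,\dots,w_d,v_1,\dots,v_{n-d}$ and $a$, with axioms stating that the $w_i,v_j$ lie in $\valring^n$, that their residues form a basis of $\RF^n$ with $\res w_1,\dots,\res w_d$ a basis of $\bbU$, and that $a\in\bB$; the resulting theory $\TT^+$ is again $1$-h-minimal (Remark~\ref{rmk:hmin:const}). Let $W,V$ be the $K$-spans of $(w_i)_i$ and $(v_j)_j$ and let $\pi_W$ be the projection onto $W$ along $V$; this is a lift of a projection onto $\bbU$, the fiber $F:=\pi_W^{-1}(\pi_W(a))\cap\bB(K)$ is a ball in the affine subspace $a+V$, and $\iota\colon K^{n-d}\to a+V$, $t\mapsto a+\sum_j t_jv_j$, is an isometry which, the $\res v_j$ being linearly independent, has the property that $\phi\mapsto\iota^{-1}\circ\phi\circ\iota$ identifies risometries $F\to F$ with risometries $\iota^{-1}(F)\to\iota^{-1}(F)$, where $\iota^{-1}(F)$ is an $\LL^+$-definable ball in $\VF^{n-d}$. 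Since $\bchi_K$ and $\bchi'_K$ are $\bbU(K)$-riso-trivial, Lemma~\ref{lem:fiber_check} gives $\bchi_K\sim\bchi'_K \iff \bchi_K|_F\sim\bchi'_K|_F \iff (\bchi_K|_F\circ\iota)\sim(\bchi'_K|_F\circ\iota)$; applying (r-equiv)$_{n-d}$ (available since $1\le n-d<n$) in $\TT^+$ yields a sentence in the expanded language, and since the added constants are all $\VF$-valued while the truth of ``$\bchi_K|_F\sim\bchi'_K|_F$'' does not depend on the choice of $W,V,a$, existentially quantifying the constants away produces the desired $\LL$-sentence $\psi_d$.

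The case $d=0$ is the crux. Now both $\bchi_K,\bchi'_K$ are nowhere riso-trivial on $\bB(K)$; since (r-triv)$_n$ holds, Lemma~\ref{lem:Crig-def} makes $\Crig_{\bchi_K}$ and $\Crig_{\bchi'_K}$ $\Leq$-definable, and by Lemma~\ref{lem:Crig} they are finite of uniformly bounded size. Any risometry from $\bchi_K$ to $\bchi'_K$ carries $\Crig_{\bchi_K}$ onto $\Crig_{\bchi'_K}$; conversely, for a given bijection $\beta$ between these two finite sets, Lemma~\ref{lem:fin-riso} says exactly when $\beta$ is realized by a risometry of $\bB(K)$ (equal cut-radii, and equal leading terms $\rv^{(n)}$ of the pairwise differences of the core elements), and one additionally has to match (i) $\bchi_K|_{P}$ with $\bchi'_K|_{\beta(P)}$ for every $P\in\Crig_{\bchi_K}$, and (ii) the restrictions to $\bB(K)\setminus\bigcup_{P}P$ compatibly. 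In (i): if $P$ is a point this is an equality of values; if $P$ is a ball with $\rtsp_P(\bchi_K)\ne\{0\}$ we are back in the case $d\ge1$, applying (r-equiv)$_{n'}$ with $n'<n$ after a fiber reduction inside $P$; and if $P$ is a ball with $\rtsp_P(\bchi_K)=\{0\}$ then by Lemma~\ref{lem:Crig}(3), $P$ is closed and $\bchi_K|_P$ is $1$-riso-trivial on every proper subball, so its risometry type is governed by residue-level data: the risometry types of the restrictions of $\bchi_K$ to the maximal open subballs of $P$ (these lie in $\RV^\eq$ and, via (r-equiv) in dimension $<n$ and Corollary~\ref{cor:RET}, depend definably on the subball), together with a translation matching residues; this is a first-order condition on residue sorts. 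Part (ii) reduces in the same spirit, via Lemmas~\ref{lem:fiber_check} and \ref{lem:fin-riso}, to lower-dimensional (r-equiv) together with residue-level combinatorics, and can be organized by a side induction on $\#\Crig_{\bchi_K}$. I expect the real obstacle to be precisely this last step: doing the bookkeeping uniformly in $K$ and packaging the boundedly many resulting conditions --- quantified over the definable rigid cores and the boundedly many bijections $\beta$ --- into a single $\LL$-sentence $\psi_0$.
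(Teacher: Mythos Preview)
Your cases $d=n$ and $1\le d\le n-1$ are correct and match the paper (its Cases~1 and~2). In the $d=0$ case your part~(i) is also essentially right --- it is the analogue of the paper's treatment of those $B\in\Crig_\chi$ on which $\chi$ is still not riso-trivial: Lemma~\ref{lem:Crig}(3) forces such $B$ to be closed with $1$-riso-trivial proper subballs, and one then quantifies over translations $d\in B'-B$ and checks, via Cases~1/2 in dimension $<n$, that $\chi|_{B_1}\sim\chi'|_{B_1+d}$ for every maximal open $B_1\subseteq B$.

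The genuine gap is your part~(ii). You propose a ``side induction on $\#\Crig_{\bchi_K}$'' and concede you have not carried it out; in fact no such induction is needed, and it is unclear how one would even set it up. The paper's key move is to extend $\Crig_\chi$ to a \emph{partition} $Q_\chi:=\Crig_\chi\cup P_\chi$ of $\bB(K)$, where $P_\chi$ is the set of maximal balls disjoint from $\bigcup\Crig_\chi$. Two observations then make your part~(ii) immediate. First, every $B\in P_\chi$ is already $1$-riso-trivial for $\chi$ (otherwise Lemma~\ref{lem:Crig}(2) would place an element of $\Crig_\chi$ inside $B$), so $\chi|_B\sim\chi'|_{B'}$ is handled directly by Cases~1/2. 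Second, any risometry sending $\Crig_\chi$ to $\Crig_{\chi'}$ induces a map $\alpha\colon Q_\chi\to Q_{\chi'}$ which is independent of the risometry and $\Leq$-definable --- a maximal ball disjoint from the core is determined by its position relative to the core (cf.\ Lemma~\ref{lem:fin-riso}). The problem then factors: $\chi\sim\chi'$ iff $\chi|_B\sim\chi'|_{\alpha(B)}$ for every $B\in Q_\chi$, and the right-hand side is a single first-order condition. No recursion on the size of the core is involved.

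A minor point: you need not quantify over bijections $\beta$, since by Lemma~\ref{lem:fin-riso}(2) the induced bijection $\Crig_\chi\to\Crig_{\chi'}$ is unique whenever one exists.
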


\begin{lem}\label{lem:r=>t}
Let $n \ge 1$ be given.
If (r-equiv)$_{n'}$ holds for $1 \le n' < n$, then (r-triv)$_n$ holds.
\end{lem}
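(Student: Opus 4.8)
The plan is to reduce the statement to the detection of $\bV$-riso-triviality for a single line $\bV\subseteq\RF^n$, and then to detect this by passing to $(n-1)$-dimensional slices and invoking the inductive hypotheses.

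\textbf{Reduction to a fixed line.} By the remark following Definition~\ref{defn:rtsp} and the additivity of riso-triviality (Proposition~\ref{prop:rtsp}, via Lemma~\ref{lem:V-additive}), for every spherically complete $K$ we have $\rtsp_{\bB(K)}(\bchi_K)=\sum\{\bV(K)\mid \bV\text{ a line with }\bchi_K\text{ }\bV\text{-riso-trivial on }\bB(K)\}$, and the span of an $\Leq$-definable family of subspaces is again $\Leq$-definable; so it suffices to produce, for a fixed $\LL$-definable line $\bV\subseteq\RF^n$, an $\LL$-sentence expressing ``$\bchi$ is $\bV$-riso-trivial on $\bB$'' uniformly over spherically complete $K\models\TT$. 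Fixing $\bV$ is harmless: a line in $\RF^n$ is an element of an $\RV^\eq$-sort, so adding a constant for it preserves $1$-h-minimality by \cite[Theorem 4.1.19]{iCR.hmin}. Note that when proving (r-triv)$_n$ for $n\ge2$ we may use not only (r-equiv)$_{n'}$ but also (r-triv)$_{n'}$ for $1\le n'<n$, since the latter follows from Lemma~\ref{lem:r=>t} applied at $n'$; for $n=1$, $\rtsp$ is $\{0\}$ or all of $\RF$, the latter exactly when $\bchi$ is constant on $\bB$, which is visibly first-order.

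\textbf{Passing to slices.} Fix such a $\bV$, choose (by an existential quantifier, using Remark~\ref{rmk:lift:no:matter} so the choice is irrelevant) a lift $V\subseteq\VF^n$ and a complement, and let $\pi\colon\bB\to V$ be the transversal projection onto $V$ lifting a projection with kernel the chosen complement; its fibers $F$ are balls in $(n-1)$-dimensional affine subspaces, and $\pi(\bB)$ is a $1$-dimensional ball. The heart of the matter is the equivalence: $\bchi$ is $\bV$-riso-trivial on $\bB(K)$ if and only if the $\Leq$-definable family of slices $(\bchi|_F)_F$ is \emph{coherently risometric}, i.e.\ one can choose risometries straightening one slice onto another so that they assemble into a single risometry of $\bB(K)$. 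One implication is immediate: if $\psi$ straightens $\bchi$ along $\bV$, then by Lemma~\ref{lem:fiber_compat} we may take $\psi$ to respect $\pi$-fibers, and Remark~\ref{rmk:fiber_riso} then gives the coherent family (by translating along $V$). For the converse I would build the global straightener by a bottom-up transfinite recursion over the tree of subballs of $\pi(\bB)$ in the $\bV$-direction (as in the proofs of Lemmas~\ref{lem:closed2all} and \ref{lem:closed2all-riso}), using Lemma~\ref{lem:B_finite}, the gluing lemma~\ref{lem:fin-riso}, and (r-triv)$_{n-1}$ to control the riso-triviality spaces of the slices, and using the finiteness of the rigid cores $\Crig_{\bchi|_F}$ (Lemma~\ref{lem:Crig}) to isolate the finitely many ``critical scales'' at which the slice-risometries must be adjusted to stay mutually compatible.

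\textbf{Making it first-order.} By (r-equiv)$_{n-1}$ together with the family version of the Riso-Equivalence Theorem in dimension $n-1$ (the analogue of Corollary~\ref{cor:RET}, whose proof uses only (r-equiv)$_{n-1}$ and the t-stratification results of Section~\ref{sec:tstrat}), the risometry type of $\bchi|_F$ is recorded by an $\Leq$-definable map $\brho\colon\pi(\bB)\to\RV^\eq$; combining this with the $\Leq$-definability (from (r-triv)$_{n-1}$) of the riso-triviality spaces and rigid cores of the slices, the coherence condition from the previous step becomes expressible by an $\LL$-sentence, uniformly in $K$. Assembling these sentences over all lines $\bV$ yields the desired $\bbU$. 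I expect the main obstacle to be exactly the converse implication in the key equivalence --- turning ``the slices are pairwise risometric with matching $\RV^\eq$-data'' into an actual global straightener. The subtlety is that a risometry between two slices need not be close to the identity, so naive gluing fails at the coarse scales; one must choose the slice-risometries with control on how far they move points, which forces the slices to agree not merely up to risometry but compatibly at each scale, and this is precisely where the finiteness of the rigid cores and the inductive riso-triviality statement in dimension $n-1$ are essential.
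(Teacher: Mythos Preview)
Your reduction to a fixed one-dimensional $\bV$ and the general idea of testing riso-triviality via lower-dimensional slices are both in the paper. Where your proposal diverges --- and where it has a genuine gap --- is in the role played by t-stratifications. You invoke them only peripherally (inside your appeal to the analogue of Corollary~\ref{cor:RET}), whereas in the paper they are the organising backbone of the argument: one fixes an $\LL$-definable t-stratification $(\bS_i)_i$ reflecting $\bchi$ (Theorem~\ref{thm:tstrat}) and uses that the map $B'\mapsto\bW:=\drtsp_{B'}((\bS_i)_i)$ is already $\Leq$-definable (Lemma~\ref{lem:3.14}). The first-order criterion for ``$\bchi$ is $\bU$-riso-trivial on $\bB$'' (Claim~\ref{claim.ind.UVW.new}) is then explicit: for every closed $B'\subseteq\bB$ with $\bU\not\subseteq\bW$, the restrictions of $\bchi$ to suitable open balls inside fibers transversal to $\bU\oplus\bW$ are in risometry --- a condition in dimension $n-\dim\bW-1$, expressible by (r-equiv)$_{n'}$ for that $n'$.

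Your ``coherence condition'' is never written down, and the sentence ``combining this with the $\Leq$-definability \dots the coherence condition becomes expressible by an $\LL$-sentence'' is precisely the step that requires proof. Knowing the risometry type $\brho(F)$ of each fiber $F$ (and even its riso-triviality space and rigid core) does not by itself yield a first-order test for $\bV$-riso-triviality on all of $\bB$: constancy of $\brho$ is necessary but far from sufficient (a single marked point per one-dimensional fiber, placed at arbitrary heights, already shows this), and you give no concrete multi-scale condition capturing the required compatibility. Likewise, the passage ``I would build the global straightener by a bottom-up transfinite recursion \dots using the finiteness of the rigid cores'' describes the difficulty rather than resolving it. The paper's proof of the hard implication is a careful double induction: an outer downward induction on $\ell=\dim\drtsp_{B'}((\bS_i)_i)$, and an inner induction on the cardinality of $\pi_U(\bS_\ell(K)\cap F)$ inside a chosen sub-ball. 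It is the t-stratification --- specifically the finiteness of $\bS_\ell(K)\cap F$ for a transversal fiber $F$ --- that makes this inner induction go through and allows the local straighteners to be stitched together. Without a substitute for this scaffolding, your proposal does not establish (r-triv)$_n$.
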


Clearly, those two lemmas together imply Theorems~\ref{thm:RTT}
and \ref{thm:RET}.

Of course, (r-triv)$_n$ and (r-equiv)$_n$ also imply the corresponding family versions (Corollaries~\ref{cor:RTT} and \ref{cor:RET}) up to ambient dimension $n$. Moreover, (r-triv)$_n$ also implies Lemma~\ref{lem:Crig-def} in ambient dimension $n$.

\begin{proof}[Proof of Lemma~\ref{lem:t=>r}]
We are given $\Leq$-definable maps $\bchi, \bchi' \colon \bB \to \RV^\eq$
for some $\LL$-definable ball $\bB \subseteq \VF^n$; we need find an $\LL$-sentence $\psi$ expressing that a risometry from $\bchi$ to $\bchi'$ exists.

Existence of a risometry in some spherically complete model $K$ implies $\rtsp_{\bB(K)} \bchi_K =\rtsp_{\bB(K)} \bchi'_K$.
By (r-triv)$_{n}$, $\rtsp_{\bB} \bchi$ and $\rtsp_{\bB} \bchi'$ are $\Leq$-definable,
so we can put this equality as a first condition into $\psi$ and from now on
assume that $\rtsp_{\bB} \bchi = \rtsp_{\bB} \bchi' =: \bbW$. We now do a case distinction; the different cases correspond to definable conditions and formally become part of $\psi$. (Moreover, Case 3 will use Cases 1 and 2.)

\medskip

\textbf{Case 1:} $\bbW = \RF^n$:

In this case, $\bchi_K$ and $\bchi'_K$ are constant, so they are in risometry if and only if they are equal.

\medskip

\textbf{Case 2:} $\{0\} \subsetneq \bbW \subsetneq \RF^n$:

Given a spherically complete model $K \models \TT$, fix any projection $\bar\pi\colon \RF(K)^n \to \bbW(K)$ and any lift $\pi\colon K^n \to W$ of $\pi$.
Also choose any $x \in \pi(\bB(K))$ and consider the fiber
$F := \pi^{-1}(x) \cap \bB(K)$.
By (r-equiv)$_{n'}$ (for $n' = n - \dim \bbW(K)$), there exists an $\Leq$-formula which, given $\bar\pi$, $W$, $\pi$, $x$ as above, expresses whether a risometry $\bchi_K|_F \to \bchi_K'|_F$ exists. By Lemma \ref{lem:fiber_check}, this is equivalent
to the existence of a risometry $\bchi_K \to \bchi_K'$, so we obtain the desired $\LL$-sentence $\psi$ using quantifiers (universal or existential does not matter) for the above choices of $\bar\pi$, $W$, $\pi$, $x$.

\medskip

\textbf{Case 3:} $\bbW = \{0\}$:

Let us first introduce some notation in some given model $K$ (spherically complete, as always):
Set $\chi := \bchi_K$.
Recall the notion of rigid core $\Crig_{\chi} = \bCrig_{\bchi}(K)$ from Definitions~\ref{defn:Crig} and \ref{defn:bCrig}, let $P_{\chi}$ denote the set of all maximal balls disjoint from $C := \bigcup_{B \in \Crig_{\chi}} B$, and set $Q_{\chi} := \Crig_{\chi} \cup P_{\chi}$.
Note that $Q_{\chi}$ is a partition of $K^n$. Indeed, any $x \in K^n \setminus C$ has some positive distance $\lambda$ to $C$ (since $\Crig_{\chi}$ consists of finitely many points and balls), so $x \in B_{<\lambda}(x) \in P_{\chi}$.
Since $\Crig_{\chi}$ is $\Leq$-definable (uniformly in all models), so are $P_{\chi}$ and $Q_{\chi}$.

We use analogous notation for $\chi' := \bchi_K'$, namely:
$\Crig_{\chi'}$, $P_{\chi'}$ and $Q_{\chi'}$.

Any risometry from $\chi$ to $\chi'$ send balls on which $\chi$ is not riso-trivial at all to balls on which $\chi'$ is not riso-trivial at all, so it sends $\Crig_\chi$ to $\Crig_{\chi'}$.

Since the cardinalities of $\Crig_\chi$ and $\Crig_{\chi'}$ are bounded independently of the model (by Lemma~\ref{lem:Crig}),
Remark~\ref{rmk:fin-riso} yields
an $\LL$-sentence expressing that there exists a risometry from $\bB(K)$ to $\bB(K)$ sending $\Crig_\chi$ to $\Crig_{\chi'}$. We include that $\LL$-sentence in $\psi$ and from now on assume that such a risometry exists. 
Remark~\ref{rmk:fin-riso} also tells us that the induced map $\Crig_\chi \to \Crig_{\chi'}$ is $\Leq$-definable uniformly in $K$.

Next, note that each risometry $\phi$ from $\bB(K)$ to $\bB(K)$ sending $\Crig_\chi$ to $\Crig_{\chi'}$ induces a map $\alpha\colon Q_\chi \to Q_{\chi'}$, and $\alpha$ does not depend on the choice of $\phi$. Moreover,
$\alpha$ is uniformly $\Leq$-definable.
It follows that there exists a risometry from $\chi$ to $\chi'$ if and only if, for every $B \in Q_{\chi}$ and $B' := \alpha(B) \in Q_{\chi'}$, there exists a risometry $\chi|_{B} \to \chi'|_{B'}$.
We put a quantifier running over all $B \in Q_{\chi}$ into our sentence $\psi$ and from now on focus on a single such $B$, i.e., it remains to express that there exists a risometry $\chi|_{B} \to \chi'|_{B'}$.

Using (r-triv)$_n$ once more, we do a case distinction on whether 
$\chi$ is $1$-riso-trivial on $B$. If this is the case, we are done by Case~1 or 2, 
so suppose now that $\chi$ is not riso-trivial at all on $B$.
By Lemma~\ref{lem:Crig}~(2) and (3), we have $B \in \Crig_{\chi}$, it is closed, and 
and $\chi$ is $1$-riso-trivial on every proper subball of $B$. Analogous statements hold for $\chi'$ and $B' = \alpha(B)$.

Let $C_1$ be the set of open subballs $B_1 \subseteq B$ satisfying $\radop B_1 = \radcl B$, and similarly $C'_1$ for $B'$. Any risometry $B \to B'$
induces a bijection $C_1 \to C'_1$ which is a translation, in the sense that there exists a $d \in D := B' - B = \{b' - b \mid b \in B, b' \in B'\}$ such that every $B_1 \in C_1$ is sent to $B_1 + d \in C'_1$.  (This follows from the definition of risometry.) Thus there exists a risometry $\chi|_{B} \to \chi'|_{B'}$ if and only if there exists a $d \in D$ such that for every $B_1 \in C_1$, $\chi|_{B_1}$ is in risometry to $\chi'|_{B_1 + d}$.
Since $\chi|_{B_1}$ and $\chi'|_{B_1+d}$ are $1$-riso-trivial (by Lemma~\ref{lem:Crig}~(3)), this is a definable condition by Cases~1 and 2.
\end{proof}

\begin{proof}[Proof of Lemma~\ref{lem:r=>t}]
We are given an $\LL$-definable ball $\bB \subseteq \VF^n$ and an $\Leq$-definable map $\bchi \colon \bB \to \RV^{\eq}$. We need to show that $\rtsp_{\bB(K)}(\bchi_K)$ is $\Leq$-definable uniformly in $K$.

We fix an $\LL$-definable t-stratification $(\bS_{i})_i$ reflecting $\bchi$, as provided by Theorem~\ref{thm:tstrat}. Then,
by Lemma~\ref{lem:3.14}, the map sending a closed ball $B \subseteq K^n$
to $\drtsp_B((\bS_i(K))_i)$ is $\Leq$-definable (uniformly in $K$).
Using this, the following claim gives a criterion for riso-triviality on $\bB$.

\begin{figure}
 \includegraphics{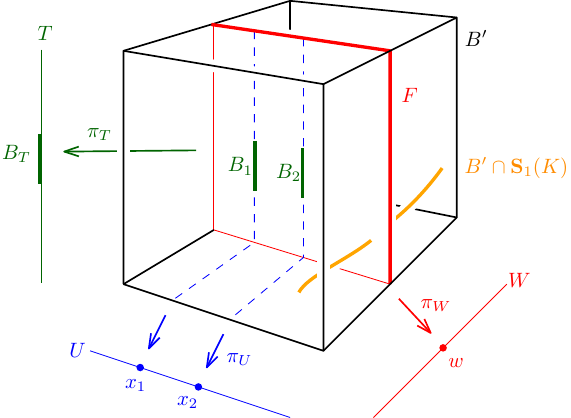}
 \caption{The ingredients of Claim~\ref{claim.ind.UVW.new} (2): $(\bS_{i}(K))_i$ is only $\bW$-riso-trivial, due do $\bS_{\dim \bW}(K)$ intersecting $\bB(K)$, but $\chi_K$ might nevertheless also be $\bU$-riso-trivial. To check this, one needs to verify whether $\bchi_K |_{B_1}$ and $\bchi_K|_{B_2}$ are in risometry for every pair of balls $B_1$ and $B_2$ as in the claim.}
 \label{fig:claim2}
\end{figure}

\begin{claim}\label{claim.ind.UVW.new}
 For every spherically complete model $K$ and every one-dimensional vector sub-space $\bU \subseteq \RF(K)^n$, the following statements are equivalent.
 \begin{enumerate}
  \item $\bchi_K$ is $\bU$-riso-trivial on $\bB(K)$.
    \item For every closed ball $B' \sseteq \bB(K)$, if $\bU$ is not contained in $\bW := \drtsp_{B'}((\bS_i(K))_i)$, then the following holds:
 
    Choose a complement $\bar T \subseteq \RF(K)^n$ of $\bU \oplus \bW$ (i.e., $\bU \oplus \bW \oplus \bar T = \RF(K)^n$), choose lifts $T, U, W \subseteq K^n$ and denote by $\pi_T: B' \twoheadrightarrow T$, $\pi_U: B' \twoheadrightarrow U$, $\pi_W: B' \twoheadrightarrow W$ the projections that are $0$ on the other two vector spaces respectively (see Figure~\ref{fig:claim2}). Moreover, choose $w \in \pi_W(B')$ and denote by $F := \pi_W^{-1}(w)$ the corresponding $\pi_W$-fiber.

    For every pair of distinct points $x_1,x_2 \in \pi_U(B')$ and every open ball $B_T \subseteq \pi_T(B')$ of open radius $|x_2-x_1|$, set $B_i := \pi_T^{-1}(B_T) \cap F \cap \pi_U^{-1}(x_i)$
     (for $i=1,2$). We require that $\bchi_K |_{B_1}$ and $\bchi_K|_{B_2}$ are in risometry.
  \end{enumerate}
\end{claim}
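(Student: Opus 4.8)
I would prove the two implications separately.

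For ``(1) $\Rightarrow$ (2)'', fix a closed ball $B'$ as in~(2) with $\bU \not\subseteq \bW := \drtsp_{B'}((\bS_i(K))_i)$, together with the data $\bar T, T, U, W, \pi_T, \pi_U, \pi_W, w, F$ from the claim; assume $0 \in B'$, $w = 0$. Since $(\bS_i)_i$ reflects $\bchi$, Remark~\ref{rmk:tstrat} gives that $\bchi_K$ is $\bW$-riso-trivial on $B'$, so Convention~\ref{conv:trans} applies; as $\bchi_K$ is also $\bU$-riso-trivial on $B'$ by~(1) and Lemma~\ref{lem:closed2all}, and $\bU$ lies in the kernel of the residue projection attached to $\pi_W$, Lemma~\ref{lem:fiber_triv} shows that $\bchi_K|_F$ is $\bU$-riso-trivial on $F$; fix a $U$-straightener $\phi\colon F \to F$ (Remark~\ref{rmk:lift:no:matter}), so $\eta := \bchi_K|_F \circ \phi$ is $U$-translation invariant. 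Given distinct $x_1, x_2 \in \pi_U(B')$ and an open ball $B_T \subseteq \pi_T(B')$ of radius $\delta := |x_2-x_1|$, I would set $\sigma := \phi \circ \tau \circ \phi^{-1}$, where $\tau\colon F \to F$ is the translation by $x_2-x_1 \in U$ (a risometry). Then $\sigma$ is a risometry with $\bchi_K|_F \circ \sigma = \eta \circ \tau \circ \phi^{-1} = \eta \circ \phi^{-1} = \bchi_K|_F$ (using $U$-translation invariance of $\eta$) and $|\sigma(z) - z - (x_2-x_1)| < \delta$ for all $z$ (because $\phi$ is a risometry). This estimate forces $\sigma$ to map $B_1 := \{x_1\} + B_T$ into an open ball of radius $\delta$ that contains $B_2 := \{x_2\} + B_T$, hence onto $B_2$ by Lemma~\ref{lem:B_finite}; so $\sigma|_{B_1}$ is a risometry from $\bchi_K|_{B_1}$ to $\bchi_K|_{B_2}$, which is what~(2) demands.

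For ``(2) $\Rightarrow$ (1)'', by Lemma~\ref{lem:closed2all} it suffices to show $\bchi_K$ is $\bU$-riso-trivial on every closed ball $B' \subseteq \bB(K)$. Put $\bW := \drtsp_{B'}((\bS_i(K))_i)$ (so $\bchi_K$ is $\bW$-riso-trivial on $B'$ by Remark~\ref{rmk:tstrat}); if $\bU \subseteq \bW$ we are done, so assume $\bU \not\subseteq \bW$, choose the data of~(2) with $0 \in B'$, $w=0$, and write $\chi := \bchi_K|_F$, $F = F_U \times F_T$ with $F_U := U \cap B'$, $F_T := T \cap B'$. By Lemma~\ref{lem:fiber_triv} it is enough to make $\chi$ $\bU$-riso-trivial on $F$. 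Hypothesis~(2) now reads: for distinct $u_1, u_2 \in F_U$ and every open ball $O \subseteq F_T$ of radius $|u_1-u_2|$, $\chi|_{u_1+O}$ and $\chi|_{u_2+O}$ are risometric. \emph{Step A:} upgrade this to full slices --- for $\delta := |u_1-u_2|$ the open $\delta$-balls partition $F_T$, so translating the given risometries $u_1+O \to u_2+O$ into risometries $O \to O$ and gluing over all $O$ yields a risometry $\rho^{u_1,u_2}\colon F_T \to F_T$ with $\chi(u_1+s) = \chi(u_2 + \rho^{u_1,u_2}(s))$ and $|\rho^{u_1,u_2}(s) - s| < \delta$ for all $s$ (the glued map is a risometry because each piece moves points by less than $\delta$ while distinct pieces are at distance $\ge\delta$; surjectivity of the pieces is Lemma~\ref{lem:B_finite}, as in the proofs of Lemmas~\ref{lem:closed2all} and~\ref{lem:closed2all-riso}). \emph{Step B:} assemble a straightener of the shape $\phi(u+s) = u + \theta_u(s)$; if the risometries $\theta_u\colon F_T \to F_T$ ($u \in F_U$) are chosen so that $\chi(u+\theta_u(s))$ is independent of $u$ and $|\theta_u(s) - \theta_{u'}(s)| < |u-u'|$ for distinct $u, u'$, then $\phi$ is a risometry $F \to F$ (this second condition is exactly what makes the pieces fit together) with $\chi \circ \phi$ being $U$-translation invariant, i.e.\ a $U$-straightener of $\chi$ on $F$. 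The $\theta_u$ would be produced by transfinite recursion down a cofinal well-ordered chain of subballs of $F_U$: at a closed subball $D \subseteq F_U$ of radius $\mu$ one fixes the common value of the $\theta_u$ ``modulo $\mu$'' for $u \in D$ (compatibly with the coarser stages and a chosen base point) and uses the maps $\rho^{u_1,u_2}$ of Step A to refine consistently on the proper subballs of $D$, with $\theta_u$ pinned down at the bottom.

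The hard part is Step B: not every family $(\theta_u)$ with $\chi(u+\theta_u(s))$ independent of $u$ will do --- the $\theta_u$ must be chosen \emph{coherently}, so that slices at nearby parameters receive nearby straighteners (the estimate $|\theta_u(s)-\theta_{u'}(s)| < |u-u'|$), and what makes this possible is precisely the extra conclusion of Step A that the slices are risometric through maps that are $|u_1-u_2|$-close to the identity. (The degenerate case $\bar T = 0$, where the balls $B_i$ in~(2) are points, is immediate: condition~(2) then says $\bchi_K$ is constant on the one-dimensional fiber $F$, hence $\bU$-riso-trivial.) Once the claim is proved, $\bU$-riso-triviality of $\bchi_K$ on $\bB(K)$ becomes a first-order condition on $\bU$: $\drtsp_{B'}((\bS_i(K))_i)$ depends $\Leq$-definably on $B'$ by Lemma~\ref{lem:3.14}, while the risometry conditions in~(2) involve only the lower-dimensional balls $B_1, B_2$ and are therefore definable by (r-equiv)$_{n'}$ with $n' < n$; since $\rtsp_{\bB(K)}(\bchi_K)$ is the span of all one-dimensional riso-triviality directions $\bU$ (Lemma~\ref{lem:V-additive}), this yields the $\Leq$-definability asserted in (r-triv)$_n$.
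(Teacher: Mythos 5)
Your ``(1) $\Rightarrow$ (2)'' direction is essentially the paper's argument: your map $\sigma = \phi\circ\tau\circ\phi^{-1}$ is exactly the risometry of Remark~\ref{rmk:fiber_riso}, applied inside the fiber $F$ with respect to $\pi_U$. One small omission: you must take the straightener $\phi$ of $\bchi_K|_F$ to respect $\pi_U$-fibers (Lemma~\ref{lem:fiber_compat}), not just to exist for the given lift $U$. Without that, the estimate $|\sigma(z)-z-(x_2-x_1)|<\delta$ only places $\sigma(B_1)$ inside the set $\{u:|u-x_2|<\delta\}\times B_T$, which is strictly larger than $B_2=\{x_2\}\times B_T$; Lemma~\ref{lem:B_finite} does not apply because $B_1$ is not a ball of $F$ but a slice of one. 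With a fiber-respecting $\phi$ the map $\sigma$ sends the fiber over $x_1$ to the fiber over $x_2$ and the argument closes.

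The real problem is Step~B of ``(2) $\Rightarrow$ (1)''. You need a family $(\theta_u)_{u\in F_U}$ satisfying two conditions simultaneously: the \emph{exact} condition that $\chi(u+\theta_u(s))$ is independent of $u$, and the \emph{coherence} condition $|\theta_u(s)-\theta_{u'}(s)|<|u-u'|$. Your Step~A only supplies pairwise data $\rho^{u_1,u_2}$ with no cocycle compatibility, and the transfinite recursion you sketch cannot produce the family: if at each scale $\mu$ you only pin down $\theta_u$ modulo $B_{<\mu}$ (or pin down $\theta_{u_D}$ for one representative $u_D$ per ball $D$ and let $u_D\to u$), then $\theta_u(s)$ is obtained as a limit, and the exact condition $\chi(u+\theta_u(s))=\chi(u_0+s)$ is not preserved under limits because $\chi$ is not continuous. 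Conversely, if you insist on exactness by setting $\theta_u:=\rho^{v,u}\circ\theta_v$ for some previously handled $v$, the coherence bound you get is $<|u-v|$, which is useless for pairs $u,u'$ that are much closer to each other than to any previously handled point. This tension is precisely what the paper's proof is built to resolve, and it does so by a mechanism your argument discards entirely: the downward induction on $\ell=\dim\drtsp_{B'}((\bS_i(K))_i)$ over the t-stratification. On every ball $B''$ meeting $F$ but avoiding $\bS_\ell(K)$, the induction hypothesis $(*)_{\ell'}$ for $\ell'>\ell$ already yields $\bU$-riso-triviality of $\bchi_K$ on the whole (higher-dimensional) ball $B''$, i.e.\ a ready-made coherent family on $B''\cap F$; since $\bS_\ell(K)\cap F$ is finite, only finitely many gluings across points of $\pi_U(\bS_\ell(K)\cap F)$ remain, and condition~(2) is invoked exactly once per gluing point, where the pieces being glued are separated by at least the relevant distance so that cross-piece pairs satisfy the risometry inequality automatically. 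Your version uses condition~(2) only for the single ball $B'$ and never appeals to the induction hypothesis on subballs, so it is in effect claiming the strictly stronger statement that the slice condition on one fiber forces riso-triviality on that fiber; you give no proof of this, and I see no way to complete Step~B without reinstating the t-stratification input.
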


Some explanations concerning (2):
\begin{itemize}
 \item By ``choose'', we mean that the truth of (2) does not depend on those choices.
 \item If $\dim T = n - \dim W - 1$ happens to be $0$, then the ball $B_T$ makes no sense according to our convention that balls have to be infinite. In that case, we use the convention that $B_T$ is equal to the singleton set $T$
 (and $B_i$ becomes the singleton set $F \cap \pi^{-1}_U(x_i)$).
\end{itemize}

The claim implies the lemma: Firstly, note that the entire Condition (2) can be expressed by an $\LL$-sentence, using (r-equiv)$_{n'}$ (for $n' = n - \dim W - 1$) to definably check whether $\bchi_K |_{B_1}$ and $\bchi_K|_{B_2}$ are risometric. Thus we obtain the desired space $\rtsp_{\bB(K)}(\bchi_K)$ as the union of all $\bU$ satisfying (2).

\begin{proof}[Proof of $(1) \Rightarrow (2)$ in Claim \ref{claim.ind.UVW.new}.]
Suppose that $\bchi_K$ is $\bU$-riso-trivial on $\bB(K)$, and let $B' \sseteq \bB(K)$ be a closed sub-ball. By Remark~\ref{rmk:tstrat}, we have $\bW=\drtsp_{B'}((\bS_i(K))_i)\sseteq \rtsp_{B'}(\chi)$. Since $\chi$ is also $\bU$-riso-trivial on $B'\sseteq \bB(K)$, it is $(\bU + \bW)$-riso-trivial on $B'$ by Lemma \ref{lem:V-additive}. Now
set $F_i := F\cap \pi_U^{-1}(x_i)$ and let $B_i \subseteq F_i$ be as in (2), for $i=1,2$.
By Remark~\ref{rmk:fiber_riso}, we find a risometry
$\alpha \colon F_1 \to F_2$ sending $\bchi_K|_{F_1}$ to $\bchi_K|_{F_2}$. This risometry moreover sends $B_1$ to $B_2$, so restricts to the desired risometry from $\bchi_K|_{B_1}$ to $\bchi_K|_{B_2}$.
\qedhere (Claim~\ref{claim.ind.UVW.new}, $(1) \Rightarrow (2)$)
\end{proof}

\begin{proof}[Proof of $(1) \Leftarrow (2)$ in Claim \ref{claim.ind.UVW.new}.]
Suppose that (2) holds.
We prove the following claim by downwards induction on $\ell$, starting from $\ell = n$:
\begin{itemize}
\item[(*)$_\ell$] For every definable ball $B' \subseteq \bB(K)$ satisfying $\dim(\drtsp_{B'}((\bS_i(K))_i)) = \ell$, $\bchi_K$ is $\bU$-riso-trivial on $B'$.
\end{itemize}

Note that once we know (*)$_\ell$ for all $\ell$, we are done, since then, we can in particular take $B' = \bB(K)$.

Let us now prove (*)$_{\ell}$ for some given $\ell$, assuming that
(*)$_{\ell'}$ holds for $\ell' > \ell$.
We start by noting that it suffices to prove the claim for balls $B'$ which are closed. Indeed, if $B'$ is an arbitrary definable ball, then for every closed subball $B'' \subseteq B'$, we have $\ell'' := \dim(\drtsp_{B''}((\bS_i(K))_i)) \ge \dim(\drtsp_{B'}((\bS_i(K))_i))$, so using (*)$_{\ell''}$, we obtain $\bU$-riso-triviality on every closed subball of $B'$, which implies $\bU$-riso-triviality on $B'$ (by Lemma~\ref{lem:closed2all}).

So we now assume that $B'$ is closed; we use all the notation from (2), and we assume that $\ell= \dim W = \dim(\drtsp_{B'}((\bS_i(K))_i))$.

If $\bU \subseteq \bW$, then $\chi$ is trivially $\bU$-riso-trivial on $B'$ (by Remark~\ref{rmk:tstrat}). (This includes the the start of induction, namely $\ell = n$.) So now suppose that $\bU \not\subseteq \bW$.

Fix any $F$, a fiber of $\pi_W : B' \ra W$, as in (2). We will prove that 
\begin{itemize}
 \item[(**)] $\bchi_K|_{F}$ is $\bU$-riso-trivial.
\end{itemize}

Then, together with $\bW$-riso-triviality of $\bchi_K|_{B'}$, we obtain (using Lemma~\ref{lem:fiber_triv}) that $\bchi_K|_{B'}$ is $(\bU + \bW)$-riso-trivial. In particular this implies that $\bchi_K$ is $\bU$-riso-trivial on $B'$, as desired.

\begin{figure}
 \includegraphics{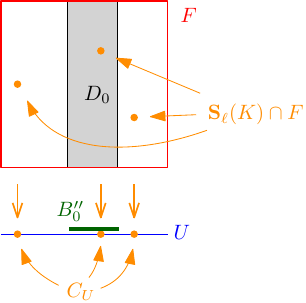}
 \caption{\label{fig:**}Proving (**): Using induction over $\kappa$, we obtain $\bar U$-riso-triviality on stripes $D_0$ containing $\kappa$ elements of $\bS_\ell(K) \cap F$. The case $\kappa = 0$ is obtained from an outer induction  over $\ell$.}
\end{figure}

Since $\ell = \dim(\drtsp_{B'}((\bS_i(K))_i))$, we have that $\bS_{\ell}(K) \cap B' \ne \emptyset$ (by Remark~\ref{rmk:tstrat}), and $\bW$-riso-triviality then also implies that $\bS_{\ell}(K) \cap F \ne \emptyset$. An easy dimension argument yields that $\bS_{\ell}(K) \cap F$ is $0$-dimensional and hence finite (see \cite[Lemma 3.10]{i.whit}).
Let $C_U := \pi_U(\bS_{\ell}(K) \cap F)$ be its projection to $U$.

Let us now assume that we are
given a definable ball $B''_0 \subseteq \pi_U(B')$ and set $D_0 := F \cap \pi_U^{-1}(B''_0) \subseteq F$ (see Figure~\ref{fig:**}).
Our goal is to prove the existence of a straightener $\phi_0\colon D_0 \to D_0$ witnessing $\bU$-riso-triviality of $\bchi_K|_{D_0}$ and satisfying $\pi_U \circ \phi_0 = \pi_U$. 
We do this using another induction, namely on the cardinality $\kappa := \#(C_U \cap B''_0)$. Once we know that claim for $\kappa = \#C_U$, we can take $B''_0 = \pi_U(B')$, yielding $\bU$-riso-triviality on the entire fiber $F$ (and hence proving (**)).

First suppose $\kappa = 0$.
Let $B'' \subseteq B'$ be a ball of the same radius as $B''_0$ which has non-empty intersection with $D_0$. Then 
$B'' \cap F \subseteq D_0$, so
$B'' \cap F \cap \bS_\ell(K) = \emptyset$ (since $\kappa = 0$).
Using $\bW$-riso-triviality, this implies
$B'' \cap \bS_\ell(K) = \emptyset$. Thus
$\dim(\drtsp_{B''}((\bS_i(K))_i)) \ge \ell+1$, meaning that we can apply induction (on $\ell$). This yields that
$\bchi_K|_{B''}$ is $\bU$-riso-trivial, and hence so is
$\bchi_K|_{B'' \cap F}$ (using $\bW$-riso-triviality once more). 
So for each $B''$ as above, we have a straightener $\phi_{B''}$ witnessing $\bU$-riso-triviality of $\bchi_K|_{B'' \cap F}$, and we may assume that $\pi_U \circ \phi_{B''} = \pi_U$. 
Since $D_0$ is a disjoint union of such balls of the form $B'' \cap F$, the union of those $\phi_{B''}$ is a straightener witnessing $\bU$-riso-triviality of $\bchi_K|_{D_0}$.

Now suppose $\kappa \ge 1$.
We fix some $c \in C_U \cap B''_0$, we partition
$B''_0 \setminus \{c\}$ into the (infinitely many) maximal balls $B''_i \subseteq B''_0$ not containing $c$, and we set $D_i := F \cap \pi_U^{-1}(B''_i)$.
By induction, we have a straightener $\phi_i\colon D_i \to D_i$ for each $i$. We ``stitch them together'' to a straightener $\phi_0\colon D_0 \to D_0$ as follows.

To ease notation, we identify $F$ with $\pi_U(B') \times \pi_T(B')$
(using the map $z \mapsto (\pi_U(z), \pi_T(z))$).

We fix an arbitrary $x_i \in B''_i$ for every $i$ and
using Claim (2), we pick a risometry
$\psi_i\colon F \cap \pi_U^{-1}(x_i) \to F \cap \pi_U^{-1}(c)$ compatible with $\bchi_K$. More precisely, for each open ball $B_T \subseteq \pi_T(B')$ of open radius $|x_i - c|$, we use Claim (2) to pick a risometry
$\{x_i\} \times B_T \to \{c\} \times B_T$, and we let $\psi_i$ be the union of all of them.

By Remark~\ref{rmk:fix:fiber}, we can (without loss) prescribe $\phi_i$ on 
$F \cap \pi_U^{-1}(x_i)$. We do so by imposing that
\begin{equation}\label{eq:psiphi}
\phi_i(x_i, y) = \psi_i^{-1}(c, y)
\end{equation}
for $y \in \pi_T(B')$.
We then define $\phi_0\colon D_0 \to D_0$ by
\[
\phi_0 := \id_{F \cap \pi_U^{-1}(c) } \cup \bigcup_i \phi_i.
\]
Then for any $i$ and any $x \in B_i''$, we have
\[
\bchi_K(\phi_0(x, y))
= \bchi_K(\phi_i(x, y))= \bchi_K(\phi_i(x_i, y)) \overset{\eqref{eq:psiphi}}{=} \bchi_K(c, y),
\]
which shows that $\bchi_K \circ \phi_0$ is
$U$-translation invariant. To see that $\phi_0$ is a risometry, first note that each pair of points of the form $z = (x_i,y), z' = (c,y')$ satisfies the risometry condition, i.e., $\rv(\phi_0(z) - \phi_0(z')) = \rv(z - z')$,
since $|\phi_0(x_i, y) - (x_i,y)| < |x_i - c|$ by \eqref{eq:psiphi} and the definition of $\psi_i$. This, together with the maps $\phi_i$ being risometries, implies the risometry condition for all pairs of points.
Indeed, consider e.g.\ a pair of points of the form $z_0 = (x,y) \in D_i, z_4 = (x',y') \in D_j$ with $i \ne j$. (Other pairs work with a similar argument.) Set $z_1 := (x_i, y)$, $z_2 := (c, y)$, $z_3 := (x_j, y)$. Then each successive pair $z_\ell, z_{\ell+1}$ satisfies the risometry condition; together with
$|z_0 - z_4| = \max_\ell|z_\ell- z_{\ell+1}|$, this implies the risometry condition for $z_0,z_4$.

Thus, $\phi_0$ is a straightener of $\phi_0$ on $D$, finishing the proof of (**) and hence also of (*)$_{\ell}$.
\qedhere (Claim~\ref{claim.ind.UVW.new}, $(1) \Leftarrow (2)$)
\end{proof}

As already explained, now that we proved
Claim~\ref{claim.ind.UVW.new}, we are also done with the proof of Lemma~\ref{lem:r=>t}.
\end{proof}

And this in turn finishes the proofs of Theorems~\ref{thm:RTT} and \ref{thm:RET}.

\section{Riso-Stratifications}
\label{sec:can_whit}

Let $K_1$ be either $\RR$ or $\CC$. Then an elementary extension
$K \succneqq K_1$ can be turned into a valued field, by taking the convex closure of $K_1$ as the valuation ring (see below for details).
In this way, a definable set $\bX(K_1) \subseteq K_1^n$ gives rise to a riso-tree, namely the one of $\bX(K)$.
The goal of this section is to construct, from that riso-tree, a
stratification in $K_1^n$, which we call the riso-stratification of $\bX(K_1)$. We will verify that the riso-stratification has some good properties, in particular that it satisfies the Whitney conditions and that $\bX(K_1)$ is a union of strata.
By construction, this riso-stratification is entirely canonical.

The construction works in quite some generality. In particular, we can allow certain expansions of the ring-structure on $K_1$, and
instead of only $\RR$ or $\CC$, we can allow real closed or algebraically closed fields of characteristic $0$. Moreover, we can also work in some other topological fields like $\QQ_p$ or $\kt$, for $k$ an arbitrary field of characteristic $0$. In those cases, the version of the Whitney conditions we obtain is closely related to the Verdier conditions considered by Cluckers--Comte--Loeser \cite{CCL.cones} and Forey \cite{For.motCones}.

Finally, using a variant of the construction in algebraically closed fields, we also associate, to an affine scheme, an ``algebraic riso-stratification'', which captures additional information when the scheme is non-reduced. That variant will be needed in Section~\ref{sec:poincare} for our application to Poincaré series.

\subsection{The assumptions}
\label{sec:whit-ass}

We start by fixing the precise conditions under which we will construct riso-stratifications.
We do the construction in three different settings simultaneously:
\begin{hyp}\label{hyp:can_whit}
In the entire Section~\ref{sec:can_whit}, we assume that $K_1$ and $\Lnoval$ satisfy one of the following assumptions and we set $\Tnoval := \Th_{\Lnoval}(K_1)$:
\begin{enumerate}
\item[(OMIN)] $K_1$ is a real closed field, considered as a structure in a language $\Lnoval$ in which it is power-bounded o-minimal (see Definition~\ref{defn:powbd}).
\item[(ACF)] $K_1$ is an algebraically closed field of characteristic $0$, considered as an $\Lnoval$-structure, where $\Lnoval$ is an extension of the ring language by an arbitrary set of constants from $K_1$.
\item[(HEN)] $K_1$ is a valued field which is elementarily equivalent to $\QQ_p$, $\kt[z]$ or $\kt[z^{\QQ}]$, where $k$ is a field of characteristic $0$, and
$\Lnoval$ is an extension of the valued field language on $K_1$ by an arbitrary set of constants from $K_1$. When applying the valued field notation from Section~\ref{sec:notn} to that valuation, we put an index ``fine'' everywhere, e.g.\ writing $|\cdot|_\fine\colon K_1 \to \VG_\fine(K_1)$ for the valuation map and $\valring_\fine(K_1)$ for the valuation ring.
\end{enumerate}
\end{hyp}

Recall that power-bounded is a generalization of polynomially bounded introduced in \cite{Mil.powBd}:
\begin{defn}\label{defn:powbd}
An o-minimal $\Lnoval$-structure $K_1$ is called power-bounded if every $\Lnoval(K_1)$-definable function from $K_1$ to itself is eventually bounded by an $\Lnoval$-definable endomorphism of the group $K_1^\times$.
\end{defn}

Some readers might mainly be interested in one of the following examples:

\begin{exa}
The following structures on $\RR$ fall into the Case (OMIN):
\begin{enumerate}
\item 
the collection of semi-algebraic sets;
\item $\RR_{\mathrm{an}}$, which consists of all globally subanalytic sets (see \cite{Dri.Ran}, where those sets are called finitely subanalytic).
\end{enumerate}
\end{exa}

The list given in Hypothesis~\ref{hyp:can_whit} (of structures where one can obtain riso-stratifications by our methods) is certainly not exhaustive. We leave it to the readers to extend it. In particular, it would be natural to expect that in the (HEN) case, we can allow any $1$-h-minimal field $K_1$ of characteristic $(0,0)$ or $(0,p)$.

Out of $\LL_0$, we construct a language $\LL$ as follows:

\begin{defn}\label{defn:LL:can_whit}
We set $\LL^- := \LL_0 \cup \{\valring\}$ where $\valring$ is a predicate for a (new) valuation ring and
$\LL := \LL^- \cup \LL_{0,\RF}$, where $\LL_{0,\RF}$ is a copy of the language $\LL_0$ on the residue field corresponding to $\valring$.
(More formally, we should either add the sort $\RF$ and the map $\res\colon \valring \to \RF$ to $\LL$, or we keep $\LL$ one-sorted and pull the ``copy of $\Lnoval$ on $\RF$'' back to $\valring$.)
\end{defn}

\begin{notn}
Given an $\LL$-structure $K$,
the valuation corresponding to $\valring$ will be denoted by $|\cdot|$, and
all the valued field notation introduced in Section~\ref{sec:notn} (like $\RF(K)$, $\valring(K)$, $\res$, \dots) will be applied with respect to that valuation. When we speak of 
``the'' valuation on $K$, we mean $|\cdot|$ (even in the case (HEN)). We consider such $K$ as topological fields, with the valuation topology coming from $|\cdot|$. Given a set $X \subseteq K^n$, we write $\Cl(X)$ for the topological closure of $X$ and $\Int(X)$ for its interior.
\end{notn}

\begin{lem}\label{lem:consKK0}
Given a field $K_1$ in a language $\Lnoval$ as above, there exist elementary extensions $K_1 \prec_{\Lnoval} K_0\prec_{\Lnoval} K$ and a valuation $|\cdot|$ on $K$ with the following properties:
\begin{enumerate}[(P1)]
    \item\label{h:sph}
    $K$ is spherically complete.
    \item\label{h:doag}
    The value group of $(K,|\cdot|)$ is non-trivial and divisible.
    \item\label{h:bij}
    $K_0$ is a lift of the residue field $\RF(K)$ of $K$, i.e., we have $K_0 \subseteq \valring(K)$, and
    the residue map $\res\colon \valring(K) \to \RF(K)$ induces a bijection from $K_0$ to $\RF(K)$.
\item\label{h:res}
We use the bijection from the $\Lnoval$-structure $K_0$ to the residue field $\RF(K)$ to put a copy of the language $\Lnoval$ onto $\RF(K)$, in this way turning $K$ into an $\LL$-structure.
When considering $K$ as an $\LL$-structure in this way, the induced structure on $\RF(K)$
is the pure $\Lnoval$-structure.
    \item\label{h:h}
    $K$ is $1$-h-minimal as an $\LL$-structure, with respect to the valuation $|\cdot|$.
\end{enumerate}
\end{lem}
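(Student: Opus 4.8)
The plan is to realise all three cases of Hypothesis~\ref{hyp:can_whit} by one and the same construction. Fix an elementary extension $K_0\succ_{\Lnoval}K_1$ (the particular choice plays no role in what follows), let $K:=K_0(\!(t^\QQ)\!)$ be the Hahn series field with coefficient field $K_0$ and value group $\QQ$, and let $|\cdot|$ be the $t$-adic valuation on $K$. The one mildly subtle point of the construction lies in the (HEN) case: there $|\cdot|$ is \emph{not} the fine valuation of $K$; rather, the fine valuation of $K$ is the composite of $|\cdot|$ with the fine valuation of $K_0$, so that the whole ``fine'' structure is retained as additional structure on the residue field $\RF(K)=K_0$. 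Following Definition~\ref{defn:LL:can_whit}, I would then transport the $\Lnoval$-structure of $K_0$ onto $\RF(K)$ through the residue map to obtain the $\LL$-structure on $K$.

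Properties (P1), (P2) and (P3) are then immediate from standard facts about Hahn series: $K_0(\!(t^\QQ)\!)$ is (maximally, hence) spherically complete, its value group $\QQ$ is non-trivial and divisible, and the residue map identifies the subfield of constant series $K_0$ with $\RF(K)$, with $K_0\subseteq\valring(K)$. For $K_0\prec_{\Lnoval}K$ I would argue case by case: in (ACF) this is model completeness of algebraically closed fields of characteristic $0$ (with constants); in (HEN) it follows from the Ax--Kochen--Ershov principle, since the fine residue field of $K$ coincides with that of $K_0$ while its fine value group is a pure --- hence elementary --- extension of that of $K_0$ (a routine computation with ordered abelian groups); and in (OMIN), using that $\Tnoval$ is power-bounded, $K_0(\!(t^\QQ)\!)$ carries a canonical $\Lnoval$-structure making it a model of $\Tnoval$ with $K_0\prec_{\Lnoval}K$, and the $t$-adic valuation ring then equals the convex hull of $K_0$, so that $(K,|\cdot|)$ is a $\Tnoval$-convex valued field in the sense of \cite{DL.Tcon1}. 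Property (P4) --- that the full $\LL$-structure induces on $\RF(K)$ nothing beyond the pure $\Lnoval$-structure --- I would deduce from stable embeddedness of the residue field: in (ACF) this is the classical orthogonality of the residue sort in $\mathrm{ACVF}$; in (OMIN) it is the van den Dries--Lewenberg theorem that the residue field of a $\Tnoval$-convex valued field carries precisely its $\Tnoval$-structure; and in (HEN) it follows from relative quantifier elimination for henselian valued fields of equicharacteristic $0$, which forces the induced structure on $\RF(K)$ to be generated by the field structure together with the predicates explicitly placed on the residue sort, namely the composed fine valuation and the constants.

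For (P5) I would verify $1$-h-minimality of $\Th(K)$ as follows. Its reduct to the pure valued field language $\{+,\cdot,\valring\}$ (for the valuation $|\cdot|$) is the theory of a henselian valued field of equicharacteristic $0$ --- indeed $\mathrm{ACVF}_{0,0}$ in the (ACF) case --- hence $1$-h-minimal by the examples recalled in Section~\ref{sec:hmin}. Adding the $\VF$-constants of $\Lnoval$ preserves $1$-h-minimality by Remark~\ref{rmk:hmin:const}, and all the remaining structure --- the fine valuation on $\VF$ in the (HEN) case (which is interdefinable, over $|\cdot|$, with a predicate on the residue sort) together with the copy of $\Lnoval$ placed on $\RF(K)$ --- amounts to an expansion of $\RV^\eq$, so again preserves $1$-h-minimality by \cite[Theorem~4.1.19]{iCR.hmin}. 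In the (OMIN) case one may instead directly invoke that $\Tnoval$-convex valued fields with power-bounded $\Tnoval$ are $1$-h-minimal.

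The step I expect to be the main obstacle is (P4), together with the model-theoretic input underpinning the construction in the two ``new valuation'' cases: in (OMIN) one needs the van den Dries--Lewenberg machinery to know that the Hahn field is genuinely a model of $\Tnoval$ whose residue field is exactly $K_0$ (and not merely elementarily equivalent to it), while in (HEN) one needs the Ax--Kochen--Ershov argument and the observation that the original ``fine'' valuation migrates into the residue-field structure once one passes to the coarser $t$-adic valuation. The remaining verifications are routine bookkeeping once the uniform construction $K=K_0(\!(t^\QQ)\!)$ with its coarse valuation has been adopted.
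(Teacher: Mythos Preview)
Your construction coincides with the paper's in the cases (ACF) and (HEN): there the paper also takes $K_0=K_1$ and $K=K_0(\!(t^{\QQ})\!)$ with the $t$-adic valuation, and your arguments for (P1)--(P5) are essentially the ones given. One point you gloss over in (HEN) is the $\QQ_p$-subcase: there the residue characteristic is $p$, so the plain equicharacteristic Ax--Kochen--Ershov transfer you invoke does not apply. The paper treats this subcase separately, using that $K$ is again $p$-adically closed and that the theory has definable Skolem functions. Relatedly, the paper explicitly checks (and remarks in a footnote that this can fail for general $\Gamma$) that $\VG_\fine(K_0)\prec \QQ\times\VG_\fine(K_0)$ holds for the specific value groups $\ZZ$ and $\QQ$ at hand; your phrase ``a pure, hence elementary, extension'' hides this computation.

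In the (OMIN) case there is a genuine gap. You assert that for power-bounded $\Tnoval$ the Hahn field $K_0(\!(t^{\QQ})\!)$ carries a canonical $\Lnoval$-structure making it a model of $\Tnoval$. This is \emph{not} true in general: by van den Dries's results on $\Tnoval$-convex structures, the value group of a $\Tnoval$-convex valuation is an ordered vector space over the field of exponents $\Lambda$ of $\Tnoval$. If $\Lambda\supsetneq\QQ$ (e.g.\ $\Tnoval$ is the theory of $\RR$ expanded by all real power functions $x\mapsto x^r$), then $\QQ$ is not a $\Lambda$-vector space and $K_0(\!(t^{\QQ})\!)$ cannot be a model of $\Tnoval$: indeed, for $\lambda\in\Lambda\setminus\QQ$ the element $t^\lambda$ would have to exist in $K$ with valuation $\lambda\notin\QQ$. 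So your uniform ``Hahn field with value group $\QQ$'' construction breaks down precisely when $\Tnoval$ is power-bounded but not polynomially bounded.

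The paper's route in (OMIN) avoids this obstruction by never prescribing the value group: it first passes to a proper $\Lnoval$-elementary extension $K_2\succ K_1$ (so that the convex hull of $K_1$ gives a non-trivial $\Tnoval$-convex valuation), then uses the van den Dries--Lewenberg result that a tame $K_0\prec K_2$ exists, and finally applies the spherical-completion theorem (Theorem~\ref{thm:sph}) to obtain an immediate $\LL$-elementary extension $K\succ K_2$. Divisibility of the value group then comes for free from the structure theory of $\Tnoval$-convex valued fields, without having to fix it to be $\QQ$.
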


\begin{proof}
The case (OMIN):

We first fix any $\Lnoval$-elementary extension $K_2 \succ K_1$ which contains infinite elements, i.e., which realizes the partial type $\{x > a \mid a \in K_1\}$.
We turn $K_2$ into a valued field (and consider it as an $\LL$-structure)
by defining the valuation ring $\valring(K_2)$ to be the convex closure of $K_1$ in $K_2$. Clearly, $\valring(K_2) \ne K_2$, i.e., the valuation is non-trivial. As an $\LL$-structure, $K_2$ is a $\TT_0$-convex structure in the sense of \cite{DL.Tcon1}.

Then we choose an $\Lnoval$-elementary substructure $K_0 \prec K_2$ which is ``tame in $K_0$'' in the sense of \cite{DL.Tcon1}, meaning that the residue map $\valring(K_2) \to \RF(K_2)$ induces a bijection $K_0 \to \RF(K_2)$.
Such tame $K_0$ exist by \cite[Theorem~(2.12)]{DL.Tcon1}. We may moreover assume $K_1 \prec_{\Lnoval} K_0$.

Finally, we use Theorem~\ref{thm:sph} to find a spherically complete immediate elementary extension $K \succ_{\LL} K_2$. Immediateness implies $\RF(K) = \RF(K_2)$, so $K_0$ is also tame in $K$.

Now (P\ref{h:sph}) holds by construction,
(P\ref{h:doag}) follows from \cite[Theorem~B]{Dri.Tcon2},
(P\ref{h:bij}) is the tameness of $K_0$ in $K$,
(P\ref{h:res}) is \cite[Theorem~A]{Dri.Tcon2}, and for (P\ref{h:h}), first note that $K$ is $1$-h-minimal as an $\LL^-$-structure
by \cite[Theorem 7.2.4]{iCR.hmin}, and then use that by \cite[Theorem 4.1.19]{iCR.hmin}, extending the language on the residue field preserves $1$-h-minimality.

\medskip

The case (ACF):

We set $K_0 := K_1$ and $K := K_0(\!(t^{\QQ})\!)$ (the Hahn field), and we let $|\cdot|$ be the $t$-adic valuation.

As an $\LL$-structure, $K$ is simply an algebraically closed valued field of equi-characteristic $0$ (in a language with some additional constants).
In particular, we have $K_1 \prec_{\Lnoval} K$ (since they are both algebraically closed fields, with some constants from $K_1$). Properties (P\ref{h:sph}), (P\ref{h:doag}) and (P\ref{h:bij}) are clear by construction of $K$ as a Hahn field.
(P\ref{h:res}) follows from Denef-Pas quantifier elimination, and for (P\ref{h:h}), note that $K$ is $1$-h-minimal by \cite[Corollary 6.2.6]{iCR.hmin}.

\medskip

The case (HEN):

As in the case (ACF), we set $K_0 := K_1$, $K := K_0(\!(t^{\QQ})\!)$, and we let $|\cdot|$ be the $t$-adic valuation.
We first check that $K_0 \prec_{\Lnoval} K$: In the case $K_0 \equiv \QQ_p$, this follows by noting that $K$ is elementarily equivalent to $K_0$ (since it it $p$-adically closed, unramified and has residue field $\FF_p$) and that $K$ has definable Skolem functions.
In the cases $K_0 \equiv \kt[z]$ and $K_0 \equiv \kt[z^{\QQ}]$, by Ax--Kochen/Ershov, it suffices to
verify that we have an elementary extension at the level of value groups of the fine valuations. Indeed, by construction, the value group $\VG_\fine(K)$ is equal to the lexicographical product $\QQ \times \VG_\fine(K_0)$ (where $\QQ$ is the more significant factor), and this is an elementary extension of $\VG_\fine(K_0)$ when $\VG_\fine(K_0) \equiv \ZZ$ or $\VG_\fine(K_0) \equiv \QQ$.\footnote{As a side remark, note that $\Gamma \prec \QQ \times \Gamma$ is not true for arbitrary ordered abelian groups $\Gamma$. A counter-example is the additive group of the polynomial ring $\ZZ[\omega]$, with the ordering suggested by the name ``$\omega$''. We leave verifying this as an exercise, using the results from \cite{iC.oag}.}

As in the (ACF) case, Properties (P\ref{h:sph}), (P\ref{h:doag}), and (P\ref{h:bij}) hold by 
construction of $K$ as a Hahn field.
To prove (P\ref{h:res}) and (P\ref{h:h}), first note that $\LL$ is already definable from the language $\LL' := \Lvf \cup \LL_{0,\RF} \subseteq \LL$, i.e., where we remove the valuation $|\cdot|_\fine$ on $\VF$ but keep it on
$\RF$. Indeed, for any $x \in K$, we have:
\[
x \in \valring_\fine(K) \quad\iff\quad 
x \in \maxid(K) \,\, \vee \,\, (x \in \valring(K) \wedge \res(x) \in \valring_\fine(K_0)).
\]
Since $K$ is $1$-h-minimal as an $\Lvf$-structure (by  \cite[Theorem 4.1.19]{iCR.hmin}), it is also $1$-h-minimal as an $\LL$-structure by \cite[Corollary 6.2.6]{iCR.hmin}, since $\LL$ is (as we just saw), up to interdefinability, obtained from $\Lvf$ by an extension on the residue field only; this shows (P\ref{h:h}).

Moreover, the same property of $\LL$ implies that we can apply resplendent Denef-Pas quantifier elimination in an extension of $\LL$ by an angular component map; this yields (P\ref{h:res}).
\end{proof}

The field $K$ obtained in Lemma~\ref{lem:consKK0} satisfies the following form of uniqueness:

\begin{lem}\label{lem:uniqKK0}
Suppose that $K_1 \equiv_{\Lnoval} K'_1$ are as in Hypothesis~\ref{hyp:can_whit} and that
we have $K_1 \prec_{\Lnoval} K_0\prec_{\Lnoval} K$  and
$K'_1 \prec_{\Lnoval} K'_0\prec_{\Lnoval} K'$ both
satisfying (P\ref{h:sph})--(P\ref{h:res}). Then $K \equiv_{\LL} K'$.
In other words, $\Th_{\LL}(K)$ is determined by $\Th_{\Lnoval}(K_1)$.
\end{lem}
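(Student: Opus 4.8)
The plan is to reduce $\Th_{\LL}(K)$, in each of the three cases of Hypothesis~\ref{hyp:can_whit}, to the theory of the residue field $\RF(K)$ together with the theory of the value group $\VG(K)$, and to show that both of these are pinned down by $\Tnoval=\Th_{\Lnoval}(K_1)$. For the residue field: by (P\ref{h:bij}) the residue map restricts to a bijection $K_0\to\RF(K)$, and by (P\ref{h:res}) the $\Lnoval$-structure carried by $\RF(K)$ is the one transported along exactly this bijection, which moreover coincides with the \emph{whole} structure induced on $\RF(K)$ from $K$. Hence $\RF(K)\cong_{\Lnoval}K_0$, and since $K_1\prec_{\Lnoval}K_0$ this gives $\RF(K)\equiv_{\Lnoval}K_1$; symmetrically $\RF(K')\equiv_{\Lnoval}K_1'$, so the hypothesis $K_1\equiv_{\Lnoval}K_1'$ yields $\RF(K)\equiv_{\Lnoval}\RF(K')$, i.e.\ the residue fields with their full induced $\LL$-structure (which is precisely the copy of $\Lnoval$ named by $\LL_{0,\RF}$) are elementarily equivalent. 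For the value group: by (P\ref{h:doag}), $\VG(K)$ and $\VG(K')$ are nontrivial divisible ordered abelian groups (in case (OMIN) possibly carrying in addition the ordered-vector-space structure over the field of exponents of $\Tnoval$, which is the same for $K$ and $K'$); in all cases the theory of such objects is complete, so $\VG(K)\equiv\VG(K')$ in the appropriate language.

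It then remains to combine these two facts through an Ax--Kochen/Ershov-type statement. In all three cases $K$ is a henselian valued field of equicharacteristic $(0,0)$: in (ACF) it is an algebraically closed valued field of equicharacteristic $0$; in (OMIN) it is a $\Tnoval$-convex valued field, hence henselian by van den Dries--Lewenberg \cite{DL.Tcon1}; in (HEN) it is the Hahn field $K_0(\!(t^\QQ)\!)$ with the $t$-adic valuation, whose residue field $K_0$ has characteristic $0$. By (P\ref{h:sph}) and equicharacteristic $0$, $K$ is, as a valued field, isomorphic to a Hahn series field over $\RF(K)$ (Kaplansky), so it carries an angular component map; fix such an $\ac$ on $K$ and an $\ac'$ on $K'$. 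Now resplendent Denef--Pas quantifier elimination applies to $(K,\ac)$ and $(K',\ac')$ in the language $\LL\cup\{\ac\}$, with the extra structure residing on the residue field (and the value group): every $\LL\cup\{\ac\}$-sentence is, modulo the axioms shared by $K$ and $K'$ (henselian, equicharacteristic $(0,0)$, $\ac$ compatible with $\res$, and so on), a Boolean combination of $\LL_{0,\RF}$-sentences about the residue field and sentences about the value group. Since these data agree for $K$ and $K'$ by the previous paragraph, we get $(K,\ac)\equiv_{\LL\cup\{\ac\}}(K',\ac')$, and in particular $K\equiv_{\LL}K'$. (In case (OMIN) one can alternatively avoid choosing $\ac$ altogether and instead invoke the completeness of $\Tnoval$-convexity from \cite{DL.Tcon1}, together with the fact that the structure induced on the residue field of a $\Tnoval$-convex valued field is the pure $\Tnoval$-structure, which makes $\LL_{0,\RF}$ definitionally redundant over $\LL^-$.)

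The main obstacle is the book-keeping around the ``relative'' or ``resplendent'' form of the quantifier-elimination input: one must make sure that Denef--Pas $\mathrm{QE}$ (respectively the completeness of $\Tnoval$-convexity) still applies when the residue field is enriched by the full, possibly o-minimal or analytic, language $\Lnoval$, and that this enrichment is exactly the one named by $\LL_{0,\RF}$ and matched between $K$ and $K'$ via (P\ref{h:res}). Checking that the angular component maps exist, via (P\ref{h:sph}) and the structure theory of spherically complete equicharacteristic-$0$ valued fields, is the only other point that needs a moment of care; the rest is a routine chain of elementary equivalences.
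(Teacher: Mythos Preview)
Your overall strategy---reduce $\Th_{\LL}(K)$ to the theories of $\RF(K)$ and $\VG(K)$ via an Ax--Kochen/Ershov principle, and identify those with data determined by $\Tnoval$---is exactly the paper's. However, there is a genuine gap in how you invoke resplendent Denef--Pas: that result allows arbitrary enrichment of the sorts $\RF$ and $\VG$, but \emph{not} of the valued-field sort $\VF$. In the language $\LL = \Lnoval \cup \{\valring\} \cup \LL_{0,\RF}$, the copy of $\Lnoval$ lives on $\VF$, and in cases (OMIN) and (HEN) this is strictly more than the ring language.

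In case (OMIN), $\Lnoval$ is the full o-minimal language on $\VF$, so Denef--Pas simply does not apply; your parenthetical ``alternative'' via completeness of $\Tnoval$-convexity \cite{DL.Tcon1} is not optional but required, and is precisely what the paper uses (Corollary~(5.10) there). One small addition is needed: that corollary is stated in the language $\LL^- \cup \{\text{predicate for }K_0\}$, and you should note that $\LL$ is a reduct of this, since the bijection $K_0 \to \RF(K)$ is definable there and hence so is the $\LL_{0,\RF}$-structure.

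In case (HEN), $\Lnoval$ contains the fine valuation $|\cdot|_\fine$ on $\VF$, so again extra structure sits on $\VF$. The fix---carried out in the proof of (P\ref{h:res}) and (P\ref{h:h}) in Lemma~\ref{lem:consKK0} and invoked by reference in the paper's proof of the present lemma---is to observe that $|\cdot|_\fine$ on $\VF$ is already definable from $\LL' := \Lvf \cup \LL_{0,\RF}$ (the coarse valuation on $\VF$ together with the fine valuation on $\RF$), so $\LL$ and $\LL'$ are interdefinable; resplendent Denef--Pas then applies to $\LL'$. You did not make this reduction, and without it your claim that ``the extra structure resid[es] on the residue field'' is unjustified.

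In case (ACF) your argument is fine as written, since $\Lnoval = \Lring$ with constants adds nothing to $\VF$ beyond the valued-field language.
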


\begin{proof}
In the case (OMIN), this follows from \cite[Corollary~(5.10)]{DL.Tcon1}.
(Note that the language used in that corollary is the extension of $\LL^- = \Lnoval \cup \{\valring\}$ by a predicate for $K_0$; $\LL$ can be considered as a reduct of that language, since the bijection $K_0 \to \RF(K)$ is definable in that language.)

In the cases (ACF), this follows from Ax--Kochen/Ershov, using that, by (P\ref{h:bij}), the residue field $\RF(K)$ is isomorphic to $K_0$. 
In the case (HEN), this follows from Ax--Kochen/Ershov and Denef-Pas quantifier elimination, applied in the same way as in the proof of (P\ref{h:res}) above.
\end{proof}

\begin{rmk}\label{rmk:coarse-fine}
In the case (HEN), the valuation $|\cdot|$ on $K$ is a coarsening of $|\cdot|_\fine$, as the notation suggests. (This is clear from the construction of $K$ given in the proof of Lemma~\ref{lem:consKK0}.)
\end{rmk}

Let us now fix some conventions for the rest of the section.

\begin{conv}\label{conv:sect}
For the entire remainder of Section~\ref{sec:can_whit}, we assume the following:
\begin{enumerate}
  \item $K_1$ and $\Lnoval$ are as in Hypothesis~\ref{hyp:can_whit}, i.e., in one of he cases (OMIN), (ACF), (HEN).
  \item $\LL \supseteq \Lnoval$ is as in Definition~\ref{defn:LL:can_whit}, and $K \succ_{\Lnoval} K_0 \succ_{\Lnoval} K_1$ are as obtained from Lemma~\ref{lem:consKK0} (where $K$ is an $\LL$-structure).
  \item
  The theories used implicitly in the notation from Section~\ref{sec:notn} are $\TT_0 := \Th_{\Lnoval}(K)$ and $\TT := \Th_{\LL}(K)$.
\end{enumerate}
\end{conv}

Note that by Lemma~\ref{lem:consKK0}~(P\ref{h:h}), $\LL$ and $\TT$ satisfy Hypothesis~\ref{hyp:KandX_can}, so that all the previous results of this paper apply in $K$.

The following lemma summarizes some more properties of $K$ that we will need:

\begin{lem}\label{lem:K0:props}
For $K_0$ and $K$ as constructed in Lemma~\ref{lem:consKK0}, we have the following (where $K$ is considered as an $\LL$-structure):
\begin{enumerate}[(P1)]\setcounter{enumi}{5}
\item\label{h:top}
If $X \subseteq K^n$ is $\Lnoval$-definable, then the topological closure $\Cl(X)$ (with respect to the valuation topology)
is also $\Lnoval$-definable.
\item\label{h:val} The induced structure on $\VG^\times(K)$ is o-minimal.
\item\label{h:dcl}
We have $\dclVG(K_0) = \{0, 1\}$.
\item\label{h:orth}
$\RF(K)$ and $\VG(K)$ are orthogonal (i.e., every definable subset of $\RF(K)^m \times \VG(K)^n$ is a finite union of sets of the form $X \times Y$, for some $X \subseteq \RF(K)^m$ and $Y \subseteq \VG(K)^n$).
\item\label{h:dim}
If $X \subseteq K^n$ is an $\LL$-definable set of dimension $d$ (in the sense of Definition~\ref{defn:dim}),
then there exists an $\Lnoval$-definable set $X' \subseteq K^n$ of dimension $d$ containing $X$.
\item\label{h:cl} For any $\Lnoval$-definable $X \subseteq K^n$
and any $y \in K_0^n$, if $B_{<1}(y) \cap X$ is non-empty, then $y$ lies in the topological closure of $X$ (for the valuation topology).
\end{enumerate}
\end{lem}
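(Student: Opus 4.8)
The plan is to prove the properties (P6)--(P11) of Lemma~\ref{lem:K0:props} one at a time, using the construction of $K$ from Lemma~\ref{lem:consKK0} together with quantifier elimination and the orthogonality of residue field and value group. Throughout, I would exploit the fact that, in all three cases, $K$ admits (resplendent) quantifier elimination of the Denef--Pas type (for (ACF) and (HEN)) or is $T_0$-convex with the relevant relative quantifier elimination of van den Dries--Lewenberg (for (OMIN)), so that every $\LL$-definable set is, up to a finite Boolean combination, controlled by an $\Lnoval$-formula on a ``$K_0$-part'', an $\Lnoval$-formula on the residue field, and conditions on the value group.

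For (P6), I would argue that taking topological closure is an $\Lnoval$-definable operation: in the (OMIN) case this is a standard fact about power-bounded o-minimal structures (closure of a definable set is definable), while in the (ACF) and (HEN) cases $\Lnoval$ already includes the ring/valued-field language, and closure for the valuation topology of an $\Lnoval$-definable set is again $\Lnoval$-definable by elimination of $\exists^\infty$ and definability of dimension (Proposition~\ref{prop:hmin:dim}); the only subtlety is that the relevant valuation here is $|\cdot|$, which in the (HEN) case is a coarsening of the fine valuation (Remark~\ref{rmk:coarse-fine}), but since $\Lnoval$ in that case contains the fine valued field language and the coarsening is $\Lnoval$-definable, closure with respect to $|\cdot|$ stays $\Lnoval$-definable. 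For (P7) and (P9) (o-minimality of $\VG^\times(K)$ and orthogonality of $\RF(K)$ and $\VG(K)$), I would invoke the structure theorems directly: in the (ACF)/(HEN) cases these are immediate from Denef--Pas quantifier elimination together with the fact that $\VG(K) = \QQ \times \VG_\fine(K_0)$ has an o-minimal (indeed divisible ordered abelian group, or lexicographic product thereof) structure and is stably embedded, while in the (OMIN) case orthogonality and o-minimality of the value group sort are part of the analysis of $T_0$-convex structures in \cite{Dri.Tcon2}. Property (P8), that $\dclVG(K_0) = \{0,1\}$, follows since the value group generated over $\emptyset$ (with the $\Lnoval(K_1)$-constants, which all lie in $K_0 \subseteq \valring(K)^\times \cup \{0\}$ and hence have trivial valuation) is trivial, so the only $\Lnoval(K_0)$-definable elements of $\VG(K)$ are $0$ and $1$; here I would use (P\ref{h:res}), that the induced structure on $\RF(K)$ is pure, to rule out anything coming from the residue field feeding into the value group.

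The main work is in (P10) and (P11), the dimension-comparison statements. For (P10) I would take an $\LL$-definable $X \subseteq K^n$ of dimension $d$ and, using quantifier elimination, write $X$ as a finite union of sets each of which is ``rectangular'' in the sense of being cut out by an $\Lnoval$-condition on the $K_0$-coordinates of an analytic section, an $\Lnoval$-condition on residues, and a value-group condition; the key point is that forgetting the residue-field and value-group data (i.e., replacing each such piece by the $\Lnoval$-definable set obtained by only remembering the $K_0$/``fine'' data and the closure conditions) can only increase dimension in a controlled way, and a careful bookkeeping shows one can find an $\Lnoval$-definable $X' \supseteq X$ of the same dimension $d$. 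Concretely, I would first reduce to $X$ being the graph of (or contained in the graph region of) a definable function over an $\Lnoval$-definable base of dimension $d$, then observe that the residue-field fibers add $0$ to the topological dimension over $K$ (since $\RF(K)$ embeds into $\valring(K)$ via $K_0$ and a single residue class has empty interior), so the $\Lnoval$-definable ``tube'' around $X$ still has dimension $d$. Property (P11) is then essentially a pointwise version: if $X$ is $\Lnoval$-definable and $B_{<1}(y) \cap X \ne \emptyset$ for some $y \in K_0^n$, then since $K_0$ is an $\Lnoval$-elementary substructure which is a lift of the residue field and $X$ is $\Lnoval$-definable, the point $y$ (which is the ``center'' of the ball $B_{<1}(y)$) must be an $\Lnoval$-definable-over-$K_0$ adherent point of $X$, i.e., lie in $\Cl(X)$; I would make this precise by transferring between $K$ and an $\Lnoval$-elementary extension realizing the relevant type, using (P6) to know $\Cl(X)$ is $\Lnoval$-definable and hence its membership is detected already in $K_0$-like substructures.

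I expect the main obstacle to be (P10): getting the dimension bound exactly right (rather than merely ``finite'') requires a genuinely careful analysis of how $\LL$-definable sets decompose relative to the residue field and value group, and in particular ruling out that the residue-field direction or the value-group direction could secretly contribute to topological dimension in $K^n$. The clean way to handle this is to note that $\RF(K)$ and $\VG(K)$, being orthogonal to each other and (in the relevant sense) ``small'' inside $K$ — residue classes and valuation-balls of fixed radius have empty interior — cannot increase topological dimension, so any $\LL$-definable set is contained in a finite union of ``boxes'' each of the form (an $\Lnoval$-definable set) $\times$ (residue-field data) $\times$ (value-group data), whose $\Lnoval$-definable projection to the first factor has the same dimension. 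Assembling these and taking the union gives the desired $X'$. The other properties are either standard citations or short arguments once quantifier elimination is in hand.
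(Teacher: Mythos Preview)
Your overall strategy of treating the three cases (OMIN), (ACF), (HEN) via their respective quantifier-elimination results is the same as the paper's, and (P7)--(P9) go through essentially as you say (though note that in both (ACF) and (HEN), the value group of $|\cdot|$ is just $\QQ$, not a lexicographic product; the fine valuation plays no role here). However, three of your arguments have genuine gaps or diverge substantially from the paper.

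\textbf{(P6), the (ACF) case.} Your appeal to Proposition~\ref{prop:hmin:dim} does not help: that proposition concerns $\LL$-definable sets, whereas here $\Lnoval$ is only the ring language with constants, which says nothing about the valuation topology. The paper instead invokes Mumford's theorem that for a constructible set in any ``reasonable'' field topology (in particular a valuation topology), the topological closure coincides with the Zariski closure, which is automatically $\Lnoval$-definable. Your argument for (HEN) via the coarsening is correct and matches the paper.

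\textbf{(P10).} Your proposed decomposition-and-bookkeeping argument is both harder and vaguer than what is needed. In the (ACF) and (HEN) cases the paper simply takes $X'$ to be the Zariski closure of $X$; that $\dim X' = \dim X$ is a theorem of van den Dries on dimension in henselian fields. In the (OMIN) case the paper cites a theorem of Garc\'ia Ram\'irez; the key input (which your sketch does not supply) is van den Dries's result that the graph of an $\LL$-definable function is contained in a finite union of graphs of $\Lnoval$-definable functions, after which cell decomposition reduces to this case. Your ``residue-field fibers add $0$ to the dimension'' heuristic is not a proof, and the ``$\Lnoval$-definable tube'' you describe is not obviously $\Lnoval$-definable.

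\textbf{(P11).} Your elementarity argument does not work as stated: the hypothesis $B_{<1}(y)\cap X\neq\emptyset$ involves the valuation, which is not in $\Lnoval$, so it cannot simply be transferred down to $K_0$. The paper's argument is short and uses what you have already established: set $\lambda:=\inf\{|x-y|:x\in X\}$, which exists by (P7); it lies in $\dclVG(K_0)$ and is strictly less than $1$, so by (P8) it must equal $0$, giving $y\in\Cl(X)$.
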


\begin{proof}
(P\ref{h:top}) This is clear in the case (OMIN),
since topological closure can be expressed in the o-minimal language $\Lnoval$.
In the case (ACF), this follows from
\cite[Chapter I, §10, Theorem 1]{Mum.sheaves},
which essentially states that the closure of a constructible set $X \subseteq K^n$ with respect to any ``reasonable'' topology on $K^n$ is equal to its Zariski closure; valuation topologies are reasonable in that sense.\footnote{Strictly speaking, the proof given in \cite{Mum.sheaves} only applies when every point has a countable neighbourhood basis in that ``reasonable topology''. However, it is straight forward to remove this restriction.}
In the case (HEN), note that by Remark~\ref{rmk:coarse-fine}, and since refining a valuation does not change the valuative topology, we can work with the fine valuation. Then the claim is clear, using that topological closure with respect to the fine valuation is $\Lnoval$-definable.

\medskip

(P\ref{h:val})--(P\ref{h:orth})
In the cases (ACF) and (HEN), all of these follow from Denef--Pas Quantifier Elimination (where for (P\ref{h:val}), we use that the value group is divisible).
In the case (OMIN), they follow from results of \cite{Dri.Tcon2}, namely:
(P\ref{h:val}) and (P\ref{h:dcl}) follow from \cite[Theorem B]{Dri.Tcon2}. (To get 
(P\ref{h:dcl}), apply that theorem in a language having constants for all elements of $K_0$.)
(P\ref{h:orth}) is \cite[Proposition~5.8]{Dri.Tcon2}.

\medskip

(P\ref{h:dim}) 
In the cases (ACF) and (HEN), take $X'$ to be the Zariski closure of $X$; this has the same dimension as $X$ by \cite[Theorem~3.5]{Dri.dimDef}. In the case (OMIN), this is \cite[Theorem~3.8]{Gar.powbd}. (The key idea is the following: By cell decomposition e.g.\ as in \cite[Theorem~5.2.4]{iCR.hmin}, we may assume that $X$ is the graph of an $\LL$-definable function. Such a graph is contained in the union of the graphs of finitely many $\Lnoval$-definable functions, by \cite[Corollary~2.8]{Dri.Tcon2}.)

(P\ref{h:cl})
Set $\lambda := \inf \{|x-y| \mid x \in X\}$.
This infimum exists by (P\ref{h:val}). It lies in $\dclVG(
K_0)$ and it is less than $1$ (by assumption), so by (P\ref{h:dcl}), it has to be $0$. This implies the claim.
\end{proof}

\subsection{The shadow}
\label{sec:sh}

Recall that Convention~\ref{conv:sect} is active, and recall also that the main sort
of $\Lnovaleq$ is denoted by $\bA$.
In this subsection, we introduce a first approximation to a stratification associated to an $\Lnoval$-definable set $\bX \subseteq \bA^n$, which we call the ``shadow''.\footnote{We think of it as the shadow in the residue field of the riso-tree of $\bX$.}
Even though this will be used to understand singularities of $\Lnoval$-definable objects, we will need shadows (and later also riso-stratifications) more generally associated to ($\Leq$-definable) maps $\bchi\colon \VF^n \to \RV^\eq$. Nevertheless, shadows will always be $\Lnoval$-definable.

\begin{defn}\label{defn:Sh}
Fix $n \ge 1$.
The \emph{shadow} of an $\Leq$-definable map $\bchi\colon \VF^n \to \RV^\eq$
is the partition of $\bA^n$ into $\Lnoval$-definable sets $(\bSh_{r})_{0 \le r\le n}$ given by
\begin{equation}\label{eq:Sh}
 z\in \bSh_r(\RF(K))\iff
\dim \rtsp_{\res^{-1}(z)}
(\bchi_K) = r.
\end{equation}
\end{defn}

Note that \eqref{eq:Sh} indeed yields an $\Lnoval$-definable set:
By the Riso-Triviality Theorem (in the form of Corollary~\ref{cor:RTT}), \eqref{eq:Sh} defines an $\Leq$-definable subset of $\RF^n$, and by Lemma~\ref{lem:consKK0} (P\ref{h:res}), such a set is already $\Lnoval$-definable. (Also, since $\TT_0$ is complete, $\bSh_r$ is determined by its $\RF(K)$-valued points.)
In addition, note that by Lemma~\ref{lem:uniqKK0}, the shadow does not depend on the chosen models $K_1$, $K_0$, and $K$.

The shadow of an $\Lnoval$-definable set $\bX \subseteq \bA^n$ can then be defined by first interpreting $\bX$ as a subset of $\VF^n$ and then taking $\bchi$ to be its indicator function. The following convention makes this precise and also allows tuples of sets and maps, similar to, but not exactly like Convention~\ref{conv:sets:rtsp}.

\begin{conv}\label{conv:sets:L0}
If $(\bX_1, \dots, \bX_\ell, \bchi_1, \dots, \bchi_{\ell'})$ is a tuple of finitely many $\Lnoval$-definable sets $\bX_i \subseteq \bA^n$ and $\Leq$-definable maps $\bchi_{j}\colon \VF^n \to \RV^\eq$, then its shadow is defined to be the shadow of the map $\bchi\colon \VF^n \to \RV^\eq$ defined by $\bchi_K(x) := (\mathbbm1_{\bX_{1}(K)}(x), \dots, \mathbbm1_{\bX_{\ell}(K)}(x), \bchi_{1,K}(x), \dots, \bchi_{\ell',K}(x))$.
\end{conv}

\begin{rmk}\label{rmk:Sh}
Since the $\Lnoval$-structure on $\RF(K)$ comes from the one on $K_0$ (via the bijection from Lemma~\ref{lem:consKK0} (P\ref{h:bij})),
we have a second characterization of the shadow $(\bSh_r)_r$ of a map $\bchi$, namely:
\[
 x\in \bSh_r(K_0)\iff
\dim \rtsp_{B_{<1}(x)}
(\bchi_K) = r,\]
where $x$ runs over $K_0^n$. Here, when we write $B_{<1}(x)$, we consider $x$ as an element of $K^n$ (i.e., $B_{<1}(x)$ is a ball in $K^n$).
\end{rmk}

In a stratification, one wants that the union of all skeleta up to a certain dimension is topologically closed.
The following example shows that the shadow does not have this property. (This will be fixed in the next subsection.)

\begin{exa}\label{exa:PS}
Fix $K_0 := \RR \precneqq K$, let $\bX(K) \subseteq K^3$ be the graph of the function
\[
f(x,y) = \begin{cases}
          0 & \text{if } x \le 0\\
          x\cdot y & \text{if } x \ge 0,
         \end{cases}
\]
and let $(\bSh_r)_r$ be the shadow of $\bX$.
A straightforward computation shows: $\bSh_0 = \emptyset$,
$\bSh_1 = \{0\} \times \bA^\times \times \{0\}$,
$\bSh_2 = \bX \setminus \bSh_1$ and $\bSh_3 = \bA^3 \setminus \bX$.
The key point of that computation is that $(0,0,0)$ lies in $\bSh_2(\RR)$, since $\bX(K)$ is $\bar W$-riso-trivial on $B := \res^{-1}((0,0,0))$ for $\bar W = \RR^2 \times \{0\}$, as witnessed by the straightener $\phi\colon B \to B, (x,y,z) \mapsto (x,y,z+f(x,y))$.
That $\phi$ is a risometry follows, as in Example~\ref{exa:vert:transl}, from the fact that we have $|f(x,y) - f(x',y')| < \max\{|x-x'| ,|y-y'|\}$ whenever $x,y,x',y' \in B$.

However, note that the set $\bSh_0(\RR) \cup \bSh_1(\RR) \sseteq \RR^3$ is not closed in the topology of $\RR^3$, contrary to what one would like to have.
\end{exa}

We give a third characterization of the shadow $(\bSh_{r})_r$ of an $\Leq$-definable map $\bchi\colon \VF^n \to \RV^\eq$; this will then allow us to bound the dimension of $\bSh_r$.

\begin{defn}\label{defn:loc}
Given $\bchi\colon \VF^n \to \RV^\eq$ as above,
we partition $K^n$ into sets $\Sloc_r$ (for $0 \le r \le n$) according to ``local riso-triviality'':
    \[\Sloc_r := \{x \in K^n
    \mid \dim \rtsp_B(\bchi_K) = r
    \text{ for every sufficiently small ball $B$ containing $x$}\}.\]
\end{defn}

\begin{lem}\label{lem:PSloc}
For every $r$, we have
$\bSh_{r}(K_0) = \Sloc_r \cap K_0^n$.
\end{lem}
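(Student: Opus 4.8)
The plan is to unwind both definitions and reduce to a pointwise statement about riso-triviality of $\bchi_K$ on the balls around a point of $K_0^n$. By Remark~\ref{rmk:Sh}, for $x\in K_0^n$ one has $x\in\bSh_r(K_0)$ iff $\dim\rtsp_{B_{<1}(x)}(\bchi_K)=r$, whereas $x\in\Sloc_r$ iff $\dim\rtsp_B(\bchi_K)=r$ for every sufficiently small ball $B\ni x$. Since $(\bSh_r(K_0))_r$ partitions $K_0^n$ and no point can lie in two different $\Sloc_r$ (a small enough $B\ni x$ would have two different values of $\dim\rtsp_B$), it suffices to show: for every $x\in K_0^n$, $\dim\rtsp_{B_{<1}(x)}(\bchi_K)$ equals $\dim\rtsp_B(\bchi_K)$ for every sufficiently small ball $B\ni x$. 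Throughout I use that $K$ is spherically complete (Lemma~\ref{lem:consKK0}~(P\ref{h:sph})), so all of Section~\ref{sec:prel} and Corollary~\ref{cor:RTT} apply.

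Fix $x\in K_0^n$. I would first record the basic monotonicity: if $B'\subseteq B$ are balls then restricting a straightener of $\bchi_K$ on $B$ and composing with a translation yields one on $B'$, so $\rtsp_B(\bchi_K)\subseteq\rtsp_{B'}(\bchi_K)$ (as in the proof of Lemma~\ref{lem:closed2all}). Every ball containing $x$ is $B_{\le\mu}(x)$ or $B_{<\mu}(x)$, so the whole picture is controlled by the non-increasing function $g(\mu):=\dim\rtsp_{B_{\le\mu}(x)}(\bchi_K)$ on $\VG^\times(K)$, with $\dim\rtsp_{B_{<\mu}(x)}(\bchi_K)$ squeezed between values of $g$ at neighbouring arguments. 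Applying Corollary~\ref{cor:RTT} to the family $(x',\mu)\mapsto B_{\le\mu}(x')$ shows that $\mu\mapsto\rtsp_{B_{\le\mu}(x)}(\bchi_K)$, hence $g$, is $\Leq(x)$-definable; as the structure induced on $\VG^\times(K)$ is o-minimal (Lemma~\ref{lem:K0:props}~(P\ref{h:val})) and $g$ is monotone with finite range, $\VG^\times(K)$ splits into finitely many intervals of constancy of $g$. Their endpoints form a finite $\Leq(x)$-definable subset of the linearly ordered set $\VG^\times(K)$, hence each endpoint is $\Leq(x)$-definable; since $x$ has coordinates in $K_0$, each endpoint is $\Leq(K_0)$-definable and therefore lies in $\dclVG(K_0)\cap\VG^\times(K)=\{1\}$ by Lemma~\ref{lem:K0:props}~(P\ref{h:dcl}). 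Thus $g$ is constant on $(0,1)$ with value $r'$, say, and the spaces $\rtsp_{B_{\le\mu}(x)}(\bchi_K)$ for $\mu\in(0,1)$, being an increasing chain of constant dimension $r'$, all coincide with one space $\bW$ with $\dim\bW=r'$.

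The crucial step, and the one I expect to be the main obstacle, is then to push $\bW$-riso-triviality up from the balls $B_{\le\mu}(x)$ ($\mu<1$) to the ball $B_{<1}(x)$ itself: a priori the riso-triviality space could still jump up on balls strictly inside $B_{<1}(x)$, and ruling this out is exactly where the previous paragraph (no change strictly below radius $1$) must be combined with Lemma~\ref{lem:closed2all}. By the ``$\Leftarrow$'' direction of that lemma it is enough to check that $\bchi_K$ is $\bW$-riso-trivial on every closed ball $B'=B_{\le\lambda}(x')\subseteq B_{<1}(x)$; for such $B'$ one automatically has $\lambda<1$ and $|x'-x|<1$, so with $\mu:=\max\{\lambda,|x'-x|\}\in\VG^\times(K)$ we get $\mu<1$ and $B'\subseteq B_{\le\mu}(x)$, on which $\bchi_K$ is $\bW$-riso-trivial because $\rtsp_{B_{\le\mu}(x)}(\bchi_K)=\bW$; restricting (the ``$\Rightarrow$'' direction of Lemma~\ref{lem:closed2all}) gives $\bW$-riso-triviality on $B'$. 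Hence $\bW\subseteq\rtsp_{B_{<1}(x)}(\bchi_K)$, while the reverse inclusion is monotonicity since $B_{\le\mu}(x)\subseteq B_{<1}(x)$ for $\mu<1$; so $\dim\rtsp_{B_{<1}(x)}(\bchi_K)=r'$. Finally every ball $B\ni x$ of radius $<1$ also has $\dim\rtsp_B(\bchi_K)=r'$ — directly for $B_{\le\mu}(x)$, and by the squeezing for $B_{<\mu}(x)$ — so the required equality holds with threshold radius $1$, yielding $x\in\bSh_{r'}(K_0)\cap\Sloc_{r'}$ and hence the lemma. The only genuine subtlety, a possible dimension jump at radius exactly $1$, is disposed of uniformly by this argument, which nowhere needs $1$ to be an interior point of an interval of constancy of $g$.
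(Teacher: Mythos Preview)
Your proof is correct and follows essentially the same approach as the paper: both arguments hinge on the definability of $\rtsp$ (Corollary~\ref{cor:RTT}), the o-minimality of the value group (P\ref{h:val}), the fact that $\dclVG(K_0)=\{0,1\}$ (P\ref{h:dcl}), and Lemma~\ref{lem:closed2all} to pass from closed subballs to $B_{<1}(x)$. The only difference is organizational: the paper treats the nontrivial direction by contradiction, considering the definable set $\Xi=\{\epsilon : \bchi_K\text{ not }r\text{-riso-trivial on }B_{\le\epsilon}(x)\}$ and arguing its infimum can be neither $0$ nor $1$, whereas you work directly with the monotone step function $g(\mu)=\dim\rtsp_{B_{\le\mu}(x)}(\bchi_K)$ and show its breakpoints lie in $\{1\}$; your observation that the spaces themselves (not just their dimensions) coincide on $(0,1)$ is a slight strengthening but not needed for the lemma.
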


Note that in general, we do not have $\bSh_{r}(K) = \Sloc_r$, as one sees in Example~\ref{exa:PS}, where $\Sloc_1 = \{0\} \times (K \setminus \maxid(K)) \times \{0\} \ne \bSh_{1}(K)$. (However, note that those two sets indeed agree on points in $K_0^3$.)

\begin{proof}[Proof of Lemma~\ref{lem:PSloc}]
Fix $x \in K_0^n$ and $r$. We need to prove:
$\bchi_K$
is $r$-riso-trivial on the ball $B_{<1}(x)$, if and only
if it is $r$-riso-trivial on every sufficiently small ball around $x$.

The implication ``$\Rightarrow$" is trivial. 

To prove ``$\Leftarrow$", we suppose for contradiction that we have $r$-riso-triviality on some sufficiently small ball $B$ around $x$ but no $r$-riso-triviality on $B_{<1}(x)$.
By the Riso-Triviality Theorem (in the form of Corollary~\ref{cor:RTT}), the set
\[
\Xi := \{\epsilon \in \VG^\times(K) \mid \bchi_K\text{ is not $r$-riso-trivial on }B_{\le\epsilon}(x)\}
\]
is $\Leq(x)$-definable. By (P\ref{h:val}), the set $\Xi$ has an $\Leq(x)$-definable infimum in $\VG(K)$; this infimum lies in
$\dclVG(x) \overset{(P\ref{h:dcl})}{=} \{0, 1\}$, since $x \in K_0^n$. The infimum cannot be $0$, since this would contradict $r$-riso-triviality on $B$. On the other hand,
that $\bchi_K$ is not $r$-riso-trivial on $B_{<1}(x)$ implies that the infimum cannot be $1$. Indeed, by Corollary~\ref{cor:closed2all}, there exists a closed ball $B' \subseteq B_{<1}(x)$ on which $\bchi_K$ is not $r$-riso-trivial.
This also implies non-$r$-riso-triviality on the smallest ball $B''$ containing $B'$ and $x$. Since $B''$ is closed and of radius less than $1$, this contradicts the infimum of $\Xi$ being $1$.
\end{proof}

\begin{lem}
$\dim \Sloc_r \le r$.
\end{lem}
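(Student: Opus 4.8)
The plan is to sandwich $\Sloc_r$ between $\bchi$ and a \emph{t-stratification} reflecting $\bchi$: more precisely, to show that $\Sloc_r$ is contained in the topological closure of the union of the low-dimensional skeleta of such a t-stratification. First I would invoke Theorem~\ref{thm:tstrat} to fix an $\LL$-definable t-stratification $(\bS_i)_{0 \le i \le n}$ of $\VF^n$ reflecting $\bchi$, and set $\bS_{\le r} := \bS_0 \cup \cdots \cup \bS_r$. The key local input is the following observation, valid for every closed ball $B \subseteq K^n$: letting $s := \min\{i \mid \bS_i(K) \cap B \ne \emptyset\}$ (well defined since $(\bS_i)_i$ partitions $K^n$), we have $B \subseteq \bS_{\ge s}(K)$, so that Remark~\ref{rmk:tstrat} applies to $B$ and gives both $\dim \drtsp_B((\bS_i(K))_i) = s$ and $\drtsp_B((\bS_i(K))_i) \subseteq \rtsp_B(\bchi_K)$; in particular $s \le \dim \rtsp_B(\bchi_K)$.

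Next I would prove the inclusion $\Sloc_r \subseteq \Cl(\bS_{\le r}(K))$. Fix $x \in \Sloc_r$. By Definition~\ref{defn:loc}, for every sufficiently small closed ball $B$ containing $x$ we have $\dim \rtsp_B(\bchi_K) = r$, so the integer $s$ attached to $B$ as above satisfies $s \le r$; equivalently, $\bS_{\le r}(K) \cap B \ne \emptyset$. Since the closed balls containing $x$ form a neighbourhood basis of $x$ for the valuation topology, this shows $x \in \Cl(\bS_{\le r}(K))$, as claimed.

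Finally I would combine this with the relevant dimension bounds. By monotonicity of the topological dimension, $\dim \Sloc_r \le \dim \Cl(\bS_{\le r}(K))$. If $\bS_{\le r}(K) = \emptyset$ then $\Sloc_r = \emptyset$ and there is nothing to prove; otherwise Proposition~\ref{prop:hmin:dim} yields $\dim \Cl(\bS_{\le r}(K)) = \dim \bS_{\le r}(K)$, and the latter is $\le r$ since $\dim \bS_i \le i$ for every $i$ by the definition of a t-stratification. Hence $\dim \Sloc_r \le r$. I do not expect a serious obstacle here; the only points requiring a little care are the interplay between the quantifier ``for every sufficiently small ball'' in the definition of $\Sloc_r$ and the neighbourhood-basis argument, and checking that Remark~\ref{rmk:tstrat} may indeed be applied to an arbitrary small closed ball $B$ once $s$ is taken minimal with $\bS_s(K) \cap B \ne \emptyset$, so that the needed hypothesis $B \subseteq \bS_{\ge s}(K)$ is satisfied.
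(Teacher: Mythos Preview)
Your proposal is correct and follows essentially the same approach as the paper's proof: fix a t-stratification $(\bS_i)_i$ reflecting $\bchi$, show that every point of $\Sloc_r$ lies in $\Cl(\bS_{\le r}(K))$, and bound the dimension. The only minor difference is that the paper concludes via the equality $\Cl(\bS_{\le r}(K)) = \bS_{\le r}(K)$ (a property of t-stratifications established in \cite{i.whit}), whereas you instead invoke Proposition~\ref{prop:hmin:dim} to get $\dim \Cl(\bS_{\le r}(K)) = \dim \bS_{\le r}(K)$; your variant is slightly more self-contained.
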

\begin{proof}
Let $(\bS_i)_i$ be an $\LL$-definable t-stratification reflecting $\bchi$, as obtained from Theorem~\ref{thm:tstrat}.
In every neighbourhood of any $x \in \Sloc_r$, there exists a point of $\bS_{\le r}(K)$. Thus $\Sloc_r\subseteq \Cl(\bS_{\le r}(K)) = \bS_{\le r}(K)$, and this has dimension at most $r$.
\end{proof}

\begin{lem}\label{lem:shadow-dim}
$\dim \bSh_r \le r$.
\end{lem}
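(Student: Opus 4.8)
The plan is to produce an $\Lnoval$-definable set $X'\subseteq\bA^n$ with $\bSh_r\subseteq X'$ and $\dim X'\le r$; the conclusion then follows by monotonicity of topological dimension (Definition~\ref{defn:dim}). By completeness of $\TT_0$ it suffices to verify $\bSh_r(K_0)\subseteq X'(K_0)$ for the specific model $K_0$ of Convention~\ref{conv:sect}, and this is where Lemma~\ref{lem:PSloc} enters: $\bSh_r(K_0)=\Sloc_r\cap K_0^n$, so what one really needs is an $\Lnoval(\emptyset)$-definable set $X'$ of dimension $\le r$ containing $\Sloc_r$.

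That set is produced from the bound $\dim\Sloc_r\le r$ already established, together with Lemma~\ref{lem:K0:props}~(P\ref{h:dim}). First I would fix an $\LL$-definable t-stratification $(\bS_i)_i$ reflecting $\bchi$ (Theorem~\ref{thm:tstrat}); as in the proof of the preceding lemma, $\Sloc_r\subseteq\bS_{\le r}(K)$ with $\dim\bS_{\le r}\le r$. Since $\bS_{\le r}$ is $\LL$-definable over $\emptyset$, (P\ref{h:dim}) yields an $\Lnoval$-definable $X'\subseteq\bA^n$, which can moreover be taken $\Lnoval$-definable over $\emptyset$, with $\bS_{\le r}\subseteq X'$ and $\dim X'\le r$. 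Chasing inclusions in $K_0$ then gives
\[
\bSh_r(K_0)=\Sloc_r\cap K_0^n\subseteq\bS_{\le r}(K)\cap K_0^n\subseteq X'(K)\cap K_0^n=X'(K_0),
\]
where the last equality uses $K_0\prec_{\Lnoval}K$ together with $X'$ being $\Lnoval(\emptyset)$-definable. Completeness of $\TT_0$ upgrades this to the inclusion $\bSh_r\subseteq X'$ of $\Lnoval$-definable sets, whence $\dim\bSh_r\le\dim X'\le r$.

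The one genuinely delicate point is that one cannot transfer the bound $\dim\Sloc_r\le r$ to $\bSh_r$ directly: $\bSh_r(K)$ is in general strictly larger than $\Sloc_r$ (Example~\ref{exa:PS}), which is precisely why the argument has to be carried out inside the model $K_0$, where Lemma~\ref{lem:PSloc} forces the two sets to coincide. The attendant bookkeeping is to make sure the enclosing set $X'$ coming from (P\ref{h:dim}) is available over $\emptyset$ (in each of the cases (OMIN), (ACF), (HEN) the construction behind (P\ref{h:dim}) — taking a Zariski closure, respectively the cell-decomposition argument of \cite{Gar.powbd} — preserves parameter-freeness), so that the concluding completeness step is legitimate. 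I expect this to be essentially the only obstacle; the remaining steps are immediate from the cited results.
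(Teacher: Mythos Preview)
Your proposal is correct and follows essentially the same route as the paper: bound $\dim\Sloc_r\le r$, invoke (P\ref{h:dim}) to get an $\Lnoval$-definable $X'$ of dimension $\le r$ containing $\Sloc_r$, and use Lemma~\ref{lem:PSloc} together with $K_0\prec_{\Lnoval}K$ to conclude $\bSh_r\subseteq\bX'$. The only cosmetic difference is that the paper applies (P\ref{h:dim}) directly to $\Sloc_r$ rather than passing through $\bS_{\le r}$, and your extra discussion of parameter-freeness is already built into the paper's convention that ``$\Lnoval$-definable'' means parameter-free.
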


\begin{proof}
From $\dim \Sloc_r \le r$ and (P\ref{h:dim}), we obtain that there exists an $\Lnoval$-definable set $X' = \bX'(K) \subseteq K^n$
of dimension at most $r$ containing $\Sloc_r$.
Now $\bSh_{r}(K_0) = \Sloc_r \cap K_0^n \subseteq X' \cap K_0^n$
implies $\bSh_{r} \subseteq \bX'$ and hence
$\dim \bSh_r \le \dim \bX' \le r$.
\end{proof}

\subsection{The iterated shadow}
\label{sec:iter:sh}

We saw in Example~\ref{exa:PS} that the shadow is not, in general, a stratification. However, we can obtain a stratification in that particular example by taking the shadow of the shadow. This moves the point $(0,0,0)$ from $\bSh_2$ to $\bSh_0$, where it morally belongs. The same strategy also works in general: Taking iterated shadows stabilizes after finitely many steps, and we define the promised canonical stratification as this stabilized shadow. (We will see in the following subsections that it deserves the name stratification.)

Recall that we use Convention~\ref{conv:sect} about $\Lnoval, \LL, K_1, K_0, K, \Tnoval, \TT$. In addition, for the entire subsection, we fix an $\Leq$-definable map $\bchi\colon \VF^n \to \RV^\eq$ for some $n \ge 1$.

\begin{prop}
\label{prop:shadow:stabilizes}
In the above setting, define, recursively over $s \in \NN$, $(\bSh^s_r)_{r \le n}$ to be the shadow of the tuple $(\bchi, (\bSh^{t}_r)_{r \le n, t < s})$ (using Convention~\ref{conv:sets:L0}).
Then we have $(\bSh^{n+1}_r)_{r \le n} = (\bSh^n_r)_{r \le n}$.
\end{prop}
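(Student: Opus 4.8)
The plan is to combine two elementary observations with a downward induction on the skeleton dimension $r$. Throughout, write $\bchi^{(s)}$ for the tuple $(\bchi, (\bSh^t_r)_{r\le n,\, t<s})$, so that $\bSh^s$ is by definition the shadow of $\bchi^{(s)}$, and for $x \in K_0^n$ abbreviate $\rho^s(x) := \dim \rtsp_{B_{<1}(x)}(\bchi^{(s)}_K)$, so that $x \in \bSh^s_{\rho^s(x)}(K_0)$ by Remark~\ref{rmk:Sh}. The two elementary facts I would record first are: \emph{(i) passing to more components only lowers riso-triviality} — since $\bchi^{(s+1)}$ has $\bchi^{(s)}$ and each $\bSh^s_r$ among its components, any straightener of $\bchi^{(s+1)}$ on a ball $B$ also straightens $\bchi^{(s)}$ and every Boolean combination of the $\mathbbm1_{\bSh^s_j}$, so $\rtsp_B(\bchi^{(s+1)}_K) \subseteq \rtsp_B(\bchi^{(s)}_K)$ and $\rtsp_B(\bchi^{(s+1)}_K) \subseteq \rtsp_B(\bSh^s_{\le r}(K))$; and \emph{(ii) closure absorption} — for $x \in K_0^n$ with $x \in \Cl(\bSh^s_{\le r})$, the argument from the proof of Lemma~\ref{lem:K0:props}(P\ref{h:cl}) (using (P\ref{h:val}) and (P\ref{h:dcl})) shows $B := B_{<1}(x)$ meets $\bSh^s_{\le r}(K)$, so by (i) and Lemmas~\ref{lem:locallytrivdim} and~\ref{lem:shadow-dim}, $\dim \rtsp_B(\bchi^{(s+1)}_K) \le \dim \rtsp_B(\bSh^s_{\le r}(K)) \le \dim \bSh^s_{\le r} \le r$, i.e.\ $\rho^{s+1}(x) \le r$. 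Hence
\[
\Cl(\bSh^s_{\le r}) \subseteq \bSh^{s+1}_{\le r} \qquad\text{for all } r,s,
\]
an inclusion of $\Lnoval$-definable sets (by (P\ref{h:top}) and the construction of the shadow), which it suffices to check on $K_0 \models \Tnoval$.

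The proposition then follows from the statement, proved by downward induction on $r$ from $r=n$ to $r=0$:
\[
Q(r):\qquad \bSh^{s+1}_{\le r} = \Cl(\bSh^s_{\le r})\quad\text{for all } s \ge n-r-1 .
\]
Here $Q(n)$ is trivial since $\bSh^s_{\le n}=\bA^n$, and granting $Q(r)$ for all $r$ the idempotency of $\Cl$ gives, for every $r$, $\bSh^{n+1}_{\le r} = \Cl(\bSh^n_{\le r}) = \Cl(\Cl(\bSh^{n-1}_{\le r})) = \Cl(\bSh^{n-1}_{\le r}) = \bSh^n_{\le r}$, whence $\bSh^{n+1}=\bSh^n$. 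For the inductive step I assume $Q(r+1),\dots,Q(n)$ and prove $Q(r)$. The inclusion ``$\supseteq$'' is closure absorption. For ``$\subseteq$'', again by closure absorption it is enough to show: if $x\in K_0^n$ and $B:=B_{<1}(x)$ misses $\bSh^s_{\le r}(K)$ (so in particular $\rho^s(x)\ge r+1$), then $\rho^{s+1}(x)\ge r+1$. Set $\bW:=\rtsp_B(\bchi^{(s)}_K)$, of dimension $\ge r+1$, and fix a straightener $\phi\colon B\to B$ witnessing $\bW$-riso-triviality of $\bchi^{(s)}$. I claim $\phi$ also straightens every skeleton $\bSh^s_j$ on $B$, so that $\phi$ witnesses $\bW$-riso-triviality of $\bchi^{(s+1)}=(\bchi^{(s)},\bSh^s)$, forcing $\rtsp_B(\bchi^{(s+1)}_K)=\bW$ and hence $\rho^{s+1}(x)=\dim\bW\ge r+1$. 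For $j\le r$ this is trivial as $\bSh^s_j(K)\cap B\subseteq \bSh^s_{\le r}(K)\cap B=\emptyset$. For $j\ge r+1$ we have $n-j\le n-r-1\le s$, so $Q(j)$ applies at step $s-1$ and yields $\bSh^s_{\le j}=\Cl(\bSh^{s-1}_{\le j})$; since $\bSh^{s-1}_{\le j}$ is a (Boolean combination of) component(s) of $\bchi^{(s)}$ it is straightened by $\phi$ on $B$, and then so is its closure $\Cl(\bSh^{s-1}_{\le j})=\bSh^s_{\le j}$ by Lemma~\ref{lem:triv_cl}(1). As $\phi$ does this for all $j\ge r+1$ simultaneously, and the $\bSh^s_j$ are Boolean combinations of the $\bSh^s_{\le j}$, the claim follows.

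The crux — and the only place I expect real work — is the ``$\subseteq$'' direction of the inductive step: the assertion that, once the higher skeleta have reached their stable (topologically closed) form, a point $x$ not infinitesimally approached by $\bSh^s_{\le r}$ keeps $\rho^{s+1}(x)\ge r+1$. The delicate ingredient is the interaction between the $\Lnoval$-definable skeleta $\bSh^s_{\le j}$ and the infinitesimal balls $B_{<1}(x)$: the identity $\bSh^s_{\le j}=\Cl(\bSh^{s-1}_{\le j})$, applied inside $B$, is exactly what, after enough iterations, excludes the ``missing boundary point'' behaviour of Example~\ref{exa:PS}. In carrying this out I would take particular care to check that Lemma~\ref{lem:triv_cl}(1) genuinely applies to these closures on the ball $B$, that the index bookkeeping ($n-j\le s$ for the relevant $j$) is exactly right, and that replacing the tuple $(\mathbbm1_{\bSh^s_0},\dots,\mathbbm1_{\bSh^s_n})$ by the $\bSh^s_{\le j}$ (and vice versa) is harmless for riso-triviality through a common straightener. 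Apart from this, the proof uses only Remark~\ref{rmk:Sh}, the Riso-Triviality Theorem already built into the definition of the shadow, and facts (i)–(ii) above — no additional model theory is needed.
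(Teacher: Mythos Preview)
Your proof is correct and takes a genuinely different route from the paper's. The paper first isolates four elementary properties (Sh1)--(Sh4) of the iterated shadows --- monotonicity $\Sh^{s}_{\le i}\subseteq\Sh^{s+1}_{\le i}$, the frontier estimate $\Cl(\Sh^s_i)\setminus\Sh^s_i\subseteq\Sh^{s+1}_{\le i-1}$, a locality statement, and the base case $\Int\Sh^0_n=\Int\Sh^1_n$ --- and then runs an upward induction on $s$ proving the invariant $\Int(\Sh^s_{\ge n-s})\cap\Sh^s_j=\Int(\Sh^{s+1}_{\ge n-s})\cap\Sh^{s+1}_j$. You instead establish the sharper identity $\bSh^{s+1}_{\le r}=\Cl(\bSh^s_{\le r})$ (for $s\ge n-r-1$) by downward induction on $r$, the key point being that a straightener of $\bchi^{(s)}$ on $B$ automatically straightens the closure $\Cl(\bSh^{s-1}_{\le j})=\bSh^{s}_{\le j}$ via Lemma~\ref{lem:triv_cl}. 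Your argument is more economical --- it bypasses (Sh3) and (Sh4) entirely --- and it yields, for free, that $\bSh^{s}_{\le r}$ is closed once $s\ge n-r$, a fact the paper derives only afterwards (Lemma~\ref{lem:riso-stratificationborder}). The paper's decomposition into (Sh1)--(Sh4), on the other hand, isolates reusable properties of shadows that may be of independent interest.

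One tiny edge case in your write-up: when $r=n-1$ and $s=0$, your generic step would invoke $Q(n)$ ``at step $s-1=-1$'' and refer to $\bSh^{-1}_{\le n}$, which does not exist. This case is in fact trivial --- since $B\cap\bSh^0_{\le n-1}(K)=\emptyset$ forces $B\subseteq\bSh^0_n(K)$, every $\bSh^0_j$ is constant on $B$ and any $\phi$ straightens them --- but you should say so explicitly. For $r\le n-2$ the lower bound $s\ge n-r-1\ge 1$ guarantees $s-1\ge 0$, so the issue does not recur.
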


In the rest of this subsection, we use the notation $\bSh_r^s$ from Proposition~\ref{prop:shadow:stabilizes}, and we
apply our usual convention that $\bSh_{\le r}^s := \bigcup_{r'
\le r} \bSh_{r'}^s$ and
$\bSh_{\ge r}^s := \bigcup_{r' \ge r} \bSh_{r'}^s$ (for every $s$). We also set
$\bSh^s_{\le-1} = \emptyset = \bSh^s_{\ge n+1}$. We use non-boldface letters to denote the interpretations of all of these sets in $K$, i.e., $\Sh^s_r = \bSh^s_r(K)$, etc.

The following Lemma is used in the proof of Proposition~\ref{prop:shadow:stabilizes}.
(Recall that $\Cl$ and $\Int$ denote the closure and the interior with respect to the valuation topology.)

\begin{lem}
\label{lem:shadows}
The iterated shadows from Proposition~\ref{prop:shadow:stabilizes} have the following properties for $0 \le i \le n$ and for every $s \ge 0$:
\begin{enumerate}[(Sh1)]
    \item $\Sh_{\ge i}^{s+1} \subseteq \Sh_{\ge i}^{s}$ (or, equivalently: $\Sh_{\le i-1}^{s} \subseteq  \Sh_{\le i-1}^{s+1}$)
    \item $\Cl(\Sh_{i}^s) \setminus \Sh_{i}^s \subseteq \Sh_{\le i - 1}^{s+1}$.
    \item Suppose that for some $\Lnoval$-definable open set $U \subseteq K^n$, we have $\Sh_j^s \cap U = \Sh_j^{s+1} \cap U$ for all $j$; then also $\Sh_j^{s+1} \cap U = \Sh_j^{s+2} \cap U$ for all $j$.
    \item $\Int \Sh_{n}^0 = \Int \Sh_{n}^1$.
\end{enumerate}
\end{lem}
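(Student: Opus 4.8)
The plan is to prove the four properties in order, since (Sh2) through (Sh4) build on (Sh1) and on the shadow's characterization via local riso-triviality (Lemma~\ref{lem:PSloc}), together with the dimension bound (Lemma~\ref{lem:shadow-dim}).

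For (Sh1), I would argue by induction on $s$ (the case $s=0$ being trivial since $\bSh^{-1}$ is empty and hence irrelevant, or rather: interpreting $\bSh^0$ as the plain shadow of $\bchi$). The key observation is that $\bSh^{s+1}$ is the shadow of a tuple that \emph{refines} the tuple whose shadow is $\bSh^{s}$: indeed the defining tuple for $\bSh^{s+1}$ is $(\bchi, (\bSh^t_r)_{r\le n,\, t<s+1})$, which contains the defining tuple $(\bchi, (\bSh^t_r)_{r\le n,\, t<s})$ for $\bSh^{s}$ plus the extra block $(\bSh^{s}_r)_r$. By the induction hypothesis $(\bSh^s_r)_r$ is determined (as a partition of $\bA^n$) by $(\bSh^{t}_r)_{t<s}$ in a way compatible with taking shadows — more precisely, adding to a tuple $\bchi$ a map whose restriction to each relevant infinitesimal ball $B_{<1}(x)$ is a \emph{function of} the restriction of the original tuple can only enlarge the riso-triviality-obstruction set, hence can only decrease $\dim\rtsp_{B_{<1}(x)}$. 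Monotonicity of $\dim\rtsp$ under refinement of the data (which follows from the remark after Definition~\ref{defn:riso-triv} that $\bar V$-riso-triviality of a tuple is stronger than that of each component, combined with Remark~\ref{rmk:Sh} and Lemma~\ref{lem:PSloc}) then gives $\Sh^{s+1}_{\ge i}\subseteq\Sh^{s}_{\ge i}$ pointwise on $K_0^n$, and hence as $\Lnoval$-definable sets.

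For (Sh2), use the third characterization of the shadow via $\Sloc_r$ (Lemma~\ref{lem:PSloc}): a point $x\in K_0^n$ lies in $\Sh^s_i$ iff $\dim\rtsp_B(\bchi^{(s)}_K)=i$ for every sufficiently small ball $B\ni x$, where $\bchi^{(s)}$ is the tuple defining $\bSh^{s+1}$. If $x\in\Cl(\Sh^s_i)\setminus\Sh^s_i$, then by definition of $\Sloc$ the local value $\dim\rtsp_B$ at $x$ is \emph{not} $i$, while there are points arbitrarily close to $x$ where the local value \emph{is} $i$ — and by (Sh1) (or just by semicontinuity of $\dim\rtsp$ on the tree) this forces the local value at $x$ to be $<i$, i.e.\ $x\in\Sloc_{\le i-1}$ for the data $\bchi^{(s)}$, which is exactly $\Sh^{s+1}_{\le i-1}$ by Lemma~\ref{lem:PSloc} applied one level up. The cleanest route is: $\Sh^s_i\subseteq\Cl(\bS^{(s)}_{\le i}(K))$ for a t-stratification $(\bS^{(s)}_j)_j$ reflecting $\bchi^{(s)}$, combined with $\dim\bSh^s_i\le i$ (Lemma~\ref{lem:shadow-dim}) so that the frontier $\Cl(\Sh^s_i)\setminus\Sh^s_i$ has dimension $<i$; then observe this frontier is $\Lnoval$-definable by (P\ref{h:top}) and every point of it has, on its infinitesimal ball, riso-triviality dimension strictly below $i$ when one feeds in the block $(\bSh^s_r)_r$ as part of the data (because being a frontier point of $\Sh^s_i$ is itself recorded by that block). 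I expect this step to be the main obstacle, as it requires carefully tracking \emph{which} extra information the next shadow iteration sees.

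For (Sh3), suppose $\Sh^s_j\cap U=\Sh^{s+1}_j\cap U$ for all $j$ on an $\Lnoval$-definable open $U$. Then over $U$ the tuple defining $\bSh^{s+2}$ agrees (as a partition, restricted to $U$) with the tuple defining $\bSh^{s+1}$: the blocks $(\bSh^t_r)_{t<s+1}$ are common, and the remaining block $(\bSh^{s}_r)_r$ versus $(\bSh^{s+1}_r)_r$ coincide on $U$ by hypothesis. Since the shadow is computed pointwise from infinitesimal balls $B_{<1}(x)$ with $x\in K_0^n\cap U$, and $B_{<1}(x)\subseteq\res^{-1}(\text{a residue point in }U)$ stays inside (the pullback of) $U$ when $U$ is open, the shadows agree on $U$: $\Sh^{s+1}_j\cap U=\Sh^{s+2}_j\cap U$. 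Finally (Sh4): by (Sh1) applied with $i=n$ we get $\Sh^1_n\subseteq\Sh^0_n$, hence $\Int\Sh^1_n\subseteq\Int\Sh^0_n$; conversely, on the open set $U:=\Int\Sh^0_n$ every infinitesimal ball $B_{<1}(x)$ (for $x\in K_0^n\cap U$) is contained in $\res^{-1}(U)$, on which $\bchi$ is, up to the trivial residue-directions, whatever it is — the point is that $\Int\Sh^0_n$ consists of points where $\bchi$ is $n$-riso-trivial on $B_{<1}(x)$, and adding the block $(\bSh^0_r)_r$ changes nothing there because that block is constant (equal to ``$r=n$'') on all of $U$ and a constant map is $\RF^n$-riso-trivial; hence those points stay in $\Sh^1_n$, giving $U\subseteq\Sh^1_n$ and so $\Int\Sh^0_n\subseteq\Int\Sh^1_n$. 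Combining the two inclusions yields the equality.
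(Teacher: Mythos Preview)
Your arguments for (Sh3) and (Sh4) are correct and essentially match the paper's proof; the only point to make explicit is that for $x \in U \cap K_0^n$ with $U$ an $\Lnoval$-definable open set, one has $B_{<1}(x) \subseteq U$ by property (P\ref{h:cl}), which is what makes the ``local'' reasoning go through.

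For (Sh1), your core observation --- the tuple defining $\bSh^{s+1}$ strictly contains the tuple defining $\bSh^{s}$, so $\rtsp_{B_{<1}(x)}$ of the larger tuple is contained in that of the smaller --- is correct and in fact simpler than the paper's route (the paper instead uses that $\Sh^{s}_{\le i-1}$ has dimension $\le i-1$, hence is at most $(i-1)$-riso-trivial on any ball meeting it). However, your wrapping is wrong: the ``function of'' claim is false and would, if true, give \emph{equality} of $\rtsp$ rather than inclusion. The restriction of $\Sh^{s}_j$ to $B_{<1}(x)$ is \emph{not} determined by the restriction of the previous tuple to $B_{<1}(x)$ --- this is precisely the content of Example~\ref{exa:PS}. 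Drop the induction and the ``function of'' sentence; bare monotonicity of $\rtsp$ under enlarging the tuple is all you need.

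For (Sh2) you have a genuine gap. Your first attempt (semicontinuity forcing the local value to be $<i$) does not work: a frontier point of $\Sh^{s}_i$ could a priori have local value $>i$. Your second attempt gestures in the right direction but does not identify the mechanism. Here is the missing idea, which is the paper's argument: suppose $x \in K_0^n$ lies in $\Cl(\Sh^{s}_i) \setminus \Sh^{s}_i$ but not in $\Sh^{s+1}_{\le i-1}$. Then on $B := B_{<1}(x)$ the full tuple $(\bchi_K, (\Sh^{t}_j)_{t \le s})$ is $i$-riso-trivial, so in particular $\Sh^{s}_i$ alone is $i$-riso-trivial on $B$. Now apply Lemma~\ref{lem:triv_cl}: the \emph{same} straightener also straightens $\Cl(\Sh^{s}_i) \setminus \Sh^{s}_i$, so this frontier set is $i$-riso-trivial on $B$ as well. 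But that set has dimension $< i$ (by Proposition~\ref{prop:hmin:dim}) and contains $x$, contradicting Lemma~\ref{lem:locallytrivdim}. The step you were missing is the use of Lemma~\ref{lem:triv_cl} to transfer riso-triviality from $\Sh^{s}_i$ to its frontier.
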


\begin{proof}
(Sh1)
Fix $x \in \bSh_{\le i - 1}^{s}(K_0)$. Since
$\Sh_{\le i - 1}^{s}$ has dimension at most $i - 1$ (by Lemma~\ref{lem:shadow-dim}), it is at most $(i-1)$-trivial on $B_{<1}(x)$ (by Lemma~\ref{lem:locallytrivdim}), so $x \in \bSh_{\le i - 1}^{s+1}(K_0)$ (by Remark~\ref{rmk:Sh}).

(Sh2)
Both sides are $\Lnoval$-definable (for the left hand side, this is by (P\ref{h:top})), so it suffices to verify the inclusion for $K_0$-points.
Suppose that $x \in K_0^n$ is a counter-example, i.e., that $x$ only lies in the left hand side.
From $x \notin \Sh^{s+1}_{\le i-1}$, we in particular obtain that $\Sh_{i}^s$
is $i$-riso-trivial on $B := B_{<1}(x)$.
Using Lemma~\ref{lem:triv_cl},
this implies that also $\Cl(\Sh_{i}^s) \setminus \Sh_{i}^s$ is $i$-riso-trivial on $B$. But this is a contradiction to Lemma~\ref{lem:locallytrivdim}, since $\dim (\Cl(\Sh_{i}^s) \setminus \Sh_{i}^s) < i$ (by Proposition~\ref{prop:hmin:dim}).

(Sh3)
This follows from the shadow being defined locally. More precisely, first note that it suffices to prove the equality for the $K_0$-points. For $x \in U \cap K_0^n$, we have $B := B_{<1}(x) \subseteq U$ (by (P\ref{h:cl})). Hence the assumption
implies that
\[\rtsp_B (\bchi_K, (\Sh_{j}^{s'})_{j,s'\le s}) = \rtsp_B (\bchi_K, (\Sh_{j}^{s'})_{j,s'\le s+1}),\]
which implies that for every $j$, we have $x \in \Sh_{j}^{s+1}$ if and only if $x \in \Sh_{j}^{s+2}$.

(Sh4)
The inclusion $\supseteq$ follows from (Sh1), so it suffices to prove that $\Int \Sh_{n}^0 \subseteq \Sh_{n}^1$, which then implies $\Int \Sh_{n}^0 \subseteq \Int \Sh_{n}^1$. Since both sides are $\Lnoval$-definable, it suffices to prove that elements $x \in \Int \Sh_{n}^0 \cap K_0^n$ also lie in $\Sh_{n}^1$.
For such an $x$, we have $B := B_{<1}(x) \subseteq \Int \Sh_{n}^0$ (by (P\ref{h:cl})), so in particular $(\Sh_{j}^0)_j$ is $n$-riso-trivial on $B$. Moreover, $x \in \Sh_{n}^0$ implies that also $\bchi_K$ is $n$-riso-trivial on $B$. One deduces $n$-riso-triviality of $(\bchi_K, (\Sh_{j}^0)_j)$ (since $n$-riso-triviality of a map on $B$ is equivalent to the map being constant on $B$). Thus, by definition of $\Sh_{n}^1$, we have $x \in \Sh_{n}^1$.
\end{proof}

The four observations from the lemma suffice to prove the proposition (i.e., for its proof, we do not use any other properties of shadows).

\begin{proof}[Proof of Proposition~\ref{prop:shadow:stabilizes}]
We prove the following claim by induction on $s$, for $0 \le s, j \le n$:
\begin{equation}\label{eq.ind}
\Int(\Sh_{\ge n-s}^s) \cap \Sh^s_j = \Int(\Sh_{\ge n-s}^{s+1}) \cap \Sh^{s+1}_j
\end{equation}
In the case $s = n$, this yields the claim of the proposition.

If $s = 0$, then the case $j = n$ is just (Sh4) and the cases $j < n$ are trivial; so suppose now that $s \ge 1$ and that, for all $j$, we have
\begin{equation}\label{eq.as}
\Int(\Sh_{\ge n-s+1}^{s-1}) \cap \Sh^{s-1}_j = \Int(\Sh_{\ge n-s+1}^{s}) \cap \Sh^{s}_j.
\end{equation}
We want to prove (\ref{eq.ind}).

We will prove that, for every $j$,
\begin{equation}\label{eq.a}
\Int(\Sh_{\ge n-s}^{s}) \cap \Sh^{s-1}_j = \Int(\Sh_{\ge n-s}^{s}) \cap \Sh^{s}_j.
\end{equation}

Let us check first how this implies (\ref{eq.ind}):
\begin{proof}[Proof of (\ref{eq.a}) $\Rightarrow$ (\ref{eq.ind})]
Using (Sh3) with $U = \Int(\Sh_{\ge n-s}^{s})$, equality (\ref{eq.a}) implies that
\begin{equation}\label{eq.b}
\Int(\Sh_{\ge n-s}^{s}) \cap \Sh^{s}_j = \Int(\Sh_{\ge n-s}^{s}) \cap \Sh^{s+1}_j.
\end{equation}
To deduce (\ref{eq.ind}), it thus suffices to verify that $\Int(\Sh_{\ge n-s}^{s+1}) = \Int(\Sh_{\ge n-s}^{s})$.
The inclusion ``$\subseteq$'' follows directly from (Sh1), so it remains to check ``$\supseteq$''. We have
\[
\Int(\Sh_{\ge n-s}^{s}) = \Int(\Sh_{\ge n-s}^{s}) \cap \Sh^{s}_{\ge n-s}
\overset{\substack{(\ref{eq.b})\text{ with}\\j=n-s}}{=} \Int(\Sh_{\ge n-s}^{s}) \cap \Sh^{s+1}_{\ge n-s}
\]
and hence $\Int(\Sh_{\ge n-s}^{s}) \subseteq \Sh^{s+1}_{\ge n-s}$. Since the left hand side of this inclusion is an open set, we even have
$\Int(\Sh_{\ge n-s}^{s}) \subseteq \Int(\Sh^{s+1}_{\ge n-s})$, which is what we wanted to show.
\qedhere ((\ref{eq.a}) $\Rightarrow$ (\ref{eq.ind}))
\end{proof}

It remains to prove (\ref{eq.a}), i.e.,
we need to verify that for every $x \in \Int(\Sh_{\ge n-s}^{s})$ and for every $j$, we have
\begin{equation}\label{eq.eq}
x \in \Sh^{s-1}_j \iff x \in \Sh^{s}_j.
\end{equation}
If we additionally assume $x \in \Int(\Sh_{\ge n-s+1}^{s-1})$, then the equivalence follows from (\ref{eq.as}), so we may now assume that
$x \in \Int(\Sh_{\ge n-s}^{s}) \setminus \Int(\Sh_{\ge n-s+1}^{s-1})$. We will prove that such an $x$ lies both 
\begin{enumerate}
    \item[(a)] in $\Sh_{n-s}^{s-1}$ and
    \item[(b)] in $\Sh_{n-s}^{s}$,
\end{enumerate}
so that both sides of (\ref{eq.eq}) hold if and only if $j = n-s$.

We have $x \in \Int(\Sh_{\ge n-s}^{s}) \subseteq \Sh_{\ge n-s}^{s} \overset{\text{(Sh1)}}{\subseteq} \Sh_{\ge n-s}^{s-1}$, so it suffices to prove $x \notin \Sh_{\ge n-s+1}^{s-1}$ to obtain (a); since by (Sh1) this also implies $x \notin \Sh_{\ge n-s+1}^{s}$ we then also obtain (b).

From $x \in \Int(\Sh_{\ge n-s}^{s})\overset{\text{(Sh1)}}{\subseteq} \Int (\Sh_{\ge n-s}^{s-1})$, we obtain $x \notin \Cl(\Sh_{\le n-s-1}^{s-1})$, and from
$x \notin \Int(\Sh_{\ge n-s+1}^{s-1})$, we obtain
$x \in \Cl(\Sh_{\le n-s}^{s-1})$. These two things together imply $x \in \Cl(\Sh_{n-s}^{s-1})$.
Now suppose for contradiction that $x \in \Sh_{\ge n-s+1}^{s-1}$. Then in particular $x \notin \Sh^{s-1}_{n-s}$, so we have $x \in \Cl(\Sh_{n-s}^{s-1}) \setminus \Sh^{s-1}_{n-s} \overset{\text{(Sh2)}}{\subseteq} \Sh^{s}_{\le n-s-1}$. But this contradicts the assumption that $x \in \Int(\Sh_{\ge n-s}^s)$.
\end{proof}

\subsection{The riso-stratification}
\label{sec:riso-strat}

We now define the promised canonical stratification by iteratively taking the shadow until it has stabilized; by Proposition~\ref{prop:shadow:stabilizes} this happens after at most $n$ many steps, where $n$ is the ambient dimension.

\begin{defn}\label{defn:risoStrat}
Let $K_1$ and $\Lnoval$ be given as in Hypothesis~\ref{hyp:can_whit}, and suppose that $\bchi\colon \VF^n \to \RV^\eq$ is an $\Leq$-definable map, for some $n \ge 1$. We choose $K \succ_{\Lnoval} K_0 \succ_{\Lnoval} K_1$ as in Lemma~\ref{lem:consKK0} and we define $\bSh^s_r \subseteq \bA^n$ as in Proposition~\ref{prop:shadow:stabilizes}, i.e., for $s \in \NN$,
$(\bSh^s_r)_{r \le n}$ is the shadow (Definition~\ref{defn:Sh}) of the tuple $(\bchi, (\bSh^{t}_r)_{r \le n, t < s})$.
With this notation, we call
$(\bSh^n_r)_{r \le n}$ the \emph{riso-stratification} of $\bchi$. We proceed as in Convention~\ref{conv:sets:L0} to more generally define the riso-stratification of a tuple $(\bX_1, \dots, \bX_\ell, \bchi_1, \dots, \bchi_{\ell'})$ of $\Lnoval$-definable sets $\bX_i$ and $\Leq$-definable maps $\bchi_j$.
\end{defn}

\begin{conv}\label{conv:risoStrat}
If we call a partition $(\bS_r)_{r\le n}$ of $\bA^n$ into $\Lnoval$-definable sets is a riso-stratification (without reference to any map $\bchi$), then we mean that it is the riso-stratification of some $\Leq$-definable $\bchi\colon \VF^n \to \RV^\eq$. It is not difficult to see that this is equivalent to $(\bS_r)_{r\le n}$ being the riso-stratification of itself.
\end{conv}

Note that by Lemma~\ref{lem:uniqKK0}, the riso-stratification is well-defined, i.e., it only depends on $\Th_{\Lnoval}(K_1)$ and (the formula defining) $\bchi$, but not on the choice of $K$ and $K_0$, and not even of the choice of $K_1$.

\begin{rmk}\label{rmk:riso:non-complete}
One can also make the notion of riso-stratification work uniformly for all models of a non-complete $\Lnoval$-theory $\Tnoval$, in the sense that given $\bchi$, there exist $(\bSh^n_r)_{r \le n}$ which is the riso-stratification of $\bchi$ in each completion of $\Tnoval$.
To see this, one needs to check that the definition of the shadow (Definition~\ref{defn:Sh}) works uniformly.
Recall that there, we are given an
$\Leq$-formula $\varphi$ defining a subset of $\RF^n$ and we rewrite it as an $\Lnoval$-formula $\varphi'$; we need that $\varphi'$ does not depend on the chosen model of $\Tnoval$. In other words, we need a uniform version of Lemma~\ref{lem:consKK0} (P\ref{h:res}). 
Here are some examples where our proof of (P\ref{h:res}) easily adapts to such a uniform version:
\begin{enumerate}
    \item $\Lnoval$ is the valued field language and $\Tnoval$ is the theory of henselian valued fields of equi-characteristic $0$ with value group elementarily equivalent to $\ZZ$;
    \item $\Lnoval$ is the valued field language and $\Tnoval$ is the theory of henselian valued fields of equi-characteristic $0$ with divisible value group.
\end{enumerate}
Moreover, we can always add constants symbols from $\bA$ to the language $\Lnoval$ (without changing $\Tnoval$).
\end{rmk}

We now prove some basic properties
of riso-stratifications; in Section~\ref{sec:whit}, we will then verify that they are Whitney stratifications.
We continue to fix $K$, $K_0$, $\TT$ and $\TT_0$ as in Convention~\ref{conv:sect}.

\begin{notn}
Given a riso-stratification $(\bS_i)_{i\le n}$, we set $\bS_{\le r} := \bigcup_{i \le r} \bS_{i}$, etc.;
we again use non-boldface letters for the interpretations of the above sets in $K$: $S_r = \bS_r(K)$, etc.
\end{notn}

\begin{lem}\label{lem:riso-stratificationrtsp}
Let $(\bS_i)_i$ be the riso-stratification of an $\Leq$-definable map $\bchi\colon \VF^n \to \RV^\eq$.
Given $x \in \bS_d(\RF(K))$ (for some $d$), consider the ball $B := \res^{-1}(x) \subseteq K^n$. Then we have:
\begin{enumerate}
\item $B \subseteq S_{\ge d}$ and $B \cap S_d \ne \emptyset$;
\item $\dim \rtsp_B((S_{i})_i) = d$;
\item $\rtsp_B(S_{d}) =\rtsp_B((S_{i})_i) =  \rtsp_B((S_{i})_i, \bchi_K)$.
\end{enumerate}
\end{lem}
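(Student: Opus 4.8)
The plan is to prove the three assertions essentially in order, bootstrapping from the stabilization property of the iterated shadow and the basic lemmas about riso-triviality already established.

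\medskip

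\textbf{Setup and part (1).} Write $(\bS_i)_i = (\bSh^n_r)_r$ for the riso-stratification of $\bchi$, so that by Proposition~\ref{prop:shadow:stabilizes} we have $(\bSh^{n+1}_r)_r = (\bSh^n_r)_r$; in particular, $(\bS_i)_i$ is the shadow of the tuple $(\bchi, (\bS_i)_i)$ (here I use that $\bSh^{n+1}_r$ is the shadow of $(\bchi,(\bSh^t_r)_{t\le n})$, and absorb the lower-index shadows using (Sh1), or more simply invoke Convention~\ref{conv:risoStrat} that $(\bS_i)_i$ is the riso-stratification of itself, hence its own shadow). By Remark~\ref{rmk:Sh}, for $x \in S_d \cap K_0^n$ (equivalently $x \in \bS_d(K_0)$, identified with $\bS_d(\RF(K))$ via the bijection of Lemma~\ref{lem:consKK0}~(P\ref{h:bij})), saying $x \in \bS_d(\RF(K))$ means precisely that $\dim\rtsp_{B}(\bchi_K,(S_i)_i) = d$ for $B = B_{<1}(x) = \res^{-1}(x)$. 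For $B \cap S_d \ne \emptyset$: since $x \in S_d$ and $x \in B$, this is immediate. For $B \subseteq S_{\ge d}$: a point $y \in B$ lies in $S_e$ for $e = \dim\rtsp_{B_{<1}(y)}(\bchi_K,(S_i)_i)$ after passing to a $\TT$-model where the relevant value-group cut is realized — but more directly, since $(S_i)_i$ is $d$-riso-trivial on $B$, Lemma~\ref{lem:fiber_triv} (applied with the fiber decomposition along the $d$-dimensional riso-triviality space) shows $(\bchi_K,(S_i)_i)$ is $d$-riso-trivial on every closed subball of $B$ containing enough of the structure; combined with Lemma~\ref{lem:locallytrivdim} one gets that the local riso-triviality dimension at every $y \in B$ is $\ge d$, hence $y \notin S_{<d}$ by Remark~\ref{rmk:Sh} (reading off the shadow) and Lemma~\ref{lem:PSloc}. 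The cleanest route here is: by $d$-riso-triviality of $(\bchi_K,(S_i)_i)$ on $B$ and Lemma~\ref{lem:closed2all}, it is $d$-riso-trivial on every closed subball, so for $y \in B \cap K_0^n$ the infimum argument of Lemma~\ref{lem:PSloc} gives $y \in \bSh_{\ge d}(K_0) = \bS_{\ge d}(K_0)$; since both sides are $\Lnoval$-definable and $\TT_0$ is complete, $B \subseteq S_{\ge d}$.

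\medskip

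\textbf{Parts (2) and (3).} Set $\bU := \rtsp_B((S_i)_i)$ and $\bU' := \rtsp_B((S_i)_i,\bchi_K)$ and $\bU'' := \rtsp_B(S_d)$. Clearly $\bU' \subseteq \bU \subseteq \bU''$. That $\dim \bU' = d$ is exactly the defining property of $x \in \bS_d(\RF(K))$ recalled above (the shadow of the tuple $(\bchi,(S_i)_i)$ records $\dim\rtsp_B$ of that very tuple). So it remains to show $\bU'' \subseteq \bU'$, i.e. $\dim\rtsp_B(S_d) \le d$ and that a straightener of $S_d$ on $B$ also straightens the full tuple; equivalently, by Proposition~\ref{prop:rtsp}/Lemma~\ref{lem:V-additive} it suffices to show $\rtsp_B(S_d) \subseteq \bU'$. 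The key observation is part (1): $B \subseteq S_{\ge d}$ and $B \cap S_d \ne \emptyset$, so $B \cap S_d$ is a nonempty definable subset of the ball $B$ with $\dim(B\cap S_d) \le \dim S_d \le d$ (Lemma~\ref{lem:shadow-dim}). Hence by Lemma~\ref{lem:locallytrivdim}, $\dim\rtsp_B(S_d) \le d$. Now for any straightener $\phi$ witnessing $\bU''$-riso-triviality of $S_d$ on $B$: because $\dim \bU'' \le d$ and $\dim (B \cap S_d) \le d$ while the $\bU''$-fibers through points of $(B\cap S_d)$ are permuted among themselves, the preimage $\phi^{-1}(S_d \cap B)$ is a $\bU''$-translation-invariant definable set of dimension $\le d$ whose projection to a complement of $\bU''$ (a lift thereof) is $(\dim B - d)$-dimensional at most — so in fact $\phi^{-1}(S_d\cap B)$ is, up to the $\bU''$-direction, $0$-dimensional in its transverse slices. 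This rigidity forces $\phi$ to also respect the finer stratification structure. More precisely, I would argue: apply Lemma~\ref{lem:surface}-type reasoning, or better, use that the riso-stratification is ``self-reflecting'' — since $(S_i)_i$ is the shadow of $(\bchi,(S_i)_i)$ and $\bS_d$ is, near a generic point of itself, a $d$-dimensional manifold (to be proven in the later ``manifolds'' section), a straightener of $S_d$ near such a point straightens $\bchi$ too.

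\medskip

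\textbf{Main obstacle.} The genuinely delicate step is the inclusion $\rtsp_B(S_d) \subseteq \rtsp_B((S_i)_i, \bchi_K)$ in part (3): a priori, knowing how to straighten the single set $S_d$ on $B$ says nothing about the other strata $S_i$ ($i \ne d$) or about $\bchi$ itself. The way to close this is to exploit that $B \subseteq S_{\ge d}$ (part (1)), so that the only strata meeting $B$ are $S_d, S_{d+1}, \dots, S_n$, and that on $B$ the tuple $(\bchi_K, (S_i)_i)$ is already known to be $d$-riso-trivial (defining property of $x \in \bSh_d$), so its riso-triviality space $\bU'$ has dimension exactly $d$; then $\bU \supseteq \bU'$ has dimension $\ge d$, while $\bU \subseteq \bU''$ has dimension $\le d$ by the previous paragraph, forcing $\dim \bU = d$, and then $\bU' \subseteq \bU \subseteq \bU''$ with all three of dimension $d$ — but equality of dimension does not give equality of subspaces for free. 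To upgrade to genuine equality $\bU = \bU' = \bU''$, I would use Lemma~\ref{lem:fiber_triv} together with part (1): pick a lift $U''$ of $\bU''$ and the associated transverse fiber $F$; on $F$, the set $S_d \cap F$ is a single point (the surface/Lemma~\ref{lem:surface} mechanism, since transverse slices of the $d$-dimensional $S_d$ inside a $d$-dimensional-direction fiber are $0$-dimensional, hence finite, hence — after choosing the fiber generically — a point), and a point is not riso-trivial at all on $F$; so the restriction to $F$ of every $S_i$ (and of $\bchi_K$, since $\bchi$ is constant along... no, rather since the whole tuple's riso-triviality on $B$ pulls back) forces $\bU''$ to be the full riso-triviality space of the tuple. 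I expect the bookkeeping of ``choose the fiber $F$ generic enough that $S_d \cap F$ is a single point'' to be the fiddly part, handled by a dimension count plus Lemma~\ref{lem:Einf}, and the conceptual content to be entirely captured by Lemmas~\ref{lem:V-additive}, \ref{lem:fiber_triv}, \ref{lem:surface} and \ref{lem:locallytrivdim}.
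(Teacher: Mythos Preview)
Your core argument is essentially the paper's: you set up the chain $\bU' \subseteq \bU \subseteq \bU''$, bound $\dim \bU''$ from above by $d$ via Lemma~\ref{lem:locallytrivdim}, and bound $\dim \bU'$ from below by $d$ via the stabilization $\bS_d = \bSh_d^{n+1}$. But you then manufacture an obstacle that does not exist. You write that ``$\bU' \subseteq \bU \subseteq \bU''$ with all three of dimension $d$ --- but equality of dimension does not give equality of subspaces for free.'' This is simply false: if $\bU' \subseteq \bU''$ are finite-dimensional vector spaces of the same dimension, then $\bU' = \bU''$. Your entire ``Main obstacle'' section (fibers, generic points, Lemma~\ref{lem:surface}-type reasoning, single-point transverse slices) is unnecessary and, as written, sketchy; none of it is needed once you remember this piece of linear algebra. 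The paper's proof ends immediately after the dimension squeeze.

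A second, smaller point: your direct argument for $B \subseteq S_{\ge d}$ in part~(1) is convoluted (invoking $\Sloc$, elementary extensions, Lemma~\ref{lem:PSloc}, and closure under subballs). The paper instead proves (2) and (3) first --- using only the trivial half of (1), namely $z \in B \cap S_d$ --- and then derives $B \subseteq S_{\ge d}$ in one line: if some $S_e$ with $e<d$ met $B$, Lemma~\ref{lem:locallytrivdim} would force $\dim\rtsp_B((S_i)_i) \le e < d$, contradicting (2). Reordering the proof this way removes all the machinery you invoke for (1).
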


\begin{proof}
Let $z \in K_0^n$ be the preimage of $x$ under the bijection $K_0^n \to \RF(K)^n$ induced by the residue map. Since $z \in S_d$
(by definition of the $\Lnoval$-structure on $\RF(K)$) the second part of (1) holds. Now consider the following obvious inclusions:
\[
\underbrace{\rtsp_B((S_i)_i, \bchi_K)}_{\text{(a)}} \subseteq \rtsp_B((S_i)_i) \subseteq \underbrace{\rtsp_B(S_d)}_{\text{(b)}}.
\]
We use the notation $\bSh^s_r$ from Proposition~\ref{prop:shadow:stabilizes} and set $\Sh^s_r := \bSh^s_r(K)$. Since $z \in S_d = \Sh_d^n = \Sh_d^{n+1}$ (by Definition of $S_d$), the dimension of (a) is at least $d$ by definition of $\Sh_d^{n+1}$. On the other hand, since $\dim S_d \le d$, the dimension of (b) is at most $d$ by Lemma~\ref{lem:locallytrivdim}, so we have equalities everywhere, and all those spaces have dimension $d$.
This already proves (2) and (3).
For (1), note that if $B$ would contain a point from $S_{<d}$, then
Lemma~\ref{lem:locallytrivdim} would imply $\rtsp_B((S_i)_i) < d$, contradicting (2).
\end{proof}

If $B \subseteq K^n$ is an arbitrary ball and $d$ is minimal with $B \cap S_d \ne \emptyset$, then we have $\dim \rtsp_B((S_{i})_i) \le d$ (by Lemma~\ref{lem:locallytrivdim}), but equality does not necessarily hold (in contrast to the case when $B$ is as in Lemma~\ref{lem:riso-stratificationrtsp}):

\begin{exa}\label{exa:trumpet}
Let $X = \bX(K) \subseteq K^3$ be the zero-set of $f(x,y,z) = y^2+z^2 - x^3$ (this is the trumpet from Figure~\ref{fig:trumpet} on p.~\pageref{fig:trumpet}), and let $(\bS_i)_i$ be the riso-stratification of $\bX$.
In this case, one obtains $S_0 = \{(0,0,0)\}$,
$S_1 = \emptyset$, $S_2 = X \setminus S_0$ and $S_3 = K^3 \setminus X$. Fix $\epsilon \in K$ with $|\epsilon| < 1$ and set $B := B_{<|\epsilon|}((\epsilon,0,0))$. Then $B \subseteq S_{\ge 2}$ but $\dim \rtsp_B((S_{i})_i) = 1$.
\end{exa}

We will now prove some topological properties of $(\bS_r)_r$, e.g.\ that $\bS_{\le r}$ is topologically closed. We want to make sense out of this in two different ways simultaneously: on the one hand using the valuative topology on $K$, and on the other hand, using a topology on $K_1$. In the cases (OMIN) and (HEN), $K_1$ comes with a natural topology. In the case (ACF), the Zariski topology seems to be the canonical choice, but for certain statements, we will need a finer, ``more analytic'' one. To be able to treat all cases simultaneously, we introduce a (generalized) norm $|\cdot|_\fine$ on $K_1$ which induces the desired topology in all three cases:

\begin{defn}\label{defn:L0fine}
In the case (OMIN), $|\cdot|_\fine$ is the absolute value;
in the case (HEN), $|\cdot|_\fine$ is the fine valuation already
introduced in Hypothesis~\ref{hyp:can_whit}. In the case (ACF), we fix any real closed subfield $R_1 \subseteq K_1$ such that $[K_1:R_1] = 2$
and define $|x|_\fine := \sqrt{\prod_{\sigma\in \Gal(K_1/R_1)}\sigma(x)}$.
Set $\Lnovalfine := \Lnoval \cup \{|\cdot|_\fine\}$. (In the cases (HEN), (OMIN) $\Lnovalfine$ is interdefinable with $\Lnoval$.)
\end{defn}
If $K_1 = \CC$, it is natural to take $R_1 = \RR$, so that $|\cdot|_\fine$ becomes the complex absolute value and the induced topology is just the usual analytic topology. However, while $|\cdot|_\fine$ will occasionally be used in proofs, none of the statements we prove depends on its choice. Most importantly,
whether an $\Lnoval$-definable set is topologically closed is independent of this choice, since this is equivalent to being closed in the Zariski topology (by \cite[Chapter I, §10, Theorem 1]{Mum.sheaves}).

Having introduced $|\cdot|_\fine$ on $K_1$, we also adapt our choices of $K_0$ and $K$ accordingly, as follows:

\begin{conv}\label{conv:sec:fine}
For the remainder of Section~\ref{sec:can_whit}, in addition to Convention~\ref{conv:sect}, we
assume that $K_1$ is an $\Lnovalfine$-structure as in Definition~\ref{defn:L0fine}, and we also assume that $K_0$ and $K$ are $\Lnovalfine$-structures satisfying $K \succ_{\Lnovalfine} K_0 \succ_{\Lnovalfine} K_1$.
\end{conv}

To see that such $K_0$ and $K$ exist, note that this makes a difference only in the (ACF) case, and there, the proof of Lemma~\ref{lem:consKK0} also works if one replaces $\Lnoval$ by $\Lnovalfine$.

Using this convention, when we are interested in the topological properties of $\bX(K_1)$, for some $\Lnoval$-definable set $\bX \subseteq \bA^n$, we can as well work with $\bX(K)$ instead, and there, by the following lemma, we can equivalently also use the valuative topology instead of the one induced by $|\cdot|_\fine$:
\begin{lem}\label{lem:top:agree}
On $K$, the topology induced by the valuation $|\cdot|$ and the topology induced by $|\cdot|_\fine$ agree.
\end{lem}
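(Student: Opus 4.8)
The plan is to show that the identity map of $K$ is a homeomorphism between $(K,|\cdot|)$ and $(K,|\cdot|_\fine)$. Both $|\cdot|$ and $|\cdot|_\fine$ are multiplicative (i.e.\ $|xy|=|x|\,|y|$, $|xy|_\fine=|x|_\fine\,|y|_\fine$, and each vanishes only at $0$), so the two topologies are translation invariant; a neighbourhood basis of $0$ for the first is $\{a\cdot\maxid(K)\mid a\in K^\times\}$ (using that $|\cdot|\colon K^\times\to\VG^\times(K)$ is onto), while a neighbourhood basis of $0$ for the second is $\{a\cdot\powball_\fine\mid a\in K^\times\}$, where $\powball_\fine:=\{x\in K\mid |x|_\fine\le 1\}$ is the closed unit ball of $|\cdot|_\fine$ (here one uses that $|\cdot|_\fine$ is surjective onto its value set). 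Hence it suffices to prove the chain of inclusions
\[
\maxid(K)\ \subseteq\ \powball_\fine\ \subseteq\ \valring(K).
\]

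Granting this, the lemma follows formally. For every $a\in K^\times$ we have $a\cdot\maxid(K)\subseteq a\cdot\powball_\fine$, so each basic $|\cdot|_\fine$-neighbourhood of $0$ contains a basic $|\cdot|$-neighbourhood of $0$, whence every $|\cdot|_\fine$-open set is $|\cdot|$-open. Conversely, fixing $\pi\in K^\times$ with $|\pi|<1$ (possible since $|\cdot|$ is non-trivial, by Lemma~\ref{lem:consKK0}~(P\ref{h:doag})) we get $a\pi\cdot\powball_\fine\subseteq a\pi\cdot\valring(K)=\{x\mid |x|\le |a\pi|\}\subseteq a\cdot\maxid(K)$ for all $a$, so each basic $|\cdot|$-neighbourhood of $0$ contains a basic $|\cdot|_\fine$-neighbourhood of $0$, whence every $|\cdot|$-open set is $|\cdot|_\fine$-open. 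By translation invariance this gives that the two topologies coincide.

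The only real content is thus the displayed chain, which I would verify in the three cases of Hypothesis~\ref{hyp:can_whit}; in each case it expresses that, in a suitable sense, $|\cdot|_\fine$ refines $|\cdot|$. In the case (HEN), $|\cdot|$ is a coarsening of $|\cdot|_\fine$ by Remark~\ref{rmk:coarse-fine}, so directly $\powball_\fine\subseteq\valring(K)$, and writing the coarsening map of value groups as an order-preserving homomorphism fixing the identity gives $|x|<1\Rightarrow|x|_\fine<1$, i.e.\ $\maxid(K)\subseteq\powball_\fine$. In the case (OMIN), $\valring(K)$ is by construction the convex hull of $K_1$ and $|\cdot|_\fine$ is the order absolute value, so $|x|_\fine\le 1$ forces $x\in[-1,1]\subseteq\valring(K)$, while $x\in\maxid(K)$ means $|x|_\fine<s$ for every $s\in K_1^{>0}$, in particular $|x|_\fine<1$. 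In the case (ACF), one uses the construction of $K$ together with $|\cdot|$ and $|\cdot|_\fine$ on $K$ underlying Convention~\ref{conv:sec:fine} — the Hahn field $K_1(\!(t^\QQ)\!)$ with $|\cdot|$ the $t$-adic valuation and $|\cdot|_\fine$ the extension of the norm on $K_1$ that places $t$ $|\cdot|_\fine$-infinitesimally over $K_1$: an element $x=\sum_q a_q t^q$ with leading exponent $q_0$ then satisfies $|x|_\fine=|a_{q_0}|_\fine\cdot|t|_\fine^{q_0}$, and since $|t|_\fine^{q}$ is $|\cdot|_\fine$-infinitesimal over $K_1$ exactly for $q>0$ (using that $K_1$ is algebraically closed to take rational powers), one reads off that $|x|_\fine\le 1$ forces $q_0\ge 0$, i.e.\ $x\in\valring(K)$, and that $x\in\maxid(K)$, i.e.\ $q_0>0$, forces $|x|_\fine<1$. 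This establishes the chain in all cases, and hence the lemma. The only place where one must be a little careful is precisely this case-by-case verification of the chain; everything else is formal.
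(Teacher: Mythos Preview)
Your approach is exactly the paper's: reduce to neighbourhood bases of $0$, rescale, and verify the chain $\maxid(K)\subseteq\{x\mid |x|_\fine\le 1\}\subseteq\valring(K)$. The paper simply asserts this chain ``is indeed the case'' without further comment, whereas you attempt a case-by-case verification.

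That verification, however, has two imprecisions worth fixing. In the (ACF) case, the formula $|x|_\fine=|a_{q_0}|_\fine\cdot|t|_\fine^{q_0}$ is false as stated: $|\cdot|_\fine$ is the (nonstandard) complex absolute value coming from a real closed subfield $R\subseteq K$, and it is not a valuation, so it does not see only the leading term. What is true is that if $x=y+z\sqrt{-1}$ with $y,z\in R$, then $|x|_\fine^2=y^2+z^2$, and one argues via the order on $R$: if $x\in\maxid(K)$ then $y,z$ have positive $t$-support, hence are infinitesimal in $R$, giving $|x|_\fine<1$; conversely if $x\notin\valring(K)$ then $y$ or $z$ has a negative $t$-exponent in its leading term, hence is infinite in $R$, forcing $|x|_\fine>1$. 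In the (OMIN) case, $\valring(K)$ need not literally be the convex hull of $K_1$ (the construction passes through $K_2$ and a tame $K_0$), but you do not need this: it suffices that $\valring(K)$ is a convex subring containing $1$, so $[-1,1]\subseteq\valring(K)$, and that $\maxid(K)$ is convex with $1\notin\maxid(K)$, so $\maxid(K)\subseteq(-1,1)$. With these corrections your argument is complete.
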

\begin{proof}
It suffices to compare neighbourhood bases of $0$. After rescaling, it suffices to verify that we have $\maxid(K) \subseteq \{x \in K  \mid |x|_\fine \le 1\} \subseteq \valring(K)$, which is indeed the case.
\end{proof}

The upshot of the above discussion is that the reader is pretty free in choosing their preferred topology, as long as $\Lnoval$-definable sets are concerned, namely:

\begin{defn}
We call an $\Lnoval$-definable set $\bZ \subseteq \bA^n$ topologically open (resp.\ topologically closed), if the following equivalent conditions hold:
\begin{enumerate}
    \item $\bZ(K_1)$ is open (resp.\ closed) in the topology induced by $|\cdot|_\fine$;
    \item $\bZ(K)$ is open (resp.\ closed) in the valuative topology (induced by $|\cdot|$);
    \item in the (ACF) case only: $\bZ(K_1)$ is open (resp.\ closed) in the Zariski topology.
\end{enumerate}
We call an $\LL$-definable set $\bZ \subseteq \VF^n$ topologically open (resp.\ topologically closed), if (2) holds. We use the notation $\Cl(\bZ)$ and $\Int(\bZ)$ (for closure and interior) accordingly.
\end{defn}

We are now ready to state and prove topological properties of the riso-stratification. We start with the following basic observation (which was false for the shadow):

\begin{lem}
\label{lem:riso-stratificationborder}
If $(\bS_i)_i$ is any riso-stratification in $\bA^n$, then for every $r \geq 0$, the union of strata $\bS_{\le r}$ is topologically closed.
\end{lem}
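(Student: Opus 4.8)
\textbf{Proof plan for Lemma~\ref{lem:riso-stratificationborder}.}

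The plan is to reduce the statement to the stabilization of the iterated shadow (Proposition~\ref{prop:shadow:stabilizes}) combined with property (Sh2) from Lemma~\ref{lem:shadows}. First I would recall that by Definition~\ref{defn:risoStrat}, the riso-stratification is $(\bS_r)_{r\le n} = (\bSh^n_r)_{r\le n}$, and that by Proposition~\ref{prop:shadow:stabilizes} we also have $(\bSh^{n+1}_r)_{r\le n} = (\bSh^n_r)_{r\le n}$. In particular, $\bSh^n_i = \bSh^{n+1}_i$ for every $i$, so $\bS_{\le r} = \bSh^n_{\le r} = \bSh^{n+1}_{\le r}$ as well.

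Next I would invoke (Sh2) of Lemma~\ref{lem:shadows} with $s = n$: for each $i$, we have $\Cl(\Sh_i^n) \setminus \Sh_i^n \subseteq \Sh_{\le i-1}^{n+1}$, working in $K$ (and analogously in $K_1$, since both sides are $\Lnoval$-definable and whether a set is topologically closed is checked on $K_0$-points). Using the stabilization, this reads $\Cl(S_i) \setminus S_i \subseteq S_{\le i-1}$. Summing over $i \le r$, one computes
\[
\Cl(S_{\le r}) = \bigcup_{i \le r} \Cl(S_i) \subseteq \bigcup_{i \le r}\bigl(S_i \cup S_{\le i-1}\bigr) = S_{\le r},
\]
where the first equality uses that topological closure commutes with finite unions. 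Hence $S_{\le r}$ is closed. To phrase this as a statement about the $\Lnoval$-definable set $\bS_{\le r}$, I would note that $\bS_{\le r}$ is $\Lnoval$-definable and that, by the definition of ``topologically closed'' for $\Lnoval$-definable sets (equivalently: closedness in $K$ for the valuative topology, or in $K_1$, or Zariski-closedness in the (ACF) case), the computation above in $K$ suffices.

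I do not expect a genuine obstacle here: the real content has already been packaged into Proposition~\ref{prop:shadow:stabilizes} and into properties (Sh1), (Sh2) of Lemma~\ref{lem:shadows}. The only point requiring a small amount of care is the bookkeeping between the $K$-level statement and the $\Lnoval$-definable statement, i.e.\ making sure that $\Cl(\bS_{\le r})$ is itself $\Lnoval$-definable (which is (P\ref{h:top}) of Lemma~\ref{lem:K0:props}) so that the inclusion $\Cl(S_{\le r}) \subseteq S_{\le r}$ verified on $K$-points (or on $K_0$-points) really does give equality of the $\Lnoval$-definable sets.
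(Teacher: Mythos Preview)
Your proposal is correct and follows essentially the same approach as the paper: reduce to showing $\Cl(S_i)\setminus S_i\subseteq S_{\le i-1}$ via (Sh2) of Lemma~\ref{lem:shadows} together with the stabilization $\bSh_i^n=\bSh_i^{n+1}$ from Proposition~\ref{prop:shadow:stabilizes}. The extra bookkeeping you spell out about $\Lnoval$-definability of the closure is fine but more than the paper bothers to record.
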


\begin{proof}
It suffices to prove that $\Cl(\bS_i(K)) \setminus \bS_i(K) \subseteq \bS_{\le i - 1}(K)$ for every $i$.
Using that the shadow has stabilized (i.e., $\bS_i = \bSh_i^n = \bSh_i^{n+1}$ for every $i$), this is exactly the statement of Lemma~\ref{lem:shadows} (Sh2).
\end{proof}

The riso-stratification is local in the following sense:

\begin{lem}\label{lem:riso:local}
Suppose that $\bchi, \bchi'\colon \VF^n \to\RV^\eq$ 
are $\Leq$-definable maps and that $\bfU \subseteq \bA^n$ is an open $\Lnoval$-definable set such that $\bchi_K|_{\bfU(K)} = \bchi'_K|_{\bfU(K)}$.
Let $(\bS_i)_i$ and $(\bS'_i)_i$ be the riso-stratifications of $\bchi$ and of  $\bchi'$, respectively.
Then we have $\bS_i \cap \bfU = \bS'_i \cap \bfU$ for every $i$.
\end{lem}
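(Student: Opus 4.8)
The plan is to prove the statement by induction on the number $s$ of iterations of the shadow operation. Concretely, let $(\bSh^s_r)_{r\le n}$ and $((\bSh')^s_r)_{r\le n}$ be the iterated shadows of $\bchi$ and of $\bchi'$ as in Proposition~\ref{prop:shadow:stabilizes}, so that by Definition~\ref{defn:risoStrat} we have $(\bS_i)_i = (\bSh^n_r)_r$ and $(\bS'_i)_i = ((\bSh')^n_r)_r$. I would show
\[
\bSh^s_r \cap \bfU = (\bSh')^s_r \cap \bfU \qquad\text{for all }r,
\]
by induction on $s\ge 0$; the case $s = n$ is exactly the lemma.

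The crucial preliminary observation is that for every $x \in \bfU(K_0)$ one has $B_{<1}(x) \subseteq \bfU(K)$. This follows from (P\ref{h:cl}) applied to the topologically closed $\Lnoval$-definable set $\bA^n \setminus \bfU$: if $B_{<1}(x)$ met $(\bA^n\setminus\bfU)(K)$, then $x$ would lie in its topological closure, hence in it, contradicting $x\in\bfU(K)$. With this in hand, fix $s$ and assume the displayed equality for all $t<s$. For $x\in\bfU(K_0)$ and $B := B_{<1}(x)\subseteq\bfU(K)$, the hypothesis $\bchi_K|_{\bfU(K)} = \bchi'_K|_{\bfU(K)}$ gives $\bchi_K|_B = \bchi'_K|_B$, and the inductive hypothesis (together with $B\subseteq\bfU(K)$) gives $\bSh^t_r(K)\cap B = (\bSh')^t_r(K)\cap B$ for all $r$ and $t<s$. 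Hence the two tuples whose shadow we take at step $s$ restrict to the same map on $B$ (using Convention~\ref{conv:sets:L0}), so they have the same riso-triviality space on $B$; by the local characterization of the shadow in Remark~\ref{rmk:Sh} this yields $x\in\bSh^s_r \Leftrightarrow x\in(\bSh')^s_r$ for every $r$.

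To conclude, note that $\bSh^s_r\cap\bfU$ and $(\bSh')^s_r\cap\bfU$ are $\Lnoval$-definable, that $K_0\models\TT_0$, and that $\TT_0$ is complete; therefore agreement on $K_0$-points upgrades to equality of $\Lnoval$-definable sets, which closes the induction and proves the lemma.

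I do not expect a real obstacle here: the argument is essentially the locality of the shadow construction, packaged into an induction. The one point to handle with care is that the inductive hypothesis must be about $\bfU$ as an $\Lnoval$-definable set (equivalently, about all of $\bfU(K)$, not just its $K_0$-points), so that it can be intersected with the ball $B\subseteq\bfU(K)$; and one must cite (P\ref{h:cl}) in the correct direction to get $B_{<1}(x)\subseteq\bfU(K)$ for $x\in\bfU(K_0)$. Everything else is routine.
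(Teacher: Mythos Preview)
Your proposal is correct and follows essentially the same approach as the paper's proof: reduce to showing that each iterated shadow agrees on $\bfU$, use (P\ref{h:cl}) to get $B_{<1}(x)\subseteq\bfU(K)$ for $x\in\bfU(K_0)$, and then invoke the local characterization of the shadow from Remark~\ref{rmk:Sh}. The paper compresses the induction into the single sentence ``since the riso-stratification is defined by iteratively taking the shadow, it suffices to prove that $(\bSh_r)_r$ and $(\bSh'_r)_r$ coincide in $\bfU$'', whereas you spell out the induction on $s$ explicitly; your version is a bit more careful but not genuinely different.
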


In the case where $\bchi$ and $\bchi'$ are obtained from tuples $(\bX_1, \dots \bX_k)$ and $(\bX'_1, \dots \bX'_k)$ of $\Lnoval$-definable subsets of $\bA^n$,
the assumption 
$\bchi_K|_{\bfU(K)} = \bchi'_K|_{\bX(K)}$ simply becomes
that $\bX_j \cap \bfU = \bX'_j \cap \bfU$ for every $j$, so the statement becomes independent of the valuation and can be formulated in $K_1$.

\begin{proof}[Proof of Lemma~\ref{lem:riso:local}]
Let $(\bSh_r)_r$ and $(\bSh'_r)_r$ denote the shadows of $\bchi$ and $\bchi'$, respectively. Since the riso-stratification is defined by iteratively taking the shadow, it suffices to prove that $(\bSh_r)_r$ and $(\bSh'_r)_r$ coincide in $\bfU$, i.e., that for every $x \in \bfU(K_0)$ and every $r$, we have $x \in \bSh_r(K_0)$ if and only if  $x \in \bSh'_r(K_0)$. Since $\bfU$ is open and $\Lnoval$-definable, by (P\ref{h:cl}), $\bfU(K)$ contains the entire ball $B := B_{<1}(x)$, so $\bchi_K|_B = \bchi'_K|_B$. Now the claim follows from the definition of the shadow (in the form of Remark~\ref{rmk:Sh}).
\end{proof}

\subsection{Definable manifolds}

In a Whitney stratification, the strata are required to be manifolds. In this intermediate section, we provide a characterisation of manifolds in terms of riso-triviality (Proposition~\ref{prop:riso-mani}). We work with $C^1$-manifolds. More precisely, for our manifolds to behave well in totally disconnected fields, we require that they are definable and \emph{strictly} $C^1$. (We will recall what this means.)

Recall that Convention~\ref{conv:sec:fine} is active; in particular, $K_1$ and $\Lnoval$ are as given by Hypothesis~\ref{hyp:can_whit}, and we fix $|\cdot|_\fine$ and $\Lnovalfine$ as in Definition~\ref{defn:L0fine}.
For the following definitions, we work in $K_1$ and use (the topology induced by) $|\cdot|_\fine$, but recall that as far as definable objects are concerned, it is equivalent to work in the valued field $K$ and with the valuation $|\cdot|$ (which has the advantage of not depending on the choice of $|\cdot|_\fine$).

\begin{defn}\label{defn:C1}
Let $U\subseteq K_1^n$ be open and let $f\colon U \to K_1^m$ be a function. We say that a linear map $L_u\colon K_1^n \to K_1^m$ is the \emph{strict (total) derivative} of $f$ at a point $u \in U$ if
\[
\lim_{\substack{x, x' \to u\\x\ne x'}} \frac{|f(x) - f(x')- L_u(x - x')|_\fine}{|x - x'|_\fine} = 0.
\]
If this is the case, we write $Df_u$ for $L_u$.
The function $f$ is called \emph{strictly $C^1$} if the strict derivative exists everywhere.
\end{defn}
Note that if $f$ is strictly $C^1$, then $u \mapsto Df_u$ is automatically continuous.

Using this, we introduce the corresponding notion of manifolds. We will only need submanifolds of $K_1^n$ (and not abstract manifolds). Those can be defined in terms of charts, but for our purposes, it is easier to formulate the definition in terms of graphs of functions:

\begin{defn}\label{defn:manifold}
\begin{enumerate}
\item
Fix $d \le n$.
A \emph{$d$-dimensional $\Lnoval$-definable strict-$C^1$-submanifold} of $K_1^n$, or \emph{$\Lnoval$-manifold}, for short, is an $\Lnoval$-definable subset $X \subseteq K_1^n$
such that for every $x \in X$,
there exists
an $(n-d)$-dimensional
vector subspace $W \subseteq K_1^n$ 
and 
an open neighbourhood $U \subseteq K_1^n$ of $x$
such that for each $u \in U$, $(u + W) \cap U$ contains exactly one element of $X$, and the induced map $f$ from $U/W := \{u+W \mid u \in U\} \subseteq K_1^n/W$ to $K_1^n$ is strictly $C^1$.
\item
The tangent space $T_xX$ of an $\Lnoval$-submanifold $X \subseteq K_1^n$ at a point $x \in X$ is the image
$\im Df_{x+W}$ of the total derivative of $f$ at $x+W$, for $f$ and $W$ as in (1).
\end{enumerate}
\end{defn}

We leave it to the reader to verify that $T_xX$ is well-defined, i.e., that it does not depend on the choices from (1).

\begin{rmk}\label{rmk:manifold:graph}
In (1), no definability conditions are imposed on $U$ and $f$, but one can take them to be definable in $\Lnovalfine(K_1)$, using that the topology is definable in that language.
\end{rmk}

\begin{rmk}\label{rmk:mani:indep}
The notion of being an $\Lnoval$-manifold is model independent, i.e.,
given an $\Lnoval$-definable $\bX \subseteq \bA^n$ and another model $K_1' \equiv_{\Lnoval} K_1$, $\bX(K_1')$ is an $\Lnoval$-manifold if and only if $\bX(K_1)$ is.
This is clear in the cases (OMIN) and (HEN), since $\Lnovalfine = \Lnoval$. To see that this is also true in the case (ACF), first 
pass to elementary extensions $K \succ_{\Lnovalfine} K_1$ and $K' \succ_{\Lnovalfine} K'_1$ both satisfying the conditions of Lemma~\ref{lem:consKK0}, then 
replace $|\cdot|_\fine$ by $|\cdot|$ (using that they induce the same topology), and finally note that
$K \equiv_{\LL} K'$ by Lemma~\ref{lem:uniqKK0}.\footnote{This is more or less just a (new?)\ proof of the well-known fact that for varieties, being $C^1$ can be characterized algebraically.} 
\end{rmk}

\begin{rmk}\label{rmk:C1:smooth}
In the (ACF) case, an $\Lnoval$-manifold $X$ is automatically smooth (in the sense of algebraic geometry). Indeed, we can without loss assume that the field is $\CC$ (using Remark~\ref{rmk:mani:indep}), where this is well known. (Recall that $C^1$ implies holomorphic.)
\end{rmk}

\begin{rmk}\label{rmk:change:W}
If $X$ is an $\Lnoval$-manifold, then in Definition~\ref{defn:manifold} (1), one can take for $W$ any complement of $T_xX$ in $K_1^n$ (after possibly shrinking $U$). While this can certainly be proven directly in each of the contexts from Hypothesis~\ref{hyp:can_whit} separately (using a version of the Implicit Function Theorem for definable strictly $C^1$ functions), we will see in Remark~\ref{rmk:change:W:proof} how to obtain this in all contexts simultaneously, by passing through $K$.
\end{rmk}

Intuitively, one can think of a $C^1$-manifold as a set which is roughly translation invariant on infinitesimal neighbourhood of points. The following proposition makes this intuition precise:

\begin{prop}\label{prop:riso-mani}
For a $d$-dimensional $\Lnoval$-definable set $\bX \subseteq \bA^n$, the following are equivalent; we set $X := \bX(K)$.
\begin{enumerate}
 \item $\bX$ is an $\Lnoval$-manifold.
 \item For every $x \in X \cap K_0^n$, we have $\dim \rtsp_{B_{<1}(x)}(X) = d$.
\end{enumerate}
Moreover, if this is the case, we have:
\begin{enumerate}\stepcounter{enumi}\stepcounter{enumi}
 \item For every $x \in X$ and every sufficiently small ball $B$ around $x$, we have $\rtsp_B(X) = \res(T_x X)$.
 \item If $x \in X \cap K_0^n$, then (3) holds for $B = B_{<1}(x)$; in other words, in (2), we have $\rtsp_{B_{<1}(x)}(X) = \res(T_x X) = T_x \bX(K_0)$.
\end{enumerate}
\end{prop}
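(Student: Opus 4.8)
The plan is to prove the equivalence (1)$\Leftrightarrow$(2) and then deduce (3) and (4). The essential geometric content is the same in all three contexts of Hypothesis~\ref{hyp:can_whit}, and by Remark~\ref{rmk:mani:indep} and Lemma~\ref{lem:uniqKK0} we may freely pass between $K_1$, $K_0$ and $K$; all topological and $C^1$-notions on $K_1$ transfer to $K$ via $|\cdot|_\fine$ and (Lemma~\ref{lem:top:agree}) to the valuation $|\cdot|$. So throughout I will work in $K$, where $1$-h-minimality and all earlier results are available.

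\textbf{Step 1: reduce to a local statement at a point $x\in X\cap K_0^n$.} Fix such an $x$; without loss $x=0$. The key observation is that both the manifold condition and condition (2) are, near $x$, controlled by the ball $B:=B_{<1}(x)=\maxid(K)^n$. For (1), I want: $\bX$ is an $\Lnoval$-manifold near $0$ (with a complement $W$ of the candidate tangent space) if and only if $X\cap B$ is the graph, over a lift $W\subseteq K^n$ of some $d$-dimensional $\bW\subseteq\RF(K)^n$, of a function $f\colon \pi_W(X\cap B)\to \ker\pi_W$ satisfying $|f(w_1)-f(w_2)|<|w_1-w_2|$ (the hypothesis of Example~\ref{exa:vert:transl}), and moreover $\pi_W(X\cap B)$ is all of $\pi_W(B)=W\cap B$. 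Here the direction (manifold $\Rightarrow$ graph of a $<$-contracting function) uses that strict differentiability of the chart map $f_0$ on $K_1^n/W$ at $0$, together with $Df_0$ vanishing at $0$ after choosing $W\supseteq$ nothing of $T_0X$, forces exactly the strict-contraction estimate on the infinitesimal neighbourhood; the converse direction (graph of $<$-contracting function $\Rightarrow$ strictly $C^1$) uses $\Lnoval$-definability of $f$ (via (P\ref{h:top}) and (P\ref{h:dim}), so $X$ sits inside an $\Lnoval$-definable $d$-dimensional set whose closure is controlled) plus a standard argument that a definable function which is infinitesimally flat at every $K_0$-point of its domain has a continuous total derivative everywhere.

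\textbf{Step 2: close the loop using Lemma~\ref{lem:surface}.} Once $X\cap B$ is presented as in Step~1, Lemma~\ref{lem:surface} (applied with that $W$, $\pi_W$) gives immediately $\rtsp_B(X)=\bW$, which is $d$-dimensional: this yields (1)$\Rightarrow$(2), and simultaneously identifies the riso-triviality space with $\res$ of the tangent-space lift, giving (4) and hence (by Lemma~\ref{lem:closed2all-riso}-type localization, or directly) (3). Conversely, for (2)$\Rightarrow$(1): suppose $\dim\rtsp_{B}(X)=d$ for every $x\in X\cap K_0^n$, with $B=B_{<1}(x)$. Let $\bW:=\rtsp_B(X)$, choose a lift $W$ and a straightener $\psi\colon B\to B$ with $\psi^{-1}(X\cap B)$ being $W$-translation invariant. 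Since $\dim X=d=\dim W$ and $X\cap B\neq\emptyset$, the set $\psi^{-1}(X\cap B)$ is (after an affine shift) exactly $W\cap B$ — here one uses that it is $W$-invariant, has the right dimension, and cannot be larger without contradicting $\dim X=d$ via Lemma~\ref{lem:locallytrivdim}. Pushing forward by $\psi$ and reading off, as in the proof of Lemma~\ref{lem:surface}, that $X\cap B$ is the graph of a $<$-contracting $f\colon W\cap B\to\ker\pi_W$, we land back in the situation of Step~1 and conclude $\bX$ is an $\Lnoval$-manifold near $x$. Ranging over all $x\in X\cap K_0^n$ and using that $\Lnoval$-definability plus Lemma~\ref{lem:consKK0} (P\ref{h:cl}) propagates the local manifold property from the dense set $X\cap K_0^n$ to all of $X$ (any $x'\in X$ has a $K_0$-point of $X$ in every neighbourhood, and the chart data is $\Lnoval$-definable hence rigid), finishes (1).

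\textbf{Main obstacle.} The delicate point is the equivalence, at a single point, between strict $C^1$-differentiability (a statement about $|\cdot|_\fine$-limits of difference quotients as two points independently approach $x$) and the purely valuative condition $\dim\rtsp_{B_{<1}(x)}(X)=d$. Translating "the strict derivative exists and equals $L$" into "$X\cap B_{<1}(x)$ is, up to a linear risometry, the graph of a function $f$ with $|f(w_1)-f(w_2)|<|w_1-w_2|$" requires care: one must check that strict differentiability at $0$ with $D f_0 = 0$ is \emph{equivalent} to — not merely implied by — the infinitesimal strict-contraction estimate, and for the converse one needs that an $\Lnoval$-definable function that is "infinitesimally flat at $0$" in this valuative sense is genuinely strictly differentiable at $0$ over $|\cdot|_\fine$. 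This is where $\Lnoval$-definability is indispensable (a non-definable $<$-contracting map on $\maxid(K)$ need not descend to anything $C^1$, cf.\ the spirit of Example~\ref{exa:def:nnd}); the argument will go through o-minimal / Denef--Pas cell decomposition (as invoked for (P\ref{h:dim})) to reduce $f$ to a single $\Lnoval$-definable branch and then use monotonicity/continuity of its difference quotients. I expect the bookkeeping for the choice of complement $W$ (Remark~\ref{rmk:change:W}) to be an extra, but routine, annoyance handled by the linear-risometry freedom of Remark~\ref{rmk:lift:no:matter}.
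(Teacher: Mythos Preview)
Your overall architecture matches the paper's: both directions pass through Lemma~\ref{lem:surface}, and the tangent space is identified with $\res$ of the image of the chart derivative. But there is a genuine gap in your (2)$\Rightarrow$(1).

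\textbf{The single-sheet step is wrong as written.} You claim that $\psi^{-1}(X\cap B)$ is ``(after an affine shift) exactly $W\cap B$'' because it is $W$-invariant and ``cannot be larger without contradicting $\dim X=d$ via Lemma~\ref{lem:locallytrivdim}''. This is not a valid argument: a $W$-translation-invariant subset of $B$ of dimension $d$ is of the form $\pi_W^{-1}(S)$ for some finite $S$ in a transversal fiber, and nothing about dimension forces $\#S=1$. (Lemma~\ref{lem:locallytrivdim} bounds $\dim\rtsp$, not the number of sheets.) The paper handles exactly this via Lemma~\ref{lem:fib:single}, whose proof is not a dimension argument at all: one takes the fiber through the $K_0$-point $x_0$, observes that the (finite, $\LL(K_0)$-definable) set $F$ has all its pairwise distances in $\dclVG(K_0)=\{0,1\}$ by (P\ref{h:dcl}), and since $F\subseteq B_{<1}(x_0)$ those distances are all $0$. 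Without this you cannot conclude that $X\cap B$ is a graph over $\pi_W(B)$, and the rest of your Step~2 does not start.

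\textbf{On your ``main obstacle''.} Your plan to get strict $C^1$ from the infinitesimal contraction estimate via ``o-minimal/Denef--Pas cell decomposition'' is not how the paper proceeds, and it is not clear it would go through uniformly in all three settings of Hypothesis~\ref{hyp:can_whit}. The paper instead packages each valuative statement (existence of $f_W$, the strict-derivative estimate, the transversality of $W$) as an $\Lnovalfineeq$-formula with an explicit $|\cdot|_\fine$-parameter $\delta$ (or $\beta,\epsilon$), observes it holds in $K$ for any $\delta$ with $|\delta|<1$, transfers to $K_0$ by elementary equivalence, universally quantifies over the $K_0$-parameters there, and transfers back. This is what simultaneously (i) promotes ``at each $K_0$-point'' to ``at each point'' and (ii) yields the $\forall\epsilon\,\exists\delta$ shape of strict differentiability, including the freedom in choosing $W$ (this is also how Remark~\ref{rmk:change:W} is proved). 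Your density argument (``any $x'\in X$ has a $K_0$-point nearby'') is the wrong mechanism; the right one is that the manifold condition is itself an $\Lnovalfineeq$-sentence, so once verified for all $K_0$-points it holds for all points by $K_0\prec_{\Lnovalfine}K$.
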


\begin{proof}[Proof of Proposition~\ref{prop:riso-mani}, (1) $\Rightarrow$ (2),(4)]
We start working in $K_0$:
Choose a $|\cdot|_\fine$-neighbourhood $U_0 \subseteq K_0^n$ of $x$, an $(n-d)$-dimensional subspace $W_0 \subseteq K_0^n$ and $f_0\colon U_0/W_0 \to K_0^n$  as in Definition~\ref{defn:manifold}, witnessing that $\bX(K_0)$ is a an $\Lnoval$-manifold near $x$.
Let $F_0 := D(f_0)_{x + W_0} \in \operatorname{Hom}(K_0^n/W_0, K_0^n)$ be the differential of $f_0$ at the image of $x$ in $K_0^n/W_0$. Since $f_0$ is strictly $C^1$,
for any $\epsilon \in K_0^\times$, there exists a $\delta \in K_0^\times$ such that the following holds:
\begin{equation}\label{eq:12ma}
\forall y_1, y_2 \in U_{\delta,0}/W_0, y_1 \ne y_2\colon
\frac{|f_0(y_1) - f_0(y_2) - F_0(y_1 - y_2)|_\fine}{|y_1 - y_2|_\fine} < |\epsilon|_\fine,
\end{equation}
where $U_{0,\delta} := \{z \in K_0^n \mid |z-x|_\fine \le |\delta|_\fine\}$ (which we assume to be contained in $U_0$).

By Remark~\ref{rmk:manifold:graph} we may assume that $f_0$ is definable, so \eqref{eq:12ma} can be considered as an $\Lnovalfineeq$-formula with parameters $x,W_0,\epsilon,\delta$. Since $K$ is an elementary extension of $K_0$, we can lift everything to
$K$. We write $f$, $F$, $W$ and $U_\delta$ for the lifts of
$f_0$, $F_0$, $W_0$ and $U_{0,\delta}$, respectively.

Since the valuative ball $B := B_{<1}(x)$ is contained in 
$U_\delta$ for every $\delta \in K_0^\times$, we obtain that for $y_1, y_2 \in B/W$, the inequality of \eqref{eq:12ma} (with $f$ and $F$ instead of $f_0$ and $F_0$) holds for every $\epsilon \in K_0^\times$. This implies a corresponding valuative inequality:
\begin{equation}\label{eq:12mav}
\frac{|f(y_1) - f(y_2) - F(y_1 - y_2)|}{|y_1 - y_2|} < 1.
\end{equation}
Note that we have $\res(W) \cap \res(\im F) = W_0 \cap \im F_0 =\{0\}$.
From this, one deduces $|y_1 - y_2| = |F(y_1 - y_2)|$, so \eqref{eq:12mav} is equivalent to $\rv(f(y_1) - f(y_2)) = \rv(F(y_1 - y_2))$. In particular, for $x_1, x_2 \in X \cap B$,
we have $\rv(x_1 - x_2) \in \rv(\im F)$.
By Lemma~\ref{lem:surface}, this implies that $\rtsp_B(X) = \res(\im F)$. Since $\dim \im F = d$, this proves (1), and since $\im F = T_x X$, we also already proved (4).
\end{proof}

Before we can continue with the proof of the proposition, we need a lemma:

\begin{lem}\label{lem:fib:single}
Suppose that $\bX \subseteq \bA^n$ is a $d$-dimensional $\Lnoval$-definable set and fix a point
$x_0 \in \bX(K_0)$. Set $B := B_{<1}(x_0) \subseteq K^n$ and $\bU := \rtsp_B(\bX(K))$, and let $W \subseteq K^n$ be a vector subspace such that $\res(W)$ is a complement of $\bU$. Then for every $z \in B$, the intersection $(z + W) \cap B \cap \bX(K)$ is a singleton.
In particular, for any lift $U \subseteq K^n$ of $\bU$ and any $x, x' \in \bX(K) \cap B$, we have $\rv(x - x') \in \rv(U)$.
\end{lem}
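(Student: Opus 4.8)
The plan is as follows. After translating by $-x_0$ (which changes nothing), assume $x_0 = 0$, so $B = B_{<1}(0) = \maxid(K)^n$. Since $K_0 \prec_{\Lnoval} K$ and $\bX$ is $\Lnoval$-definable, $\bX(K_0) \subseteq \bX(K)$, so $0 \in \bX(K) \cap B$; in particular this intersection is non-empty. By Lemma~\ref{lem:locallytrivdim}, $\dim \bU \le \dim \bX = d$, and in the situation at hand $\dim \bU = d$. I would then fix a lift $U \subseteq K^n$ of $\bU$; since $\res(U) \oplus \res(W) = \RF(K)^n$ one gets $U \cap W = 0$ (scale a hypothetical nonzero vector to a unit) and hence $U \oplus W = K^n$. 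Let $\pi \colon K^n \to U$ and $\sigma \colon K^n \to W$ be the projections along $W$ and along $U$. Then $\pi$ is a lift of the projection $\RF(K)^n \to \bU$ along $\res(W)$, both $\pi$ and $\sigma$ are given by matrices in $\valring(K)^{n\times n}$, so $|v| = \max(|\pi(v)|,|\sigma(v)|)$ for all $v$; consequently $\pi(B) = U\cap B$, $\sigma(B) = W\cap B$, and each $\pi$-fiber in $B$ is a full coset $u + (W\cap B)$, which is exactly a set of the form $(z+W)\cap B$.

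Next I would straighten: by Remark~\ref{rmk:lift:no:matter} and Lemma~\ref{lem:fiber_compat}, $\bU$-riso-triviality of $\bX(K)$ on $B$ is witnessed by a $U$-straightener $\phi\colon B\to B$ respecting $\pi$-fibers ($\pi\circ\phi = \pi$). Put $Y := \phi^{-1}(\bX(K)\cap B)$; it is $U$-translation invariant, hence $Y = \{v\in B \mid \sigma(v)\in\sigma(Y)\}$, so each $\pi$-fiber $F$ meets $Y$ in $u+\sigma(Y)$ and, since $\phi(F)=F$, meets $\bX(K)$ in a set of size $\#\sigma(Y) =: k$ independent of $F$ (this constancy also follows directly from Remark~\ref{rmk:fiber_riso}). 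Thus $(z+W)\cap B\cap\bX(K)$ always has exactly $k$ elements; it remains to show $k = 1$. Non-emptiness gives $k\ge 1$. Finiteness of $k$ follows from the dimension theory of Proposition~\ref{prop:hmin:dim}: $\pi$ restricts to a definable surjection $\bX(K)\cap B \twoheadrightarrow U\cap B$ all of whose fibers have size $k$, and since $\dim(\bX(K)\cap B)\le d = \dim(U\cap B)$ these fibers must be $0$-dimensional, hence finite.

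The heart of the argument is ruling out $k \ge 2$. First I would show that the ``sheets'' of $\bX(K)\cap B$ are mutually parallel translates: if $p,q\in\bX(K)\cap B$ lie in a common $\pi$-fiber, then $p-q\in W$, while $\phi^{-1}(p)-\phi^{-1}(q)$ has $W$-part equal to a difference of two elements of $\sigma(Y)$ and some a priori error term in $U$; comparing $\rv^{(n)}(\phi^{-1}(p)-\phi^{-1}(q)) = \rv^{(n)}(p-q)$ and using $U\cap W = 0$ forces the $U$-error to vanish, so $p-q$ is a difference of two elements of $\sigma(Y)$. Hence $\bX(K)\cap B = Z_0 + \Delta$, where $Z_0 \ni x_0$ is the graph over $U\cap B$ of a single map into $W\cap B$ and $\Delta := \sigma(Y) - \phi^{-1}(x_0)$ is a finite subset of $W\cap\maxid(K)^n$ containing $0$; moreover $\Delta = (x_0 + W)\cap B\cap\bX(K)$. \textbf{The main obstacle is now to prove $\Delta = \{0\}$.} Here is where $\Lnoval$-definability of $\bX$ and $x_0\in K_0^n$ must be used: the space $\bU$ and the projection $\bar\pi_\bU$ are $\Lnoval$-definable over $\res(x_0)\in\RF(K_0)$, so (after transporting the relevant data to the residue field and descending along $K_0 \prec K$, or, in the (ACF)/(HEN) cases, by examining the algebraic branches of $\bX$ through the $K_0$-point $x_0$) the offsets in $\Delta$ are ``defined over $K_0$''; being also in $\maxid(K)^n$, they must vanish because the residue map is a bijection $K_0 \to \RF(K)$ (Lemma~\ref{lem:consKK0}~(P\ref{h:bij})), i.e.\ $K_0\cap\maxid(K) = \{0\}$. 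This gives $k=1$.

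Finally, the ``in particular'' is immediate once $k=1$: then $\pi$ is a bijection $\bX(K)\cap B \to U\cap B$, so $\bX(K)\cap B = \{u + h(u)\mid u\in U\cap B\}$ for a map $h$ into $W\cap B$, and the straightener $\phi$ (which respects $\pi$-fibers) sends $w_0 + u \mapsto u + h(u)$ for the unique $w_0\in\sigma(Y)$; the risometry inequality for $\phi$ yields $|h(u)-h(u')| < |u - u'|$ for $u\ne u'$. Hence for distinct $x = u+h(u)$, $x' = u'+h(u')$ in $\bX(K)\cap B$ we have $x - x' = (u-u') + (h(u)-h(u'))$ with $|h(u)-h(u')| < |u-u'| = |x-x'|$, so $\rv^{(n)}(x-x') = \rv^{(n)}(u-u') \in \rv^{(n)}(U)$. (Alternatively, once $\pi(\bX(K)\cap B) = \pi(B)$ and the displayed $\rv$-condition hold, $\rtsp_B(\bX(K)) = \bU$ is re-derived directly from Lemma~\ref{lem:surface}, and the $\rv$-condition is exactly what is asserted.)
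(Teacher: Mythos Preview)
Your overall strategy is right and matches the paper's: reduce to the single fiber through $x_0$, show it is finite, and then exploit that $x_0 \in K_0^n$ together with special properties of $K_0 \subseteq K$ to force that fiber to collapse to $\{x_0\}$. The ``in particular'' part you give is fine and essentially the paper's argument. But the execution of the key step has genuine gaps.

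First, the structural claim $\bX(K)\cap B = Z_0 + \Delta$ is both unjustified and unnecessary. From the risometry condition you only get $\rv^{(n)}(p-q) = \rv^{(n)}(s-s')$ for two fiber-mates, not $p-q = s-s'$; the sheets of $\bX(K)\cap B$ need not be exact translates of one another. Likewise your identification $\Delta = (x_0+W)\cap B\cap\bX(K)$ fails: the latter is $\phi(\sigma(Y))$, not $\sigma(Y)-\phi^{-1}(x_0)$. None of this is needed anyway: by $\bU$-riso-triviality the cardinality of $(z+W)\cap B\cap\bX(K)$ is independent of $z$ (and, via Remark~\ref{rmk:lift:no:matter}, of the lift $W$), so it suffices to analyse the one fiber $F := (x_0+W)\cap B\cap\bX(K)$.

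Second, and more importantly, your argument that the elements of $F$ lie in $K_0^n$ does not go through. Even after choosing $W$ to be $K_0$-definable (which you omit, but is essential; take $W$ to be a coordinate plane so that $F$ becomes $\LL(K_0)$-definable), the individual points of a finite $\LL(K_0)$-definable subset of $K^n$ need not lie in $\dclVF(K_0)$, only in $\acl$. The paper's fix is to pass to the value group: the set $\{\,|x'-x_0|\mid x'\in F\,\}$ is a finite $\Leq(K_0)$-definable subset of the totally ordered $\VG(K)$, so each of its elements is individually $\Leq(K_0)$-definable and hence lies in $\dclVG(K_0)=\{0,1\}$ by (P\ref{h:dcl}). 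Since $F\subseteq B_{<1}(x_0)$, all these values are $<1$, hence equal to $0$, i.e.\ $F=\{x_0\}$. This replaces your handwave about ``offsets defined over $K_0$'' with a one-line use of the order on $\VG$.
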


Recall that $\rv(U)$ only depends on $\bU$, and not on the choice of the lift $U$
(Remark~\ref{rmk:rvW}).

\begin{proof}[Proof of Lemma~\ref{lem:fib:single}]
By $\bU$-riso-triviality, the statement about $F := (z+W)\cap B \cap \bX(K)$ being a singleton does not depend on the choice of $z \in B$, and neither on $W$ (provided that $\res(W)$ is a complement of $\bU$). We may thus take $z = x_0$ and let $W$ be parallel to some coordinate plane, so that in particular, $F$ is $\LL(K_0)$-definable.
By $\bU$-riso-triviality and a dimension argument, $F$ is finite, so we have $\{|x' - x_0| \mid x' \in F\} \subseteq \dclVG(K_0) = \{0, 1\}$ (by (P\ref{h:dcl})), which implies $F = \{x_0\}$ (since $F \subseteq B_{<1}(x_0)$).

The ``in particular'' part follows by choosing a $U$-straightener $\varphi\colon B \to B$ of $\bX(K)$: we then have $\rv(x - x') = \rv(\varphi^{-1}(x) - \varphi^{-1}(x')) \in \rv(U)$.
\end{proof}

\begin{proof}[Proof of Proposition~\ref{prop:riso-mani}, (2) $\Rightarrow$ (1)]
By assumption, we have a map
$\bX(\RF(K)) \to \bGr_d^n(\RF(K)), \bar x \mapsto \bV_{\bar x}:= \rtsp_{\res^{-1}(\bar x)}(X)$. This map is $\Leq$-definable by the Riso-Triviality Theorem (Corollary~\ref{cor:RTT}), and since it is a map between sorts of $\RF^\eq$, it is even $\Lnovaleq$-definable (by (P\ref{h:res})).
We fix a model independent notation for that map:
Given $K' \models \Tnoval$ and $x' \in \bX(K')$, we write $\bfV_{K',x'} \in \bGr_d^n(K')$ for the image of $x'$ under the interpretation of that map in $K'$.

Now fix $x_0 \in \bX(K_0) \subseteq X$, and set $\bar x := \res(x_0)$, $B := \res^{-1}(\bar x) = B_{<1}(x_0)$, $\bV_{\bar x} := \bfV_{\RF(K),\bar x} \subseteq \RF(K)^n$ (as in the previous paragraph) and $V_{x_0} := \bfV_{K,x_0} \subseteq K^n$.
Note that $V_{x_0}$ is a lift of $\bV_{\bar x}$
(since $\res$ sends $\bfV_{K_0,x_0} = \bfV_{K,x_0} \cap K_0^n$ to $\bV_{\bar x}$).
Additionally fix an $(n-d)$-dimensional vector subspace
$W \subseteq K^n$ such that $\res(W) \cap \bV_{\bar x} = \{0\}$, and write $\rho_W\colon K^n \to K^n/W$ for the canonical projection. By
Lemma~\ref{lem:fib:single}, we have:
\begin{equation}\label{eq:v:to:w}
\forall z_1 \in \rho_W(B)\colon
\exists^{=1} (\underbrace{x_1}_{\!\!\!\!\!\!\!\!=:f_W(z_1)\!\!\!\!\!\!\!\!} \in X \cap B)\colon \rho_W(x_1) = z_1.
\end{equation}

By $\bV_{\bar x}$-riso-triviality, for $z_1$ as in \eqref{eq:v:to:w}, we have
$|f_W(z_1) - x_0| \le |z_1 - \rho_W(x_0)|$. Thus, if we pick $\delta \in K^\times$ with $|\delta| < 1$ and set $B_{\fine,\alpha}(x_0) := \{x \in K^n \mid |x - x_0|_\fine < |\alpha|_\fine\}$ for $\alpha \in K^\times$, we have:
\begin{equation}\label{eq:ex:f}
\underbrace{\forall z_1 \in \rho_W(B_{\fine,\delta^2}(x_0))\colon
\exists^{=1} x_1 \in (X \cap B_{\fine,\delta}(x_0))\colon \rho_W(x_1) = z_1}_
{=:\phi_{\eqref{eq:ex:f}}(x_0,\delta, W)}
\end{equation}
Note that this is an $\Lnovalfineeq$-formula (interpreted in $K$) with parameters $x_0, \delta, W$ (running over $\bX, \bA^\times, \bGr^n_{n-d}$), which we denote by $\phi_{\eqref{eq:ex:f}}(x_0,\delta, W)$, as indicated.

We put this aside for a moment and
note that by the ``in particular'' part of Lemma~\ref{lem:fib:single}, we have
\begin{equation}\label{eq:dirSd}
\{\rv(x_2 - x_1) \mid x_1,x_2 \in X \cap B\} \subseteq \rv(V_{x_0}).
\end{equation}
(Recall that $V_{x_0} = \bfV_{K,x_0}$ is a lift of $\bV_{\bar x}$.)
This implies that for two distinct $z_1, z_2 \in \rho_W(B)$, we have $|(f_W(z_1) - f_W(z_2)) - L(z_1 - z_2)| < |z_1 - z_2|$,
where $f_W$ was defined in \eqref{eq:v:to:w} and $L\colon K^n/W \to V_{x_0}$ is the unique linear map satisfying $\rho_W \circ L = \id_{K^n/W}$.
And this in turn implies that for $\delta$ as before and any fixed $\epsilon \in K_0^\times$, we have
\begin{equation}\label{eq:f:C1}
\underbrace{
\forall z_1,z_2 \in \rho_W(B_{\fine,\delta^2}(x_0)), z_1 \ne z_2
\colon
|(f_W(z_1) - f_W(z_2)) - L(z_1 - z_2)|_\fine < |\epsilon\cdot(z_1 - z_2)|_\fine
}_{=:\phi_{\eqref{eq:f:C1}}(x_0,\epsilon,\delta, W)}
\end{equation}
(since $|\epsilon| = 1$). Moreover, \eqref{eq:dirSd} implies that for every fixed $x_1 \in X \cap B$,
given any $(n-d)$-dimensional vector sub-space $W' \subseteq K^n$, the condition that $\res(W')$ is a complement of $\bV_{\bar x}$ is equivalent to:
\begin{equation}\label{eq:W:comp}
\forall x_2 \in X \cap B, w\in W'\colon 
|x_2 - x_1 - w| \ge  |x_2 - x_1|.
\end{equation}
Now, for the same $x_1$ we consider the following condition on $W'$, for some fixed $\beta \in K^\times$:
\begin{equation}\label{eq:W:comp:fine}
\underbrace{\forall x_2 \in X \cap B_{\fine,\beta}(x_1), w\in W'\colon 
|x_2 - x_1 - w|_\fine \ge  |\beta\cdot(x_2 - x_1)|_\fine}_{=:\phi_{\eqref{eq:W:comp:fine}}(x_1,\beta, W')}:
\end{equation}
If we take $|\beta| < 1$, then \eqref{eq:W:comp} implies 
\eqref{eq:W:comp:fine}, whereas if we take $\beta \in K_0^\times$, then \eqref{eq:W:comp:fine} implies \eqref{eq:W:comp}.

Now we put everything together. We keep our $x_0 \in \bX(K_0)$ fixed, we also continue to fix some $\epsilon \in K_0^\times$, and we additionally fix some $\beta' \in K_0^\times$. Then on the one hand, $K$ satisfies the $\Lnovaleq(x_0)$-formula
\begin{equation}\label{eq:Wex}
\exists \beta \in K^\times, W \in \bGr_{n-d}^n(K)\colon
\forall x_1 \in X \cap B_{\fine,\beta}(x_0)\colon
\phi_{\eqref{eq:W:comp:fine}}(x_1,\beta, W);
\end{equation}
indeed, pick any $W$ such that $\res(W)$ is a complement of $\bV_{\bar x}$ (so that \eqref{eq:W:comp} holds) and pick any $\beta \in K^\times$ such that $|\beta| < 1$.
On the other hand, we can use \eqref{eq:W:comp:fine} to impose the assumption on $W$ needed by \eqref{eq:ex:f} and \eqref{eq:f:C1}, so $K$ also satisfies the $\Lnovaleq(x_0,\epsilon,\beta')$-formulas
\begin{equation}\label{eq:W:f}
\forall W \in \bGr_{n-d}^n(K)\colon
\big(\phi_{\eqref{eq:W:comp:fine}}(x_0,\beta', W)
\Rightarrow 
\forall^{\mathrm{small}} \delta \in K^\times\colon
\phi_{\eqref{eq:ex:f}}(x_0,\delta, W)\big)
\end{equation}
and
\begin{equation}\label{eq:W:C1}
\forall W \in \bGr_{n-d}^n(K)\colon
\big(\phi_{\eqref{eq:W:comp:fine}}(x_0,\beta', W)
\Rightarrow 
\exists \delta \in K^\times\colon \phi_{\eqref{eq:f:C1}}(x_0,\epsilon,\delta, W)\big),
\end{equation}
where by ``$\forall^{\mathrm{small}} \delta \in K^\times$'', we mean that there exists some $\delta_0 \in K^\times$ such that the statement holds for all $\delta \in K^\times$ satisfying $|\delta|_\fine < |\delta_0|_\fine$.

Since $K$ is an elementary extension of $K_0$, \eqref{eq:Wex}, \eqref{eq:W:f} and \eqref{eq:W:C1} also hold in $K_0$, and there, we can now also put ``$\forall x_0 \in \bX\, \forall \epsilon,\beta' \in \bA^\times$'' in front (since we proved everything for all such $x_0, \epsilon, \beta'$ in the structure $K_0$). Afterwards, we transfer those $\Lnovalfineeq$-sentences back to $K$.
For the remainder of the proof, we work in $K$.

We are now ready to prove that $X$ is a manifold. To this end, suppose that $x_0 \in X$ is given and choose
$\beta \in K^\times$ and $W \in \bGr_{n-d}^n(K)$ as provided by ``$\forall x_0\colon$\eqref{eq:Wex}'', so that
\begin{equation}\label{eq:beta:W}
\phi_{\eqref{eq:W:comp:fine}}(x_1,\beta, W) \text{ holds for all } x_1 \in X \cap B_{\fine,\beta}(x_0). 
\end{equation}
As a side remark, note that the rest of the proof works with any $\beta$ and $W$ satisfying \eqref{eq:beta:W}.

Applying \eqref{eq:beta:W} with $x_1 = x_0$ and using ``$\forall x_0,\beta'\colon$\eqref{eq:W:f}'' (with $\beta' = \beta$)
provides some $\delta \in K^\times$ such that
$\phi_{\eqref{eq:ex:f}}(x_0,\delta, W)$ holds, and we may additionally assume $\delta \le \beta$.
As before, we write $\rho_W\colon K^n \to K^n/W$ for the canonical projection. We claim that for
\[
U := B_{\fine,\delta}(x_0) \cap \rho_W^{-1}(\rho_W(B_{\fine,\delta^2}(x_0)))),
\]
$X \cap U$ is the image of a strictly $C^1$-map as required by Definition~\ref{defn:manifold}. To see this, first note that since $\phi_{\eqref{eq:ex:f}}(x_0,\delta, W)$ holds, $X \cap U$ is indeed the image of the map $f_W\colon \rho_W(U) \to X$ from \eqref{eq:v:to:w}.
It thus remains to prove that $f_W$ is strictly $C^1$ at $\rho_W(x_1)$, for every $x_1 \in X \cap U$.
Since $\phi_{\eqref{eq:W:comp:fine}}(x_1,\beta, W)$ holds (by our choices of $\beta$ and $\delta \le \beta$), ``$\forall x_0,\beta',\epsilon\colon$\eqref{eq:W:C1}'' implies
\[
\forall \epsilon \in K^\times\colon
\exists \delta' \in K^\times\colon
\phi_{\eqref{eq:f:C1}}(x_1,\epsilon,\delta', W).
\]
This is just the statement that $f_W$ is strictly $C^1$ at $x_1$, with the total derivative being the map $L$ appearing in \eqref{eq:f:C1}.
This finishes the proof that $X$ is a manifold.
\end{proof}

Before we prove the last missing part of Proposition~\ref{prop:riso-mani}, namely (3), we fulfill the promise we made in Remark~\ref{rmk:change:W} about the ``transversal'' space $W$ appearing in the definition of manifold; we do so working in $K$:

\begin{rmk}\label{rmk:change:W:proof}
Suppose that $X \subseteq K^n$ is an $\Lnoval$-manifold and that $x_0 \in X$. Our proof of Proposition~\ref{prop:riso-mani} ``(1) $\Leftrightarrow$ (2)'' additionally yields that
the condition of Definition~\ref{defn:manifold} holds for
any complement $W \subseteq K^n$ of $T_{x_0}X$.
Indeed, since $X$ satisfies Proposition~\ref{prop:riso-mani} (2), we are in the situation of the above proof of Proposition~\ref{prop:riso-mani} ``(2) $\Rightarrow$ (1)''. By the side remark right after \eqref{eq:beta:W} in that proof, it suffices to verify that $W$ satisfies \eqref{eq:beta:W} for some $\beta \in K^\times$, i.e., that for all $x_1, x_2 \in X$ sufficiently close to $x_0$ and for all $w \in W$, we have $|x_2 - x_1 - w|_\fine \ge  |\beta\cdot(x_2 - x_1)|_\fine$. This follows easily using our definition of manifold, namely that on a neighbourhood of $x_0$, $X$ is the image of a strictly $C^1$-map whose total derivative at $x_0$ has image 
$T_{x_0}X$.
\end{rmk}

\begin{proof}[Proof of Proposition~\ref{prop:riso-mani} (3)]
Choose a neighbourhood $U \subseteq K^n$ of $x$, an $(n-d)$-dimensional subspace $W \subseteq K^n$ and $f\colon U/W \to K^n$ witnessing that $X$ is a an $\Lnoval$-manifold near $x$ as in Definition~\ref{defn:manifold}; in particular, $T_xX = \im Df_{x+W}$. Since we just proved Remark~\ref{rmk:change:W}, we may
additionally assume that $\res(W) \cap \res(T_xX) = \{0\}$.
Using the definition of the strict differential $Df_{x+W}$ (but expressing the limit in Definition~\ref{defn:C1} in the valuation topology), choose $\lambda \in \VG^\times(K)$ such that $B := B_{<\lambda}(x)$ is contained in $U$ and such that for $y_1,y_2 \in B/W$, we have the valuative inequality
\[
\frac{|f(y_1) - f(y_2) - Df_{x+W}(y_1 - y_2)|}{|y_1 - y_2|} < 1.
\]
Now we are in the same situation as at the end of the proof of ``(1) $\Rightarrow$ (2), (4)'', so in the same way, we conclude that $\rtsp_B(X) = \res(T_xX)$.
\end{proof}

\subsection{Riso-stratifications are Whitney stratifications}
\label{sec:whit}

As promised, we now prove that riso-stratifications are (in particular) Whitney stratifications, except for one detail: While one usually imposes that the strata of a Whitney-stratification should be $C^\infty$-manifolds, we only obtain $C^1$-manifolds. More precisely, we use the notion of $\Lnoval$-definable (strict $C^1$) manifold from the previous subsection.

We formulate the definition of Whitney stratifications in such a way that it is suitable for model theoretic considerations; see e.g.\ \cite{BCR.realGeom} for a similar formulation of Whitney's Condition~(b). 

\begin{notn}[Grassmannians]\label{notn:grA}
Given $0 \le r \le n$, we write $\bGr_{r}^n$ for the Grassmannian of $r$-dimensional vector subspaces of $\bA^n$, considered as an $\Lnoval$-definable subset of the power set $\pow(\bA^n)$.
\end{notn}

We now introduce our variant of Whitney stratifications. Its full name maybe should be ``$\Lnoval$-definable strict $C^1$-Whitney stratification'', but as for manifolds, we use a more lightweight terminology:

\begin{defn}\label{defn:whit}
An $\Lnoval$-\emph{Whitney stratification} of $K_1^n$ (for some $n \ge 1$) is a partition of $K_1^n$ into $\Lnoval$-submanifolds $S_r$ (called \emph{skeleta}) of dimension $r$, for $0 \le r \le n$ with the following properties.
\begin{enumerate}
    \item
    For each $r \le n$, the set
    $S_{\le r} := \bigcup_{r' \le r} S_{r'}$ is closed.
    \item For every $0 \le r < s \le n$, the pair $(S_r, S_s)$ satisfies \emph{Whitney's Condition (b)}, i.e.: Consider the map
    \[f_{r,s}\colon S_r \times S_s \to \bGr^n_{1}(K_1) \times \bGr^n_{s}(K_1), (x,y) \mapsto (K_1\cdot(x-y), T_yS_s).\]
    Then for every $x_0 \in S_r$, every $V \in \bGr^n_{1}(K_1)$ and every $W \in \bGr^n_{s}(K_1)$ such that
    $(x_0, x_0, V, W)$ lies in the topological closure of the graph of $f_{r,s}$, we have $V \subseteq W$.
\end{enumerate}
\end{defn}

\begin{rmk}
Like the notion of $\Lnoval$-manifold,
the notion of $\Lnoval$-Whitney stratification does not depend on the chosen model of the $\Lnoval$-theory of $K_1$. Accordingly, we have a notion of whether a partition $(\bS_r)_{r \le n}$ of $\bA^n$ into $\Lnoval$-definable sets $\bS_r$ is an
$\Lnoval$-Whitney stratification.
\end{rmk}

Now we can state the main result of this subsection:

\begin{thm}\label{thm:whit}
We assume that $\Lnoval$ and $\Tnoval$ are as in Hypothesis~\ref{hyp:can_whit}.
Let $(\bS_r)_{r \le n}$ be the riso-stratification of an $\Leq$-definable map
$\bchi\colon \VF^n \to\RV^\eq$, for some $n \ge 1$.
Then $(\bS_r)_r$ is an $\Lnoval$-Whitney stratification.
Moreover, if $\bX \subseteq \bA^n$ is an $\Lnoval$-definable set such that $\bX(K)$ is a union of fibers of $\bchi_K$ (i.e., $\bX(K) = \bchi_K^{-1}(\bchi_K(\bX(K)))$), then for each $r$,
the intersection $\bX \cap \bS_r$ is clopen in $\bS_r$.
\end{thm}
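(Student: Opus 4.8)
The plan is to verify the three requirements of Definition~\ref{defn:whit} in turn and then the ``moreover'' clause. Only the partition $(\bS_r)_r$ matters for the Whitney properties (recall from Convention~\ref{conv:risoStrat} that it is itself a riso-stratification), and I would work throughout in the model $K_0$ with its $\Lnovalfine$-structure, which is harmless since being an $\Lnoval$-Whitney stratification is a property of $\Tnoval$ (the remark after Definition~\ref{defn:whit}); the gain is that every point we ever need is residue-rational. Closedness of each $\bS_{\le r}$ is exactly Lemma~\ref{lem:riso-stratificationborder}. For the manifold property, fix $r$ with $\bS_r\ne\emptyset$: Lemma~\ref{lem:shadow-dim} gives $\dim\bS_r\le r$, while Lemma~\ref{lem:riso-stratificationrtsp}~(2),(3) together with Lemma~\ref{lem:locallytrivdim}, transported to $K_0$ along the bijection $K_0\cong\RF(K)$ of Lemma~\ref{lem:consKK0}~(P\ref{h:bij}) and Remark~\ref{rmk:Sh}, forces $\dim\bS_r=r$ and $\dim\rtsp_{B_{<1}(x)}(\bS_r(K))=r$ for each $x\in\bS_r(K_0)$; Proposition~\ref{prop:riso-mani} (implication (2)$\Rightarrow$(1), and part~(4)) then shows $\bS_r$ is an $\Lnoval$-manifold of dimension $r$ and yields, for $x\in\bS_r(K_0)$,
\[
\res(T_x\bS_r)=\rtsp_{B_{<1}(x)}(\bS_r(K))=\rtsp_{B_{<1}(x)}\big((\bS_i(K))_i,\bchi_K\big),
\]
the last equality by Lemma~\ref{lem:riso-stratificationrtsp}~(3); I would record this formula. (In case (ACF) this makes $\bS_s$ smooth by Remark~\ref{rmk:C1:smooth}, so $y\mapsto T_y\bS_s$, and hence the closure of the graph of $f_{r,s}$, are $\Lnoval$-definable by the obvious extension of Lemma~\ref{lem:K0:props}~(P\ref{h:top}) to products with Grassmanians; in the other cases $\Lnovalfine=\Lnoval$ and this is automatic.)

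For Whitney's Condition~(b), fix $r<s$, a point $x_0\in\bS_r(K_0)$ and $V\in\bGr_1^n(K_0)$, $W\in\bGr_s^n(K_0)$ with $(x_0,x_0,V,W)$ in the closure of the graph of $f_{r,s}$; I must show $V\subseteq W$. Passing to an elementary extension $K\succ_{\Lnoval}K_0$ and using $\maxid(K)\ne 0$, I would extract $x\in\bS_r(K)$, $y\in\bS_s(K)$ with $x,y\in B_{<1}(x_0)$ and with $K\cdot(x-y)$ and $T_y\bS_s$ at $|\cdot|_\fine$-distance below some $\eta\in\maxid(K)\setminus\{0\}$ from $V$ and $W$; since $|\cdot|$ is a coarsening of $|\cdot|_\fine$ (Remark~\ref{rmk:coarse-fine} in case (HEN), the convex-hull construction otherwise), this gives $\res(K(x-y))=\res(V)$ and $\res(T_y\bS_s)=\res(W)$, and because $V,W$ have coordinates in $K_0$ it then suffices to prove $\res(K(x-y))\subseteq\res(T_y\bS_s)$, where $\res(T_y\bS_s)=\rtsp_{B_y}(\bS_s(K))$ for all sufficiently small balls $B_y\ni y$ (Proposition~\ref{prop:riso-mani}~(3)). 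On $B_0:=B_{<1}(x_0)$ the tuple $(\bS_i)_i$ is $\bar U$-riso-trivial with $\bar U:=\res(T_{x_0}\bS_r)$, and by Lemma~\ref{lem:fiber_compat} I would take a straightener $\phi\colon B_0\to B_0$ respecting $\pi_U$-fibres ($U$ a lift of $\bar U$). Then $\phi^{-1}(\bS_s(K)\cap B_0)$ is $U$-translation invariant, hence $\bar U$-riso-trivial on $\phi^{-1}(B_y)$, which — transported through the risometry $\phi$ — gives $\bar U\subseteq\res(T_y\bS_s)$ (this also proves Whitney's Condition~(a)); and $\phi^{-1}(\bS_r(K)\cap B_0)$ is a single $U$-coset, being $U$-translation invariant and meeting each $\ker\pi_U$-coset exactly once by Lemma~\ref{lem:fib:single} applied to $\bX=\bS_r$.

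Combining $\phi^{-1}(x)-\phi^{-1}(x_0)\in U$ with the fact that $\phi$, being a risometry, preserves residue directions of differences, a short valuation computation reduces $\res(K(x-y))\subseteq\res(T_y\bS_s)$ to the already-established $\bar U\subseteq\res(T_y\bS_s)$ together with the \emph{anchored} statement: for $x_0\in\bS_r(K_0)$ and $y\in\bS_s(K)$ infinitesimally close to $x_0$ one has $\res(K(x_0-y))\subseteq\res(T_y\bS_s)$, possibly after first passing to a suitable sub-ball around $y$. This anchored statement is the hard core, and I expect it to be the main obstacle. The route I would try is an induction on the ambient dimension $n$: modulo $U=T_{x_0}\bS_r$, and after a definable $C^1$-flattening of $\bS_r$ near $x_0$, the straightened picture on $B_0$ becomes a riso-stratification in ambient dimension $n-r$ (using Lemmas~\ref{lem:fiber_compat}--\ref{lem:fiber_triv} to identify the partition of a transversal fibre with a riso-stratification), which disposes of the case $r\ge 1$; the base case $r=0$ I would treat directly, bounding $\rtsp_{B_{\le|x_0-y|}(y)}((\bS_i)_i)$ via Lemma~\ref{lem:locallytrivdim} (using that this ball meets $\bS_0$) and Lemma~\ref{lem:Crig}, and arguing that the secant direction is forced into $\res(T_y\bS_s)$. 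The delicate point is making the fibrewise partition genuinely $\Lnoval$-definable so that the induction hypothesis applies; alternatively one could here invoke a version of the known fact that t-stratifications induce Whitney stratifications.

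For the ``moreover'' clause, put $T:=\bchi_K(\bX(K))$, so that $\bX=\bchi_K^{-1}(T)$ and likewise $\bA^n\setminus\bX=\bchi_K^{-1}(\RV^\eq(K)\setminus T)$ is a union of fibres of $\bchi_K$. The key point is that $\bchi_K$ is constant on $\bS_r(K)\cap B_{<1}(x_0)$ for every $x_0\in\bS_r(K_0)$: by the recorded formula, $((\bS_i)_i,\bchi_K)$ is $\bar U$-riso-trivial on $B_0:=B_{<1}(x_0)$ with $\bar U=\res(T_{x_0}\bS_r)$, so (Lemma~\ref{lem:fiber_compat}) a straightener $\phi$ of this tuple respecting $\pi_U$-fibres has $\phi^{-1}(\bS_r(K)\cap B_0)$ equal to a single $U$-coset (as in the previous paragraph, via Lemma~\ref{lem:fib:single}), on which $\bchi_K\circ\phi$ — being $U$-translation invariant — is constant; applying $\phi$ gives the claim. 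Now fix $x_0\in(\bX\cap\bS_r)(K_1)\subseteq(\bX\cap\bS_r)(K_0)$. If $x_0$ lay in the topological closure of $\bS_r\setminus\bX$, then passing to $K\succ_{\Lnoval}K_1$ (closures of $\Lnoval$-definable sets being $\Lnoval$-definable) we could find $y\in\bS_r(K)\setminus\bX(K)$ with $y\in B_{<1}(x_0)$, whence $\bchi_K(y)=\bchi_K(x_0)\in T$ and so $y\in\bX(K)$ — a contradiction. Hence $\bX\cap\bS_r$ is open in $\bS_r$; the same argument with $\bA^n\setminus\bX$ in place of $\bX$ shows $\bS_r\setminus\bX$ is open in $\bS_r$, so $\bX\cap\bS_r$ is clopen in $\bS_r$, completing the plan.
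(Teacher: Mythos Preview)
Your overall architecture matches the paper: closedness via Lemma~\ref{lem:riso-stratificationborder}, the manifold property via Lemma~\ref{lem:riso-stratificationrtsp} and Proposition~\ref{prop:riso-mani}, then Whitney's Condition~(b), and finally the ``moreover'' clause (where your argument is essentially the paper's). The reduction for Condition~(b) is also structurally right: pass to $K_0$-points, pick witnesses $x\in S_r\cap B$, $y\in S_s\cap B$ with $B=B_{<1}(x_0)$, use $\bar U$-riso-triviality of $(S_i)_i$ on $B$, and split $x-y$ into an ``along $\bar U$'' piece and a remaining piece.

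The genuine gap is exactly where you flag it: the ``anchored statement'' that $\res(K(x_0-y))\subseteq\res(T_yS_s)$. Your proposed induction on $n$ does not close it. The reduction step would require that the partition $(S_i\cap F_{x_0})_i$ of the fibre is itself a riso-stratification (so that the induction hypothesis applies), and this is not established anywhere in the paper; in fact the fibre need not even be $\Lnoval$-definable in a way compatible with the setup of Section~\ref{sec:whit-ass}. And the base case $r=0$ is not handled by Lemmas~\ref{lem:locallytrivdim} or~\ref{lem:Crig}: those control the \emph{dimension} of $\rtsp$ and the combinatorics of non-riso-trivial balls, but give no information about which \emph{directions} lie in $\rtsp_{B'}(S_s)$ for a ball $B'$ around $y$.

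The paper's proof supplies precisely this missing directional input via Lemma~\ref{lem:A_finite}: for an $\LL(K_0)$-definable set $X$ and a point $x\in K_0^n$, the set of radii $\lambda$ for which some ball $B_{<\lambda}(y)$ with $|y-x|=\lambda$ fails to satisfy $\res(K(y-x))\subseteq\rtsp_{B_{<\lambda}(y)}(X)$ is finite, hence (by (P\ref{h:dcl})) contained in $\{1\}$. This is a non-trivial fact imported from \cite[Theorem~7.4]{i.whit}, and it is the real engine of the Whitney-(b) argument. The paper then applies it not in $K^n$ but inside the fibre $F_{x_0}$ (which is $\LL(K_0)$-definable because the lift $\pi_U$ was chosen $\Lnoval(K_0)$-definably), to the set $S_s\cap F_{x_0}$, the base point $x_0$, and the transported point $y_0:=\psi^{-1}(y)$. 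Note also a small wrinkle in your decomposition: you split $x-y=(x-x_0)+(x_0-y)$, but $x_0-y$ need not lie in $\ker\pi_U$, so Lemma~\ref{lem:v1v2} is not immediately available; the paper instead writes $x-y=(x-\psi(x_0))+(\psi(x_0)-y)$ using the fibre-to-fibre risometry $\psi\colon F_{x_0}\to F_y$ from Remark~\ref{rmk:fiber_riso}, which guarantees the two summands have residue-directions in the complementary subspaces $\bar U$ and $\ker\bar\pi_{\bar U}$.
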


The following remark is the motivation for the moreover-part of the theorem:

\begin{rmk}\label{rmk:riso-sets}
When one considers the riso-stratification of a tuple $(\bX_1, \dots, \bX_k)$ of $\Lnoval$-definable subsets of $\bA^n$, then in the moreover part of the theorem, one can in particular take $\bX = \bX_j$ for any $j$.
If we work over $K_1 \in \{\RR, \CC\}$, where $\Lnoval$-definable sets have finitely many connected components, the moreover part thus says that each $\bX_j(K_1)$ is a union of some of the connected components of the skeleta $\bS_r(K_1)$.
\end{rmk}

The proof of Theorem~\ref{thm:whit} is essentially the same as the proof of \cite[Theorem~7.11]{i.whit}. We nevertheless repeat it, giving some more details. (The most tedious part is already done, namely in Proposition~\ref{prop:riso-mani}.)

We continue using Convention~\ref{conv:sec:fine}; in particular, Definitions~\ref{defn:C1}, \ref{defn:manifold} and \ref{defn:whit} can equivalently be applied in $K$ (instead of $K_1$) and with the valuation topology.

The following lemma is a crucial ingredient to the proof of Whitney's Condition (b):

\begin{lem}\label{lem:A_finite}
Let $X \subseteq K^n$ be $\LL(K_0)$-definable, fix $x \in K_0^n$ and $y \in K^n$, suppose that $\lambda := |y-x|$ is neither $0$ nor $1$ and let $B := B_{<\lambda}(y)$ be the maximal ball around $y$ not containing $x$. Then $\rtsp_B(X)$ contains $\res(K\cdot(y - x))$.
\end{lem}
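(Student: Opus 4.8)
\textbf{Proof plan for Lemma~\ref{lem:A_finite}.}

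The plan is to reduce to a situation where one can apply a preservation-of-riso-triviality argument under a natural first-order transfer between $K_0$ and $K$, exactly in the spirit of the proof of Proposition~\ref{prop:riso-mani}. Write $v := y - x$ and $\lambda := |v|$, and let $\ell := K\cdot v$ be the line through $x$ and $y$. Since $x \in K_0^n$, the key dichotomy provided by (P\ref{h:dcl}) is that $\dclVG(K_0) = \{0,1\}$, so the only ``$K_0$-visible'' radii are $0$ and $1$; the hypothesis $\lambda \notin \{0,1\}$ says precisely that $B := B_{<\lambda}(y)$ is a ball whose radius is strictly between the trivial scales, hence $B \subseteq B_{<1}(x)$ and $x \notin B$.

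First I would reduce to the case $\lambda < 1$ and, after an $\LL(K_0)$-definable affine change of coordinates (translating $x$ to $0$ and rotating $\ell$ onto a coordinate axis — such a change is available since $x \in K_0^n$ and $\ell$ is $\LL(K_0,y)$-definable, and linear risometries don't affect riso-triviality spaces by Remark~\ref{rmk:lift:no:matter}), assume $x = 0$ and $\ell = K \cdot e_1$, so $\res(K\cdot v) = \RF(K)\cdot \bar e_1$. Then $B = B_{<\lambda}(y)$ for some $y$ with $|y_1| = \lambda$ and $|y_j| \le \lambda$ for $j \ge 2$. The goal becomes: $\bar e_1 \in \rtsp_B(X)$, i.e. $X$ is $(\RF(K)\cdot\bar e_1)$-riso-trivial on $B$.

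The heart of the argument is a ``$\forall^{\mathrm{small}}\lambda$'' statement transferred from $K_0$. For each $\epsilon \in K_0^\times$ with $|\epsilon| < 1$, consider the $\LL^{\eq}(K_0)$-formula expressing: for every ball $B'$ of radius $\epsilon$ contained in $B_{<1}(0)$ but not containing $0$, and lying ``over'' the $e_1$-axis in the appropriate sense, $X$ is $1$-riso-trivial on $B'$ with riso-triviality space containing $\bar e_1$ — this is $\LL^{\eq}(K_0)$-definable by the Riso-Triviality Theorem (Corollary~\ref{cor:RTT}) applied to the indicator function of $X$. I would then show this holds in $K_0$ for all sufficiently small $\epsilon$: indeed, for such $B'$ the projection of $X\cap B'$ to the $e_1$-coordinate, combined with Lemma~\ref{lem:surface}, forces $\bar e_1 \in \rtsp_{B'}(X)$ whenever the relevant transversality holds — and transversality is the generic situation one arranges using (P\ref{h:dcl}) to kill the bad scales, precisely as $\Xi$ is handled in the proof of Lemma~\ref{lem:PSloc}. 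By elementarity $K \succ K_0$, the same statement holds in $K$; applying it to our specific $B$ (whose radius $\lambda$ is ``small'' relative to $K_0$ since $\lambda < 1$ and $\lambda \notin \dclVG(K_0)$) yields $\res(K\cdot v) = \RF(K)\cdot\bar e_1 \subseteq \rtsp_B(X)$.

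\textbf{Main obstacle.} The delicate point is making precise the ``transversality'' condition that lets one pass from riso-triviality on all small balls over the $e_1$-axis in $K_0$ to the conclusion for the single ball $B$ in $K$, and ensuring the formula one transfers genuinely captures ``$\bar e_1 \in \rtsp_{B'}X$'' rather than merely ``$X$ is $1$-riso-trivial on $B'$'' (these differ, and only the former gives what we need). I expect this to require care analogous to the bookkeeping of the formulas $\phi_{\eqref{eq:ex:f}}$ and $\phi_{\eqref{eq:W:comp:fine}}$ in the proof of Proposition~\ref{prop:riso-mani}: one must track a $K_0$-parameter for the direction, feed it through Corollary~\ref{cor:RTT}, and use (P\ref{h:dcl}) at the right moment to discard the scales $0$ and $1$. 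The rest — the reduction to $\lambda < 1$, the coordinate normalization, the appeal to Lemma~\ref{lem:surface} — is routine given the machinery already developed.
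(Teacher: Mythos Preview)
Your proposal has a fundamental confusion about the setup that makes the transfer strategy unworkable. In Section~\ref{sec:can_whit}, $K_0$ is a \emph{lift of the residue field}: we have $K_0 \subseteq \valring(K)$ and $\res|_{K_0}\colon K_0 \to \RF(K)$ is a bijection (Property~(P\ref{h:bij})). Consequently every $\epsilon \in K_0^\times$ has $|\epsilon| = 1$, so the phrase ``$\epsilon \in K_0^\times$ with $|\epsilon| < 1$'' is vacuous, and there is no elementary transfer $K_0 \prec_{\LL} K$ in the valued language to exploit. The transfers in the proof of Proposition~\ref{prop:riso-mani} that you are imitating go through $K_0 \prec_{\Lnovalfine} K$, i.e.\ in a language \emph{without} the coarse valuation; but the condition ``$\res(K\cdot(y-x)) \subseteq \rtsp_{B'}(X)$'' is genuinely about the valuation and lives in $\Leq$, not $\Lnovaleq$. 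A second, independent problem: your coordinate change rotating $\ell = K\cdot(y-x)$ onto the $e_1$-axis is not $K_0$-definable, since $y \notin K_0^n$; so after the rotation $X$ is no longer $\LL(K_0)$-definable and the rest of the argument cannot proceed. Finally, the appeal to Lemma~\ref{lem:surface} is unmotivated: that lemma needs $X$ to look like a graph over a fixed direction on the ball, and nothing in the hypotheses gives this.

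The paper's proof is entirely different and much shorter. It defines the $\Leq(K_0)$-definable set $A \subseteq \VG^\times(K)$ of those $\lambda$ for which \emph{some} $y$ with $|y-x|=\lambda$ violates the conclusion, and then invokes \cite[Theorem~7.4]{i.whit} (available under the hypotheses (P\ref{h:h}) and (P\ref{h:orth})) to conclude that $A$ is finite. Since a finite $\Leq(K_0)$-definable subset of the totally ordered $\VG^\times(K)$ lies in $\dclVG(K_0) = \{0,1\}$ by (P\ref{h:dcl}), and $\lambda \notin \{0,1\}$ by hypothesis, we are done. The substantial content is entirely outsourced to the cited theorem from \cite{i.whit}; your proposal contains no substitute for that finiteness statement.
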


\begin{proof}
Let $X$ and $x$ be fixed as in the lemma, and consider the set $A$ of all those $\lambda \in \VG^\times(K)$ for which there exists a counter-example to the claim, i.e., such that there exists a $y \in K^n$ satisfying $|y - x| = \lambda$ and such that $\rtsp_{B_{<\lambda}(y)}(X)$ does not contain $\res(K\cdot (y - x))$.

By (P\ref{h:h}) and (P\ref{h:orth}), the assumptions of \cite[Theorem~7.4]{i.whit} are satisfied, which states that $A$ is finite. (More precisely, the theorem states that $A$ would be finite even if we would use $\drtsp_{B_{<\lambda}(y)}(X)$ instead of $\rtsp_{B_{<\lambda}(y)}(X)$.)
The lemma now follows from (P\ref{h:dcl}) and using that $A$ is $\Leq(K_0)$-definable:
Using the total order on $\VG(K)$, each element of $A$ lies in $\dclVG(K_0) \subseteq \{0, 1\}$.
\end{proof}

We also quickly do a small computation that we will use:
\begin{lem}\label{lem:v1v2}
If, for $v_1, v_2 \in K^n$, we have $\res (Kv_1) \ne \res(Kv_2)$, then $\res (K(v_1+v_2))$ is contained in $\res (Kv_1) + \res(Kv_2)$.
\end{lem}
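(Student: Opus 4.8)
The plan is to normalize $v_1$ and $v_2$ so that they live in $\valring(K)^n$ with $\max(|v_1|,|v_2|)$ rescaled to $1$, and then exploit that $\res$ is $\RF(K)$-linear on $\valring(K)^n$. First I would dispose of the degenerate cases: if $v_1+v_2=0$ the left-hand side is $\{0\}$ and there is nothing to prove, and if $v_1=0$ (or $v_2=0$) then the hypothesis $\res(Kv_1)\ne\res(Kv_2)$ forces the other vector to be non-zero, and the inclusion $\res(Kv_1)\subseteq\res(Kv_1)+\res(Kv_2)$ is trivial. So from now on I assume $v_1,v_2,v_1+v_2$ are all non-zero. Put $\lambda:=\max(|v_1|,|v_2|)$, pick $r\in K^\times$ with $|r|=\lambda^{-1}$, and set $w_i:=rv_i\in\valring(K)^n$, so that $\max(|w_1|,|w_2|)=1$ and $w_1+w_2=r(v_1+v_2)$.

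Next I would record the elementary fact that for a non-zero $v\in K^n$ with $|v|=\lambda$ one has $Kv\cap\valring(K)^n=\valring(K)\cdot(rv)$ and hence $\res(Kv)=\RF(K)\cdot\res(rv)$; in particular $\res(Kv)\ne\{0\}$ is a line spanned by $\res(rv)$. Then I distinguish two cases according to whether $|v_1|=|v_2|$. If, say, $|v_1|=\lambda>|v_2|$, then $|w_2|<1$, so $\res(w_2)=0$ and $\res(w_1+w_2)=\res(w_1)\ne0$; this gives $|v_1+v_2|=\lambda$ and
\[
\res(K(v_1+v_2))=\RF(K)\cdot\res(w_1+w_2)=\RF(K)\cdot\res(w_1)=\res(Kv_1)\subseteq\res(Kv_1)+\res(Kv_2),
\]
and symmetrically if $|v_2|>|v_1|$. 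If $|v_1|=|v_2|=\lambda$, then $\res(w_1)$ and $\res(w_2)$ are both non-zero, and the hypothesis $\res(Kv_1)\ne\res(Kv_2)$ says exactly that the two lines $\RF(K)\res(w_1)$ and $\RF(K)\res(w_2)$ are distinct, i.e.\ $\res(w_1)$ and $\res(w_2)$ are $\RF(K)$-linearly independent. Hence $\res(w_1+w_2)=\res(w_1)+\res(w_2)\ne0$, so again $|v_1+v_2|=\lambda$ and
\[
\res(K(v_1+v_2))=\RF(K)\cdot\bigl(\res(w_1)+\res(w_2)\bigr)\subseteq\RF(K)\res(w_1)+\RF(K)\res(w_2)=\res(Kv_1)+\res(Kv_2).
\]

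There is no genuine obstacle here; the only place the hypothesis is used is in the last case, to rule out the cancellation $\res(w_1)+\res(w_2)=0$ (which would make $|v_1+v_2|$ strictly smaller than $\lambda$, and then $\res(K(v_1+v_2))$ would reflect a higher-order term unrelated to $\res(Kv_1)$ and $\res(Kv_2)$).
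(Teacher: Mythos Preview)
Your proof is correct and follows essentially the same approach as the paper's: case split on the valuations, with the unequal case giving $\res(K(v_1+v_2))=\res(Kv_i)$ for the larger $v_i$, and the equal case handled by normalizing to $|v_1|=|v_2|=1$ and using that the hypothesis forces $\res(v_1),\res(v_2)$ to be linearly independent so that $\res(v_1+v_2)\ne 0$. Your version is simply more explicit about the degenerate cases and the elementary fact $\res(Kv)=\RF(K)\cdot\res(rv)$, which the paper leaves implicit.
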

\begin{proof}
If $|v_1| > |v_2|$, then $\res (K(v_1+v_2)) = \res(Kv_1)$, so the claim holds, and similarly if $|v_2| >|v_1|$. In the last case, we may assume $|v_1| = |v_2| = 1$ (otherwise dividing $v_1$ and $v_2$ by some $r \in K$ of valuation $|v_1|$). Then $\res (Kv_1) \ne \res(Kv_2)$ implies that we also have $|v_1 + v_2| = 1$ and the lemma follows using that for $v \in K^n$ satisfying $|v| = 1$, we have $\res(Kv) = \RF(K)\res(v)$.
\end{proof}

From now on, we fix $\bchi$ and $(\bS_r)_r$ as in Theorem~\ref{thm:whit}, and we set $S_r := \bS_r(K)$.
Note that by Lemma~\ref{lem:riso-stratificationrtsp}, each $\bS_r$ satisfies the assumptions of Proposition~\ref{prop:riso-mani} (2), so we already have:

\begin{lem}\label{lem:S_r-mani}
For each $r$, $S_r$ is a $r$-dimensional $\Lnoval$-manifold. Moreover, for every $x \in S_r$,
there exists a ball $B$ around $x$ such that
$\rtsp_B(S_r) = \res(T_xS_r)$.
\end{lem}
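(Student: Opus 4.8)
The plan is to deduce the lemma almost directly from Proposition~\ref{prop:riso-mani} and Lemma~\ref{lem:riso-stratificationrtsp}, the only genuinely new ingredient being a small dimension bookkeeping that puts us in the situation covered by those results.

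First I would dispose of the degenerate case. If $\bS_r = \emptyset$ as an $\Lnoval$-definable set, it is vacuously an $r$-dimensional $\Lnoval$-manifold (Definition~\ref{defn:manifold}) and the moreover-part is empty. So assume $\bS_r \neq \emptyset$; since $\Tnoval$ is complete and $K_0 \models \Tnoval$, this gives $\bS_r(K_0) \neq \emptyset$, and recall $S_r \cap K_0^n = \bS_r(K_0)$ because $K_0 \prec_{\Lnoval} K$.

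Next I would pin down the dimension of $\bS_r$. By construction (Proposition~\ref{prop:shadow:stabilizes} together with Convention~\ref{conv:sets:L0}), $\bS_r = \bSh_r^n$ is the $r$-th skeleton of the shadow of a single $\Leq$-definable map $\VF^n \to \RV^\eq$, so Lemma~\ref{lem:shadow-dim} gives $\dim \bS_r \le r$. For the reverse inequality, pick any $z \in \bS_r(K_0)$. Since the $\Lnoval$-structure on $\RF(K)$ is transported from $K_0$ via the residue map (Lemma~\ref{lem:consKK0}~(P\ref{h:bij})), we have $\res(z) \in \bS_r(\RF(K))$ and $\res^{-1}(\res(z)) = B_{<1}(z)$, so parts (2) and (3) of Lemma~\ref{lem:riso-stratificationrtsp} (with $d=r$) yield
\[
\dim \rtsp_{B_{<1}(z)}(S_r) \;=\; \dim \rtsp_{B_{<1}(z)}((S_i)_i) \;=\; r .
\]
As $B_{<1}(z)$ meets $S_r$ (it contains $z$), Lemma~\ref{lem:locallytrivdim} gives $\dim S_r \ge r$, hence $\dim \bS_r = r$.

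Finally, the displayed identity, valid for every $z \in \bS_r(K_0) = S_r \cap K_0^n$, is exactly condition (2) of Proposition~\ref{prop:riso-mani} for the $r$-dimensional $\Lnoval$-definable set $\bX := \bS_r$. Thus that proposition applies: its part (1) says $\bS_r$ is an $\Lnoval$-manifold, and its part (3) says that for every $x \in S_r$ and every sufficiently small ball $B$ around $x$ we have $\rtsp_B(S_r) = \res(T_x S_r)$, which in particular gives the moreover-part. I do not expect any real obstacle: all the substantive work — the $K_0$-to-$K$ transfer underlying Proposition~\ref{prop:riso-mani}, and the computation of $\rtsp_B$ for $B = \res^{-1}(x)$ in Lemma~\ref{lem:riso-stratificationrtsp} — is already done; the only points requiring care are checking that $\bS_r$ is literally $r$-dimensional (so that the hypotheses of Proposition~\ref{prop:riso-mani} are met) and the routine translation between $K_0$-points of $\bS_r$ and $\RF(K)$-points needed to invoke Lemma~\ref{lem:riso-stratificationrtsp}.
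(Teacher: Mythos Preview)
Your proposal is correct and follows essentially the same route as the paper: the paper's proof is a one-liner invoking Proposition~\ref{prop:riso-mani}, after noting (just before the lemma) that Lemma~\ref{lem:riso-stratificationrtsp} supplies condition~(2) of that proposition. Your write-up makes explicit the dimension bookkeeping (that $\dim \bS_r = r$ exactly, via Lemma~\ref{lem:shadow-dim} and Lemma~\ref{lem:locallytrivdim}) and the $K_0$-to-$\RF(K)$ translation, both of which the paper leaves implicit, but there is no substantive difference in strategy.
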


\begin{proof}
This follows from Proposition~\ref{prop:riso-mani}.
\end{proof}

\begin{lem}\label{lem:riso-stratification_b}
For every $0 \le r < s \le n$, the pair
$(S_r, S_s)$ satisfies Whitney's Condition~(b), i.e., Definition~\ref{defn:whit}~(3).
\end{lem}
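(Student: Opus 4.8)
The plan is to follow the proof of \cite[Theorem~7.11]{i.whit}, now in the $1$-h-minimal setting, with Proposition~\ref{prop:riso-mani} replacing the ad hoc manifold computations made there.

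First I would reduce to a statement that can be tested in $K$. Whitney's Condition~(b) for the pair $(S_r,S_s)$ says precisely that the set $\Cl(\operatorname{graph} f_{r,s})$ is disjoint from $\Delta := \{(a,a,V,W) : a\in \bS_r,\ V\in\bGr^n_1,\ W\in\bGr^n_s,\ V\not\subseteq W\}$; both are $\Lnoval$-definable — the closure by~(P\ref{h:top}) — so this is an $\Lnoval$-sentence, and by Lemma~\ref{lem:uniqKK0} it may be checked in $K$, and in fact it suffices to check it in $K_1$ and then pass up to $K$ to produce witnesses. So I would take $(x_0,x_0,V,W)\in\Cl(\operatorname{graph} f_{r,s})(K_1)$ with $x_0\in\bS_r(K_1)\subseteq K_0^n$, lift the closure membership to $K$ (using $K_1\prec_{\Lnovalfine}K$), and instantiate the corresponding ``$\forall\epsilon\,\exists(x,y)$'' statement with $\epsilon$ infinitesimal for $|\cdot|$ (possible by~(P\ref{h:doag}), via Lemma~\ref{lem:top:agree}). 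This yields $x\in S_r(K)$ and $y\in S_s(K)$ lying in $\tilde B:=B_{<1}(x_0)=\res^{-1}(\res x_0)$ such that $\res(K\cdot(x-y))$ and $\res(T_yS_s)$ are the images of $V$ and $W$ under the $\Lnoval$-isomorphism $\res\colon K_0\to\RF(K)$; since $S_r\cap S_s=\emptyset$ one has $x\ne y$ and $y\ne x_0$, and the task becomes to show $\res(K\cdot(x-y))\subseteq\res(T_yS_s)$, as $\res^{-1}$ then gives $V\subseteq W$.

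Next I would assemble three facts about this configuration. \emph{(i)} By Lemma~\ref{lem:riso-stratificationrtsp} and Proposition~\ref{prop:riso-mani}(4), $\bar W_0:=\res(T_{x_0}S_r)=\rtsp_{\tilde B}(S_r)=\rtsp_{\tilde B}((S_i)_i)\subseteq\rtsp_{\tilde B}(S_s)$; since $y\in\tilde B$ and riso-triviality passes to subballs (Lemma~\ref{lem:closed2all}), $\rtsp_{\tilde B}(S_s)\subseteq\rtsp_{B'}(S_s)=\res(T_yS_s)$ for every sufficiently small ball $B'\ni y$ (Proposition~\ref{prop:riso-mani}(3)), so $\bar W_0\subseteq\res(T_yS_s)$ — this is already Whitney's Condition~(a) for $y$. \emph{(ii)} Lemma~\ref{lem:fib:single} applied to the $r$-manifold $S_r$ on $\tilde B$ gives $\res(K\cdot(x'-x''))\subseteq\bar W_0$ for all $x',x''\in S_r(K)\cap\tilde B$, so in particular $\res(K\cdot(x-x_0))\subseteq\bar W_0\subseteq\res(T_yS_s)$. \emph{(iii)} Lemma~\ref{lem:A_finite} applied to $S_s$ with base point $x_0\in K_0^n$ and moving point $y$ (here $|y-x_0|\notin\{0,1\}$ since $0<|y-x_0|<1$) gives $\res(K\cdot(y-x_0))\subseteq\rtsp_{B_{<|y-x_0|}(y)}(S_s)$, which as in~(i) is contained in $\res(T_yS_s)$.

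Finally I would combine these via $x-y=(x-x_0)-(y-x_0)$: if $|x-x_0|\ne|y-x_0|$ then $\res(K\cdot(x-y))$ coincides with one of the two residue lines in~(ii),~(iii), and if $|x-x_0|=|y-x_0|$ but $\res(K\cdot(x-x_0))\ne\res(K\cdot(y-x_0))$ then Lemma~\ref{lem:v1v2} gives $\res(K\cdot(x-y))\subseteq\res(K\cdot(x-x_0))+\res(K\cdot(y-x_0))$; either way $\res(K\cdot(x-y))\subseteq\res(T_yS_s)$. The hard part will be the remaining case — $x$ and $y$ at the same $|\cdot|$-scale from $x_0$ and pointing in the same direction, so that $x-y$ is of strictly finer scale and is invisible from the data at $x_0$. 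This I would dispose of exactly as in \cite[Theorem~7.11]{i.whit}: the set of scales $\gamma\in\VG^\times(K)$ for which there exist $x\in S_r(K)\cap\tilde B$ and $y\in S_s(K)\cap\tilde B$ with $|x-y|=\gamma$ and $\res(K\cdot(x-y))\not\subseteq\res(T_yS_s)$ is $\Leq(K_0)$-definable, and one shows it is finite by an argument of the same kind as the one underlying Lemma~\ref{lem:A_finite} (ultimately \cite[Theorem~7.4]{i.whit}); then by~(P\ref{h:val}) and~(P\ref{h:dcl}) this finite set lies in $\{0,1\}$, hence contains no scale in $(0,1)$, which is all that can occur between two distinct points of $B_{<1}(x_0)$. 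This establishes $\res(K\cdot(x-y))\subseteq\res(T_yS_s)$, completing the verification of Whitney's Condition~(b).
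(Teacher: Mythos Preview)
Your setup and facts (i)--(iii) are correct and close to the paper's, but your handling of the ``hard case'' $\res(K\cdot(x-x_0)) = \res(K\cdot(y-x_0))$ is a genuine gap. You assert that the set of scales $\gamma$ at which some pair $(x,y)\in(S_r\cap\tilde B)\times(S_s\cap\tilde B)$ violates the condition is finite ``by an argument of the same kind as the one underlying Lemma~\ref{lem:A_finite}'', but Lemma~\ref{lem:A_finite} (and \cite[Theorem~7.4]{i.whit} behind it) concern a \emph{fixed} $K_0$-rational base point and one moving point; with two moving points the finiteness is neither proved in \cite{i.whit} nor obvious, and your appeal to ``exactly as in \cite[Theorem~7.11]{i.whit}'' is a misremembering of what that proof actually does.

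The paper avoids this case entirely by a different decomposition of $x-y$. Rather than writing $x-y = (x-x_0)-(y-x_0)$, it first chooses a $K_0$-definable lift $\pi_U\colon K^n\to U$ of a projection onto $\bU := \rtsp_B((S_i)_i)$, uses $\bU$-riso-triviality of $(S_i)_i$ on $B$ to obtain a risometry $\psi\colon F_{x_0}\to F_y$ between the $\pi_U$-fibers through $x_0$ and $y$, and writes $x-y = v_1+v_2$ with $v_1 := x-\psi(x_0)$ and $v_2 := \psi(x_0)-y$. The point is that now $\res(K\cdot v_1)\subseteq\bU$ (by Lemma~\ref{lem:fib:single}, since $x,\psi(x_0)\in S_r\cap B$) while $v_2\in\ker\pi_U$, so $\res(K\cdot v_1)\ne\res(K\cdot v_2)$ automatically and Lemma~\ref{lem:v1v2} always applies --- there is no hard case. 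For $v_2$, one pulls back via $\psi$ to $y_0:=\psi^{-1}(y)\in F_{x_0}$, so that $\rv(v_2)=\rv(x_0-y_0)$, and applies Lemma~\ref{lem:A_finite} to the $\LL(K_0)$-definable set $S_s\cap F_{x_0}$ inside the fiber $F_{x_0}$, with base point $x_0\in K_0^n$ and moving point $y_0$. The $K_0$-definability of the fiber (coming from the $K_0$-definable choice of $\pi_U$) is exactly what makes this legitimate.
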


Note that the Grassmannian ``$\bGr_{s}^n(K_0)$'' is somewhat ambiguous: its elements can either be considered as subspaces of $K_0^n$ or, via the natural inclusion $\bGr_{s}^n(K_0) \subseteq \bGr_{s}^n(K)$, as subspaces of $K^n$. In the following proof, it almost does not matter which interpretation one uses, though we rather think of the second one.

\begin{figure}
 \includegraphics{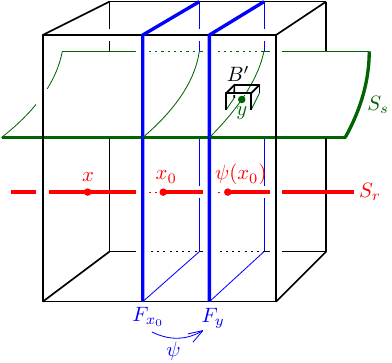}
 \caption{Proving Whitney's condition (b).}
 \label{fig:whit-b}
\end{figure}

\begin{proof}[Proof of Lemma~\ref{lem:riso-stratification_b}]
Let \[f_{r,s}\colon S_r \times S_s \to \bGr^n_{1}(K) \times \bGr^n_{s}(K), (x,y) \mapsto (K\cdot (x-y), T_yS_s)\] be as in Definition~\ref{defn:whit}, let $F$ be the topological closure of the graph of $f_{r,s}$
and recall that we need to prove that for every $x_0 \in S_r$, every $V \in \bGr^n_{1}(K)$ and every $W \in \bGr^n_{s}(K)$ such that $(x_0, x_0, V, W) \in F$, we have $V \subseteq W$. 
Since this condition is an $\Lnovalfine$-sentence, it suffices to prove it in $K_0$. We thus assume that $x_0 \in \bS_r(K_0)$, $V \in \bGr_{1}^n(K_0)$
and $W \in \bGr_{s}^n(K_0)$. Since $\res$ induces an isomorphism $K_0 \to \RF(K)$, it suffices to show that $\res(V) \subseteq \res(W)$.

Set $B := B_{<1}(x_0) \subseteq K^n$.
The set
\[
B \times B \times \{V' \in K^n \mid \res(V') = \res(V)\}
 \times \{W' \in K^n \mid \res(W') = \res(W)\}
\]is an open neighbourhood of $(x_0, x_0, V, W)$ in $K^n \times K^n \times \bGr_1^n(K) \times \bGr_s^n(K)$, so by definition of $F$, it contains an element of the graph of $f_{r,s}$.
In other words, there exist $x \in B \cap S_r$ and $y \in B \cap S_{s}$ such that 
$\res(K\cdot (x - y)) = \res(V)$
and $\res(T_yS_{s}) = \res(W)$. We fix such $x$ and $y$.
We moreover fix a ball $B' \subseteq B$ (using Lemma~\ref{lem:S_r-mani})
such that $\res(T_yS_s) = \rtsp_{B'}(S_s)$ (see Figure~\ref{fig:whit-b}).
What we have to show (the inclusion $\res V \subseteq \res W$) can now be expressed as
$\res(K\cdot (x - y)) \subseteq \rtsp_{B'}(S_s)$.

Set $\bU := \rtsp_B((S_i)_i)$, choose a projection $\bpi_{\bU}\colon \RF(K)^n \to \bU$ and choose lifts $U\subseteq K^n$ and $\pi_U\colon K^n \to U$ which are $\Lnoval(K_0)$-definable. (Such lifts can be found by first pulling back $\bU$ and $\bpi_{\bU}$ to $K_0^n$ using $\res|_{K_0^n}\colon K_0^n \to \RF(K)^n$, and then let $U$ and $\pi_U$ be defined by the same formulas as those pull-backs.)

Given any $z \in B$, we denote by $F_z := \{z' \in B \mid \pi_U(z') = \pi_U(z)\}$ the corresponding $\pi_U$-fiber in $B$.
By $\bU$-riso-triviality of $(S_{i})_i$,
there exists a risometry $\psi\colon F_{x_0} \to F_{y}$ sending $S_i \cap F_{x_0}$ to $S_i \cap F_{y}$ for every $i$ (by Remark~\ref{rmk:fiber_riso}). We write $x-y$ as the sum of $v_1 := x-\psi(x_0)$ and $v_2 := \psi(x_0) - y$. 
Then we have $\res(K\cdot v_1) \subseteq \bU$
by Lemma~\ref{lem:fib:single} and $v_2 \in \ker \pi_U$.
In particular, $\res(K\cdot v_1) \ne \res(K\cdot v_2)$, which implies (by Lemma~\ref{lem:v1v2}) that 
to obtain $\res(K\cdot (v_1 + v_2)) \subseteq \rtsp_{B'}(S_s)$, it suffices
to prove $\res(K\cdot v_j) \subseteq \rtsp_{B'}(S_s)$ for $j=1,2$. For $j = 1$, this is clear:
$\res(K\cdot v_1) \subseteq \bU \subseteq \rtsp_{B'}(S_s)$. So it remains to prove the claim for $v_2$.

Set $y_0 := \psi^{-1}(y) \in F_{x_0}$.
Since $\psi$ is a risometry, we have $\rv(v_2) = \rv(x_0 - y_0)$ and $\rtsp_{F_y \cap B'}(S_s) = \rtsp_{B_0}(S_s)$, where $B_0 \subseteq F_{x_0}$ is the ball around $y_0$ of the same radius as $B'$.
To obtain $\res(K\cdot v_2) \subseteq \rtsp_{B'}(S_s)$, it thus suffices to prove
$\res(K\cdot (x_0 - y_0)) \subseteq \rtsp_{B_0}(S_s)$.
This follows by applying Lemma~\ref{lem:A_finite} to
$S_s \cap F_{x_0}$, $x_0$ and $y_0$ (working in $F_{x_0}$ as the ambient space). Indeed, note that this $S_s \cap F_{x_0}$ is $\LL(K_0)$-definable and that $|y_0-x_0| < 1$, so by the lemma, we have $\res(K\cdot (x_0 - y_0)) \subseteq \rtsp_{B'_0}(S_s)$ for some ball $B'_0 \subseteq F_{x_0}$ containing $B_0$.
\end{proof}

We now are already almost done with the proof of Theorem~\ref{thm:whit}.

\begin{proof}[Proof of Theorem~\ref{thm:whit}]
By Lemma~\ref{lem:S_r-mani}, $S_r$ is an $r$-dimensional manifold.
By Lemma~\ref{lem:riso-stratificationborder}, $S_{\le r}$ is closed, and by Lemma~\ref{lem:riso-stratification_b}, Whitney's Condition (b) is satisfied (i.e., Definition~\ref{defn:whit} (2)). It remains to verify the moreover part of the theorem, namely: if $X \subseteq K^n$ is an $\Lnoval$-definable set which is a union of fibers of $\bchi_K$, then
$X \cap S_r$ is clopen in $S_r$ for every $r$.
We show that $X \cap S_r$ is open in $S_r$; closedness is then obtained by applying the same argument to the complement of $X$.

We need to show that no $x \in S_r \cap X$ lies in the topological closure of $S_r \setminus X$.
Since $K_0 \prec_{\Lnoval} K$, it suffices to verify this for $x \in S_r \cap X \cap K_0^n$, so fix such an $x$
and set $B := B_{<1}(x)$. 
By Lemma~\ref{lem:riso-stratificationrtsp},
$(S_r, \bchi_K)$ is $r$-riso-trivial on $B$ and hence so is $(S_r, X)$ (since $X$ is a union of fibers of $\bchi_K$).
Let $U \subseteq K^n$ be a lift of $\bU := \rtsp_B(S_r, X)$, let $\pi_U\colon B \to U$ be a lift of a projection $\RF(K)^n \to \bU$, and let $F \subseteq B$ be the $\pi_U$-fiber containing $x$. Since $F \cap S_r$ is a singleton by Lemma~\ref{lem:fib:single}, it is equal to $\{x\}$; in particular, $F \cap S_r$ is a subset of $F \cap X$. Using $\bU$-riso-triviality of $(S_r, X)$, we deduce that
that $B \cap S_r \subseteq B \cap X$. This shows that $B$ is an open neighbourhood of $x$ disjoint from $S_r \setminus X$.
\end{proof}

\subsection{Riso-stratifications of affine schemes}
\label{sec:alg:riso}

In this subsection, we work in the (ACF) case of Hypothesis~\ref{hyp:can_whit}, where $\Lnoval$ is the ring language extended by constants for some ring $R \subseteq K_1$ (and $\Tnoval =\Th_{\Lnoval}(K_1)$).
We want to associate a riso-stratification to a closed sub-scheme $\bX$ of the affine space $\mathbb{A}^n_R$. We could simply take the riso-stratification of the corresponding $\Lnoval$-definable subset of $\bA^n$, but it seems that this would not be good enough for our application to Poincaré series (in Section~\ref{sec:poincare}); in particular, it would only capture the reduced structure $\bX^{\mathrm{red}}$ of $\bX$. If $\bX$ is a hypersurface defined by some polynomial $f \in R[x_1, \dots, x_n]$, then the problem can be fixed by taking the riso-stratification of the $\Leq$-definable map
$\bchi\colon \VF^n \to \RV, a \mapsto \rv(f(a))$.
We will do something similar for general $\bX$, using generators of the ideal defining $\bX$. To this end, we start by verifying that the result does not depend on the choice of generators. As usual, $K$, $\LL$ and $\TT$ are as in Convention~\ref{conv:sect}.

\begin{lem}\label{lem:ideal:riso}
Suppose that $I \subseteq R[x_1, \dots, x_n]$ is an ideal and that $(f_1, \dots, f_\ell)$ and $(f'_1, \dots, f'_{\ell'})$ are two sets of generators of $I$.
Define
\[
\bchi\colon \VF^n \to \RV^{(\ell)},
a \mapsto \rv^{(\ell)}(f_1(a), \dots, f_\ell(a)))
\]
and
\[
\bchi'\colon \VF^n \to \RV^{(\ell')},
a \mapsto \rv^{(\ell')}(f'_1(a), \dots, f'_{\ell'}(a))).
\]
Then, for $a, b \in \valring(K)^n$ satisfying $|a-b| < 1$, we have
\begin{equation}\label{eq:ideal:chi}
\bchi_K(a) = \bchi_K(b) \iff \bchi'_K(a) = \bchi'_K(b).
\end{equation}
In particular:
\begin{enumerate}
 \item For any ball $B \subsetneq \valring(K)^n$, we have $\rtsp_B(\bchi_K) = \rtsp_B(\bchi'_K)$.
 \item The riso-stratifications of $\bchi$ and of $\bchi'$ agree.
\end{enumerate}
\end{lem}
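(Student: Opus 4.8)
The plan is to prove the displayed equivalence \eqref{eq:ideal:chi} by a direct ultrametric estimate and then to deduce (1) and (2) formally. First I would record some generalities. Since $(f_1,\dots,f_\ell)$ and $(f'_1,\dots,f'_{\ell'})$ both generate $I$, I can write $f'_j=\sum_i g_{ji}f_i$ and $f_i=\sum_j h_{ij}f'_j$ with all $g_{ji},h_{ij}\in R[x_1,\dots,x_n]$. Recall that in the (ACF) case $R\subseteq K_0\subseteq\valring(K)$, so any $p\in R[x_1,\dots,x_n]$ satisfies $|p(c)|\le 1$ for $c\in\valring(K)^n$ and, via a telescoping identity $p(a)-p(b)=\sum_k (a_k-b_k)q_k$ with $q_k\in R[x_1,\dots,x_n]$, also $|p(a)-p(b)|\le|a-b|$ for $a,b\in\valring(K)^n$. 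I would also note that for $c\in\valring(K)^n$ the quantity $\rho(c):=\max_i|f_i(c)|$ equals $\sup\{|f(c)|\mid f\in I\}$ (since every $f\in I$ is an $R[x]$-combination of the $f_i$, while each $f_i$ lies in $I$), and hence also equals $\max_j|f'_j(c)|$; in particular $\rho$ is independent of the chosen generating set.

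To prove \eqref{eq:ideal:chi} it suffices, by symmetry, to show $\bchi_K(a)=\bchi_K(b)\Rightarrow\bchi'_K(a)=\bchi'_K(b)$ for $a,b\in\valring(K)^n$ with $|a-b|<1$. Unwinding the definition of $\bsim$, the hypothesis gives either $\rho(a)=0$ — then all $f_i(a)=f_i(b)=0$, hence all $f'_j(a)=f'_j(b)=0$ and we are done — or $\delta:=\max_i|f_i(a)-f_i(b)|<\rho(a)$; this single inequality simultaneously covers the ``equal tuples'' subcase and the ``$\rv^{(\ell)}$-equal but unequal'' subcase. In the remaining case I would use, for each $j$,
\[
f'_j(a)-f'_j(b)=\sum_i\bigl(g_{ji}(a)(f_i(a)-f_i(b))+(g_{ji}(a)-g_{ji}(b))f_i(b)\bigr),
\]
together with $|g_{ji}(a)|\le 1$, $|g_{ji}(a)-g_{ji}(b)|\le|a-b|$ and $|f_i(b)|\le\rho(b)\le\rho(a)$, to get $|f'_j(a)-f'_j(b)|\le\max(\delta,|a-b|\rho(a))<\rho(a)=\max_j|f'_j(a)|$; hence $(f'_j(a))_j\bsim(f'_j(b))_j$, i.e.\ $\bchi'_K(a)=\bchi'_K(b)$.

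Finally, (1) and (2) would be formal consequences, and I expect the estimate above to be the only genuine work. A ball $B\subsetneq\valring(K)^n$ has all pairs of points at distance $<1$ (else $B$ would contain a closed ball of radius $1$, which equals all of $\valring(K)^n$), so \eqref{eq:ideal:chi} shows $\bchi_K|_B$ and $\bchi'_K|_B$ have the same fibres; since a risometry $\phi\colon B\to B$ is a $V$-straightener of a map on $B$ if and only if it is a $V$-straightener of any map on $B$ with the same fibres, $\bar V$-riso-triviality on $B$ depends only on the fibres, and (1) follows. For (2), each ball $\res^{-1}(z)$ with $z\in\RF(K)^n$ is a proper subball of $\valring(K)^n$, so (1) gives that the shadows of $\bchi$ and $\bchi'$ coincide; an induction on $s$ then shows that the iterated shadows $(\bSh^s_r)_{r\le n}$ of Proposition~\ref{prop:shadow:stabilizes} coincide for $\bchi$ and $\bchi'$, since at stage $s$ the tuples $(\bchi,(\bSh^t_r)_{t<s})$ and $(\bchi',(\bSh^t_r)_{t<s})$ have, on each $\res^{-1}(z)$, the same fibres (by \eqref{eq:ideal:chi} and the inductive hypothesis that the $\Lnoval$-definable components agree), hence the same $\rtsp$ and the same shadow; with $s=n$ this gives (2). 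The only mild subtlety in the estimate is to absorb the two alternatives in the definition of $\bsim$ into the single bound $\delta<\rho(a)$ and to exploit generator-independence of $\rho$ so that the final comparison is against $\max_j|f'_j(a)|$ and not merely $\max_i|f_i(a)|$; the telescoping Lipschitz bound and the passage ``same fibres $\Rightarrow$ same riso-triviality space (hence same riso-stratification)'' are routine.
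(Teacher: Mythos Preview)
Your proof is correct and follows essentially the same approach as the paper's: establish $\max_i|f_i(a)|=\max_j|f'_j(a)|$ on $\valring(K)^n$ by writing each generator as an $R[x]$-combination of the other set, then use the Lipschitz estimate $|p(a)-p(b)|\le|a-b|$ for $p\in R[x]$ together with a product telescoping to bound $|f'_j(a)-f'_j(b)|$ strictly by this common maximum; the deductions of (1) and (2) via ``same fibres on balls $\subsetneq\valring(K)^n$'' are likewise the paper's argument. The only cosmetic differences are that the paper telescopes as $h_i(b)(f_i(b)-f_i(a))+(h_i(b)-h_i(a))f_i(a)$ where you use $g_{ji}(a)(f_i(a)-f_i(b))+(g_{ji}(a)-g_{ji}(b))f_i(b)$, and you are slightly more explicit about the degenerate case $\rho(a)=0$.
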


\begin{proof}
Fix any $g \in I$ and write it as
$g = \sum_i h_if_i$, for $h_i \in R[x_1, \dots, x_n]$.
Then for $a \in \valring(K)^n$, we have $h_i(a) \in \valring(K)$ (since
$R \subseteq \valring(K)$), so we obtain $|g(a)| \le \max_i |f_i(a)|$. Applying this to $g := f'_j$ for $j=1,\dots,\ell'$, and also applying the same argument with $(f_i)_i$ and $(f'_j)_j$ swapped, we deduce $\max_i |f_i(a)| = \max_j |f'_j(a)| =: \lambda_a$ for every $a \in \valring(K)^n$.

Now consider again $g$ and $a$ as before and additionally fix some $b \in \valring(K)^n$ satisfying $|a - b| < 1$ and $\bchi_K(b) = \bchi_K(a)$.
Then we have $|h_i(a)-h_i(b)| \le |a - b| < 1$, since the constant terms of $h_i(a)$ and $h_i(b)$ cancel, and we obtain
\begin{align*}
|h_i(b)f_i(b) - h_i(a)f_i(a)|
&=
|h_i(b)(f_i(b) -f_i(a)) + (h_i(b)-h_i(a))f_i(a)|\\
&\le \max\{
\underbrace{|h_i(b)|}_{\le 1}\cdot
\underbrace{|f_i(b) -f_i(a)|}_{< \lambda_a}, \underbrace{|h_i(b)-h_i(a)|}_{< 1}\cdot
\underbrace{|f_i(a)|}_{\le \lambda_a}\} < \lambda_a.
\end{align*}
By summing this over $i$, we obtain $|g(b) - g(a)|< \lambda_a$. Applying this to $g := f'_j$ for every $j$, and using $\max_j |f'_j(a)| = \lambda_a$, we deduce $\bchi'_K(a) = \bchi'_K(b)$. The same argument with $(f_i)_i$ and $(f'_j)_j$ swapped yields the other direction of \eqref{eq:ideal:chi}.

Concerning the ``in particular'' parts:

(1) By \eqref{eq:ideal:chi}, a map $B \to B$ straightens $\bchi_K$ if and only if it straightens $\bchi'_K$.

(2) Since the shadow is defined in terms of riso-triviality on balls $B$ of the size of the maximal ideal and contained in $\valring(K)^n$, (1) implies that the shadows of $\bchi$ and $\bchi'$ are equal. A similar argument yields that also the iterated shadows are equal.
\end{proof}

Using the lemma, we define:

\begin{defn}\label{defn:alg:riso}
Suppose that $\bX \subseteq \mathbb A^n_R$ is a closed sub-scheme (for some $n \ge 1$), defined by an ideal $I \subseteq R[x_1, \dots, x_n]$. Then the riso-stratification $(\bS_i)_i$ obtained from any finite set of generators of $I$ as in Lemma~\ref{lem:ideal:riso}
is called the \emph{algebraic riso-stratification} of the affine embedded scheme $\bX$.
\end{defn}

\begin{rmk}
Given that $\bS_i$ is $\Lring(R)$-definable (and locally closed), one can consider it as a reduced locally closed subscheme of $\mathbb A_R^n$. This the present setting, this seems more natural, to stay in the algebraic world.
\end{rmk}

Here is an example showing that the algebraic riso-stratification sees the non-reduced structure of schemes:

\begin{exa}\label{ex:nonred}
Let $\bX \subseteq \mathbb A^2$ be defined by the ideal $(y^2, xy) \subseteq \ZZ[x,y]$ and let $(\bS_i)_i$ be its algebraic riso-stratification. We claim that $(0,0) \in \bS_0(K_1)$. Indeed, consider the map $\chi\colon K^2 \to \RV^{(2)}(K), (a,b)\mapsto \rv^{(2)}(b^2, ab)$ corresponding to the above two generators. To see the claim, it suffices to check that $\chi$ is not riso-trivial at all on the ball $\maxid(K)^2$. Indeed, one easily verifies that even the map $(a,b) \mapsto |(b^2, ab)| = |b|\cdot|(b,a)|$, which factors over $\chi$, is not riso-trivial at all.

In contrast, note that the corresponding reduced scheme $\bX^{\mathrm{red}}$ is just the $x$-axis (defined by $(y)$), so the algebraic riso-stratification of $\bX^{\mathrm{red}}$ has $\bS_0 = \emptyset$.
\end{exa}

While we introduced algebraic riso-stratification only for affine schemes coming with a closed embedding into some affine space, one could ask whether, if one replaces $\bS_i$ by $\bS_i \cap \bX$, one obtains a notion which does not depend on the embedding. If this is the case, one might moreover hope to obtain a notion of riso-stratification for any (not necessarily affine) scheme of finite type. We leave this to follow-up work.

\section{Application to motivic measure and Poincaré series}
\label{sec:appl}

A motivation for this paper was to get a better understanding of Poincaré series. In this section, we finally provide the concrete application mentioned in the introduction. To this end, we first need do show that risometries preserve motivic volumes (Section~\ref{sec:mot}), which also seems to be of independent interest.
In addition, we need that riso-triviality is preserved under restriction to (suitable) subfields (Section~\ref{sec:fields}). It turns out that both of these statements need the same additional technical assumption
(in addition to Hypothesis~\ref{hyp:KandX_can}), namely the one given in Hypothesis~\ref{hyp:RFlin}.
We start by explaining how this assumption is useful.

\subsection{Assumptions and the rigid partition}
\label{sec:prig}

In the entire Section~\ref{sec:appl}, in addition to Hypothesis~\ref{hyp:KandX_can}, we impose the following condition on our theory $\TT$:

\begin{hyp}\label{hyp:RFlin}
We assume that for every model $K \models \TT$, the only definable 
(with parameters) additive subgroups of $\RF(K)$ are $\{0\}$ and $\RF(K)$ itself.
\end{hyp}

Note that this is true for example if the residue field is algebraically closed and carries the pure field language, or if it is real closed and carries an o-minimal language.

A first consequence of this hypothesis is the following:

\begin{lem}\label{lem:RFlin}
For every model $K \models \TT$, every definable additive subgroup of $\RF(K)^n$ is a vector sub-space of $\RF(K)^n$.
\end{lem}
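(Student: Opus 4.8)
The plan is to reduce the statement to the one-dimensional case, where Hypothesis~\ref{hyp:RFlin} applies directly, by an induction on $n$ using projections and fibers.

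First I would set up notation: let $G \subseteq \RF(K)^n$ be a definable additive subgroup. A subgroup that is closed under scalar multiplication by $\RF(K)$ is a vector subspace, so it suffices to show $\lambda g \in G$ for all $\lambda \in \RF(K)$ and $g \in G$. The key observation is that for a fixed $g \in G$, the set $H_g := \{\lambda \in \RF(K) \mid \lambda g \in G\}$ is a definable (with parameters, including $g$) additive subgroup of $\RF(K)$: it is clearly closed under addition and negation because $G$ is, and it is definable because $G$ is. By Hypothesis~\ref{hyp:RFlin}, $H_g$ is either $\{0\}$ or all of $\RF(K)$. So the only obstruction is the possibility that $H_g = \{0\}$ for some nonzero $g \in G$, i.e.\ that $G$ contains a ``wild'' line-germ through the origin that does not contain the full line.

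To rule this out, I would argue as follows. Suppose $g \in G$ is nonzero; pick a coordinate $i$ with $g_i \neq 0$, and consider the coordinate projection $\pi_i \colon \RF(K)^n \to \RF(K)$ onto the $i$-th coordinate. Then $\pi_i(G)$ is a definable additive subgroup of $\RF(K)$, hence (by Hypothesis~\ref{hyp:RFlin}) equal to $\RF(K)$ since it contains $g_i \neq 0$. Now I want to transport scalar multiplication on the image back up to $G$. The subgroup $K_0 := G \cap \ker \pi_i$ is a definable additive subgroup of $\RF(K)^{n-1}$ (after identifying $\ker \pi_i$ with $\RF(K)^{n-1}$), so by the inductive hypothesis it is a vector subspace. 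Given $\lambda \in \RF(K)$, since $\pi_i(G) = \RF(K)$ we can find $g' \in G$ with $\pi_i(g') = \lambda g_i = \pi_i(\lambda g)$; then $\lambda g - g' \in G \cap \ker\pi_i = K_0$, which is a vector space, and I need to conclude $\lambda g - g' \in G$ trivially (it already is, being in $K_0 \subseteq G$), hence $\lambda g = g' + (\lambda g - g') \in G$. This shows $H_g = \RF(K)$ for every nonzero $g$, so $G$ is closed under scalars and is a vector subspace. (The base case $n = 1$ is exactly Hypothesis~\ref{hyp:RFlin}.)

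The main subtlety I expect is making sure the inductive step is genuinely using definability at the right level: the set $K_0 = G \cap \ker\pi_i$ must be definable over the same parameters, which is immediate, and the inductive hypothesis applies to it as a subset of $\RF(K)^{n-1}$ with the induced structure. One should also double-check that Hypothesis~\ref{hyp:RFlin} is about subgroups definable with arbitrary parameters (it is, as stated), so that adding the parameter $g$ or the parameters defining the projection causes no problem. With these points verified, the argument is routine and the result follows.
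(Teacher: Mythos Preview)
Your first paragraph already contains the full proof, but you miss the one-line observation that finishes it: since $g \in G$, we have $1 \in H_g$, so $H_g \ne \{0\}$ and Hypothesis~\ref{hyp:RFlin} forces $H_g = \RF(K)$. That is exactly the paper's argument, and no induction on $n$ is needed.

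The induction you set up in the second paragraph, by contrast, has a genuine gap. You write that $\lambda g - g' \in G \cap \ker\pi_i = K_0$, but only $\lambda g - g' \in \ker\pi_i$ is established (from $\pi_i(g') = \lambda g_i$). Membership of $\lambda g - g'$ in $G$ is precisely what you are trying to prove, since you do not yet know $\lambda g \in G$; so the step ``it already is, being in $K_0 \subseteq G$'' is circular. The inductive hypothesis that $K_0$ is a vector space is never actually used in a non-circular way. Fortunately none of this is needed once you observe $1 \in H_g$.
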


\begin{proof}
We need to verify that any definable additive subgroup $U \subseteq \RF(K)^n$ is closed under scalar multiplication. Pick $u \in U$ and apply Hypothesis~\ref{hyp:RFlin} to the set $\{r \in \RF(K) \mid ru \in U\}$;
since it contains $1$, it is equal to $\RF(K)$.
\end{proof}

Hypothesis~\ref{hyp:RFlin} implies that given any definable map $\chi\colon K^n \to \RV^\eq(K)$, the family of
maximal $1$-riso-trivial balls is ``rigid with respect to risometries'', i.e., any risometry sending $\chi$ to itself sends each such ball to itself. In the rest of this subsection, we make this precise.

\begin{defn}\label{defn:Prig}
Let $K \models \TT$ be a spherically complete model.
Given a map $\chi\colon B_0 \to S$ for some ball $B_0 \subseteq K^n$ (where $n \ge 1$),
we define the \emph{rigid partition} $\Prig_\chi \subseteq \pow(B_0)$ of $\chi$ as the set of the following balls and singletons:
The balls are all maximal balls $B \subseteq B_0$ on which $\chi$ is $1$-riso-trivial; the singletons are are those elements of $B_0$ which are not contained in any ball on which $\chi$ is $1$-riso-trivial.
\end{defn}

For the following lemma, recall the rigid core
$\Crig_{\chi}$ introduced in Definition~\ref{defn:Crig}.

\begin{lem}\label{lem:Prig}
Suppose that $K \models \TT$ be a spherically complete model, that
$B_0 \subseteq K^n$ is an $\LL$-definable ball and that
$\chi\colon B_0 \to \RV^\eq(K)$ is an $\Leq$-definable map. Then we have the following:

\begin{enumerate}
\item
$\Prig_\chi$ is an $\Leq$-definable partition of $B_0$. (Here $\Leq$-definability is in the sense of Convention~\ref{conv:sets:of:sets}, as a subset of the power set $\pow(B_0)$.)
\item Every $B \in \Prig_{\chi} \setminus \Crig_{\chi}$ is an open ball, and the smallest (closed) ball strictly containing $B$ contains an element of $\Crig_{\chi}$ as a subset.
\item 
Suppose that $\chi'\colon B'_0 \to \RV^\eq(K)$ is a second $\Leq$-definable map and that there exists a risometry $\phi\colon B_0 \to B_0'$ sending $\chi$ to $\chi'$. Then for each $B \in \Prig_\chi$, the image $\phi(B)$ is an element of $\Prig_{\chi'}$. Moreover, the induced map
$\tilde\phi\colon \Prig_\chi \to \Prig_{\chi'}$
is $\Leq$-definable, and it is already determined by $\chi$ and $\chi'$, i.e., for any other risometry $\phi'\colon B_0 \to B_0'$ sending $\chi$ to $\chi'$, we have $\tilde\phi' = \tilde\phi$.
\item
The definability statements in (1) and (3) hold uniformly in all spherically complete models of $\TT$, i.e., given $\Leq$-definable
$\bB_0, \bB'_0 \subseteq \VF^n$ and $\bchi\colon \bB_0 \to \RV^\eq,\bchi'\colon \bB'_0 \to \RV^\eq$,
the partition $\Prig_{\bchi_K}$ of $\bB_0(K)$ can be defined by an $\Leq$-formula not depending on $K$, and if for each spherically complete $K\models \TT$ there exists a risometry $\phi_K\colon \bchi_K \to \bchi'_K$, then
the induced map $\tilde\phi_K\colon  \Prig_{\bchi_K} \to \Prig_{\bchi'_K}$ can be defined by an $\Leq$-formula not depending on $K$.
\end{enumerate}
\end{lem}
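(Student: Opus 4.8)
The plan is to prove the four parts of Lemma~\ref{lem:Prig} in order, leaning heavily on the Riso-Triviality Theorem (through Corollary~\ref{cor:RTT} and Corollary~\ref{cor:riso-tree}), on the structure of the rigid core $\Crig_\chi$ from Lemma~\ref{lem:Crig} and its definability from Lemma~\ref{lem:Crig-def}, and on Corollary~\ref{cor:RET} about risometry types living in $\RV^\eq$.

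\textbf{Part (1).} First I would show that the balls of $\Prig_\chi$ are exactly the maximal elements of the $\Leq$-definable set $\fB_1 := \{B \subseteq B_0 \mid B \text{ a ball}, \dim \rtsp_B(\chi) \ge 1\}$, which is definable by Corollary~\ref{cor:riso-tree} (it is the union $\bTr_{\ge 1}$, intersected with ``$B \subseteq B_0$''). Maximality is a first-order condition on the tree of balls (every ball strictly containing $B$ fails to lie in $\fB_1$), so the collection of such maximal balls is $\Leq$-definable. For the singletons: a point $x \in B_0$ is a singleton of $\Prig_\chi$ iff no ball containing $x$ lies in $\fB_1$; this is again first-order. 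It remains to argue these balls and singletons partition $B_0$, which is immediate from the definition: any $x$ either lies in some $1$-riso-trivial ball --- and then, since the union of a chain of $1$-riso-trivial balls is $1$-riso-trivial on that union (by Lemma~\ref{lem:closed2all} together with Proposition~\ref{prop:rtsp}, or directly by the gluing argument of Lemma~\ref{lem:closed2all}), $x$ lies in a \emph{maximal} such ball, and this maximal ball is unique by maximality --- or $x$ lies in no such ball and is a singleton. Uniqueness of the family members (needed for Convention~\ref{conv:sets:of:sets}) is automatic since distinct maximal balls are disjoint.

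\textbf{Part (2).} Let $B \in \Prig_\chi \setminus \Crig_\chi$. Since $B \notin \Crig_\chi$, $B$ is not one of the intersections of maximal chains in $T_0$; but $\chi$ is $1$-riso-trivial on $B$, so $B$ is a maximal $1$-riso-trivial ball. If $B$ were closed, then every proper subball of the smallest ball $B^+$ strictly containing $B$ that is of the form ``$B^+$ minus a point'' would... --- more efficiently: I would argue that a maximal $1$-riso-trivial ball $B$ which is \emph{not} in $\Crig_\chi$ must be open. Indeed, the smallest closed ball $B^+ \supsetneq B$ satisfies $\rtsp_{B^+}(\chi) = \{0\}$ (else $B$ would not be maximal among $1$-riso-trivial balls, using that $\rtsp$ can only drop when passing to the smallest strictly larger ball only in this way). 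So $B^+ \in T_0$, hence there is a maximal chain $C \subseteq T_0$ through $B^+$, and $B' := \bigcap_{B'' \in C} B'' \in \Crig_\chi$ satisfies $B' \subseteq B^+$; and by Lemma~\ref{lem:Crig}(2) (applied to $B$, noting $\rtsp_B\chi \ne \{0\}$ fails --- wait, here I need $\rtsp_B\chi = \{0\}$, which is false). Let me re-plan: the correct statement is that $B' \in \Crig_\chi$ with $B' \subseteq B^+$ exists by Lemma~\ref{lem:Crig}(2) applied to $B^+$, and since $B \subsetneq B^+$ is the \emph{unique} maximal proper subball only when $B$ is open of radius $= \radcl B^+$; if instead $B$ were closed, $B^+$ would be $B$ with infinitely many open subballs of that radius, and $B'$ lies in one of them --- then either $B' \subseteq B$ (contradicting maximality of $B$ as a $1$-riso-trivial ball, since $\rtsp_{B'}\chi = \{0\}$) or $B'$ is in another subball and disjoint from $B$, giving $B' \subseteq B^+$ with $B' \cap B = \emptyset$, which is exactly the desired conclusion but now I must also rule out $B$ closed. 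I would clean this up by invoking Lemma~\ref{lem:Crig}(3): if $\chi$ is not riso-trivial at all on $B^+ \in \Crig_\chi$ then $B^+$ is closed and $\chi$ is $1$-riso-trivial on every proper subball; so the maximal $1$-riso-trivial subballs of $B^+$ are precisely its maximal open subballs of radius $\radcl B^+$, all of which are open --- this gives openness of $B$ and that $B^+$ (the smallest closed ball strictly containing $B$) lies in $\Crig_\chi$.

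\textbf{Parts (3) and (4).} A risometry $\phi$ preserves the riso-triviality space of every ball --- by Corollary~\ref{cor:RTT} the quantity $\dim\rtsp_B(\chi)$ is a risometry invariant, and more directly, if $\psi$ straightens $\chi|_B$ then $\psi \circ (\phi|_{\phi^{-1}(B)})^{-1}$... --- so $\phi$ sends maximal $1$-riso-trivial balls to maximal $1$-riso-trivial balls and singletons to singletons, i.e.\ $\phi(\Prig_\chi) = \Prig_{\chi'}$ as claimed. For the induced map $\tilde\phi$ being independent of $\phi$: the key point is rigidity. Given $B \in \Prig_\chi$, I need that $\phi(B)$ does not depend on $\phi$. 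Here is where Hypothesis~\ref{hyp:RFlin} enters: I would argue that any two risometries $\phi, \phi'$ from $\chi$ to $\chi'$ differ, on each ball $B \in \Prig_\chi$, by a map that fixes $B$ setwise --- concretely, $\phi^{-1}\circ\phi'$ is a risometry from $\chi$ to itself, so it suffices to show a self-risometry $\sigma$ of $\chi$ fixes each $B \in \Prig_\chi$. For $B \in \Crig_\chi$ this is the remark after Definition~\ref{defn:Crig} (via Lemmas~\ref{lem:Crig}(1) and \ref{lem:fin-riso}). For $B \in \Prig_\chi \setminus \Crig_\chi$, by part (2) its parent closed ball $B^+$ contains some $B^{++} \in \Crig_\chi$; $\sigma$ fixes $B^{++}$, hence fixes the smallest ball containing it, and I would trace through that this forces $\sigma$ to fix $B^+$ and then to permute the maximal open subballs of $B^+$ of radius $\radcl B^+$ by a translation (as in the $C_1 \to C_1'$ discussion in the proof of Lemma~\ref{lem:t=>r}, Case 3). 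This translation is by an element of $\res(W)$ for a suitable lift $W$ of $\rtsp_{B^+}(\chi)$... and here the additive-subgroup hypothesis (Lemma~\ref{lem:RFlin}) is used to show the set of admissible translations is a $1$-riso-triviality space hence, combined with $B$ being \emph{maximal} $1$-riso-trivial, forces the translation to be $0$ on $B$. \emph{The main obstacle is precisely this rigidity argument}: pinning down exactly why Hypothesis~\ref{hyp:RFlin} forces a self-risometry of $\chi$ to fix each maximal $1$-riso-trivial ball, rather than merely permuting a family of them. Once rigidity is established, $\Leq$-definability of $\tilde\phi$ follows: $\tilde\phi$ is the unique map $\Prig_\chi \to \Prig_{\chi'}$ such that some risometry $\chi \to \chi'$ respects it, and ``$B' = \tilde\phi(B)$'' is equivalent to the $\Leq$-expressible condition (using Corollary~\ref{cor:RET} / the family version of Theorem~\ref{thm:RET}) that $\chi|_B$ and $\chi'|_{B'}$ are risometric \emph{compatibly with} some fixed risometry $\chi \to \chi'$ --- or more robustly, one defines $\tilde\phi$ via the $\Leq$-definable risometry-type map $\brho$ of Corollary~\ref{cor:RET} applied to the families $(\chi|_B)_{B \in \Prig_\chi}$ and $(\chi'|_{B'})_{B' \in \Prig_{\chi'}}$, matching up the unique $B, B'$ with equal $\brho$-value and compatible position relative to $\Crig$. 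All of this is visibly uniform in $K$ since every ingredient (Corollaries~\ref{cor:RTT}, \ref{cor:riso-tree}, \ref{cor:RET}, Lemma~\ref{lem:Crig-def}) is uniform over spherically complete models, giving part (4).
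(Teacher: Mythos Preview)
Your approach to (1) via Corollary~\ref{cor:closed2all} (unions of chains of $1$-riso-trivial balls are $1$-riso-trivial, hence maximal ones exist) is correct and slightly more direct than the paper's, which instead does a case analysis on the position of $x$ relative to $C := \bigcup_{B' \in \Crig_\chi} B'$. The paper's route has the payoff that (2) falls out immediately from that description, whereas your attempt at (2) never cleanly shows $B$ is open before invoking ``the smallest ball strictly containing $B$'', and your final cleanup via Lemma~\ref{lem:Crig}(3) only treats the case where the parent ball itself lies in $\Crig_\chi$, missing the case where $B$ is a maximal ball disjoint from $C$.

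The genuine gap is exactly where you flag it, in the rigidity step of~(3). Your phrase ``the set of admissible translations is a $1$-riso-triviality space'' is not correct and elides the real mechanism. What Lemma~\ref{lem:RFlin} gives is only that the set $X \subseteq \RF(K)^n$ of translations preserving the risometry type of $\chi$ on each maximal open subball of a closed $\tilde B \in \Crig_\chi$ is a vector subspace; the work is to show $X = \{0\}$. The paper argues by contradiction: assuming (after rescaling) $\tilde B = \valring(K)^n$, if $\bar V \subseteq X$ is a nonzero line, one must first prove that $\chi$ is $\bar V$-riso-trivial on \emph{each} residue ball $B_a := \res^{-1}(a)$ --- this is the missing idea. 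Supposing $\bar V \not\subseteq \bar W := \rtsp_{B_a}(\chi)$ for some $a$, pass to a fiber $F$ of a lift of a projection $\bar\pi\colon \RF(K)^n \to \bar W$ with $\bar V \subseteq \ker\bar\pi$; then $\chi|_{F \cap B_a}$ is not riso-trivial at all, and since $\bar V \subseteq X$ forces $\rtsp_{B_{a'}}(\chi) = \bar W$ for every $a'$ on the $\bar V$-line through $a$, the same holds for each $\chi|_{F \cap B_{a'}}$. This produces infinitely many disjoint balls in $F$ on which $\chi|_F$ is not riso-trivial, contradicting finiteness of $\Crig_{\chi|_F}$ (Lemma~\ref{lem:Crig}(1)). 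Only then does $\bar V$-riso-triviality on each $B_a$ combined with $\bar V \subseteq X$ give $\bar V$-riso-triviality on all of $\tilde B$, contradicting $\rtsp_{\tilde B}(\chi) = \{0\}$. Your definability argument for $\tilde\phi$ via $\brho$ from Corollary~\ref{cor:RET} is also too loose, since distinct balls in $\Prig_\chi$ can share $\brho$-values; the paper instead observes that for $B$ disjoint from $C$ the image $\tilde\phi(B)$ is already pinned down by the definable map $\Crig_\chi \to \Crig_{\chi'}$, and for $B$ inside a closed $\tilde B \in \Crig_\chi$ it is $\tau(B)$ for the unique translation $\tau$, which is definable via the Riso-Equivalence Theorem once uniqueness is established.
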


Before we prove the lemma, we use it to introduce some definitions:

\begin{defn}\label{defn:bPrig}
Using Lemma~\ref{lem:Prig}, we define the following:
\begin{itemize}
 \item Given $\chi$ and $\chi'$ as in (3) which are in risometry, we denote by $\Prig_{\chi\to\chi'}$ the map from $\Prig_\chi$ to $\Prig_{\chi'}$ induced by any risometry from $\chi$ to $\chi'$.
\item
  Given $\bchi\colon \bB_0 \to \RV^\eq, \bchi'\colon \bB'_0 \to \RV^\eq$ as in (4), we write $\bPrig_\bchi$ the $\Leq$-definable partition of $\bB_0$ satisfying
  $\bPrig_\bchi(K) = \Prig_{\bchi_K}$ for every spherically complete $K \models \TT$, and if $\bchi_K$ and $\bchi'_K$ are in risometry for every
  such $K$, then we write $\bPrig_{\bchi\to\bchi'}$ for the $\Leq$-definable map from $\bPrig_\bchi$ to $\bPrig_{\bchi'}$ induced by such risometries.
\end{itemize}
\end{defn}

\begin{proof}[Proof of Lemma~\ref{lem:Prig}]
We prove the uniformity statements from (4) along with (1) and (3), respectively. (In those uniformity proofs, $K$ always runs over all spherically complete models of $\TT$.)

(1)
Definability of $\Prig_\chi$ follows from the Riso-Triviality Theorem
(more specifically from Corollary~\ref{cor:RTT}). Note that this also yields the uniform definability from (4). 

To see that $\Prig_\chi$ is a partition, first note that it is a collection of disjoint balls and singletons, so it suffices to verify that given any $x \in B_0$, we can find a $B \in \Prig_\chi$ containing $x$.
If $x$ does not lie in $C := \bigcup_{B \in \Crig_\chi} B$, then we have $x \in B_{<\lambda}(x) \in \Prig_\chi$, where $\lambda$ is the distance from $x$ to $C$,
so now suppose that $x \in B \in \Crig_\chi$. If $B$ is a singleton, then it is not contained in any $1$-riso-trivial ball (by definition of $\Crig_\chi$) and hence it is an element of $\Prig_\chi$. Now assume that $B$ is a ball.
If $\chi$ is $1$-riso-trivial on $B$, then by definition of $\Crig_\chi$, $\chi$ is not riso-trivial on every ball strictly containing $B$, so again $B \in \Prig_\chi$. Otherwise, by Lemma~\ref{lem:Crig} (3), $B$ is a closed ball, and the maximal proper subball of $B$ containing $x$ lies in $\Prig_\chi$.

The above case distinction also proves (2).

(3)
Clearly, $\phi$ sends $\Crig_\chi$ to $\Crig_{\chi'}$.
By Lemma~\ref{lem:fin-riso}, the induced map
$\psi\colon \Crig_\chi \to \Crig_{\chi'}$ is
independent of $\phi$, and $\psi$ is $\Leq$-definable uniformly in $K$ (see Remark~\ref{rmk:fin-riso}).
This already defines $\Prig_{\chi\to\chi'}(B)$ for $B \in \Prig_\chi \cap \Crig_\chi$.
Now consider a $B \in \Prig_\chi$ which is 
disjoint from $C := \bigcup_{B' \in\Crig_\chi} B'$.
Since $B$ is a maximal ball disjoint from $C$ and $\phi$ is a risometry,
$\psi$ uniquely determines $\phi(B)$, so $\phi(B)$ is independent of $\phi$, too, and the restriction of $\Prig_{\chi\to\chi'}$ to the set of those $B$ is also $\Leq$-definable (also uniformly in $K$). It remains to consider $\Prig_{\chi\to\chi'}(B)$ for balls $B$ that are strictly contained in a ball $\tilde B \in \Crig_\chi$.
Recall (from the proof of (1)) that this only happens when $\tilde B$ is a closed ball and $B$ is a maximal proper subball of $\tilde B$.

Let $R$ and $R'$ be the sets of maximal proper subballs of $\tilde B$ and $\phi(\tilde B) = \psi(\tilde B)$, respectively. The map $R \to R'$ induced by $\phi$ is a translation (in the sense that there exists a $z \in K^n$ such that $\phi(B_1) = B_1 + z$ for every $B_1 \in R$). We will show that there exists only one translation $\tau\colon R \to R'$ such that $\chi|_{B_1}$ is in risometry with $\chi'|_{\tau(B_1)}$ for every $B_1 \in R$. This implies that $\phi(B) = \tau(B)$ does not depend on $\phi$ and that $\phi(B)$ is definable from $B$ (namely, using the Riso-Equivalence Theorem to define $\tau$).

To show that only one such translation exists, we may assume without loss that $\tilde B = \valring(K)^n$, so that $R$ can be identified with $\RF(K)^n$.
Let $X$ be the set of those $x \in \RF(K)^n$ such that $\chi|_{\res^{-1}(y)}$ and $\chi|_{\res^{-1}(x+y)}$ are in risometry for every $y \in \RF^n(K)$.
If two different translations $\tau_1, \tau_2\colon R = \RF(K)^n \to R'$ as above exist, then the composition $\tau_1^{-1}\circ \tau_2\colon \RF(K)^n \to \RF(K)^n$ is a translation by
a non-zero element of $X$, so to obtain that only one $\tau$ exists, it suffices to show that $X = \{0\}$.

Using that risometries can be composed, one obtains that $X$ is an additive subgroup of $\RF(K)^n$. By Lemma~\ref{lem:RFlin}, it is therefore an $\RF(K)$-vector subspace. Suppose for contradiction that $X$ is strictly bigger than $\{0\}$ and fix any $1$-dimensional vector subspace $\bar V \subseteq X$.
We will prove that for every $a \in \RF(K)^n$, $\chi$ is $\bar V$-riso-trivial on $B_a := \res^{-1}(a)$. Together with $\bar V \subseteq X$, this then implies that $\chi$ is $\bar V$-riso-trivial on all of $\valring(K)^n$, which contradicts that $\chi$ is not riso-trivial at all on $\tilde B = \valring(K)^n$.

Fix $a \in \RF(K)^n$ and suppose that $\bar V$ is not contained in $\bar W:= \rtsp_{B_a}(\chi)$.
The we can choose a projection $\bar\pi\colon \RF(K)^n \to \bar W$ satisfying $\bar V \subseteq \ker \bar\pi$. Let $\pi\colon K^n \to W$ be a lift, and choose a fiber $F \subseteq K^n$ of $\pi$ satisfying $F \cap B_a \ne \emptyset$. Then $\chi|_{F \cap B_a}$ is not riso-trivial at all.
Moreover, for every $a' \in \RF(K)^n$ satisfying $a' - a \in \bar V \subseteq X$, we also have $\rtsp_{B_{a'}}(\chi)=\bar W$ (by definition of $X$)
and $F \cap B_{a'} \ne \emptyset$ (by the choice of $\pi$),
so  $\chi|_{F \cap B_{a'}}$ is not at all riso-trivial either.
We thus just obtained infinitely many disjoint balls $F \cap B_{a'}$ in $F$ on each of which $\chi$ is not riso-trivial at all. This contradicts that the rigid core $\Crig_{\chi|_F}$ of $\chi$ restricted to $F$ is finite.
\end{proof}

\subsection{Restricting to subfields}
\label{sec:fields}

In this subsection, we prove that 
under suitable assumptions, riso-triviality in a valued field $K$ restricts nicely to substructures $K_0 \subseteq K$. This will be needed to relate the
riso-stratifications of $\CC^n$ constructed in Section~\ref{sec:can_whit} to Poincaré series, since 
riso-stratifications are defined using a nonstandard extension of $\CC$, which has a divisible value group, whereas Poincaré series are defined
using $\CCt$.

We work in the following setting:
\begin{itemize}
 \item[($\star$)]
We assume Hypotheses~\ref{hyp:KandX_can} and \ref{hyp:RFlin},
we fix a spherically complete model $K \models \TT$, and we moreover fix a subfield
$K_0 \subseteq K$ (not necessarily a model of $\TT$) satisfying $\dclVF(K_0) = K_0$
and $\dclRV(K_0) = \rv(K_0)$.
\end{itemize}

\begin{defn}
In this subsection,
given any set $X \subseteq K^n$, we write $X_0 := X \cap K_0^n$ for the corresponding set in $K_0$.
We call a bijection $\phi\colon X \subseteq K^n \to X' \subseteq K^{n'}$ \emph{compatible with $K_0$} if it restricts to a bijection $X_0 \to X'_0$.
\end{defn}

The main result of this subsection is the following:

\begin{prop}\label{prop:fields}
Suppose that $K$ and $K_0$ are as in ($\star$).
Let $B \subseteq K^n$ (for some $n \ge 1$) be a $K_0$-definable ball satisfying $B_0 \ne \emptyset$ and let $\chi, \chi'\colon B \to \RV^\eq(K)$ be $K_0$-definable maps. Then the following hold:
\begin{enumerate}
    \item If $\chi$ is $d$-riso-trivial for some $d$, then there exists a straightener $\phi\colon B \to B$ of $\chi$ witnessing this $d$-riso-triviality which is compatible with $K_0$ and such that moreover, $\chi \circ \phi$ is definable with parameters from $K_0$.
    \item If $\chi$ and $\chi'$ are in risometry, then there exists a risometry from $\chi$ to $\chi'$ which is compatible with $K_0$.
\end{enumerate}
\end{prop}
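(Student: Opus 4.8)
The plan is to prove (1) and (2) simultaneously by induction on the ambient dimension $n$. Two preliminary observations. First, using $\dclVF(K_0)=K_0$ and $\dclRV(K_0)=\rv(K_0)$ one checks that every $K_0$-definable ball in $K^n$ contains a point of $K_0^n$; consequently, for any $K_0$-definable family of balls and points (such as $\bCrig_\bchi$ or $\bPrig_\bchi$ below), the members that are $K_0$-definable are exactly the ones meeting $K_0^n$. Second, (2) is equivalent to its two-ball version, in which $\chi$ and $\chi'$ live on possibly different $K_0$-definable balls $B,B'$ each meeting $K_0^n$: if there is a risometry between them, then $B$ and $B'$ have the same radius, so choosing $x_0\in B\cap K_0^n$ and $x_0'\in B'\cap K_0^n$, the translation $z\mapsto z-x_0+x_0'$ is a $K_0$-compatible risometry $B\to B'$ and one reduces to $B=B'$; moreover after a $K_0$-definable translation one may assume $0\in B$. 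Now fix $n$ and set $\bbW:=\rtsp_B(\chi)$, which is $K_0$-definable by the Riso-Triviality Theorem~\ref{thm:RTT} (and equals $\rtsp_B(\chi')$ whenever $\chi,\chi'$ are in risometry). If $\dim\bbW=n$, then $\chi$ and $\chi'$ are constant on $B$, so $\phi=\id$ works for (1) and for (2) the two maps are equal.

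Suppose $1\le\dim\bbW<n$. Choose a $K_0$-definable lift $W$ of $\bbW$ and a $K_0$-definable projection $\pi_W\colon K^n\to W$ lifting a projection onto $\bbW$, and let $F_0:=\ker\pi_W\cap B$, a $K_0$-definable ball of dimension $n-\dim\bbW$ meeting $K_0^n$. Write $\tilde\chi(x):=\chi(x-\pi_W(x))$, which is $K_0$-definable, $W$-translation invariant, and agrees with $\chi$ on $F_0$. The key step is to produce, uniformly in $w\in\pi_W(B)$, a risometry $\sigma_w\colon F_0\to F_w:=\pi_W^{-1}(w)$ from $\chi|_{F_0}$ to $\chi|_{F_w}$ such that the bijection $\phi\colon B\to B$, $\phi(x):=\sigma_{\pi_W(x)}(x-\pi_W(x))$, is a risometry compatible with $K_0$; one then automatically gets $\chi\circ\phi=\tilde\chi$. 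For fixed $w\in K_0^n$ the maps $\chi|_{F_0}$ and $y\mapsto\chi(y+w)$ on $F_0$ are $K_0$-definable and, by Remark~\ref{rmk:fiber_riso}, in risometry, so the induction hypothesis (2) in dimension $n-\dim\bbW$ gives a $K_0$-compatible $\beta_w$ with $\chi(y)=\chi(\beta_w(y)+w)$, and one sets $\sigma_w(y):=\beta_w(y)+w$; obtaining this uniformly in $w$ — and ensuring the $\sigma_w$ glue to a single risometry — is done by applying the induction hypothesis with a constant adjoined for $w$, as in the proofs of Corollaries~\ref{cor:RTT} and~\ref{cor:RET}. Granting such $\phi$: for (1) it is the required $K_0$-compatible straightener (the same $\phi$ witnesses $d$-riso-triviality for every $d\le\dim\bbW$, a $W$-straightener being a $V$-straightener for every sub-lift $V\subseteq W$), and $\chi\circ\phi=\tilde\chi$ is $K_0$-definable. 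For (2), apply this to both $\chi$ and $\chi'$, getting $K_0$-compatible straighteners with $\chi\circ\phi=\tilde\chi$ and $\chi'\circ\phi'=\tilde\chi'$, both $W$-translation invariant; since $\tilde\chi$ and $\tilde\chi'$ are then in risometry, Lemma~\ref{lem:fiber_check} yields a risometry between their restrictions $\chi|_{F_0}$ and $\chi'|_{F_0}$, and the induction hypothesis (2) in dimension $n-\dim\bbW$ provides a $K_0$-compatible one $\beta$; extending $\beta$ by translation along $W$ to $\hat\beta\colon B\to B$ (which is $K_0$-compatible and carries $\tilde\chi$ to $\tilde\chi'$), the composite $\phi'\circ\hat\beta\circ\phi^{-1}$ is the desired $K_0$-compatible risometry from $\chi$ to $\chi'$.

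Finally, suppose $\dim\bbW=0$, so $\chi$ is not riso-trivial at all on $B$; for (1) only $d=0$ occurs and $\phi=\id$ suffices, so only (2) has content. Here one uses the rigid core $\bCrig_\bchi$ and rigid partition $\bPrig_\bchi$ (Definitions~\ref{defn:Crig}, \ref{defn:bCrig}, \ref{defn:bPrig}): by Lemmas~\ref{lem:Crig-def} and~\ref{lem:Prig} these are $K_0$-definable, the bijection $\Crig_\chi\to\Crig_{\chi'}$ attached to any risometry is unique (Lemma~\ref{lem:fin-riso}) and $K_0$-definable (Remark~\ref{rmk:fin-riso}), and so is the induced bijection $\Prig_{\chi\to\chi'}\colon\Prig_\chi\to\Prig_{\chi'}$ (Lemma~\ref{lem:Prig}\,(3)). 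One builds the sought risometry $\phi$ by prescribing it on each member $P\in\Prig_\chi$ as a risometry $\rho_P\colon P\to\Prig_{\chi\to\chi'}(P)$ from $\chi|_P$ to $\chi'|_{\Prig_{\chi\to\chi'}(P)}$; since $\Prig_{\chi\to\chi'}$ comes from a genuine risometry, the ``cross'' conditions between distinct members hold automatically, so any choice of the $\rho_P$ assembles into a risometry $\chi\to\chi'$. For $K_0$-compatibility one only needs the $\rho_P$ for the finitely many members $P$ meeting $K_0^n$; such $P$ and their images $\Prig_{\chi\to\chi'}(P)$ are $K_0$-definable, hence (by the preliminary observation) contain $K_0$-points. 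If $\chi$ is $1$-riso-trivial on $P$, the already-treated case of (2) in dimension $n$ with $\dim\rtsp\ge1$, in its two-ball form, gives a $K_0$-compatible $\rho_P$. Otherwise $P\in\Crig_\chi$, and by Lemma~\ref{lem:Crig}\,(3) both $P$ and $\Prig_{\chi\to\chi'}(P)$ are closed balls on whose proper subballs $\chi$, resp.\ $\chi'$, is $1$-riso-trivial; any risometry between them induces a translation between the sets of maximal proper subballs, and — using Hypothesis~\ref{hyp:RFlin} exactly as in the proof of Lemma~\ref{lem:Prig}\,(3) — this translation is the unique one compatible with $\chi$ and $\chi'$, hence $K_0$-definable; on each matched pair of maximal proper subballs, which is again $1$-riso-trivial, one applies the $\dim\rtsp\ge1$ case on the subballs meeting $K_0^n$ and combines. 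This yields a $K_0$-compatible $\rho_P$, completing the induction.

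I expect the main obstacles to be two. First, the uniform-in-$w$ application of the induction hypothesis in the fibred case: it requires checking that the hypotheses ($\star$) on $K_0$ persist when a $\VF$-element $w$ is adjoined (so that the family version of the inductive statement is available, after possibly restricting $w$ to a suitable $K_0$-definable part of $W$) and that the resulting fibre risometries genuinely glue into a risometry of $B$. Second, the case $\dim\rtsp_B(\chi)=0$: the whole argument there rests on the fact that the rigid-partition data and the subball-matching translations are \emph{canonically} attached to $\chi$ and $\chi'$ — the canonicity of those translations being precisely what Hypothesis~\ref{hyp:RFlin} buys — so that the piecewise construction can be carried out over $K_0$; getting all the definability bookkeeping (which members are $K_0$-definable, that their images are, that they contain $K_0$-points) to line up is the delicate part.
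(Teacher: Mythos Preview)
Your overall inductive architecture and your treatment of the case $\dim\bbW=0$ are in the right spirit, but there is a genuine gap in the case $1\le\dim\bbW<n$ which you yourself flag but do not resolve.

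The problem is the step where you obtain the fibre risometries $\sigma_w$ ``uniformly in $w$'' by adjoining a constant for $w$, appealing to the technique of Corollaries~\ref{cor:RTT} and~\ref{cor:RET}. That technique works precisely because the output of those corollaries is a \emph{definable} object (a vector subspace, an element of $\RV^\eq$): one obtains an $\LL(w)$-formula and then lets $w$ range. Here the output $\beta_w$ is a risometry, which is not required to be definable and in general cannot be, as Example~\ref{exa:def:nnd} shows. There is no formula encoding $\beta_w$, hence nothing to make uniform. Even if you choose some $\sigma_w$ for each $w$, the map $\phi(x)=\sigma_{\pi_W(x)}(x-\pi_W(x))$ has no reason to be a risometry: for $x,x'$ in distinct fibres you would need control over $\sigma_w(y)-\sigma_{w'}(y')$, and nothing relates $\sigma_w$ to $\sigma_{w'}$.

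The paper avoids this by going the other way round. It starts with a \emph{single} straightener $\phi$ on all of $B$ (not a priori $K_0$-compatible), normalised to be the identity on one $K_0$-definable $\pi$-fibre so that $\chi\circ\phi$ is $K_0$-definable. It then partitions $B$ into pieces $B'$ determined by the rigid partition of $(\chi\circ\phi)|_F$ on $\pi$-fibres $F$, and on each piece meeting $K_0^n$ it \emph{replaces} $\phi|_{B'}$ by a $K_0$-compatible risometry $B'\to\phi(B')$ from $(\chi\circ\phi)|_{B'}$ to $\chi|_{\phi(B')}$. Since $(\chi\circ\phi)|_{B'}$ is at least $(d{+}1)$-riso-trivial, this invokes part~(2) at the same $n$ but strictly smaller $n-d$; this is why the paper runs a nested induction on $(n,n-d)$ rather than on $n$ alone. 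The global coherence between fibres is inherited from the original $\phi$ and is never in question.

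Two smaller points on your $\dim\bbW=0$ case. First, the members of $\Prig_\chi$ meeting $K_0^n$ are not finitely many (it is $\Crig_\chi$ that is finite); what you actually need, and what Lemma~\ref{lem:fields:Crig} provides, is that each such member and its image under $\Prig_{\chi\to\chi'}$ are $K_0$-definable. Second, an element of $\Prig_\chi$ on which $\chi$ is not $1$-riso-trivial is necessarily a singleton, not a closed ball (balls in $\Prig_\chi$ are by definition maximal $1$-riso-trivial balls), so your ``otherwise'' sub-case should simply map the singleton to its image; no further decomposition into maximal proper subballs is needed.
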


Recall (Definition~\ref{defn:Prig}, Lemma~\ref{lem:Prig}) that if there exists a risometry from $\chi$ and $\chi'$, then we have an induced map $\Prig_{\chi\to \chi'}\colon \Prig(\chi) \to \Prig(\chi')$. The following lemma relates this to $K_0$:

\begin{lem}\label{lem:fields:Crig}
Suppose that $\chi$ and $\chi'$ are as in Proposition~\ref{prop:fields} and that $\phi\colon B \to B$ is a risometry from $\chi$ to $\chi'$.
 For $B' \in \Prig(\chi)$, the following are equivalent:
\begin{enumerate}
    \item $B'_0 \ne \emptyset$.
    \item $B'$ is $K_0$-definable.
    \item $\phi(B')_0 \ne \emptyset$.
    \item $\phi(B')$ is $K_0$-definable.
\end{enumerate}
\end{lem}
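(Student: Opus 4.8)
The plan is to prove the two equivalences $(1)\Leftrightarrow(2)$ and $(2)\Leftrightarrow(4)$; the equivalence $(3)\Leftrightarrow(4)$ then follows by applying $(1)\Leftrightarrow(2)$ to $\chi'$ and $\phi(B')$, which is legitimate because $\phi(B')\in\Prig_{\chi'}$ by Lemma~\ref{lem:Prig}~(3). After adjoining constants for the elements of $K_0$ to $\LL$---this preserves $1$-h-minimality (Remark~\ref{rmk:hmin:const}) and leaves Hypothesis~\ref{hyp:RFlin} untouched---I may take $\chi$, $\chi'$, $B$ and everything derived from them to be $\emptyset$-definable, so that ``$K_0$-definable'' simply means ``definable over the parameters''. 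With this in place, $(2)\Leftrightarrow(4)$ is immediate: by Lemma~\ref{lem:Prig}~(3) the induced map $\Prig_{\chi\to\chi'}\colon\Prig_\chi\to\Prig_{\chi'}$ is a $K_0$-definable bijection sending $B'$ to $\phi(B')$, and a $K_0$-definable bijection (together with its $K_0$-definable inverse) preserves $K_0$-definability of elements, via the canonical codes of Convention~\ref{conv:sets:of:sets}. The implication $(1)\Rightarrow(2)$ is equally short: if $a\in B'\cap K_0^n$, then $B'$ is the unique block of the $K_0$-definable partition $\Prig_\chi$ containing $a$, hence $B'\in\dcl(K_0\cup\{a\})=\dcl(K_0)$.

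The substance of the lemma is the implication $(2)\Rightarrow(1)$: a $K_0$-definable block $B'\in\Prig_\chi$ must contain a point of $K_0^n$. If $B'=B$ this is the hypothesis $B_0\ne\emptyset$, and if $B'=\{b\}$ is a singleton then $b\in\dcl(K_0)\cap\VF^n=\dclVF(K_0)^n=K_0^n$ directly; so assume $B'$ is a proper ball. Fix, via Theorem~\ref{thm:tstrat} (applied in the expanded language), a $K_0$-definable t-stratification $(\bS_i)_i$ reflecting $\chi$, so that $\bS_0(K)$ is a finite $K_0$-definable set of points. If $B'\in\Crig_\chi$, then $B'\cap\bS_0(K)\ne\emptyset$ by Claim~\ref{claim:Crig:S0} (from the proof of Lemma~\ref{lem:Crig}), and $S:=B'\cap\bS_0(K)$ is a finite nonempty $K_0$-definable subset of the ball $B'$; its barycenter $a:=\tfrac1{\#S}\sum_{s\in S}s$ lies in $B'$ by the ultrametric inequality, using that $|\#S|=1$ because the residue field has characteristic $0$, and lies in $\dclVF(K_0)^n=K_0^n$; this gives the desired point. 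If $B'\notin\Crig_\chi$, then by Lemma~\ref{lem:Prig}~(2) $B'$ is an open ball and the smallest closed ball $\widehat{B'}$ strictly containing it contains an element of $\Crig_\chi$, hence $\widehat{B'}\cap\bS_0(K)\ne\emptyset$ again by Claim~\ref{claim:Crig:S0}; running the same barycenter argument inside the $K_0$-definable closed ball $\widehat{B'}$ produces a point $a\in\widehat{B'}\cap K_0^n$.

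It remains, in the case $B'\notin\Crig_\chi$, to descend from the point $a\in\widehat{B'}$ to a point of $B'$ itself---this is the delicate step. If $a\in B'$ we are done, so suppose $a\in\widehat{B'}\setminus B'$. Since $B'$ is a maximal proper open subball of $\widehat{B'}$, the leading term $\xi:=\rv^{(n)}(B'-a)$ is a well-defined element of $\RV^{(n)}(K)$, it is $K_0$-definable, and $B'=\{x:\rv^{(n)}(x-a)=\xi\}$. The plan is now to produce $b'\in K_0^n$ with $\rv^{(n)}(b')=\xi$; then $b:=a+b'\in K_0^n$ satisfies $\rv^{(n)}(b-a)=\xi$, i.e.\ $b\in B'$. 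For this one needs $\dcl_{\Leq}^{\RV^{(n)}}(K_0)=\rv^{(n)}(K_0^n)$, which I expect to reduce to the hypothesis $\dclRV(K_0)=\rv(K_0)$: writing $\xi=\rv^{(n)}(z)$, the values $\rv(z_i)$ for the coordinates $i$ attaining $|z_i|=|z|$ are recoverable from $\xi$ hence lie in $\dclRV(K_0)=\rv(K_0)$; choosing $K_0$-representatives for those coordinates and setting the remaining ones to $0$ yields a $z'\in K_0^n$ with $\rv^{(n)}(z')=\xi$. Verifying this reduction cleanly, and more generally tracking $K_0$-definability through the passage between $\RV^{(n)}$ and $\RV$, is the main technical point I anticipate.
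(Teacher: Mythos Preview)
Your proof is correct and follows essentially the same route as the paper. The paper invokes Lemma~\ref{lem:Crig-def}(2) to obtain a $K_0$-definable finite set $C$ containing exactly one point in each element of $\Crig_\chi$, and then takes the barycenter of $C\cap\widehat{B'}$; since that lemma is proved precisely via the barycenter-of-$\bS_0(K)$ construction you give, the two arguments are the same up to citation versus inlining. For the final descent step the paper writes $B'=c+\rv^{-1}(\xi)$ with $\xi\in\RV(K)$ and lifts $\xi$ to $K_0$ directly via the hypothesis $\dclRV(K_0)=\rv(K_0)$; in ambient dimension $n>1$ this notation is compressed (one really needs $\rv^{(n)}$), and your coordinate-by-coordinate reduction---lifting only the coordinates of maximal norm and setting the rest to zero---is exactly the right way to unpack it. That reduction is sound: the set of maximal-norm indices and the individual leading terms $\rv(z_i)$ on those indices are determined by $\rv^{(n)}(z)$, hence $K_0$-definable, and the resulting $z'\in K_0^n$ satisfies $|z-z'|<|z|$ as required.
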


\begin{proof}
The equivalence (2) $\Leftrightarrow$ (4) follows from
$\Prig_{\phi\to\phi'}$ being $K_0$-definable (by Lemma~\ref{lem:Prig}), so we only need to prove (1) $\Leftrightarrow$ (2) (since (3) $\Leftrightarrow$ (4) then holds by symmetry).

The implication
``(1) $\Rightarrow$ (2)'' follows from $\Prig(\chi)$ being $K_0$-definable: Pick $a \in B' \cap K_0^n$; then $B'$ is the (unique) element of $\Prig(\chi)$ containing $a$.

``(2) $\Rightarrow$ (1)'': 
By Lemma~\ref{lem:Crig-def} (2), we find a $K_0$-definable set $C$ consisting of one point in each element of $\Crig_\chi$. In the case $B' \in \Crig_\chi$ we thus obtain that the unique element of $C \cap B'$ lies in $\dclVF(K_0)$ and hence in $K_0$.
If $B' \notin \Crig_\chi$, then $B'$ is an open ball, and the smallest (closed) ball $B''$ strictly containing $B'$ contains an element of $\Crig_\chi$ as a subset (by Lemma~\ref{lem:Prig}). In particular,
the intersection $C' := C \cap B''$ is non-empty. Note that it is also $K_0$-definable, so its barycenter, which we denote by $c$, lies in $\dclVF(K_0)$. If $c \in B'$, we are done. Otherwise, we can write $B'$ as $B' = c + \rv^{-1}(\xi)$, for some $\xi \in \RV(K)$. By $K_0$-definability of $B'$ and $c$, we obtain $\xi \in \dclRV(K_0) = \rv(K_0)$ (where the last equality holds by ($\star$)). Pick $b \in K_0$ satisfying $\rv(b) = \xi$. Then $c + b \in B'$ does the job.
\end{proof}

\begin{proof}[Proof of Proposition~\ref{prop:fields}]
Set $\bar V := \rtsp_B(\chi) \subseteq \RF(K)^n$ and $d := \dim \bar V$. We prove both statements in a common nested induction: an outer induction on $n$, and for fixed $n$, an induction on $n-d$. If $n-d=0$, both statements are trivial: the straightener in (1) and the risometry in (2) can be taken to be the identity.

As a preparation common for the inductive steps of both statements, we choose $\bpi\colon \colon \RF(K)^n \to \bar V$ and a lift $\pi\colon B \to V$, all of which are $K_0$-definable. Those can be obtained as follows:
By the Riso-Triviality Theorem~\ref{thm:RTT}, $\bar V$ is $K_0$-definable.
After a suitable permutation of coordinates,
$\bar V$ is the graph of a (linear) function $f\colon \RF(K)^d \to \RF(K)^{n-d}$ and we can define $\bpi\colon \RF(K)^n \to \bar V$ to be the projection whose kernel is $\{0\}^d \times \RF(K)^{n-d}$.
Denote by $\bar e_1, \dots, \bar e_d$ the standard basis of $\RF(K)^d$. Then the basis $(\bar v_i)_i := (\bar e_i, f(\bar e_i))_i$ of $\bar V$ lies in $\dclRF(K_0) \subseteq \dclRV(K_0)$. Since by ($\star$), this is equal to $\rv(K_0)$, there exist $v_i \in \valring(K_0)^n$ satisfying $\res(v_i) = \bar v_i$. Let $V$ be the vector sub-space of $K^n$ spanned by those $v_i$ and let $\pi\colon B \to V$ be the projection whose kernel is $\{0\}^d \times K^{n-d}$.

\medskip

Proof of (1):

We may assume $d \ge 1$ (otherwise, take $\phi$ to be the identity map).

Let $\phi\colon B \to B$ be a $V$-straightener of $\chi$ respecting
$\pi$-fibers (i.e., $\pi \circ \phi = \pi$). We fix some $y_1 \in \pi(B_0)$ and additionally assume that
$\phi$ is the identity on $\pi^{-1}(y_1)$ (using Remark~\ref{rmk:fix:fiber}). In particular, $\chi \circ \phi$ is $K_0$-definable, since it can be defined in terms of its restriction to the fiber over $y_1$ and $V$ (both of which are $K_0$-definable).

We will now partition $B$ into balls and singletons $B'$ and modify $\phi$ on each of those $B'$ to make it compatible with $K_0$. More precisely, for each $B'$, we will find a risometry $\phi'\colon B' \to \phi(B')$ compatible with $K_0$ and satisfying $\chi \circ \phi' = \chi \circ \phi|_{B'}$. (If $B'$ is a singleton, then obviously we have to set $\phi' = \phi|_{B'}$, but we still have to check that this $\phi'$ is compatible with $K_0$.) We then let
$\phi''\colon B \to B$ be the union of all those $\phi'\colon B' \to B'$. This is a risometry compatible with $K_0$, and since $\chi \circ \phi'' = \chi \circ \phi$, it is a straightener of $\chi$, as desired.

We choose the partition of $B$ to consist of those 
balls and singletons $B'$ satisfying the following property: We pick any $y \in \pi(B')$ and require that,
for $F := \pi^{-1}(y) \subseteq B$,
$B' \cap F$ is an element of $\Prig((\chi\circ\phi)|_{F})$.
Note that from $V$-translation invariance of $\chi \circ \phi$,
one obtains that the choice of $y \in \pi(B')$ does not matter: given another $y' \in \pi(B')$ and $F' := \pi^{-1}(y')$,
we have $B' \cap F \in \Prig((\chi\circ\phi)|_{F})$ if and only if $B' \cap F' \in \Prig((\chi\circ\phi)|_{F'})$.
That those sets $B'$ form a partition of $B$ then follows from $\Prig((\chi\circ\phi)|_{F})$ being a partition of $F$ for each $\pi$-fiber $F$.

Now fix a ball $B'$ from our partition of $B$; we need to find a $\phi'\colon B' \to \phi(B')$ with the desired properties, i.e., compatible with $K_0$ and satisfying $\chi \circ \phi' = \chi \circ \phi|_{B'}$.
If $\pi(B')_0 = \emptyset$, then we have $B'_0 = \emptyset$ and $\phi(B')_0 = \emptyset$ (since $\pi(\phi(B')) = \pi(B')$) and we can simply take $\phi' = \phi$, so suppose now that $\pi(B')_0 \ne \emptyset$.

Pick $y \in \pi(B')_0$ and set $F := \pi^{-1}(y)$.
By applying Lemma~\ref{lem:fields:Crig} to $\phi|_{F}\colon(\chi\circ\phi)|_{F} \to  \chi|_{F}$, we obtain that
$B' \cap F$ is disjoint from $K_0^n$ if and only if
$\phi(B' \cap F)$ is. In the disjoint case, we can again simply take $\phi' = \phi$,
so suppose now that $B' \cap F \cap K_0^n \ne \emptyset$.
In particular (again by Lemma~\ref{lem:fields:Crig}), $B' \cap F$ and $\phi(B' \cap F)$ are $K_0$-definable, and hence so are $B'$ and $\phi(B')$.

If $B'$ is a singleton, this implies $B' \subseteq K_0^n$ and
$\phi(B') \subseteq K_0^n$ so we are done: $\phi' := \phi|_{B'}$ is compatible with $K_0$. Otherwise, 
since $B' \cap F \in \Prig((\chi\circ\phi)|_{F})$, 
$(\chi\circ\phi)|_{B' \cap F }$ is $1$-riso-trivial. Therefore, 
$(\chi\circ\phi)|_{B'}$ is $(d+1)$-riso-trivial, so by induction on $n-d$, we can apply (2)
to $\phi|_{B'}$ to obtain (as desired) our risometry $\phi'\colon B' \to \phi(B')$ compatible with $K_0$ and sending $(\chi \circ \phi)|_{B'}$
to $\chi|_{\phi(B')}$.

\medskip

Proof of (2):
Let $\chi$ and $\chi'$ be in risometry.

\medskip

Case $d \ge 1$:

By (the proof of) (1), there exists a risometry $\phi$ compatible with $K_0$ such that $\chi \circ \phi$ is $V$-translation invariant and $K_0$-definable, so we may without loss assume that $\chi$ itself is $V$-translation invariant, and similarly for $\chi'$. Fix any $y_0 \in\pi(B_0)$. By induction on $n$, we find a risometry $\phi$
from $\chi|_{\pi^{-1}(y_0)}$
to $\chi'|_{\pi^{-1}(y_0)}$ which is compatible with $K_0$. We then extend $\phi$ to a map $B \to B$ in a $V$-translation invariant way. This $\phi$ is clearly a risometry from $\chi$ to $\chi'$, and it is compatible with $K_0$ by our choice of $V$.

\medskip

Case $d = 0$: We define the risometry $\phi$ from $\chi$ to $\chi'$
by specifying $\phi|_{B_1}\colon B_1 \to B_1'$
separately for each $B_1 \in \Prig_\chi$, and where $B_1' := \Prig_{\chi \to \chi'}(B_1)$.
If $B_1$ is a singleton, then we let $\phi|_{B_1}$ be the map sending that singleton to $B'_1$; this is compatible with $K_0$ by Lemma~\ref{lem:fields:Crig}.
If $B_1$ is a ball disjoint from $K_0^n$, then so is $B'_1$, and
we can pick $\phi|_{B_1}$ to be an arbitrary risometry from $\chi|_{B_1}$
to $\chi'|_{B_1'}$.
Finally, if $B_1$ is a ball which has non-empty intersection with $K_0^n$, then by Lemma~\ref{lem:fields:Crig}, both $B_1$ and $B'_1$ are $K_0$-definable, so we can find $\phi|_{B_1}\colon B_1 \to B_1'$ using the above case $d \ge 1$.
\end{proof}

\subsection{Risometries preserve motivic measure}\label{sec:mot}
In this subsection, we make precise and prove the following statement:
If Hypothesis~\ref{hyp:RFlin} holds and $\bX_1$ and $\bX_2$ are two definable sets which are in risometry (where the risometry is not assumed to be definable), then they have the same motivic measure in the sense of Cluckers--Loeser motivic integration.

We start by fixing an appropriate setting (where motivic integration works) and by recalling some notation and terminology around motivic integration.

\begin{hyp}\label{hyp:mot:int}
In this subsection, in addition to Hypotheses~\ref{hyp:KandX_can} and \ref{hyp:RFlin}, we assume:
\begin{enumerate}
 \item We have an $\Leq$-definable angular component map $\ac\colon \VF \to \RF$. In particular, we can and will identify $\RV$ with $(\RF^\times \times \VG) \cup \{0\}$.
 \item Each model $K \models \TT$ is elementarily equivalent to an $\LL$-structure whose underlying valued field is of the form $\kt$, for $k$ a field of characteristic $0$. In particular, $\TT$ states that the value group is elementarily equivalent to $\ZZ$.
 \item
 The structure on $\VG$ induced by $\Leq$ is the pure ordered abelian group structure.
 \item $\VG$ and $\RF$ are orthogonal.
\end{enumerate}
\end{hyp}

By \cite[Theorem~5.8.2]{iCR.hmin}, those conditions imply that the main assumptions of \cite{CL.mot0p} are satisfied, meaning that Cluckers--Loeser motivic integration is well-defined and works as desired.

The models of $\TT$ we will consider in this subsection will all be of the form $K = \kt$. Note that those are in particular spherically complete, since they are complete and the value group is discrete.
Let us write $\mathcal S$ for the set of models of $\TT$ of this form.
By Hypothesis~\ref{hyp:mot:int}~(2), an $\Leq$-definable set $\bX \subseteq \VF^m \times \RF^n \times \VG^r$ is determined by the family of sets $(\bX(K))_{K \in \mathcal S}$. Such a family $(\bX(K))_{K \in \mathcal S}$ is called a ``definable subassignment of $h[m,n,r]$'' in \cite{CL.mot} and an ``$\mathcal S$-definable set'' in \cite{iC.eval}. In the present paper, we apply terminology from  \cite{CL.mot,iC.eval} writing $\bX$ instead of $(\bX(K))_{K \in \mathcal S}$.
In particular, we write $\cC(\bX)$ for the 
ring of constructible motivic functions on $\bX$ (introduced in \cite[Section~5]{CL.mot}). This ring depends on the theory $\TT$. If we want to consider the corresponding ring for a theory $\TT' \supseteq \TT$ (possibly in an extended language), we denote it by $\cC_{\TT'}(\bX)$.

Recall that \cite{CL.mot} provides a notion of integrability of constructible motivic functions, and that given an integrable $\bff \in \cC(\bX)$,
its motivic integral, denoted by $\pr_!\bff$, is an element of $\cC(\pt)$, where $\pt$ stands for any $\LL$-definable singleton set (e.g.\ $\pt = \{0\} \subseteq \VF$; the specific choice of $\pt$ does not matter) and where $\pr\colon \bX \to \pt$ is the projection. Instead of writing $\pr_!\bff$, we will use the more suggestive notation $\int_{\bX} \bff$ for the motivic integral of $\bff$. We will also use other suggestive notation, e.g., $\int_{\bY} \bff$ for integral of the restriction of $\bff$ to $ \bY$, when $\bY \subseteq \bX$.

Given $\bff \in \cC(\bX \times \bY)$ and
$\pr\colon \bX \times \bY \to \bY$ the projection,
there is also a notion of relative integral: If $\bff$ is relatively integrable over $\bY$, then we have a relative integral $\pr_{!\bY}\bff = \pr_!\bff \in \cC(\bY)$, which one should think of as
``$y \mapsto \int_{\bX_y} \bff(x,y)\,dx$''; see \cite[Section~14.2]{CL.mot}.

Motivic integration yields a notion of motivic measure of a definable set, namely
$\mu(\bX) := \int_{\bX} \mathbbm1_{\bX}$, where $\mathbbm1_{\bX} \in \cC(\bX)$ is the constant $1$ function. (Here, we assume that $\mathbbm1_{\bX}$ is integrable; this is always the case if $\bX$ is bounded).
Recall that the motivic measure is normalized in such a way that the valuation ring has measure $1$ and that $\bbL \in \cC(\pt)$ denotes the inverse of the motivic measure of the maximal ideal.

In this paper, we only consider the ``top-dimensional motivic measure'', i.e., if $\bX \subseteq \VF^m$ has dimension less than $m$, then its measure $\mu(\bX)$ is $0$.
Formally, in terms of the graded ring $C(\bX)$ of constructible uppercase-F-Functions from \cite[Section~6]{CL.mot}, we consider an $\bff \in \cC(\bX)$ as an element of $C^m(\bX)$ when $\bX \subseteq \VF^m \times \RF^n \times \VG^r$.

We can now state a first version of the main result of this subsection; recall that we assume Hypothesis~\ref{hyp:mot:int}:

\begin{prop}\label{prop:presMeas}
Let $\bB_1, \bB_2 \subseteq \VF^n$ be $\LL$-definable balls (for some $n \ge 1$) and let $\bX_i \subseteq \bB_i$ be $\LL$-definable sets (for $i=1,2$). Suppose that for every model of the form $K = \kt \models \TT$, there exists a risometry $\phi\colon \bB_{1}(K) \to \bB_{2}(K)$
sending $\bX_{1}(K)$ to $\bX_{2}(K)$.
Then the motivic measures $\mu(\bX_1), \mu(\bX_2) \in \cC(\pt)$ are equal.
\end{prop}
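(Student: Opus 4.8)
The plan is an induction on the ambient dimension $n$, carried out for a slightly more general \emph{family} version that is strong enough to feed back into itself: given an $\Leq$-definable parameter set $\bQ$, $\Leq$-definable families of balls $\fB_1,\fB_2\subseteq\VF^n\times\bQ$ and $\Leq$-definable $\bX_i\subseteq\fB_i$ such that for every $K=\kt\models\TT$ (such $K$ being spherically complete, so all earlier machinery applies) and every $q\in\bQ(K)$ the fibres $\bX_{1,K}(\cdot,q)\subseteq(\fB_1)_q$ and $\bX_{2,K}(\cdot,q)\subseteq(\fB_2)_q$ are in risometry, the relative motivic measures of $\bX_1$ and $\bX_2$ over $\bQ$ agree in $\cC(\bQ)$; the proposition is the case $\bQ=\pt$, and the family version will be forced on us in the case $d=0$ below. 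Write $\bchi_i:=\mathbbm1_{\bX_i}$, regarded as $\Leq$-definable maps $\fB_i\to\RV^\eq$, and let $\bbU_i:=\rtsp_{\fB_i}(\bchi_i)\subseteq\RF^n$, which is $\Leq$-definable by the family form of the Riso-Triviality Theorem (Corollary~\ref{cor:RTT}). A risometry is an isometry and a homeomorphism, so fibrewise $(\fB_1)_q$ and $(\fB_2)_q$ have the same radius and type, $\dim\bX_{1,q}=\dim\bX_{2,q}$ (so there is nothing to prove when this is $<n$), and—using that $\rtsp$ is unaffected by translating a ball and conjugating a straightener for $\bchi_1$ by the (pointwise) risometry—one checks that $\bbU_1=\bbU_2$; after partitioning $\bQ$ we may assume $d:=\dim\bbU_1$ is constant. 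Finally, conjugating a risometry by an element of $\GL_n(\valring)$ is again a risometry and preserves motivic measure, so we may further assume $\bbU_1=\bbU_2$ is the span of the first $d$ coordinate directions and that the transversal projections used below are the coordinate projections $\bpi\colon\VF^n\to\VF^d$.

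\emph{Case $d=n$.} Each $\bchi_{i,K}(\cdot,q)$ is constant, so $\bX_{i,K}(\cdot,q)$ is $\emptyset$ or $(\fB_i)_q$, the risometry forces the same alternative, and the two balls have equal measure; this is a base case. \emph{Case $1\le d\le n-1$.} For fixed $q$, Remark~\ref{rmk:fiber_riso} shows that all $\bpi$-fibres of $\bX_{i,K}(\cdot,q)$ are pairwise in risometry, so by the induction hypothesis in ambient dimension $n-d$ (applied to the family of those fibres, over the parameter $\VF^d\times\bQ$) their motivic measures are independent of the fibre; moreover Lemma~\ref{lem:fiber_check} transfers the given risometry $\bchi_{1,K}(\cdot,q)\sim\bchi_{2,K}(\cdot,q)$ to their restrictions to a single $\bpi$-fibre, whence the induction hypothesis also gives that the common fibre-measures on the two sides agree. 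Motivic Fubini then yields $\mu((\fB_i)_q\cap\bX_i)=\mu(\bpi((\fB_i)_q))\cdot m_q$ with a common $m_q$ and with $\mu(\bpi((\fB_1)_q))=\mu(\bpi((\fB_2)_q))$ (balls of equal radius), uniformly in $q$, i.e.\ in $\cC(\bQ)$.

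\emph{Case $d=0$} is the heart of the argument and the only place Hypothesis~\ref{hyp:RFlin} is genuinely used. Here $\bchi_{i,K}(\cdot,q)$ is not riso-trivial at all on $(\fB_i)_q$, and we invoke the rigid partition $\bPrig_{\bchi_i}$ of Lemma~\ref{lem:Prig}: an $\Leq$-definable partition of $\fB_i$ whose fibrewise pieces are either maximal balls on which $\bchi_i$ is $1$-riso-trivial or singletons, and—by Lemma~\ref{lem:Prig}(2) together with finiteness of the rigid core (Lemma~\ref{lem:Crig}(1))—the singletons form a fibrewise finite set, so deleting them has no effect on measures. Lemma~\ref{lem:Prig}(3)--(4) supplies an $\Leq$-definable bijection $\bPrig_{\bchi_1\to\bchi_2}$ matching the balls of $\bPrig_{\bchi_1}$ with those of $\bPrig_{\bchi_2}$ compatibly with any fibrewise risometry, and on corresponding balls $B$, $B'=\bPrig_{\bchi_1\to\bchi_2}(B)$ the restrictions $\bchi_1|_B,\bchi_2|_{B'}$ are again fibrewise in risometry and are $1$-riso-trivial. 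Enlarging $\bQ$ to also index these balls and applying motivic Fubini once more over the enlarged parameter space reduces $d=0$ to Cases $d\ge1$ in the \emph{same} ambient dimension $n$; since those cases invoke the induction hypothesis only in dimension $<n$, there is no circularity and the induction closes.

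The main obstacle throughout is precisely that the risometries are not assumed definable, so one cannot change variables in the motivic integral along them; the whole strategy is to replace them by definable data on two levels. Globally, one uses that the riso-tree, the rigid core, and the rigid partition—and the bijections they induce—are all $\Leq$-definable (Theorem~\ref{thm:RTT}, Lemmas~\ref{lem:Crig}, \ref{lem:Crig-def}, \ref{lem:Prig}). On fibres, one uses Remark~\ref{rmk:fiber_riso} together with the induction hypothesis to conclude that a non-definable risometry between two $\bpi$-fibres nonetheless preserves motivic measure. A secondary, bookkeeping-heavy difficulty is to carry all of this out uniformly in $\bQ$ and over all $K=\kt\models\TT$, so that the resulting equalities actually live in $\cC(\bQ)$; this is exactly where the specific setup of Hypothesis~\ref{hyp:mot:int} is needed, so that the machinery of \cite{CL.mot0p} (and motivic Fubini in its relative form, \cite{CL.mot}) is available.
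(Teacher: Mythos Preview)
Your overall architecture---induction on $n$, a three-way case split on $d=\dim\rtsp$, and the use of the rigid partition $\bPrig$ in the $d=0$ case to reduce to $d\ge1$---is exactly the paper's strategy (carried out for the slightly more general Proposition~\ref{prop:presInt}). The main technical difference is that where you carry a \emph{family} statement over an auxiliary parameter set $\bQ$ throughout, the paper instead works pointwise: it fixes a point $(b_1,b_2)$ (or $a$), adds it as a constant to the language, applies the inductive hypothesis in the extended theory $\TT'$, and then invokes the evaluation theorem \cite[Theorem~1]{iC.eval} to conclude equality of the relevant motivic functions in $\cC(\cdot)$. Your route avoids that external input at the cost of heavier bookkeeping; the paper's route keeps the statement simple and pushes the uniformity into a single black box.

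Two places in your write-up deserve more care. First, the reduction ``conjugate by an element of $\GL_n(\valring)$ so that $\bbU$ is a fixed coordinate plane'' is not available uniformly in $q$ when $\bbU$ varies; what one actually does (and what the paper does) is merely fix a coordinate projection $\pi\colon\VF^n\to\VF^d$ whose residue restricts to a bijection on $\bbU$, after a further finite partition of $\bQ$. Second, and more substantively, your $d=0$ step ``enlarge $\bQ$ to also index these balls and apply motivic Fubini'' hides a genuine issue: $\bPrig$ is a partition into balls of \emph{varying} radii, so the naive fibrewise integral does not literally fit the Fubini template. The paper handles this concretely by the double-integral trick $\bfg(x,y)=\bff_1(y)\cdot\mathbbm1_{\bZ}(x,y)\cdot\mu(B_y)^{-1}$, which parametrises the balls by points inside them and compensates by the normalising factor; your sketch would need some such device (or an $\RV^\eq$-parametrisation of $\bPrig$) to be made precise.
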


This can be generalized to a statement about integrals of motivic functions $\bff_i$ on $\bB_i$ being equal for $i=1,2$, though there are some subtleties. Here is a precise formulation:

\begin{prop}\label{prop:presInt}
Let $\tilde \bff \in \cC(\RV^N)$ be a motivic function for some $N \ge 1$.
For $i=1,2$, let
$\bB_i \subseteq \VF^n$ be an $\LL$-definable ball, let $\bh_i\colon \bB_i \to \RV^N$ be an $\Leq$-definable map, and set 
$\bff_i := \bh_i^*(\tilde \bff) \in \cC(\bB_i)$. Suppose that for every model of the form $K = \kt \models \TT$, there exists a risometry $\phi\colon \bB_{1}(K) \to \bB_{2}(K)$
from $\bh_{1,K}$ to $\bh_{2,K}$.
Then the motivic integrals 
$\int_{\bB_1} \bff_1, \int_{\bB_2} \bff_2 \in \cC(\pt)$ are equal.
\end{prop}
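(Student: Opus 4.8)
The strategy is to reduce Proposition~\ref{prop:presInt} to a statement that can be verified by induction over the riso-tree of $\bh_1$ (equivalently of $\bh_2$), using the rigid partition $\bPrig$ from Section~\ref{sec:prig} to organise the induction. First I would observe that by Corollary~\ref{cor:RET}, the risometry type of $\bh_{i,K}$ is controlled by an $\Leq$-definable parameter in $\RV^\eq$; more importantly, by Theorem~\ref{thm:RET} the existence of a risometry from $\bh_1$ to $\bh_2$ is already a first-order property, so the hypothesis ``for every $K=\kt\models\TT$ there is a risometry $\phi$'' can be upgraded to: the $\LL$-sentence asserting risometry holds in $\TT$. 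This lets me work inside a single fixed model $K=\kt$ throughout, and it shows that $\bPrig_{\bh_1}$ and $\bPrig_{\bh_2}$ are in $\Leq$-definable bijection via $\bPrig_{\bh_1\to\bh_2}$ (Lemma~\ref{lem:Prig}(4), using Hypothesis~\ref{hyp:RFlin}).

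\textbf{Main reduction.} The key claim to prove by induction on the ambient dimension $n$ (and, within fixed $n$, by a secondary induction organised along the riso-tree) is: \emph{if $\bchi_1,\bchi_2$ are $\Leq$-definable maps on balls $\bB_1,\bB_2$ with values in $\RV^\eq$, $\tilde\bff$ is a motivic function on the (common) codomain pulled back to $\bff_i$, and $\bchi_{1,K}$ is risometric to $\bchi_{2,K}$, then $\int_{\bB_1}\bff_1=\int_{\bB_2}\bff_2$.} The base/easy case is when $\bchi_i$ is $d$-riso-trivial for some $d\ge 1$: using Proposition~\ref{prop:fields} (restriction to a suitable subfield, which is where Hypothesis~\ref{hyp:RFlin} is needed a second time) one straightens $\bchi_i$ by a risometry that is \emph{definable}, say over parameters making the fibres transversal to the riso-triviality space; then by Lemma~\ref{lem:fiber_check} and the relative-integral machinery of \cite{CL.mot}, the integral over $\bB_i$ equals $\bbL^{d}$ times the integral over a single transversal fibre $F_i$, and the restriction of $\bchi_i$ to $F_i$ is risometric in ambient dimension $n-d<n$, so the outer induction applies. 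Here one must check that straightening by a \emph{definable} risometry does not change the motivic integral — this follows from change of variables in motivic integration \cite[Section~12]{CL.mot}, since a definable risometry has Jacobian of valuation $0$ (it induces the identity on $\RF$, cf.\ Example~\ref{exa:riso}).

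\textbf{The not-riso-trivial case.} The remaining and genuinely hard case is $d=0$, i.e.\ $\bchi_{i,K}$ is not riso-trivial at all on $\bB_i$. Here I partition $\bB_i$ into $\bPrig_{\bchi_i}(K)$ and integrate termwise: by $\sigma$-additivity and boundedness of the rigid core (Lemma~\ref{lem:Crig}(1)), $\int_{\bB_i}\bff_i$ is a sum of the finitely many contributions from balls/points in $\Crig_{\bchi_i}$ together with a ``definably parametrised'' sum over the open balls of $\Prig_{\bchi_i}$ lying outside the rigid core. For a ball $B'\in\Prig_{\bchi_i}$, on $B'$ the map $\bchi_i$ is $1$-riso-trivial, and $\Prig_{\bchi_1\to\bchi_2}$ matches $B'$ with a ball $B''$ on which $\bchi_2$ is risometric (by definition of that induced bijection); so each matched pair contributes equally by the $d\ge 1$ case already handled. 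The subtlety — and the main obstacle — is to make this ``sum over $\Prig$'' rigorous as an actual motivic integral: one needs that the family $B'\mapsto\int_{B'}\bff_1$, as $B'$ ranges over the $\Leq$-definable family $\bPrig_{\bchi_1}$, is itself a constructible motivic function on a suitable $\RV^\eq$-parametrisation of $\bPrig_{\bchi_1}$, and that the matching $\bPrig_{\bchi_1\to\bchi_2}$ is measure-preserving on this parameter space. This requires descending $\bPrig$ to an $\RV^\eq$-definable index set (via Convention~\ref{conv:sets:of:sets} and Lemma~\ref{lem:Prig}) and invoking the compatibility of relative motivic integration with $\RV$-fibrations; the definable bijection $\bPrig_{\bchi_1\to\bchi_2}$ then transports integrable families to integrable families with equal integral. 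Once this bookkeeping is in place, summing the termwise equalities yields $\int_{\bB_1}\bff_1=\int_{\bB_2}\bff_2$, completing the induction and hence the proof; Proposition~\ref{prop:presMeas} is the special case $\tilde\bff=\mathbbm1$, $\bh_i=\mathbbm1_{\bX_i}$.
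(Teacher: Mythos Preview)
There is a genuine gap in your $d\ge 1$ case. You claim that Proposition~\ref{prop:fields} yields a \emph{definable} straightener, after which change of variables applies. But Proposition~\ref{prop:fields} says only that $\chi\circ\phi$ is $K_0$-definable; the straightener $\phi$ itself is not claimed to be definable, and indeed Example~\ref{exa:def:nnd} shows that in general no definable straightener exists. (Also, Proposition~\ref{prop:fields} needs a distinguished subfield $K_0$ satisfying ($\star$), which plays no role in the present setting.) So you cannot invoke the change-of-variables formula, and the reduction to a single fibre does not go through as written. The same problem recurs in your $d=0$ case: even granting that $\bPrig_{\bh_1\to\bh_2}$ is $\Leq$-definable, the risometries matching individual balls $B'\to B''$ are not, so ``each matched pair contributes equally by the $d\ge 1$ case'' presupposes exactly the statement you are trying to prove.

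The paper circumvents this by never attempting to transport $\bff_1$ to $\bff_2$ along a (non-definable) risometry. Instead, in the $d\ge 1$ case it introduces the auxiliary functions $\bfg_i = \pr_i^*\pi_!(\bff_i)$ on the \emph{product} $\pi(\bB_1)\times\pi(\bB_2)$, evaluates them at each point $(b_1,b_2)$ --- where the equality $\bfg_1(b_1,b_2)=\bfg_2(b_1,b_2)$ reduces via Lemma~\ref{lem:fiber_check} to the inductive hypothesis in ambient dimension $n-d$ --- and then uses \cite[Theorem~1]{iC.eval} (motivic functions are determined by their values) to conclude $\bfg_1=\bfg_2$ in $\cC(\pi(\bB_1)\times\pi(\bB_2))$. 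The $d=0$ case uses the same evaluation trick, now with auxiliary functions on $\bB'_1\times\bB'_1$ and $\bB'_1\times\bB'_2$ built from $\bPrig$. This pointwise-evaluation mechanism is the missing idea: it replaces the unavailable definable bijection by a comparison of integrals at the level of individual fibres, which is exactly what induction provides.
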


Recall that $\bh_i^*(\tilde \bff)$ is the pull-back of the motivic function $\tilde \bff$ along $\bh_i$. Note also that the condition that $\bff_i$ is of the form $\bh_i^*(\tilde \bff)$ is not a restriction: every motivic function in $\cC(\bB_i)$ can be written like this for some $N$. However, asking 
$\phi$ to be a risometry from $\bh_{1,K}$ to $\bh_{2,K}$ seems to be stronger than just asking that it is a risometry from $\bff_{1,K}$ to $\bff_{2,K}$; to be more precise, it is not even clear what the latter should mean.

Clearly, Proposition~\ref{prop:presMeas} is a special case of Proposition~\ref{prop:presInt}, namely, by taking $\bh_i\colon \bB_i \to \RV$ to be the indicator function of $\bX_i$ and $\tilde\bff \in \cC(\RV)$ the indicator function of $\{1\} \subseteq \RV$.

\medskip

In the proof, we will use that motivic functions $\bff \in \cC(\bX)$ are determined by their values in the sense of \cite{iC.eval}; we quickly recall this result. 
By a point of $\bX$, we mean an element $a \in \bX(K)$
for some model of the form $K = \kt \models \TT$. Given such a point $a$ (we think of $K$ as implicitly being part of the datum of $a$), the value of $\bff$ at $a$ is defined as $\bff(a) := \bff|_{\{a\}} \in \cC_{\TT'}(\pt)$, where $\TT' = \Th_{\LL(a)}(K)$ is the theory of $K$ in the language $\LL$ extended by a tuple of constant symbols for $a$ (so that $\{a\}$ is in $\LL(a)$-definable bijection to $\pt$).\footnote{This notion of evaluation of motivic functions is not the same as the one from \cite[Section~5.4]{CL.mot}.}
By \cite[Theorem~1]{iC.eval}, if two motivic functions $\bff_1, \bff_2 \in \cC(\bX)$ take the same value at every $a \in \bX(K)$ for every $K = \kt \models \TT$, then $\bff_1 = \bff_2$ in $\cC(\bX)$.

\begin{proof}[Proof of Proposition~\ref{prop:presInt}]
In the entire proof, $K$ will always be a model of $\TT$ of the form $K = \kt$.

The existence of the risometry $\phi\colon \bh_{1,K} \to \bh_{2,K}$ implies that we have $\rtsp_{\bB_1(K)} \bh_{1,K} = \rtsp_{\bB_2(K)} \bh_{2,K}$.
By the Riso-Triviality Theorem~\ref{thm:RTT}, this vector space is $\Leq$-definable; as such, denote it by $\bbU$, i.e., $\bbU(K) = \rtsp_{\bB_1(K)} \bh_{1,K} = \rtsp_{\bB_2(K)} \bh_{2,K}$ for every $K$. Given any $\LL$-sentence $\theta$, the desired equality $\int_{\bB_1} \bff_1= \int_{\bB_2} \bff_2$ holds in $\cC(\pt)$ if and only it holds in 
$\cC_{\TT \cup \{\theta\}}(\pt)$
and in $\cC_{\TT \cup \{\neg\theta\}}(\pt)$.
This allows us to assume that $d := \dim \bbU(K)$ does not depend on $K$ and that there exists a coordinate projection $\pi\colon \VF^n \to \VF^d$
such that for every $K$, the corresponding projection $\bar \pi\colon \RF(K)^n \to \RF(K)^d$ induces a bijection $\bbU(K) \to \RF(K)^d$.

Note that the existence of the risometry $\phi$ implies that $\mu(\bB_1) = \mu(\bB_2)$ in $\cC(\pt)$.

\medskip

\textbf{Case 1:} $d = n$.

In this case, the maps $\bh_{1,K}$ and $\bh_{2,K}$ are constant and equal for each $K$, i.e., there exists an $\Leq$-definable element $\bh_0 \in \RV^N$ such that
$\bh_{i,K}(x) = \bh_{0,K}$ for every $K$ and every $x \in \bB_{i}(K)$. 
Writing $\bj\colon \pt \to \RV^N$ for the $\Leq$-definable map sending the point to $\bh_0$, we obtain
\[
\int_{\bB_i}\bff_i = \mu(\bB_i)\cdot \bj^*(\tilde \bff) \in \cC(\pt).
\]
Since $\mu(\bB_1) = \mu(\bB_2)$, the proposition follows.

\medskip

\textbf{Case 2:} $1 \le d  < n$.

We use an induction over $n$, more precisely assuming that the proposition holds when the ambient space has dimension $n - d$.

For $i=1,2$, set $\bfg_i := \pr_i^*\pi_!(\bff_i)\in \cC(\pi(\bB_1) \times \pi(\bB_2))$,
where $\pr_i\colon \pi(\bB_1) \times \pi(\bB_2) \to \pi(\bB_i)$ is the corresponding projection.
(In informal notation, we have $\bfg_i(y_1, y_2) = \int_{\pi^{-1}(y_i) \cap \bB_i}\bff_i(x)\,dx$; in particular, $\bfg_i$ only depends on the variable $y_i$.) In particular,
we have
\begin{equation}\label{eq:g:f}
\int_{\pi(\bB_1) \times \pi(\bB_2)} \bfg_1
= \mu( \pi(\bB_2))\cdot \int_{\pi(\bB_1)}\pi_! (\bff_1)
=\mu( \pi(\bB_2))\cdot\int_{\bB_1} \bff_1,
\end{equation}
and analogously for $\bfg_2$.

Fix a point $(b_1, b_2) \in \pi(\bB_{1}(K)) \times \pi(\bB_{2}(K))$ for some $K  = \kt \models \TT$,
set $\TT' := \Th_{\LL(b_1, b_2)}(K)$, and
denote by $\bF_i := \pi^{-1}(b_i) \cap \bB_{i}$ the corresponding fibers, considered as $\LL(b_1, b_2)$-definable sets in the theory $\TT'$.
By Lemma~\ref{lem:fiber_check},
there exists a risometry
from $\bh_{1,K}|_{\bF_1(K)}$ to $\bh_{2,K}|_{\bF_2(K)}$, so by induction (applied in $\TT'$), we obtain
\[\bfg_1(b_1, b_2) = \int_{\bF_1} \bff_1 = \int_{\bF_2} \bff_2 = \bfg_2(b_1, b_2)\]
in $\cC_{\TT'}(\pt)$.

Since this equality holds for every point $(b_1, b_2)$
of $\pi(\bB_1) \times \pi(\bB_2)$,
\cite[Theorem~1]{iC.eval} implies that $\bfg_1$ and $\bfg_2$ are equal as elements of $\cC(\pi(\bB_1) \times \pi(\bB_2))$.
Combining this with \eqref{eq:g:f} yields
\[
 \mu(\pi(\bB_2))\cdot \int_{\bB_1} \bff_1 
=
\int_{\pi(\bB_1) \times \pi(\bB_2)} \bfg_1 =
\int_{\pi(\bB_1) \times \pi(\bB_2)} \bfg_2 =
 \mu(\pi(\bB_1))\cdot \int_{\bB_2} \bff_2 .
\]
Since $\mu(\pi(\bB_1))$ and $\mu(\pi(\bB_2))$
are equal and invertible in $\cC(\pt)$, we obtain $\int_{\bB_1} \bff_1 = \int_{\bB_2} \bff_2$, as desired.

\medskip

\textbf{Case 3:} $d = 0$.

Let $\bB_i' \subseteq \bB_i$ be the union of all those elements of $\bPrig_{\bh_i}$ which are balls. Since the difference $\bB_i \setminus \bB_i'$ is finite, we have 
$\int_{\bB_i} \bff_i = \int_{\bB_i'} \bff_i$,
so it suffices to prove $\int_{\bB_1'} \bff_1 = \int_{\bB_2'} \bff_2$.

Given $K = \kt \models \TT$ and $a \in \bB'_{i}(K)$, we write $B_a \subseteq K^n$
for the ball in $\bPrig_{\bh_i}(K)$ containing $a$.

Set $\bZ := \{(x, y) \in \bB'_1 \times \bB'_1 \mid B_x = B_{y}\}$ and denote by $\mathbbm1_{\bZ} \in \cC(\bB'_1 \times \bB'_1)$ its indicator function.
Let $\bfg \in\cC(\bB'_1 \times \bB'_1)$ be the motivic function defined by
\[
\bfg(x, y) = \bff_1(y) \cdot \mathbbm1_{\bZ}(x,y) \cdot \mu(B_y)^{-1}.
\]
More formally, by ``$\mu(B_y)^{-1}$'', we mean $\bbL^{n\cdot \balpha}$, where $\balpha\colon \bB'_1 \to \VG$ is the definable function sending $y$ to $\radcl(B_y)$, and where we temporarily use additive notation for $\VG$.
The (total) integral of $\bfg$ is equal to
\[
\int_{\bB'_1}\int_{\bB'_1}\bfg(x, y)\, dx \, dy
= \int_{\bB'_1}  \bff_1(y) 
\cdot \underbrace{\int_{\bB'_1} \mathbbm1_{\bZ}(x,y) dx}_{=\mu(B_y)} \cdot \mu(B_y)^{-1}\,dy = \int_{\bB'_1} \bff_1.
\]
In a similar way, we set $\bZ' := \{(x, y) \in \bB'_1 \times \bB'_2 \mid \bPrig_{\bh_1\to \bh_2}(B_x) = B_{y}\}$, define $\bfg' \in\cC(\bB'_1 \times \bB'_2)$ by
\[
\bfg'(x, y) = \bff_2(y) \cdot \mathbbm1_{\bZ'}(x,y) \cdot \mu(B_y)^{-1}
\]
and obtain
\[
\int_{\bB'_2}\int_{\bB'_1}\bfg'(x, y)\, dx \, dy
= \int_{\bB'_2} \bff_2.
\]
So it remains to show that the integrals of $\bfg$ and $\bfg'$ are equal. We do this by showing that the two functions become equal (in $\cC(\bB'_1)$) after integrating out the $y$-variable.
Since motivic functions are determined by their values \cite[Theorem~1]{iC.eval}, it suffices to show that for every $K = \kt \models \TT$ and every $a \in \bB'_{1}(K)$, we have
\begin{equation}\label{eq:g=g'}
\int_{\bB'_1}  \bff_1(y) \cdot \mathbbm1_{\bZ}(a,y) \cdot \mu(B_y)^{-1}\, dy
=\int_{\bB'_2}  \bff_2(y) \cdot \mathbbm1_{\bZ'}(a,y) \cdot \mu(B_y)^{-1}\, dy
\end{equation}
in $\cC_{\TT'}(\pt)$, where $\TT' = \Th_{\LL' }(K)$ and $\LL' = \LL(a)$.
From now on, we work in those $\LL'$ and $\TT'$. Write $\bB_a$ for $B_a$ considered as a model-independent $\LL'$-definable set (so that $B_a = \bB_{a}(K)$).
Let us consider the left hand side of \eqref{eq:g=g'} first: We have
\[
\int_{\bB'_1}  \bff_1(y) \cdot \mathbbm1_{\bZ}(a,y) \cdot \mu(B_y)^{-1}\, dy
=
\int_{\bB_a}  \bff_1(y) \cdot \mu(B_y)^{-1}\, dy
=\mu(\bB_a)^{-1}\cdot\int_{\bB_a}  \bff_1,
\]
where the last equality uses that $B_y = B_a$ for every $y \in B_a$. In a similar way, we obtain that the right hand side of \eqref{eq:g=g'} is equal to
$\mu(\bB'_a)^{-1}\cdot\int_{\bB'_a}  \bff_2$,
where $\bB'_a := \bPrig_{\bh_1\to \bh_2}(\bB_a)$.
Now equality of the two sides follows
from $\mu(\bB_a) = \mu(\bB'_a)$
and by applying the $d \ge 1$ case
to the restrictions $\bff_1|_{\bB_a}$ and $\bff_2|_{\bB'_a}$.
\end{proof}

\subsection{Applications to Poincaré series}
\label{sec:poincare}

Finally, we have all the tools we need to show
how riso-stratifications provide information about Poincaré series. We start by defining precisely the two notions of local motivic Poincaré series to which our result applies. 
To reduce the amount of notation to fix, we write things down working over $\CC$, but in this entire subsection, one can consider $\CC$ as being an arbitrary algebraically closed field of characteristic $0$.

For the entire subsection, we fix a closed sub-scheme $\bX \subseteq \mathbb A^n_{\CC}$ for some $n \ge 1$.
Let us also fix polynomials $f_1, \dots, f_\ell \in \CC[x_1, \dots, x_n]$ generating the ideal defining $\bX$. To this $\bX$, we associate two different subsets of $(\CC[t]/t^r)^n$, for every $r \in \NN$:
\[
\tilde Y_r :=
\bX(\CC[t]/t^r) =
\{
a \bmod t^r \mid a \in \CC[[t]]^n, \forall i\colon f_i(a) \equiv 0 \mod t^r
\}
\]
and
\[
Y_r :=
\im(\bX(\CC[[t]]) \to \bX(\CC[t]/t^r)) =
\{
a \bmod t^r \mid a \in \CC[[t]]^n,  \forall i\colon f_i(a)= 0
\}.
\]

To define the Poincaré series, we need a notion of motivic measure. 
We apply Section~\ref{sec:mot} in the following setting:
\begin{conv}\label{conv:LL:int}
We work in the field $\CCt$, which we consider as an $\LL$-structure, where $\LL := \Lvf(\CC) \cup \{\ac\}$ consists of the valued field language $\Lvf$ (see Notation~\ref{notn:Lvf}) together with the standard angular component map
$\ac\colon \CCt \to \RF(\CCt) = \CC$ and a constant in the $\VF$-sort for each element of $\CC \subseteq \CCt$.\footnote{To formally make sense of $\ac$, one can either add $\RF$ as a sort to $\LL$ or make it $\Leq$-definable by putting a suitable predicate onto $\VF^2$.}
We set $\TT := \Th_{\LL}(\CCt)$.
\end{conv}
Since $\TT$ is complete, an $\LL$-definable set $\bZ \subseteq \CCt^n$ in this context is already determined by $Z = \bZ(\CCt)$. In particular, we can allow ourselves to write $\mu(Z)$ (instead of $\mu(\bZ)$) for the motivic measure of $\bZ$.

\begin{defn}\label{defn:poincare}
Given $\bX$, $\tilde Y_r$ and $Y_r$ as above and a $\CC$-valued point $a_0 \in \bX(\CC) \subseteq \CC^n$, we set
\[\tilde X_{a_0,r} :=
\{a \in \CC[[t]]^n \mid \res(a) = a_0 \wedge (a \bmod t^r) \in \tilde Y_r\}
\] and \[
X_{a_0,r} :=
\{a \in \CC[[t]]^n \mid \res(a) = a_0
\wedge (a \bmod t^r) \in Y_r\},
\]
and then define the following two \emph{local motivic Poincaré series} of $\bX$ at $a_0$, both of which are elements of $\cC(\pt)[[T]]$:
\[
\tilde P_{\bX,a_0}(T) := \sum_{r  \ge 1}
\bbL^{rn}\cdot
\mu(\tilde X_{a_0,r})\cdot T^r
\]
and
\[
P_{\bX,a_0}(T) := \sum_{r  \ge 1}
\bbL^{rn}\cdot
\mu(X_{a_0,r})\cdot T^r.
\]
\end{defn}

In that definition, for the motivic measures to make sense, we need the sets
$\tilde X_{a_0,r}$ and $X_{a_0,r}$ to be $\LL$-definable.
Indeed, they can be defined as follows (using that $|t|$ can be defined as being the biggest element less than $1$ in the value group):
\begin{equation}\label{eq:Xtil:def}
\tilde X_{a_0,r} = \{a \in \CC[[t]]^n \mid \res(a) = a_0 \wedge\max_i |f_i(a)| \le |t^r|\}
\end{equation}
and
\begin{equation}\label{eq:X:def}
X_{a_0,r} = \{a \in \CC[[t]]^n \mid \res(a) = a_0 \wedge
\exists b \in \bX(\CC[[t]])\colon |b-a| \le |t^r|\}
\end{equation}
(where we consider $\bX(\CC[[t]])$ as a subset of $\CC[[t]]^n$).

Before stating our main result, let us fix some (abuse of) notation:
\begin{notn}\label{notn:abuse}
\begin{enumerate}
\item We let $\Lnoval := \Lring(\CC)$ be the ring-language together with constants for $\CC$ and set $\Tnoval := \Th_{\Lnoval}(\CC)$. (Recall that we write $\bA$ for the only sort of $\Lnoval$.)
\item
Given an affine scheme $\bX \subseteq \mathbb A_{\CC}^n$, say, defined by $f = (f_1, \dots, f_\ell) \in (\CC[x_1, \dots, x_n])^\ell$, we also write $\bX$ for the $\Lnoval$-definable set
    $\{a \in \bA^n \mid f(a) = 0\}$ corresponding to $\bX$.
\item We identify the $\CC$-valued points $a \in \bX(\CC)$ with closed points of $\bX$.
\item Given such a point $a \in \bX(\CC)$,
we write $T_a\bX$ for the tangent space of $\bX$ at $a$ considered as a subscheme of $\mathbb A_{\CC}^n$ (or as an $\Lnoval$-definable subset of $\bA^n$), namely, defined by
$Df_a(x) = 0$, where $Df_a$ is the total derivative of $f$ at $a$. Thus, in our notation, the Zariski tangent space $(\mathfrak{m}_a/\mathfrak{m}_a^2)^*$ is denoted by $T_a\bX(\CC)$
(where $\mathfrak{m}_a$ is the maximal ideal of the local ring of $\bX$ at $a$).
\end{enumerate}
\end{notn}

Our main result states (in two variants) that if $(\bS_d)_d$ is the riso-stratification of $\bX$, then its local motivic Poincaré series at a point $a_0 \in \bS_d(\CC)$ can be computed from the local Poincaré of $\bX \cap \bfW$ at $a_0$ in a natural way, where $\bfW$ is any linear subspace through $a_0$ which is transversal to $\bS_d$.

\begin{thm}\label{thm:poincare}
Let $\bX \subseteq \mathbb A^n_{\CC}$ be a closed sub-scheme, and let $(\bS_i)_i$ be either
\begin{enumerate}
    \item its algebraic riso-stratification (Definition~\ref{defn:alg:riso}), or   \item the riso-stratification of $\bX$
    in the sense of Definition~\ref{defn:risoStrat}, where we consider $\bX$ as an $\Lnoval$-definable
    set as described in Notation~\ref{notn:abuse}.
\end{enumerate}
Fix some $a_0 \in  \bX(\CC) \cap \bS_d(\CC)$ for some $0 \le d \le n$,
and fix an $(n-d)$-dimensional linear subspace $\bfW \subseteq \mathbb A^{n}_\CC$ containing $a_0$
such that the tangent spaces
$T_{a_0}\bfW(\CC)$ ($= \bfW(\CC) - a_0$)
and
$T_{a_0}\bS_d(\CC)$
are complements of each other in $\CC^n$.

Then we have
\begin{equation}\label{eq:thm:P}
P_{\bX,a_0}(T) = \bbL^{-d}\cdot  P_{\bX\cap\bfW,a_0}(\bbL^{d}\cdot T).
\end{equation}
If $(\bS_i)_i$ is the algebraic riso-stratification of $\bX$ (Case~(1)), then we additionally have
\begin{equation}\label{eq:thm:Ptilde}
\tilde P_{\bX,a_0}(T) = \bbL^{-d}\cdot  \tilde P_{\bX\cap\bfW,a_0}(\bbL^{d}\cdot T).
\end{equation}
\end{thm}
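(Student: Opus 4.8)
The plan is to reduce the statement about Poincaré series to the already-established facts that (a) riso-triviality restricts well from $K = \CC(\!(t^\QQ)\!)$ to $K' = \CCt$ (Proposition~\ref{prop:fields}), (b) almost translation invariance of $\bX$ on the infinitesimal neighbourhood $U_{a_0}$ propagates to the jet sets $\tilde X_{a_0,r}$ and $X_{a_0,r}$, and (c) risometries preserve motivic measure (Proposition~\ref{prop:presMeas}). The key observation tying everything together is that $a_0 \in \bS_d(\CC)$ forces, by Lemma~\ref{lem:riso-stratificationrtsp}, that $\rtsp_{B}(\bchi_K) \supseteq \res(T_{a_0}\bS_d(\CC))$ where $B = \res^{-1}(a_0) = B_{<1}(a_0)$ and $\bchi$ is either the indicator function of $\bX(K)$ (Case~(2)) or the $\RV^{(\ell)}$-valued map $a \mapsto \rv^{(\ell)}(f_1(a),\dots,f_\ell(a))$ (Case~(1)); in fact, since $a_0$ lies in the smooth $d$-dimensional manifold $\bS_d$, Proposition~\ref{prop:riso-mani} gives $\rtsp_B(\bchi_K) = \res(T_{a_0}\bS_d(\CC))$, so $\bchi_K$ is $\bar V$-riso-trivial on $B$ for $\bar V := \res(T_{a_0}\bS_d(\CC))$, which is exactly $d$-dimensional.

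First I would set up the geometry: let $V := T_{a_0}\bS_d(\CC) \subseteq \CC^n$ (a lift of $\bar V$ since $\bar V = \res(V)$ and $V$ has $\CC$-rational, hence $\valring$-integral, basis), and let $W := \bfW(\CC) - a_0$ be the transversal complement, so that $\res(W) \oplus \bar V = \RF(K)^n$. Let $\phi\colon B \to B$ be a $V$-straightener of $\bchi_K$ that respects $\pi_W$-fibers (Lemma~\ref{lem:fiber_compat}) and, using Remark~\ref{rmk:fix:fiber}, is the identity on the fiber $F := \pi_W^{-1}(a_0) \cap B$, which is a ball in the affine space $a_0 + W = \bfW(K) \cap B_{<1}(a_0)$. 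By Proposition~\ref{prop:fields}~(1), applied with $K_0 := K' = \CCt$ (which satisfies $\dclVF(K') = K'$ and $\dclRV(K') = \rv(K')$), we may take $\phi$ to be compatible with $K'$, so $\phi$ restricts to a risometry $\phi'\colon B' \to B'$ where $B' := B \cap (K')^n = U'_{a_0} = a_0 + (t\CC[[t]])^n$, and $\bchi_{K'} \circ \phi'$ is still $V$-translation invariant (intersecting with $(K')^n$ preserves this, since $V$ is defined over $\CC$) and agrees with $\bchi_{K'}$ on $F \cap (K')^n$, i.e.\ on $\bfW(K') \cap B'$.

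Next I would transfer this to the jet sets. The defining formulas \eqref{eq:Xtil:def} and \eqref{eq:X:def} express $\tilde X_{a_0,r}$ and $X_{a_0,r}$ (as subsets of $B' = U'_{a_0}$, modulo the harmless shift by $a_0$ and the fact that for $a_0\in\bX(\CC)$ the condition $\res(a)=a_0$ is just $a\in B'$) in terms of $\bchi_{K'}$: for Case~(1), $\max_i|f_i(a)|\le|t^r|$ only depends on $\bchi_{K'}(a) = \rv^{(\ell)}(f_1(a),\dots,f_\ell(a))$; for Case~(2), one must additionally handle the existential $\exists b\in\bX(\CC[[t]])\colon|b-a|\le|t^r|$, but this says $a \in \bX(\CC[[t]]) + B_{\le|t^r|}(0)$, a tubular neighbourhood of the set $\bX(K')\cap B'$, so by Lemma~\ref{lem:triv_cl}~(2) the straightener $\phi'$ also straightens $X_{a_0,r}$. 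In either case $\phi'^{-1}(\tilde X_{a_0,r})$, resp.\ $\phi'^{-1}(X_{a_0,r})$, is $V$-translation invariant on $B'$ and agrees with $\tilde X_{a_0,r}$, resp.\ $X_{a_0,r}$, on the fiber $F' := \bfW(K') \cap B'$. Writing $\tilde X'_{a_0,r}$ for the $V$-translation-invariant set $\phi'^{-1}(\tilde X_{a_0,r})$, it is (as a subset of the ball $B' \cong F' \times V'$, $V' := V \cap (K')^n \cong (t\CC[[t]])^d$) the product $(\tilde X_{a_0,r} \cap F') \times V'$ up to the obvious identification, and $\tilde X_{a_0,r}\cap F'$ is precisely the $r$-jet set $\tilde X_{\bX\cap\bfW, a_0, r}$ computed inside $\bfW$ — here one uses that a set of generators of the ideal of $\bX\cap\bfW$ inside the $(n-d)$-dimensional ambient $\bfW$ is obtained by restricting the $f_i$, together with Lemma~\ref{lem:ideal:riso} to see the algebraic riso-stratification is independent of the choice of generators. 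Applying Proposition~\ref{prop:presMeas} (risometries preserve motivic measure, valid since $\TT = \Th_\LL(\CCt)$ satisfies Hypotheses~\ref{hyp:KandX_can}, \ref{hyp:RFlin}, \ref{hyp:mot:int}) gives $\mu(\tilde X_{a_0,r}) = \mu(\tilde X'_{a_0,r}) = \mu(\tilde X_{\bX\cap\bfW,a_0,r})\cdot\mu((t\CC[[t]])^d) = \mu(\tilde X_{\bX\cap\bfW,a_0,r})\cdot \bbL^{-rd}$, where the last equality is the normalization of motivic measure. Substituting into Definition~\ref{defn:poincare}, $\tilde P_{\bX,a_0}(T) = \sum_r \bbL^{rn}\mu(\tilde X_{a_0,r})T^r = \sum_r \bbL^{r(n-d)}\mu(\tilde X_{\bX\cap\bfW,a_0,r})(\bbL^d T)^r \cdot \bbL^{-d} = \bbL^{-d}\tilde P_{\bX\cap\bfW,a_0}(\bbL^d T)$ (the ambient dimension of $\bfW$ being $n-d$), which is \eqref{eq:thm:Ptilde}; the identical computation with $X$ in place of $\tilde X$ yields \eqref{eq:thm:P}, and this case works in both (1) and (2) since it only used $\bchi_K$ being the indicator function, available after Lemma~\ref{lem:riso-stratificationrtsp} in setting (2) and a fortiori in setting (1).

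The main obstacle I anticipate is the bookkeeping around \emph{which} map $\bchi$ gives the correct riso-triviality: for \eqref{eq:thm:P} the indicator function of $\bX$ suffices and Case~(2) is immediate, but for \eqref{eq:thm:Ptilde} one genuinely needs the $\RV^{(\ell)}$-valued map built from the $f_i$, and one must check carefully that (i) riso-triviality of $a\mapsto\rv^{(\ell)}(f(a))$ on $B$ really does follow from $a_0\in\bS_d(\CC)$ for the algebraic riso-stratification (this is Lemma~\ref{lem:riso-stratificationrtsp}~(3) combined with the definition of the algebraic riso-stratification via Lemma~\ref{lem:ideal:riso}), (ii) the straightener respecting $\pi_W$-fibers and fixing $F$ restricts compatibly with $K'$ to straighten \emph{both} $\tilde X_{a_0,r}$ (via the $f_i$-values) and $X_{a_0,r}$ (via the tubular-neighbourhood Lemma~\ref{lem:triv_cl}), and (iii) restricting the $f_i$ to $\bfW$ gives generators of the ideal of $\bX\cap\bfW$ whose associated $\RV$-map has the right riso-triviality on the fiber, so that the lower-dimensional Poincaré series is literally the one appearing on the right of \eqref{eq:thm:Ptilde} — here transversality of $\bfW$ to $T_{a_0}\bS_d$ is exactly what is needed, via $\res(W)\oplus\bar V = \RF(K)^n$ and Lemma~\ref{lem:fiber_triv}. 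A secondary, more technical point is justifying the product decomposition $\mu(\tilde X'_{a_0,r}) = \mu(\tilde X_{a_0,r}\cap F')\cdot\bbL^{-rd}$ directly rather than merely through Proposition~\ref{prop:presMeas}; but in fact Proposition~\ref{prop:presMeas} (or its refinement Proposition~\ref{prop:presInt}) already subsumes this, since $\tilde X'_{a_0,r}$ is risometric to $\tilde X_{a_0,r}$ and is manifestly a ``cylinder'' over $\tilde X_{a_0,r}\cap F'$, whose measure is computed by Fubini for the Cluckers--Loeser integral, so no new input is required.
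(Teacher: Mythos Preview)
Your approach is correct and essentially the same as the paper's: obtain $\bar V$-riso-triviality of $\bchi_K$ on $B_{<1}(a_0)$ from Lemma~\ref{lem:riso-stratificationrtsp}, restrict the straightener to $\CCt$ via Proposition~\ref{prop:fields}, pass riso-triviality to the jet sets (via Lemma~\ref{lem:triv_cl} for $X_{a_0,r}$), and conclude by Proposition~\ref{prop:presMeas} and Fubini.

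Two small points to fix. First, your Fubini step has an arithmetic slip: $\mu((t\CC[[t]])^d)=\bbL^{-d}$, not $\bbL^{-rd}$; your subsequent series manipulation is in fact consistent with the correct exponent $\bbL^{-d}$, so the error is local. Second, the identification of $X_{a_0,r}\cap F'$ with the jet set of $\bX\cap\bfW$ computed inside $\bfW$ is not automatic: in \eqref{eq:X:def} the witness $b\in\bX(\CCt)$ need not lie in $\bfW$, so a priori $X_{a_0,r}\cap F'$ could be strictly larger. It does coincide, but this requires using the straightener (if $|b-a|\le|t^r|$ with $a\in F'$, replace $b$ by the point of $\bX\cap F'$ in the same $g$-fiber as $\psi^{-1}(b)$). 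The paper sidesteps this by keeping $\bX\cap\bfW$ inside $\mathbb A^n$ and verifying the set-level identity $Z_{a_0,r}=X_{a_0,r}\cap\pi_U^{-1}(B_U)$ instead; the two packagings give the same numbers.
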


\begin{rmk}
Note that the right hand sides of \eqref{eq:thm:P} and \eqref{eq:thm:Ptilde} are just the local Poincaré series of ``$\bX$ made translation invariant'', i.e., of $(\bX \cap \bfW) + T_{a_0}\bS_d \cong (\bX \cap \bfW) \times \mathbb A_{\CC}^d$. 
\end{rmk}

The proof of Theorem~\ref{thm:poincare} can easily be adapted to other kinds of Poincaré series, e.g., the multi-variable series from \cite[Theorem~14.4.1]{CL.mot}. We leave it to the reader to work out the details.

\begin{proof}[Proof of Theorem~\ref{thm:poincare}]
We start by working in the setting of 
Section~\ref{sec:can_whit} (see Convention~\ref{conv:sect}), taking
$\Lnoval = \Lring(\CC)$ and $\Tnoval = \Th_{\Lnoval}(\CC)$ as in Notation~\ref{notn:abuse}, and with $K \succneqq_{\Lnoval} K_1 = K_0 = \CC$. (In particular, we are currently \emph{not} following Convention~\ref{conv:LL:int}.) Note that these choices also fit to Section~\ref{sec:alg:riso}, where the algebraic riso-stratification is defined.

Let $\bX$, $(\bS_i)_i$, $a_0$, and $\bfW$ be given as in the theorem. More precisely, we first prove \eqref{eq:thm:Ptilde}, so let us assume that $(\bS_i)_i$ is the algebraic riso-stratification of $\bX$, i.e., the riso-stratification of the $\Leq$-definable map $\bchi\colon \VF^n \to \RV^{(\ell)}, a \mapsto \rv^{(\ell)}(f_1(a), \dots, f_\ell(a))$. (Recall that we fixed generators $f_i$ of the ideal defining $\bX$.)

Set $B := \{a \in \valring(K)^n \mid \res(a) = a_0\}$, let
$\bfU := T_{a_0}\bS_d \subseteq \mathbb A_\CC^n$ be the tangent space of $\bS_d$ at $a_0$ (in the sense of Notation~\ref{notn:abuse}) and set $\bU := \bfU(\CC)$.
By Proposition~\ref{prop:riso-mani}, we have $\bU = \rtsp_B((\bS_i(K))_i)$,
so by Lemma~\ref{lem:riso-stratificationrtsp}, $\bchi_K$ is $\bU$-riso-trivial on $B$.

Since $K$ is (spherically) complete, we can embed $\CCt$ into $K$ by sending $t$ to any element of valuation less than $1$; fix such an embedding and consider $\CCt$ as a subfield of $K$ in this way. Working in the $\LL$-structure $K$, we have $\dclVF(\CCt) = \CCt$ and $\dclRV(\CCt) = \rv(\CCt)$, so Proposition~\ref{prop:fields} applies, yielding that there exists a straightener $\phi\colon B \to B$ witnessing $\bar U$-riso-triviality of $\bchi_K$ which respects $\CCt$, i.e.,
$\phi$ restricts to a map $\phi_0$ from $B_0 := B \cap \CCt^n$ to itself.

From now on (and for the remainder of the proof), we work in the structure $\CCt$, and we let $\LL$ and $\TT= \Th_\LL(\CCt)$ be as in Convention~\ref{conv:LL:int}. We also from now on consider $\bchi$ as an $\Leq$-definable map for these $\LL$ and $\TT$.
With these conventions, the map $\phi_0$ witnesses that $\chi := \bchi_{\CCt}$ is $\bar U$-riso-trivial on $B_0$.

The set $\tilde X_{a_0,r}$ appearing in the definition of the Poincaré series $\tilde P_{\bX,a_0}(T)$ can be written as
\[
\tilde X_{a_0,r} = \{a \in B_0 \mid |\chi(a)| \le |t^r|\},
\]
where the natural map from $\RV^{(\ell)}$ to $\VG$ (sending $\rv^{(\ell)}(b)$ to $|b|$ for $b \in K^\ell$) is denoted by $|\cdot|$. Therefore, $\bU$-riso-triviality of $\chi$ on $B_0$ also implies $\bU$-riso-triviality of $\tilde X_{a_0,r}$ on $B_0$. 
Set $\bfV := T_{a_0}\bfW\subseteq \mathbb A_\CC^n$ (so that $\bfW(\CC) = \bfV(\CC) + a_0 \subseteq \CC^n$), $U := \bfU(\CCt)$, $V := \bfV(\CCt)$, denote by $\pi_U\colon \CCt^n \to U$ the projection whose kernel is $V$,
and let $\psi\colon B_0 \to B_0$ be a $U$-straightener of $\tilde X_{a_0,r}$, i.e., such that $X' := \psi^{-1}(\tilde X_{a_0,r})$ is $U$-translation invariant.
By the assumption that $\bfV(\CC)$ is a complement of $\bU = \bfU(\CC)$, we may additionally suppose that $\psi$ respects $\pi_U$-fibers
and that $\psi$ is the identity on the $\pi_U$-fiber $F := \bfW(\CCt) \cap B_0$ (see Remark~\ref{rmk:fix:fiber}).
This ensures that $X'$ is $\LL$-definable.
More precisely, if we define $g\colon B_0 \to F$ to send $a \in B_0$ to the unique $b \in F$ satisfying $b - a \in U$, then
we have $X' = g^{-1}(\tilde X_{a_0,r} \cap F)$.
This allows us to express the motivic measure of 
$\tilde X_{a_0,r}$ in terms of the motivic measure of $\tilde X_{a_0,r} \cap F$, as follows.
Since motivic measure is preserved by risometries (Proposition~\ref{prop:presMeas}), and since each fiber of $g$ is a $d$-dimensional maximal ideal sized ball (and hence has measure $\bbL^{-d}$), we obtain
\begin{equation}\label{eq:muX}
\mu(\tilde X_{a_0,r}) = \mu(X') =
\bbL^{-d}\cdot \mu_{n-d}(\tilde X_{a_0,r} \cap F),
\end{equation}
where by $\mu_{n-d}$, we mean the $(n-d)$-dimensional motivic measure on $F$ (obtained e.g.\ by identifying $F$ with $(t\CC[[t]])^{n-d}$).

Now we do a similar computation for the
Poincaré series of $\bX \cap \bfW$.
Denoting by $\tilde Z_{a_0,r}$ the sets whose measures we need to consider (so that
$\tilde P_{\bX\cap \bfW,a_0}(T)=\sum_{r \ge 1} \bbL^{rn}\cdot \mu(\tilde Z_{a_0,r})\cdot T^r$), we have
\[
\tilde Z_{a_0,r} = \{a \in B_0 \mid |\chi(a)| \le |t^r| \wedge |\bh_{\CCt}(a)| \le |t^r|\},
\]
where $\bh\colon \bA^n \to \bA^{d}$ is a tuple of linear polynomials over $\CC$ defining $\bfW$, considered as an $\Lnoval$-definable map.
Note that the fibers of $\bh_{\CCt}$ are exactly the fibers of $\pi_U$ and that we have
\[
\tilde Z_{a_0,r} = \tilde X_{a_0,r} \cap \pi_U^{-1}(B_U),
\]
for $B_U := \{u \in U \mid |u - \pi_U(a_0)| \le |t^r|\}$.
Since $\psi$ respects $\pi_U$-fibers, the set
$\psi^{-1}(\tilde Z_{a_0,r})$ is equal to
$X' \cap \pi_U^{-1}(B_U)$, and a similar computation as before (now using the restriction of $g$ to $B_0 \cap \pi_U^{-1}(B_U)$, whose fibers are $d$-dimensional balls of closed radius $|t^r|$)
yields 
\[
\mu(\tilde Z_{a_0,r}) =
\bbL^{-dr}\cdot \mu_{n-d}(\tilde X_{a_0,r} \cap F).
\]
Combining this with \eqref{eq:muX} yields
\[
\mu(\tilde X_{a_0,r}) = \mu(\tilde Z_{a_0,r})\cdot \bbL^{d(r-1)},
\]
and this implies \eqref{eq:thm:Ptilde}:
\begin{align*}
\tilde P_{\bX,a_0}(T) &= \sum_{r  \ge 1}
\bbL^{rn}\cdot
\mu(\tilde X_{a_0,r})\cdot T^r
= \sum_{r  \ge 1}
\bbL^{rn}\cdot
\mu(\tilde Z_{a_0,r})\cdot  \bbL^{d(r-1)}\cdot T^r\\
&=  \bbL^{-d}\cdot  \sum_{r  \ge 1}
\bbL^{rn}\cdot
\mu(\tilde Z_{a_0,r})\cdot (\bbL^d T)^r
= \bbL^{-d}\cdot P_{\bX\cap\bfW, a_0}(\bbL^d T).
\end{align*}

\medskip

To obtain a proof of \eqref{eq:thm:P}, now assuming that $(\bS_i)_i$ is either the riso-stratification or the algebraic riso-stratification of $\bX$, we adapt the above proof of \eqref{eq:thm:Ptilde} as follows:

If we work with the (non-algebraic) riso-stratification of $\bX$, we define $\bchi$ to be the indicator function of $\bX$. (Otherwise, we keep $\bchi$ as before.)

Instead of working with $\tilde X_{a_0,r}$, we need to use
$X_{a_0,r}$. While this set cannot be expressed directly in terms of $\chi = \bchi_{\CCt}$, we
can use Lemma~\ref{lem:triv_cl} to deduce that a straightener $\phi_0\colon B_0 \to B_0$ of $\chi$ also straightens the tubular neighbourhood $X_{a_0,r}$ of $\bX(\CCt) \cap B_0$.
We can therefore continue the proof as for $\tilde P_{\bX,a_0}(T)$ and obtain
\[
\mu(X_{a_0,r}) =
\bbL^{-d}\cdot \mu_{n-d}(X_{a_0,r} \cap F).
\]
As before, we then do a similar computation for
$P_{\bX\cap \bfW,a_0}(T)=\sum_{r \ge 1} \bbL^{r(n-d)}\cdot \mu(Z_{a_0,r})\cdot T^r$. 
Using that $Z_{a_0,r}$ is a tubular neighbourhood of $\bX(\CCt) \cap \bfW(\CCt) \cap B_0$, we again obtain
$Z_{a_0,r} = X_{a_0,r} \cap \pi_U^{-1}(B_U)$
(for the same $B_U \subseteq U$ as before), and then the rest of the proof is unchanged.
\end{proof}

We end this section with an example showing that the conclusion of Theorem~\ref{thm:poincare} would not hold if $(\bS_i)_i$ is assumed to be an arbitrary Whitney or Verdier stratification.
The set $\bX$ in that example is due do Mostowski \cite{Mos.biLip}.

\begin{exa}\label{exa:notVerdier}
Set $\bX(\CC) := \{(x,0,0,w)\mid x,w \in \CC\} \cup \{(x,x^3,xw,w)\mid x,w \in \CC\}$, $\bS_1 := \{(0,0,0)\} \times \CC$ and $\bS_2 := \bX \setminus \bS_1$. This is a Verdier stratification of $\bX$, but the local motivic Poincaré series $P_{\bX,(0,0,0,0)}(T)$ does not satisfy the formula \eqref{eq:thm:P} from Theorem~\ref{thm:poincare}.
\end{exa}

We leave it to the reader to verify that this $(\bS_r)_{r}$ is a Verdier stratification (this is straight forward), but we will explain below how to compute the local motivic Poincaré series without too much effort.

Note that the failure of formula \eqref{eq:thm:P} with respect to $(\bS_r)_{r}$ shows (via Theorem~\ref{thm:poincare}) that it is not a riso-stratification. One can also check directly that $\bX(K)$ is not $1$-riso-trivial on on the infinitesimal neighbourhood $\maxid(K)^4$ of the origin (e.g.\ for $K = \CCt[t^\QQ]$), and hence that 
the riso-stratification of $\bX$ puts the origin into the $0$-dimensional skeleton.

\begin{proof}[Proof of the second statement of Example~\ref{exa:notVerdier}]
We compute the local motivic Poincaré series of $\bX(\CC)$ around $0$.

First note that for any point $p = (x,y,z,w) \in (t\CC[[t]])^4$, the (valuative) distance from $p$ to $X := \bX(\CCt)$ is realized by a point in the same $x$-$w$-plane, so distances appearing in the definition of the Poincaré series (in \eqref{eq:X:def})
can be computed entirely within those planes.

The intersection of $X$ with such a plane is of the form $\{(0,0), (x^3, xw)\}$, i.e., it consists of two (possibly equal) points of valuative distance $|(x^3,xw)| =: |t^r|$ which both lie in $(t\CC[[t]])^2$.
The local motivic Poincaré series around $(0,0)$ of such a two-point-set is
\[
P_r(T) := \frac{T+T^{r+1}}{1-T}.
\]
(This is an easy computation involving some geometric series.)
If $|x| = |t^s|$ and $|w| = |t^{s'}|$, then
$|(x^3,xw)| = |t^{\min\{3s,s+s'\}}| =: |t^{r(s,s')}|$, so
one obtains the local motivic Poincaré series of $\bX$ by summing over $s$ and $s'$ as follows:
\[
P_{\bX, (0,0,0,0)}(T) = \sum_{s, s' \ge 1}(1-\bbL^{-1})^2\bbL^{-s-s'} P_{r(s,s')}(\bbL^{2} T).
\]
(One way to obtain this formula is to express the measures of the sets appearing in the Poincaré series of $\bX$ as integrals running over $x$ and $w$, of the corresponding measure within the $x$-$w$-plane.)
A tedious but straight forward computation\footnote{A possible strategy is to split the sum in the following way: $P_{\bX, (0,0,0,0)}(T) =
\sum_{s,s' \ge 1} (1-\bbL^{-1})^2\bbL^{-s-s'}T(1-T)^{-1} +
\sum_{s,s' \ge 1} (1-\bbL^{-1})^2\bbL^{-s-s'}T^{s + s'}(1-T)^{-1}+
\sum_{s \ge 1,s' > 2s} (1-\bbL^{-1})^2\bbL^{-s-s'}(T^{3s} - T^{s + s'})(1-T)^{-1}
$
} then yields
\[
P_{\bX, (0,0,0,0)}(T) = \frac{T- \bbL T^2 + (\bbL^4 - 2\bbL^3 + \bbL^2)T^3 + (\bbL^5 - \bbL^4 - \bbL^3)T^4 + (-\bbL^5 +2\bbL^4)T^5}{(1-\bbL^2T)(1-\bbL T)(1 - \bbL^3T^3)}.
\]

Using the same methods, one also obtains the local motivic Poincaré series of the intersection of $\bX$ with $\bW := \bA^3 \times \{0\}$, namely:
\[
P_{\bX\cap \bfW,(0,0,0,0)}(T) = 
\sum_{s \ge 1}(1-\bbL^{-1})\bbL^{-s}P_{3s}(\bbL T)
=\frac{T + (\bbL - 2)T^4}{(1-\bbL T)(1 - T^3)}.
\]
Clearly, $\bbL^{-1}P_{\bX\cap \bfW,(0,0,0,0)}(\bbL T) \ne P_{\bX,(0,0,0,0)}(T)$, showing that Theorem~\ref{thm:poincare} indeed would not hold for the stratification of $\bX$ given in the example.
\end{proof}

 \bibliographystyle{amsplain}
\bibliography{references}

\end{document}